\tikzset{dynkdot/.style={circle,draw,scale=.38}}
\definecolor{refkey}{gray}{0.5}
\definecolor{labelkey}{gray}{0.5}
\newcommand{\seq}{\coloneqq}
\numberwithin{equation}{section}
\newtheorem{Thm}{Theorem}[section]
\newtheorem{Cor}[Thm]{Corollary}
\newtheorem{Prop}[Thm]{Proposition}
\newtheorem{Lem}[Thm]{Lemma}
\newtheorem{Conj}[Thm]{Conjecture}
\newtheorem*{Claim}{Claim}
\theoremstyle{definition}
\newtheorem{Def}[Thm]{Definition}
\newtheorem{Rem}[Thm]{Remark}
\newtheorem{Ex}[Thm]{Example}
\newcommand{\ul}{\underline}
\newcommand{\ol}{\overline}
\newcommand{\im}{\imath}
\newcommand{\jm}{\jmath}
\newcommand{\Ker}{\mathop{\mathrm{Ker}}}
\newcommand{\id}{\mathrm{id}}
\newcommand{\evt}{\mathrm{ev}_{t=1}}
\newcommand{\sgn}{\mathrm{sgn}}
\newcommand{\N}{\mathbb{N}}
\newcommand{\Z}{\mathbb{Z}}
\newcommand{\Q}{\mathbb{Q}}
\newcommand{\C}{\mathbb{C}}
\newcommand{\F}{\mathbb{F}}
\newcommand{\kk}{\Bbbk}
\newcommand{\sg}{\mathsf{g}}
\newcommand{\sn}{\mathsf{n}}
\newcommand{\sh}{\mathsf{h}}
\newcommand{\sP}{\mathsf{P}}
\newcommand{\sW}{\mathsf{W}}
\newcommand{\sfc}{\mathsf{c}}
\newcommand{\sC}{\mathsf{C}}
\newcommand{\sfd}{\mathsf{d}}
\newcommand{\sfp}{\mathsf{p}}
\newcommand{\sD}{\mathsf{D}}
\newcommand{\sfSg}{\mathsf{\Sigma}}
\newcommand{\cQ}{\mathcal{Q}}
\newcommand{\cM}{\mathcal{M}}
\newcommand{\cY}{\mathcal{Y}}
\newcommand{\cT}{\mathcal{T}}
\newcommand{\cK}{\mathcal{K}}
\newcommand{\cA}{\mathcal{A}}
\newcommand{\Cc}{\mathscr{C}}
\newcommand{\Nn}{\mathscr{N}}
\newcommand{\fg}{\mathfrak{g}}
\newcommand{\fD}{\mathfrak{D}}
\newcommand{\SG}{\mathfrak{S}}
\newcommand{\fe}{\mathfrak{e}}
\newcommand{\bfi}{{\boldsymbol{i}}}
\newcommand{\bfa}{{\boldsymbol{a}}}
\newcommand{\bfb}{{\boldsymbol{b}}}
\newcommand{\bfc}{{\boldsymbol{c}}}
\newcommand{\bfg}{{\boldsymbol{g}}}
\newcommand{\bfe}{{\boldsymbol{e}}}
\newcommand{\bfn}{{\boldsymbol{n}}}
\newcommand{\bftau}{{\boldsymbol{\tau}}}
\newcommand{\bfL}{\mathbf{L}}
\newcommand{\bT}{\mathbf{T}}
\newcommand{\tvee}{\widetilde{\vee}}
\newcommand{\tc}{\widetilde{c}}
\newcommand{\tm}{\widetilde{m}}
\newcommand{\tC}{\widetilde{C}}
\newcommand{\tG}{\widetilde{G}}
\newcommand{\tD}{\widetilde{D}}
\newcommand{\tB}{\widetilde{B}}
\newcommand{\tPsi}{\widetilde{\Psi}}
\newcommand{\trPsi}{\widetilde{\Psi}^{\mathrm{res}}}
\newcommand{\teta}{\widetilde{\eta}}
\newcommand{\tSigma}{\widetilde{\Sigma}}
\newcommand{\tsfSg}{\widetilde{\mathsf{\Sigma}}}
\newcommand{\hD}{\widehat{\Delta}}
\newcommand{\hI}{\widehat{I}}
\newcommand{\tbfB}{\widetilde{\mathbf{B}}^*}
\renewcommand{\deg}{\mathop{\mathsf{deg}}\nolimits}
\newenvironment{red}{\relax\color{red}}{\relax}
\newenvironment{blue}{\relax\color{blue}}{\hspace*{.5ex}\relax}
\newcommand{\ber}{\begin{red}}
\newcommand{\er}{\end{red}}
\newcommand{\beb}{\begin{blue}}
\newcommand{\eb}{\end{blue}}
\title[Isomorphisms among quantum Grothendieck rings and cluster algebras]
{Isomorphisms among quantum Grothendieck rings and cluster algebras}
\date{\today}
\author[R.~Fujita]{Ryo Fujita}
\address[R.~Fujita]{Research Institute for Mathematical Sciences, Kyoto University, Oiwake-Kitashirakawa, Sakyo, Kyoto, 606-8502, Japan \& Institut de Math\'{e}matiques de Jussieu-Paris Rive Gauche, Universit\'{e} Paris Cit\'e, F-75013, Paris, France}
\email{rfujita@kurims.kyoto-u.ac.jp}
\author[D.~Hernandez]{David Hernandez}
\address[D.~Hernandez]{Universit\'{e} Paris Cit\'e and Sorbonne Universit\'{e}, CNRS, IMJ-PRG, IUF, F-75013, Paris, France}
\email{david.hernandez@imj-prg.fr}
\author[S.-j.~Oh]{Se-jin Oh}
\address[S.-j.~Oh]{Ewha Womans University Seoul, 52 Ewhayeodae-gil, Daehyeon-dong, Seodaemun-gu, Seoul, South Korea}
\email{sejin092@gmail.com}
\author[H.~Oya]{Hironori Oya}
\address[H.~Oya]{Department of Mathematics, Tokyo Institute of Technology, 2-12-1 Ookayama, Meguro-ku, Tokyo, 152-8551, Japan}
\email{hoya@math.titech.ac.jp}
\subjclass[2020]{17B37, 13F60, 20G42, 81R50, 17B10, 17B67}
\begin{document}

\maketitle

\begin{abstract}
We establish a cluster theoretical interpretation of the isomorphisms of \cite{FHOO} among quantum Grothendieck rings of representations 
of quantum loop algebras. Consequently, we obtain a quantization of the monoidal categorification theorem of \cite{KKOP2}. 
We establish applications of these new ingredients. First we solve long-standing problems for any non-simply-laced 
quantum loop algebras:  the positivity of $(q,t)$-characters of all simple modules, and the analog of Kazhdan--Lusztig conjecture for all reachable modules (in the cluster monoidal categorification). 
We also establish the conjectural quantum $T$-systems for the $(q,t)$-characters of Kirillov--Reshetikhin modules.
Eventually, we show that our isomorphisms arise from explicit birational transformations of variables, which we call substitution formulas. This reveals new non-trivial relations among $(q, t)$-characters of simple modules. 
\end{abstract}

\tableofcontents

\section*{Introduction}

Consider a complex finite-dimensional simple Lie algebra $\fg$ and $q$ a generic quantum parameter. 
The associated quantum loop algebra $U_{q}(L\fg)$ is a quantum affinization of $\fg$ with a 
structure of Hopf algebra. 
In particular, finite-dimensional representations over $U_{q}(L\fg)$
form a rigid monoidal category $\Cc_{\fg}$, whose structure is quite intricate as it is 
neither semisimple nor braided. This category has many applications and has 
been intensively studied from various perspectives. However, several fundamental questions remain open, 
such as the dimension and $q$-character (in the sense of Frenkel--Reshetikhin~\cite{FR99}) of simple modules in non-simply-laced types. 
In the present paper, we establish that a Kazhdan--Lusztig type approach is available to solve this problem for a very large family of simple modules.

The quantum Grothendieck ring $\cK_t(\Cc_\fg)$ of $\Cc_{\fg}$ is a non-commutative 
deformation of the Grothendieck ring $K(\Cc_\fg)$ of $\Cc_{\fg}$ inside a quantum torus $\cY_t$. It was introduced in 
 \cite{Nak04, VV03} for simply-laced types and then in \cite{Her04} for non-simply-laced types with a different method.
This ring has a canonical basis $\bfL_{t,\fg}$ whose elements $L_t(m)$ are 
called $(q,t)$-characters of simple modules and can be calculated by the Kazhdan--Lusztig type algorithm. 
Here $m$ belongs to a set $\cM$ which serves as a parameter set for the simple modules $L(m)$ in $\Cc_{\fg}$
(which is comparable with the set of Drinfel'd polynomials). 

The quantum Grothendieck ring serves as a useful tool to study the simple modules in $\Cc_{\fg}$. Indeed, when $\fg$ is simply-laced, Nakajima proved that the $(q,t)$-characters of simple modules specialize to the $q$-characters of simple modules, based on the geometry of quiver varieties~\cite{Nak04}. Namely, the $q$-characters of simple modules can be calculated by the Kazhdan--Lusztig type algorithm. More precisely, this algorithm computes the Jordan--H\"{o}lder multiplicity 
$P_{m,m'}$ of the simple module $L(m')$ occurring in the standard module $M(m)$, that is a 
tensor product of fundamental representations, whose $q$-character is known by Frenkel--Mukhin \cite{FM01}.
Since we have
$$
[M(m)] = [L(m)] + \sum_{m' \in \cM\colon m' < m} P_{m,m'} [L(m')] 
$$ 
in the Grothendieck ring $K(\Cc_{\fg})$ for a certain partial ordering on $\cM$, this algorithm enables us to compute all the simple $q$-characters in principle.  

Here we emphasize that the quiver varieties play an essential role to guarantee the validity of the algorithm.
When $\fg$ is non-simply-laced, the above theory is not applicable
for the absence of a fully developed theory of quiver varieties.  
However, one can still formulate a conjectural Kazhdan--Lusztig type algorithm for general $\fg$.

\begin{Conj}[Analog of Kazhdan--Lusztig conjecture, {\cite[Conjecture 7.3]{Her04}}] \label{Conj0:KL}
Under the specialization $\cK_t(\Cc_\fg) \to K(\Cc_{\fg})$ at $t=1$, the element $L_t(m)$ corresponds to 
the simple class $[L(m)]$ for any $m \in \cM$.  
\end{Conj}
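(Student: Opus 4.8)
\emph{Strategy.} I would deduce the conjecture from Nakajima's theorem in the simply-laced case by transporting it through the isomorphisms of quantum Grothendieck rings, and then close the remaining gap with the quantum monoidal categorification. Write $\widetilde{\fg}$ for the simply-laced simple Lie algebra unfolding $\fg$ ($B_n\rightsquigarrow A_{2n-1}$, $C_n\rightsquigarrow D_{n+1}$, $F_4\rightsquigarrow E_6$, $G_2\rightsquigarrow D_4$) and $\sigma$ for the corresponding Dynkin diagram automorphism. Let $M_t(m)\in\cK_t(\Cc_\fg)$ be the $(q,t)$-character of the standard module $M(m)$; the Kazhdan--Lusztig type algorithm produces $L_t(m)$ as the unique bar-invariant element with
\[
L_t(m)=M_t(m)+\sum_{m'<m}Q_{m,m'}(t)\,M_t(m'),\qquad Q_{m,m'}(t)\in t^{-1}\Z[t^{-1}],
\]
and, since $\evt M_t(m)=[M(m)]$, the conjecture for a fixed $m$ is exactly the family of numerical identities $Q_{m,m'}(1)=(P^{-1})_{m,m'}$ for all $m'<m$.

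\emph{Step 1: import the simply-laced case.} I would use the isomorphism $\Phi\colon\cK_t(\Cc_\fg)\xrightarrow{\ \sim\ }\cK_t(\Cc_{\widetilde{\fg}})$ onto the subalgebra distinguished in \cite{FHOO} and verify (this being part of what the present paper reinterprets cluster-theoretically) that it intertwines the bar-involutions, preserves the $\Z[t^{\pm1/2}]$-lattices generated by the $(q,t)$-characters of standard modules, and maps the standard-module basis of $\cK_t(\Cc_\fg)$ bijectively onto a sub-poset of that of $\cK_t(\Cc_{\widetilde{\fg}})$ via an order-preserving injection $\cM_\fg\hookrightarrow\cM_{\widetilde{\fg}}$, $m\mapsto\widetilde m$. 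Uniqueness in the Kazhdan--Lusztig characterization then forces $\Phi(L_t(m))=L_t(\widetilde m)$, hence $Q^{\fg}_{m,m'}(t)=Q^{\widetilde{\fg}}_{\widetilde m,\widetilde m'}(t)$. As $\widetilde{\fg}$ is simply laced, Nakajima's theorem \cite{Nak04} gives $\evt L_t(\widetilde m)=[L(\widetilde m)]$, i.e. $Q^{\widetilde{\fg}}_{\widetilde m,\widetilde m'}(1)=(P^{\widetilde{\fg}})^{-1}_{\widetilde m,\widetilde m'}$; combining, $\evt L_t(m)=\sum_{m'\le m}(P^{\widetilde{\fg}})^{-1}_{\widetilde m,\widetilde m'}\,[M(m')]$ in $K(\Cc_\fg)$.

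\emph{Step 2: reduction to a classical folding identity.} Comparing with $[L(m)]=\sum_{m'\le m}(P^{\fg})^{-1}_{m,m'}[M(m')]$, the conjecture becomes equivalent to the purely classical statement that decomposition numbers match under folding:
\[
P^{\fg}_{m,m'}=P^{\widetilde{\fg}}_{\widetilde m,\widetilde m'}\qquad\text{for all }m'\le m\ \text{in }\cM_\fg .
\]

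\emph{Step 3: the main obstacle.} For \emph{reachable} $m$ this identity is exactly the output of the quantum monoidal categorification of this paper: $L_t(m)$ is then a quantum cluster monomial and specializes at $t=1$ to the classical cluster monomial $[L(m)]$, both being computed by the same mutation sequence, so the folding identity holds for such $m$. The difficulty is $m$ \emph{non-reachable}. I would proceed by induction on the partial order: $L(m)$ is the maximal composition factor of some tensor product $L(m_1)\otimes\cdots\otimes L(m_r)$ of fundamental modules, which are reachable; lifting this to $M_t(m)=L_t(m_1)\ast\cdots\ast L_t(m_r)=\sum_{m'\le m}Z_{m,m'}(t)\,L_t(m')$ in $\cK_t(\Cc_\fg)$, specializing at $t=1$, and invoking the inductive hypothesis for $m'<m$, the target identity $\evt L_t(m)=[L(m)]$ reduces to $Z_{m,m'}(1)=P_{m,m'}$, i.e. to the assertion that the quantum composition multiplicities of a product of reachable $(q,t)$-characters recover the classical multiplicities at $t=1$. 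I expect this to be the genuine difficulty, since such a product generically leaves every single cluster. It should be attacked either geometrically --- running Nakajima's graded-quiver-variety construction for $\widetilde{\fg}$ $\sigma$-equivariantly and identifying the $\sigma$-fixed part of the relevant intersection cohomology with the $\fg$-side multiplicities, which directly yields the folding identity of Step 2 --- or categorically, by extending the monoidal categorification so that the class of \emph{every} simple object, not only a reachable one, is expressed through reachable ones by a $t$-deformable generic quantum cluster character formula. Granting either input, Steps 1--2 then give the conjecture for all $m\in\cM$.
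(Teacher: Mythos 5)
You are attempting to prove a statement that the paper itself does not prove: Conjecture~\ref{Conj0:KL} is stated as a \emph{conjecture}, and the paper establishes it only for \emph{reachable} modules (Theorem~\ref{Thm0:main}~(2) $=$ Corollary~\ref{Cor:KLr}), together with the positivity part of Conjecture~\ref{Conj:KL} for all simple modules. Your Step~3 honestly flags that the non-reachable case is the genuine difficulty and that your argument is conditional on an unproved input (a $\sigma$-equivariant quiver-variety construction or a generic quantum cluster character formula for all simples). That gap is real and is exactly what remains open; the paper does not close it either, so your proposal cannot be completed into a proof of the full statement by the methods available here.

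Beyond the acknowledged gap, Steps~1--2 contain a concrete error. The isomorphism $\Psi$ of \cite{FHOO} does \emph{not} preserve the standard-module basis: the introduction states explicitly that its specialization at $t=1$ ``does not respect the classes of fundamental modules nor of standard modules.'' Consequently the Kazhdan--Lusztig polynomials are not transported as you claim, the identity $Q^{\fg}_{m,m'}(t)=Q^{\widetilde{\fg}}_{\widetilde m,\widetilde m'}(t)$ fails, and the reduction to the ``classical folding identity'' $P^{\fg}_{m,m'}=P^{\widetilde{\fg}}_{\widetilde m,\widetilde m'}$ in Step~2 is not valid. (If that naive folding identity held, the full conjecture would already follow from Nakajima's theorem plus \cite{FHOO}, which is not the case.) What $\Psi$ does preserve is the canonical basis of simple $(q,t)$-characters, and the induced bijection on highest-weight monomials is the intricate, cluster-theoretic one of \cite[\S10.3]{FHOO} --- not the naive unfolding $m\mapsto\widetilde m$ of Drinfeld polynomials. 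For the reachable case, the paper's actual route is: equip $\cK_t(\Cc_{\le\xi})$ with a quantum cluster structure (Theorem~\ref{Thm:qcl}), identify $\Psi$ with the cluster transfer map $\hat{\bftau}^*$ (Theorem~\ref{Thm:Psi=tau}), prove that quantum cluster monomials lie in the canonical basis $\bfL_{t,\le\xi}$ by an induction on mutation distance using the positivity of structure constants (Theorem~\ref{Thm:pos}, Theorem~\ref{Thm:main}), and then compare with the classical monoidal categorification of \cite{KKOP2} via the commutative square~\eqref{eq:comm-ev}. Your Step~3 sketch for reachable $m$ gestures at this but omits the substantive step, namely proving $\eta_\bfi(x)=L_t(m)$ for every quantum cluster monomial $x$.
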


Since the $(q,t)$-characters of simple modules can be computed algorithmically as in the usual Kazhdan--Lusztig theory,
Conjecture~\ref{Conj0:KL} enables us to compute all the simple $q$-characters algorithmically once it is verified. Beyond simply-laced types, this conjecture has been proved for type $\mathrm{B}$ and for certain remarkable monoidal subcategories of $\Cc_\fg$ in \cite{FHOO} that are small (in the sense that they contain only a finite number of fundamental representations). 

A related problem is the following positivity conjecture which is known to be true for simply-laced types \cite{Nak04, VV03}, and was formulated for non-simply-laced types 
almost 20 years ago in \cite{Her04}. 
For types $\mathrm{CFG}$, the statement is only known 
for fundamental representations and was derived from a computer calculation in 
\cite{Her05}.

\begin{Conj}[Positivity of $(q,t)$-characters] \label{posqt}
In the quantum torus $\cY_t$, the elements $L_t(m)$ have non-negative coefficients. 
\end{Conj}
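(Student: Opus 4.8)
The strategy is to deduce the non-simply-laced cases from the simply-laced ones --- where Conjecture~\ref{posqt} is a theorem of \cite{Nak04, VV03} --- by transporting positivity along the isomorphisms of \cite{FHOO}, using cluster theory to control how these act on the ambient quantum tori. Fix a non-simply-laced $\fg$, let $\sg$ be the simply-laced Lie algebra attached to it by the unfolding construction, and let $\Phi\colon \cK_t(\Cc_{\sg}) \xrightarrow{\ \sim\ } \cK_t(\Cc_{\fg})$ be the $\Z[t^{\pm 1/2}]$-algebra isomorphism of \cite{FHOO} (after the relevant localization/completion). By construction $\Phi$ intertwines the bar involutions and sends the standard basis to the standard basis, $\Phi(M_t(m)) = M_t(\phi(m))$ for a poset isomorphism $\phi$ of the monomial parameter sets; since in each quantum Grothendieck ring the family of $(q,t)$-characters is the unique bar-invariant lift of the standard basis that is unitriangular for the relevant order, it follows formally that $\Phi(L_t(m)) = L_t(\phi(m))$ for every $m$. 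Hence Conjecture~\ref{posqt} for $\fg$ reduces to showing that $\Phi$ carries elements with non-negative coefficients in the monomial basis of $\cY_t^{\sg}$ to elements with non-negative coefficients in the monomial basis of $\cY_t^{\fg}$, the $\sg$-side $L_t(m)$ being already positive.

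To control $\Phi$ I would first establish the quantum analogue of the monoidal categorification theorem of \cite{KKOP2}, so that both $\cK_t(\Cc_{\sg})$ and $\cK_t(\Cc_{\fg})$ become quantum cluster algebras with identifiable exchange data; one then proves that $\Phi$ is an isomorphism of quantum cluster algebras matching the distinguished initial seeds. The key output is an explicit description of $\Phi$ on the quantum tori by a \emph{substitution formula} --- expressing each generator $\tY_{i,r}$ of $\cY_t^{\sg}$ as a distinguished birational expression in the generators of $\cY_t^{\fg}$, obtained from a monomial change of variables followed by a bounded sequence of quantum cluster mutations. Since each such mutation is subtraction-free, applying $\Phi$ to the (positive, by \cite{Nak04, VV03}) expansion of an $L_t(m)$ produces a non-negative expansion of $L_t(\phi(m))$ in $\cY_t^{\fg}$. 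This settles all $m \in \cM$ uniformly; for the simple modules that are not reachable from the initial seed --- where the cluster-monomial formalism is unavailable --- one still has the matching $\Phi(L_t(m)) = L_t(\phi(m))$ above, and invokes the positivity of \emph{all} simply-laced $(q,t)$-characters (via quiver varieties) transported through the same substitution formula.

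The main obstacle is precisely this cluster-theoretic control of $\Phi$: upgrading an a priori abstract algebra isomorphism to one realized by an explicit substitution of torus variables that is \emph{provably} positivity-preserving. The delicate point is that a quotient of subtraction-free expressions need not be a positive Laurent polynomial, so the argument must manufacture and propagate genuinely subtraction-free --- ideally monomial, up to mutation --- presentations; this is what a precise correspondence of the two cluster structures under $\Phi$ should provide, but it must be verified step by step. Concretely, the bulk of the work lies in identifying the distinguished initial seeds on both sides and checking that $\Phi$ commutes with mutation; once that is in place, the positivity of $(q,t)$-characters follows as above.
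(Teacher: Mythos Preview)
Your proposal has a genuine gap. The central step is to transport positivity from $\cY_t^{\sg}$ to $\cY_t^{\fg}$ through the substitution formula, arguing that since the birational change of variables is built from cluster mutations, it is subtraction-free and hence sends positive Laurent polynomials to positive Laurent polynomials. This inference fails: a subtraction-free rational expression that happens to equal a Laurent polynomial need not have non-negative coefficients. The one-variable identity $(1+x^3)/(1+x) = 1 - x + x^2$ already shows this. The substitution formulas do involve honest binomial denominators (see the paper's explicit examples, where several $Y$-variables go to expressions of the form $1/(a+b)$), so there is no way to avoid this obstruction by pure formality. You acknowledge the delicacy, but the hoped-for ``monomial up to mutation'' presentation does not materialise: even when $L_t(m)$ is a cluster monomial, positivity of cluster monomials in a fixed seed is a deep theorem, not a consequence of subtraction-freeness of mutation.

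There is also a factual slip: the isomorphism $\Psi$ does \emph{not} send standard modules to standard modules (indeed, not even fundamentals to fundamentals). The conclusion $\Psi(L_t(m)) = L_t(\phi(m))$ is nonetheless true, but for the deeper reasons established in \cite{FHOO}, not by the unitriangularity argument you sketch.

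The paper's route is entirely different and avoids any direct comparison of monomial expansions across the two tori. The key observation is that the coefficients $a_t[m;m']$ can be identified with \emph{structure constants} of $\cK_t(\Cc_\Z)$ in the canonical basis $\bfL_t$. This identification uses two ingredients supplied by the cluster-theoretic results of the paper: first, that $F_t(m) = L_t(m)$ for every Kirillov--Reshetikhin module (Corollary~\ref{Cor:KRF}), and second, that for suitable KR modules the truncated $(q,t)$-character is a single commutative monomial (Lemma~\ref{Lem:qKR}). Multiplying $L_t(m)$ by such a monomial-truncated $L_t(m_0)$ and reading off coefficients in the $\bfL_t$-expansion recovers the $a_t[m;m']$. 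Positivity then follows from the positivity of structure constants (Theorem~\ref{Thm:pos}), which was already established in \cite{FHOO} for all types via the ring isomorphism $\Psi$ --- an isomorphism of algebras automatically preserves structure constants, with no need to control its action on the ambient torus.
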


%When $\fg$ is simply-laced, all of these conjectures are known to be true by \cite{Nak04, VV03} using the geometry of quiver varieties. On the other hand,  they are still unsolved for non-simply-laced $\fg$.

In this paper, using the new ingredients that we explain below, we prove Conjecture~\ref{posqt} for all simple objects and Conjecture~\ref{Conj0:KL} for a large family of simple objects. 

\begin{Thm}[= Corollary~\ref{Cor:pos} \& Corollary~\ref{Cor:KLr}]\hfill \label{Thm0:main}
\begin{enumerate}
\item For general $\fg$, {\rm Conjecture~\ref{posqt}} holds.
\item For general $\fg$, {\rm Conjecture~\ref{Conj0:KL}} holds for all reachable modules.
\end{enumerate}
\end{Thm}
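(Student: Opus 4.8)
The plan is to deduce Theorem~\ref{Thm0:main} from the cluster-theoretic interpretation of the isomorphisms of \cite{FHOO} that occupies the body of the paper. The starting point is that for simply-laced $\fg'$ both Conjecture~\ref{posqt} and Conjecture~\ref{Conj0:KL} are theorems, by \cite{Nak04, VV03}, so the strategy is transport of structure: for a non-simply-laced $\fg$ one chooses an auxiliary simply-laced $\fg'$ together with a group of automorphisms $\sigma$ whose folding recovers $\fg$, and one uses the ring isomorphism $\cK_t(\Cc_\fg) \xrightarrow{\sim} \cK_t(\Cc_{\fg'})^{\sigma}$ (or the appropriate subquotient) from \cite{FHOO}. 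The key point, to be established earlier in the paper, is that this isomorphism is compatible with the cluster algebra structures on both sides and, crucially, sends the canonical basis element $L_t(m)$ on the $\fg$-side to (a monomial multiple of) $L_t(m')$ on the $\fg'$-side, where $m'$ is the corresponding dominant monomial. Granting that, positivity of $L_t(m)$ for all $m$ follows immediately from positivity on the simply-laced side, which is Conjecture~\ref{posqt} already known there; this gives part (1).

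For part (2) I would argue as follows. The notion of \emph{reachable} module is defined via the cluster monoidal categorification: $L(m)$ is reachable when its class is a cluster monomial obtained from the initial seed by a finite sequence of mutations. The quantization of the monoidal categorification theorem of \cite{KKOP2}, which the paper obtains as a consequence of the cluster interpretation of the \cite{FHOO} isomorphisms, should say precisely that for reachable $m$ the quantum cluster monomial in $\cK_t(\Cc_\fg)$ coincides with $L_t(m)$ up to a power of $t^{1/2}$, and likewise on the $\fg'$-side. Since the isomorphism $\cK_t(\Cc_\fg) \to \cK_t(\Cc_{\fg'})^{\sigma}$ is a cluster algebra isomorphism, it matches quantum cluster monomials with quantum cluster monomials, hence matches $L_t(m)$ with $L_t(m')$ for reachable $m$. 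Specializing at $t = 1$ and using that Conjecture~\ref{Conj0:KL} holds for $\fg'$ (it is a theorem in simply-laced type), one concludes that $L_t(m)|_{t=1} = [L(m)]$ in $K(\Cc_\fg)$, which is exactly Conjecture~\ref{Conj0:KL} for reachable $m$. One still has to check that the $t=1$ specialization is compatible with the isomorphism and with the map $\cK_t(\Cc_\fg) \to K(\Cc_\fg)$ — but this commutativity of specializations is part of the package inherited from \cite{FHOO} together with the classical result that reachable simple modules have classes given by cluster monomials in $K(\Cc_\fg)$, via \cite{KKOP2}.

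Assembling the argument, the proof of Theorem~\ref{Thm0:main} is short once the machinery is in place: (i) invoke the cluster-compatible isomorphism $\Phi\colon\cK_t(\Cc_\fg)\xrightarrow{\sim}\cK_t(\Cc_{\fg'})^{\sigma}$; (ii) observe $\Phi$ intertwines the canonical bases, or at least sends $L_t(m)\mapsto L_t(\sigma(m))$ up to a central grading shift; (iii) transport positivity from the simply-laced side to obtain (1); (iv) transport the Kazhdan--Lusztig statement, using the quantized monoidal categorification to identify reachable $L_t(m)$ with quantum cluster monomials, and the classical \cite{KKOP2} categorification to identify $[L(m)]$ with the corresponding $t=1$ cluster monomial, to obtain (2). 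The main obstacle — and the real content of the paper — is not in this deduction but in establishing step (i)–(ii): namely proving that the \cite{FHOO} isomorphisms are genuinely cluster isomorphisms and that they respect the canonical bases on the nose, which is where the substitution formulas and the explicit birational transformations of variables enter. Everything downstream of that, including Theorem~\ref{Thm0:main}, is then a matter of bookkeeping across the isomorphism.
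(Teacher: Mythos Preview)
Your outline for part~(2) is close to what the paper does, though two details are off. First, the isomorphism is $\cK_t(\Cc_\fg)\simeq \cK_t(\Cc_{\fg'})$, not to $\sigma$-invariants; the unfolding happens at the level of Q-data and Dynkin diagrams, not by taking fixed points of a quantum Grothendieck ring. Second, once the quantum monoidal categorification (Theorem~\ref{Thm:main}) says that a quantum cluster monomial equals $L_t(m)$, the Kazhdan--Lusztig statement for reachable $m$ follows purely on the $\fg$-side via the commutative diagram~\eqref{eq:comm-ev}: $\evt(L_t(m))=\evt(\eta_\bfi(x))=\chi_q(\bar\eta_\bfi(\evt(x)))=\chi_q(L(m))$, the last equality being the classical categorification of \cite{KKOP2}. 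You do not need to invoke Conjecture~\ref{Conj0:KL} on the $\fg'$-side here; the simply-laced input is only used, inside the proof of Theorem~\ref{Thm:main}, to show that $\eta_\bfi(x)$ lands in $\bfL_t$.

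Your argument for part~(1), however, has a genuine gap. The positivity in question is positivity of the coefficients $a_t[m;m']$ of $L_t(m)$ when expanded in the monomial basis $\{\ul{m'}\}$ of the quantum torus $\cY_t$. The isomorphism $\Psi$ does send $L_t(m)$ to some $L_t(m')$, but it does \emph{not} send $\cY_t$-monomials to $\cY'_t$-monomials or even to positive combinations thereof; indeed the substitution formulas of Section~\ref{sec:Substitution} show that $\Psi$ extends only to a birational map of skew fields, with $\ul{Y_{i,p}}$ mapping to genuinely rational expressions. So knowing that $L_t(m')$ has positive $\cY'_t$-expansion tells you nothing directly about the $\cY_t$-expansion of $L_t(m)$. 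The paper's proof of Corollary~\ref{Cor:pos} takes a different route: using Corollary~\ref{Cor:KRF} (that $F_t=L_t$ for KR modules, itself a consequence of Theorem~\ref{Thm:main}) together with Lemma~\ref{Lem:qKR}, one identifies each coefficient $a_t[m;m']$ with a structure constant of the multiplication in $\cK_t(\Cc_\Z)$ with respect to the canonical basis $\bfL_t$. Those structure constants \emph{are} preserved by $\Psi$ (any ring isomorphism respecting bases preserves structure constants), and their positivity was established in \cite{FHOO} as Theorem~\ref{Thm:pos}. That identification is the missing idea in your sketch.
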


A simple module is said to be reachable if its class (or one of its spectral parameter shift) is a cluster monomial for the cluster algebra structure established in \cite{HL16} on a subcategory $\Cc^-$ of $\Cc_\fg$. This is a large family of simple modules, including many interesting modules, all 
Kirillov--Reshetikhin modules for instance. This cluster algebra structure, and some of its variations, will play a crucial role in our proofs.

Another important ingredient for our purposes is a collection of isomorphisms between the quantum Grothendieck rings for non-simply-laced quantum loop algebras and their unfolded simply-laced ones established in \cite{FHOO}. 
Let us explain this. Now we assume that $\fg$ is of non-simply-laced type.
Then we choose another simple Lie algebra $\sg$ of simply-laced type whose Dynkin diagram is obtained by \emph{unfolding} the Dynkin diagram of $\fg$ (see Figure~\ref{Fig:unf}).  
It was proved in \cite{FHOO} that there exists a (non-unique) isomorphism of $\Z[t^{\pm 1/2}]$-algebras
\begin{equation} \label{eq:CZisom}
\cK_{t}(\Cc_{\sg}) \simeq \cK_{t}(\Cc_{\fg})
\end{equation}
which induces a bijection between the $(q,t)$-characters of simple modules. The specialization at $t=1$ of the isomorphism (\ref{eq:CZisom}) yields an isomorphism between the usual Grothendieck rings 
which is non-trivial. For example, it does not respect the classes of fundamental modules nor of standard modules.  In the light of the new results we will explain, we 
understand why the combinatorics of this bijection are intricate, as they are related to the cluster algebra structures.

We establish that the quantum Grothendieck ring of the category $\Cc^-$ (as well as some variations $\Cc_{\le \xi}$ of this category, that we introduce) 
has a structure of a quantum cluster algebras. This generalizes previous results \cite{HL15, Qin17, Bit}. 
We give a cluster theoretical interpretation of the isomorphisms among quantum Grothendieck rings constructed above, together with their behaviors on canonical bases. 
Consequently, we derive the following.

\begin{Thm}[= Theorem~\ref{Thm:main}] The quantum cluster monomials of $\cK_{t}(\Cc^-)$ belong to the canonical basis $\bfL_{t,\fg}$ of $(q,t)$-characters of simple modules. 
\end{Thm}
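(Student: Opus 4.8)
The plan is to combine three inputs: the cluster algebra structure on $\cK_t(\Cc^-)$, the known compatibility between quantum cluster monomials and the canonical basis in the \emph{simply-laced} case, and the isomorphism \eqref{eq:CZisom} together with its behavior on cluster structures and canonical bases. For the simply-laced $\sg$, this is essentially known: by the results quoted above (extending \cite{Nak04, VV03, HL15, Qin17, Bit}), the quantum Grothendieck ring $\cK_t(\Cc^-_{\sg})$ carries a quantum cluster algebra structure whose quantum cluster monomials lie in $\bfL_{t,\sg}$ — this rests on Nakajima's geometric results on quiver varieties and the monoidal categorification of \cite{HL15, KKOP2}. So the real content is the transfer to non-simply-laced $\fg$.

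First I would set up the quantum cluster algebra structure on $\cK_t(\Cc^-)$ (and on the variants $\Cc_{\le \xi}$) for general $\fg$, identifying an explicit initial quantum seed whose cluster variables are $(q,t)$-characters of Kirillov--Reshetikhin-type modules and whose exchange matrix and quantization matrix are read off from the relevant quiver/$T$-system data. The second step — the crux — is to show that the isomorphism \eqref{eq:CZisom} can be chosen so that it is an isomorphism of quantum cluster algebras, i.e.\ it sends an initial quantum seed of $\cK_t(\Cc_{\sg})$ (restricted appropriately to the relevant subcategory) to an initial quantum seed of $\cK_t(\Cc_\fg)$, matching exchange matrices and quantization data up to the reindexing dictated by the folding. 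This requires analyzing how the construction of \eqref{eq:CZisom} in \cite{FHOO} interacts with the combinatorics of the cluster structure: one must check that the image of each initial cluster variable is again a cluster variable (equivalently, a $(q,t)$-character of the expected simple module) and that the $B$-matrices correspond. Because an algebra isomorphism carrying one quantum seed to another automatically carries all mutations to mutations, it then follows that \emph{all} quantum cluster monomials on one side map bijectively to those on the other.

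The conclusion is then immediate: given a quantum cluster monomial $x$ of $\cK_t(\Cc^-)$, pull it back along \eqref{eq:CZisom} to a quantum cluster monomial $\tilde x$ of (the relevant subalgebra of) $\cK_t(\Cc_{\sg})$; by the simply-laced case $\tilde x \in \bfL_{t,\sg}$, i.e.\ $\tilde x = L_t(m')$ for some $m'$; and since \eqref{eq:CZisom} induces a bijection $\bfL_{t,\sg} \to \bfL_{t,\fg}$ between the canonical bases of $(q,t)$-characters of simple modules, we get $x = L_t(m) \in \bfL_{t,\fg}$. One should also take care of the spectral-parameter-shift caveat in the definition of reachable module, so that a cluster monomial of $\cK_t(\Cc^-)$ corresponds to a genuine element of $\bfL_{t,\fg}$ rather than only to a shifted one; this is handled by noting that the canonical basis and the cluster structure are both compatible with the relevant shift automorphisms of $\cY_t$.

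The main obstacle I expect is the second step: verifying that the \emph{a priori} purely ring-theoretic isomorphism \eqref{eq:CZisom} of \cite{FHOO} respects the quantum cluster algebra structures, i.e.\ matching the combinatorial data (initial seeds, exchange matrices, quantization matrices) on both sides. This is delicate because \eqref{eq:CZisom} is non-canonical and notoriously does not respect the naive structures (fundamental classes, standard modules), so one cannot simply compare generators; instead one needs a careful choice of the isomorphism, perhaps via the characterization of $\cK_t$ by its generators and the $(q,t)$-characters of a generating family, together with an argument that a ring isomorphism mapping \emph{enough} cluster variables to cluster variables must map an entire seed to a seed. Once that structural compatibility is established, the rest of the argument is formal.
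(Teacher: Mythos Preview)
Your high-level strategy --- establish the quantum cluster structure on both sides, show the isomorphism \eqref{eq:CZisom} respects it, and transfer from the simply-laced case --- is exactly the paper's. But your expectation for the crux step is off in a way that matters: $\Psi$ does \emph{not} send an initial seed to an initial seed, even up to reindexing. The initial quivers attached to $\cK_t(\Cc_{\le\xi})$ and $\cK_t(\Cc'_{\le\xi'})$ are genuinely different (they come from different sequences $\bfi,\bfi'$ adapted to different Q-data $\cQ,\cQ'$), and $\Psi$ corresponds to a \emph{nontrivial mutation sequence} between them. The paper's key technical input (Theorem~\ref{Thm:Psi=tau}) constructs this explicitly: one passes from $\bfi$ to $\bfi'$ by commutation moves and braid moves on reduced words, and each braid move is realized cluster-theoretically as a single mutation followed by a relabeling (\S\ref{bmov}). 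The resulting composite $\hat{\bftau}^*$ is then identified with $\Psi$ by factoring both through the quantized coordinate ring $\cA_t[N_-]$ and its dual canonical basis (Corollary~\ref{Cor:bftau} plus the characterization of $\Psi$ in Theorem~\ref{Thm:FHOO}), together with a compatibility with the duality $\fD_t$ checked via $g$-vectors. So your ``ring isomorphism maps enough cluster variables to cluster variables'' heuristic is replaced by a concrete construction in reduced-word combinatorics; without this, there is no mechanism to show $\Psi$ respects cluster monomials.

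For the simply-laced base case (in fact types $\mathrm{ABDE}$), the paper does not simply cite prior work but gives a direct inductive argument along exchange relations: writing $xx' = t^{a_1}y_1 + t^{a_2}y_2$ with $\eta_\bfi(x'),\eta_\bfi(y_k)$ already known to lie in $\bfL_t$, one expands $L_t(m)L_t(m') = \sum_{m''} c_{m''}(t)L_t(m'')$ with $c_{m''}\in\N_0[t^{\pm 1/2}]$ by positivity (Theorem~\ref{Thm:pos}), specializes at $t=1$ using the KL conjecture and the classical categorification of \cite{KKOP2}, and concludes that the $c_{m''}$ are forced to be single powers of $t$, so that $L_t(m)$ satisfies the same bar-invariant relation as $\eta_\bfi(x)$ and hence coincides with it.
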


This result is a quantum version of the monoidal categorification theorem of \cite{KKOP2} which states that the (classical) cluster monomial in $\cK(\Cc^-)$ are classes 
of simple modules. 
We also have an analogous statement for the whole category $\Cc_\fg$ (see Theorem~\ref{Thm:qmc_CZ}, where we actually work with a monoidal skeleton $\Cc_{\Z}$ of $\Cc_\fg$ for simplicity).
Let us sum up the situation in the following diagram: 

\vspace{0.5cm}

$$\begin{xymatrix}{ & K(\Cc_{\fg})  
& \ar[l]_{t=1}\cK_{t}(\Cc_{\fg}) \ar@/^2pc/@{<->}[rr]^{\cong} & \ar@{-->}[l]_-{\cong}  \mathcal{A}_t\ar@{-->}[r]^-{\cong}  &  \cK_{t}(\Cc_{\sg}) 
\\ \bfL_{\fg}\ar@{^{(}->}[ur]&\ar@{_{(}->}[l] {\bf M}\ar@{^{(}->}[u]   & \bfL_{t,\fg} \ar@{^{(}->}[u] 
& {\bf M}_t\ar@{^{(}->}[u]\ar@{^{(}-->}[r]\ar@{_{(}-->}[l]  \ar@{->}@/^2pc/[ll]^{t=1} & \bfL_{t,\sg} \ar@{^{(}->}[u] \ar@/^2pc/@{<->}[ll]^-{1:1}}\end{xymatrix}$$
\vspace{0.5cm}

\noindent
Here $\bfL_{\fg}$ is the basis of the Grothendieck ring $K(\Cc_{\fg})$ consisting of simple classes. 
It contains the set of classes of reachable modules ${\bf M}$ by \cite{KKOP2}. 
A quantum cluster algebra $\mathcal{A}_t$ is isomorphic to $\cK_{t}(\Cc_{\fg})$ and to $ \cK_{t}(\Cc_{\sg})$ as proved in Sections~\ref{ssecfive} and \ref{secsix}. The cluster algebra $\mathcal{A}_t$ contains its set of quantum cluster monomials ${\bf M}_t$ which specializes at $t = 1$ to ${\bf M}$. The composition of the isomorphisms to $\mathcal{A}_t$ recovers the isomorphism~\eqref{eq:CZisom} of
\cite{FHOO} as proved in Section \ref{secsix}. This isomorphism induces a bijection between the canonical 
bases $\bfL_{t,\sg}$, $\bfL_{t,\fg}$ as mentioned above. We prove in Section \ref{secsix} that ${\bf M}_t$ is sent to a subset of $\bfL_{t,\sg}$, and so also to a subset of $\bfL_{t,\fg}$. 
Combining all these results, we obtain that the elements in $\bfL_{t,\fg}$ that come from elements 
in ${\bf M}_t$ are evaluated at $t = 1$ to elements in $\bfL_{\fg}$, that is Theorem \ref{Thm0:main} (2). %the second statement in Theorem \ref{Thm0:main}.
(In the main body of the paper, we actually work with the analog of the above diagram for the categories $\Cc_{\le \xi}$.)

Theorem \ref{Thm0:main} (1) is proved for all simple modules by identifying the coefficients occurring in the $(q,t)$-characters 
with certain structure constants of the quantum Grothendieck ring with respect to the canonical basis $\bfL_{t, \fg}$. 
%This is possible thanks to the first property for Kirillov--Reshetikhin modules as they are reachable simple modules. 
For this identification, the $(q, t)$-characters of particular reachable simple modules, called Kirillov--Reshetikhin modules, play a key role. Our quantum version of the monoidal categorification theorem implies that they satisfy the quantum $T$-systems, as conjectured in \cite{FHOO}, which help us to investigate the structure of these $(q, t)$-characters.

%We prove that they satisfy the quantum $T$-systems, as conjectured in \cite{FHOO}, {\color{red} which is one of their crucial properties}.

\medskip

As another application of the cluster theoretical interpretation of our isomorphisms among quantum Grothendieck rings, we show that our isomorphisms  arise from explicit birational transformations of the variables in the quantum torus $\cY_t$ (Theorem \ref{Thm:Substitution}). We call these birational transformations as \emph{substitution formulas}.  Indeed, in \cite[\S10.3]{FHOO}, we provided a way of calculating the correspondence between the $\ell$-highest weight monomials of the $(q, t)$-characters of simple modules which are mutually related under our isomorphism, while the explicit correspondence among the lower terms had not been known. The substitution formulas in the present paper provide one method to calculate it, and reveal new non-trivial relations among the $(q, t)$-characters of simple modules. 
Note that we already know that the $(q, t)$-characters of simple modules specialize to their $q$-characters at $t=1$ in several cases  (for example, in the case when $\fg$ is of types $\mathrm{ABDE}$ or in the new cases established in the present paper), hence our formulas also imply several non-trivial relations among the $q$-characters of simple modules. It seems to be a new application of cluster algebras to the representation theory of quantum loop algebras, and it might be an interesting problem to further investigate the meaning of our formulas.

\medskip

For a discussion on the recent study of the non-symmetric quantized Coulomb branches  \cite{NW19, NakajimaQCB} by Nakajima and Weekes and the possible relation to our results, see the Introduction of \cite{FHOO}.

\medskip

The paper is organized as follows.
In Section~\ref{secone}, we introduce the quantum cluster algebra $\cA_{\bfi}$ associated to an infinite sequence $\bfi$ with values in $\Delta_0$, the 
vertex set of the Dynkin diagram of $\sg$ (such that each value occurs infinitely many times), imitating the cluster structure of the quantum unipotent groups. In Section~\ref{sectwo}, we study the relation between the quantum cluster algebras $\cA_\bfi$ and $\cA_{\bfi'}$ when $\bfi$ and $\bfi'$ are related by simple operations  (commutation moves, braid moves and forward shifts in respective Sections~\ref{cmov}, \ref{bmov} and \ref{fshif}). 
In particular, we obtain isomorphisms and embeddings of quantum cluster algebras which are relevant for our purposes. In Section \ref{secthree}, we recall that by \cite{GLS13, KKKO18, KK19} the quantum unipotent group $\cA_t[N_-]$ associated to $\sg$ has a quantum cluster algebras structure compatible with the dual canonical basis, which is isomorphic to some of the (finite version of the) quantum cluster algebras $\cA_\bfi$. We establish that these isomorphisms are compatible with the transformations of the last section (Corollary~\ref{Cor:bftau}). In Section~\ref{secfour}, we give general reminders on the category $\Cc$ of finite-dimensional representations of a quantum loop algebra, in particular on Kazhdan--Lusztig type conjectures in this context. We also recall the monoidal subcategories  $\Cc_\Z$ and $\Cc^-$ defined in \cite{HL10, HL16} and we introduce subcategories $\Cc_{\le \xi}$ generalizing $\Cc^-$. In Section~\ref{ssecfive}, we establish that the quantum Grothendieck ring of the category $\Cc_{\le \xi}$ has a structure of a quantum cluster algebra 
isomorphic to an algebra $\cA_\bfi$ introduced above (Theorem~\ref{Thm:qcl}).  
Our proof is partly based first on an isomorphism of quantum tori that we establish (Corollary~\ref{Cor:teta}). In Section \ref{secsix}, we give a cluster theoreticalal interpretation of the isomorphisms of quantum Grothendieck rings constructed in \cite{FHOO} (Theorem \ref{Thm:Psi=tau}), together with their canonical basis (Corollary~\ref{Cor:Psi}). This leads to a quantum version of the monoidal categorification theorem for the categories $\Cc_{\le \xi}$ (Theorem~\ref{Thm:main}) and $\Cc_\Z$ (Theorem~\ref{Thm:qmc_CZ}). 
We derive the applications discussed above (Corollary~\ref{Cor:KLr}, Corollary~\ref{Cor:pos}).
In Section \ref{sec:Substitution}, we show that our isomorphisms among quantum Grothendieck rings come from explicit birational transformations among the variables in our quantum tori $\cY_t$, which we call substitution formulas (Theorems \ref{Thm:Substitution} and \ref{Thm:Substitution_cl}). %They reveal non-trivial relations among the $q$-characters of simple modules (for example over $U_{q}(L\mathfrak{sl}_{2n})$ and $U_{q}(L\mathfrak{so}_{2n+1})$).

%%%%%%%%%%%%%%%%%%%%%%%%%%%%%%%%%%%%%%%%%%%%%%%%%%%%%%%%%%%%%%%%%%%%%%%%%%%%%%%%%%
\subsection*{Acknowledgments}
R.~F.~was supported by JSPS Overseas Research Fellowships.  
D.~H.~was supported by the Institut Universitaire de France.
S.-j.\ Oh was supported by
the Ministry of Education of the Republic of Korea and the National Research Foundation of Korea (NRF-2022R1A2C1004045). 
H.~O.~was supported by JSPS Grant-in-Aid for Early-Career Scientists (No.19K14515). 

%%%%%%%%%%%%%%%%%%%%%%%%%%%%%%%%%%%%%%%%%%%%%%%%%%%%%%%%%%%%%%%%%%%%%%%%%%%%%%%%%%

\subsection*{Conventions}
Let $\N \seq \Z_{>0}$ be the set of positive integers and $\N_0 \seq \N \cup \{0\}$.
For $a,b \in \Z$, we set $[a,b] \seq \{ u\in \Z \mid a \le u \le b \}$.
For a statement $\mathtt{P}$, we set $\delta(\mathtt{P})$ to be $1$ or $0$
according that $\mathtt{P}$ is true or not.
As a special case, we use the notation $\delta_{i,j} \seq \delta(i=j)$ (Kronecker's delta).

\section{The quantum cluster algebra $\cA_{\bfi}$}\label{secone}

Let $\Delta$ be the Dynkin graph of a finite-dimensional simple Lie algebra $\sg$ over $\C$, with $\Delta_0$ being its vertex set.
We introduce the quantum cluster algebra $\cA_{\bfi}$ associated to an infinite sequence $\bfi$ with values in $\Delta_0$ (such that each value occurs infinitely many times), imitating the cluster structure of the quantum unipotent groups. The results in this section hold true for $\sg$ of arbitrary types, while in the applications to the representation theory of quantum loop algebras of arbitrary untwisted type we will obtain in this paper, we will only use the quantum cluster algebra $\cA_{\bfi}$ obtained from $\sg$ of simply-laced type (even when we will handle non-simply-laced quantum loop algebras).  
%In this paper, we will use the quantum cluster algebra $\cA_{\bfi}$ obtained from a Lie algebra $\sg$ of simply-laced type, although we obtain applications of our work to quantum affine algebras of arbitrary untwisted type.

\subsection{Notation}
\label{Ssec:notation}
Let $\sC = (\sfc_{ij})_{i,j \in \Delta_0}$ be the Cartan matrix of $\sg$ and $\sD = \mathrm{diag}(\sfd_i \mid i \in \Delta_0)$ its minimal left symmetrizer (i.e. $\min\{\sfd_i \mid i \in \Delta_0\}=1$).
For $i,j\in \Delta_0$, we write $i \sim j$ when $\sfc_{ij}<0$.
Let $\sh^*$ be the dual of a Cartan subalgebra of $\sg$.  
Let $\{ \alpha_i \}_{i \in \Delta_0}$ and $\{ \varpi_i\}_{i \in \Delta_0}$ be the two bases of $\sh^*$ formed by simple roots and fundamental weights respectively. 
They are related by $\alpha_i = \sum_{j \in \Delta_0} \sfc_{ji} \varpi_j$. 
We consider the symmetric bilinear pairing $(\cdot,\cdot)$ on $\sh^*$ given by $(\alpha_i,\alpha_j)=\sfd_i \sfc_{ij}$.  
For each $i \in \Delta_0$, the $i$-th simple reflection $s_i$ is a linear operator on $\sh^*$ given by 
$s_i \varpi_j = \varpi_j - \delta_{i,j} \alpha_i$ for $j \in \Delta_0$.
Let $\sW$ denote the Weyl group of $\sg$, which is the subgroup of $GL(\sh^*)$ generated by the simple reflections $\{s_i\}_{i \in \Delta_0}$. 
Note that the pairing $(-,-)$ is $\sW$-invariant.
The pair $(\sW,\{s_i\}_{i \in \Delta_0})$ forms a finite Coxeter system.
Let $w_\circ$ be the longest element of $\sW$ and $\ell \in \N$ its length.
Let $i \mapsto i^*$ denote the involution of $\Delta$ given by $w_\circ$, namely $w_\circ \alpha_i = - \alpha_{i^*}$. 
%We linearly extend the pairing $(-,-)$ on $\sh^* \seq \sQ\otimes \C$.  
%Let $\{\varpi_i\}_{i \in \Delta_0} \subset \sh^*$ 
%denote the dual basis to $\{\alpha_i\}_{i \in \Delta_0}$ with respect to $(-,-)$.  
%denote the \beb basis such that $(\varpi_i,\alpha_j) = \delta_{i,j} d_i$ where 
%$d_i \seq \dfrac{(\alpha_i,\alpha_i)}{2}$. {\color{green}In this paper, the pairing is set so that $\min\{d_i \mid i \in \Delta_0 \} =1$.} \eb 

%\begin{Rem} 
%The following discussion in the first two sections can be generalized to any Coxeter group a with simply-laced Coxeter graph. 
%\end{Rem}

Consider the set $\Delta_0^{\N}$ of infinite sequences of elements of $\Delta_0$. 
Let $\Delta_0^{(\infty)}$ denote the subset of $\Delta_0^\N$ consisting of sequences $\bfi = (i_u)_{u \in \N}$ satisfying the condition: 
\begin{equation} \label{eq:cond1}
\text{For any $i \in \Delta_0$, we have $|\{u \in \N \mid i_u = i\}|=\infty$.}
\end{equation}

\subsection{Compatible pair $(\tB_{\bfi}, \Lambda_{\bfi})$}
In this section, we fix a sequence $\bfi = (i_u)_{u \in \N} \in \Delta_0^{(\infty)}$. 
For $u \in \N$ and $j \in \Delta_0$, we write
\begin{align*}
u^+ = u^+_{\bfi} &\seq \min\{ k \in \N \mid k >u, i_k = i_u \},\\
u^- = u^-_{\bfi} &\seq \max(\{ k \in \N \mid k < u, i_k=i_u \}\cup\{0\}),\\
u^+(j) = u^+_{\bfi}(j) &\seq \min\{ k \in \N \mid k>u, i_k = j \},\\
u^-(j) = u^-_{\bfi}(j) &\seq \max(\{ k \in \N \mid k < u, i_k=j \}\cup\{0\}).
\end{align*}
For $u \in \N_0$, we define
\[ w_u = w^{\bfi}_u \seq s_{i_1}s_{i_2}\cdots s_{i_u} \in \sW.\]
Consider the $\N \times \N$-matrix $\tB_\bfi = (b_{u,v})_{u,v \in \N}$ given by 
\begin{align*}
b_{u,v} = \begin{cases}
1 &\text{if } v=u^+, \\
-1 &\text{if } u=v^+, \\
\sfc_{i_u,i_v} & \text{if $u<v<u^+<v^+$,} \\
-\sfc_{i_u,i_v} &\text{if $v<u<v^+<u^+$,} \\
0 &\text{otherwise.}
\end{cases}
\end{align*}
Note that $\tB_\bfi$ is skew-symmetrizable by ${\rm diag}(\sfd_{i_u} \mid u\in \N)$.

Let $\Lambda_\bfi = (\Lambda_{u,v})_{u,v \in \N}$ be the skew-symmetric matrix defined by
\begin{equation} \label{eq:Lambda}
\Lambda_{u,v} = - \Lambda_{v,u} \seq (\varpi_{i_u}-w_u\varpi_{i_u}, \varpi_{i_v}+w_v\varpi_{i_v}) \quad \text{for $u\le v$}.
\end{equation}

\begin{Lem} \label{Lem:Lambda} 
Let $u,v \in \N$. If $u < v^+$, we have $\Lambda_{u,v} = (\varpi_{i_u}-w_u\varpi_{i_u}, \varpi_{i_v}+w_v\varpi_{i_v}).$
\end{Lem}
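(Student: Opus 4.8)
The statement is immediate when $u \le v$: in that range it is exactly the definition \eqref{eq:Lambda} of $\Lambda_{u,v}$ (and when $u=v$ both sides vanish by $\sW$-invariance of the form). So the plan is to reduce to, and then dispatch, the remaining case $v < u < v^+$. There I would start from $\Lambda_{u,v} = -\Lambda_{v,u}$, apply \eqref{eq:Lambda} to the pair $v<u$, and move everything to one side, so that the desired equality becomes the identity
\[
(\varpi_{i_u} - w_u\varpi_{i_u},\, \varpi_{i_v} + w_v\varpi_{i_v}) + (\varpi_{i_v} - w_v\varpi_{i_v},\, \varpi_{i_u} + w_u\varpi_{i_u}) = 0
\]
in $\sh^*$.

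Next I would expand the left-hand side by bilinearity and collect terms using the symmetry of $(\cdot,\cdot)$: the terms pairing one $\varpi$ with one $w\varpi$ cancel in pairs, while the $\varpi$–$\varpi$ terms and the $w\varpi$–$w\varpi$ terms each double up, so the identity reduces to
\[
(\varpi_{i_u},\varpi_{i_v}) = (w_u\varpi_{i_u},\, w_v\varpi_{i_v}).
\]
To prove this I would use the $\sW$-invariance of $(\cdot,\cdot)$ to write $(w_u\varpi_{i_u}, w_v\varpi_{i_v}) = (w_v^{-1}w_u\varpi_{i_u},\, \varpi_{i_v})$, and then note that, since $v < u$, the product telescopes: $w_v^{-1}w_u = s_{i_{v+1}}s_{i_{v+2}}\cdots s_{i_u}$. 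Thus the claim becomes $(s_{i_{v+1}}\cdots s_{i_u}\varpi_{i_u} - \varpi_{i_u},\, \varpi_{i_v}) = 0$.

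The last step, and the only place where the hypothesis $u < v^+$ is used, is the observation that $s_{i_{v+1}}\cdots s_{i_u}\varpi_{i_u} - \varpi_{i_u} \in \bigoplus_{k=v+1}^{u}\Z\alpha_{i_k}$ — proved by a one-line induction on the length of the word, using $s_j\nu - \nu \in \Z\alpha_j$ for all $\nu \in \sh^*$ — together with the fact that for every $k$ with $v < k \le u$ one has $i_k \ne i_v$ (this is precisely the definition of $v^+$, given $u < v^+$), hence $(\alpha_{i_k},\varpi_{i_v}) = \sfd_{i_v}\delta_{i_k,i_v} = 0$. Combining these yields the pairing $0$ and finishes the argument.

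I do not anticipate a genuine obstacle here: the computation is short, and its only subtle ingredient is the elementary combinatorial fact that no $i_k$ with $v < k \le u$ equals $i_v$, which is built into the notation $v^+$. The bilinear-algebra bookkeeping is routine; the one thing to watch is getting the cancellation of the cross terms right, for which it is cleaner to expand the pairing explicitly than to argue by symmetry alone.
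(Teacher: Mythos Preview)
Your proof is correct and follows essentially the same route as the paper: both reduce the case $v<u<v^+$ to the identity $(w_u\varpi_{i_u},\,w_v\varpi_{i_v})=(\varpi_{i_u},\varpi_{i_v})$, and your expansion of the sum of the two pairings is just a rearrangement of the paper's one-line computation $-(\varpi_{i_u}+w_u\varpi_{i_u},\,\varpi_{i_v}-w_v\varpi_{i_v}) = (\varpi_{i_u}-w_u\varpi_{i_u},\,\varpi_{i_v}+w_v\varpi_{i_v})$. The paper simply asserts this key identity without comment, whereas you spell out the reason (no $s_{i_v}$ appears in $w_v^{-1}w_u$); a minor remark is that $(\alpha_j,\varpi_i)=\sfd_j\delta_{ij}$ rather than $\sfd_i\delta_{ij}$, but since the two agree whenever the Kronecker delta is nonzero this does not affect the argument.
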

\begin{proof}
When $u \le v$, this is nothing but the definition \eqref{eq:Lambda}.
When $v < u <v^+$, we have $(w_u \varpi_{i_u}, w_v \varpi_{i_v}) = (\varpi_{i_u}, \varpi_{i_v})$ and hence
\[ \Lambda_{u,v} = -(\varpi_{i_u}+w_u\varpi_{i_u}, \varpi_{i_v}-w_v\varpi_{i_v}) = (\varpi_{i_u}-w_u\varpi_{i_u}, \varpi_{i_v}+w_v\varpi_{i_v}).\qedhere \]
\end{proof}

\begin{Prop} \label{Prop:BL}
Let $\bfi = (i_u)_{u \in \N}\in \Delta_0^{(\infty)}$. 
For any $u,v \in \N$, we have
\begin{equation} \label{eq:BL}
\sum_{k \in \N} b_{k,u}\Lambda_{k,v} = 2 \sfd_{i_u} \delta_{u,v}.
\end{equation} 
\end{Prop}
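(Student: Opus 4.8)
The plan is to verify the identity $\sum_{k\in\N} b_{k,u}\Lambda_{k,v} = 2\sfd_{i_u}\delta_{u,v}$ by a direct but careful case analysis, exploiting that for fixed $u$ the column $(b_{k,u})_{k\in\N}$ has only finitely many nonzero entries — namely at $k=u^-$, at $k=u^+$, and at the indices $k$ with $i_k\sim i_u$ lying in the ``interleaving'' positions described in the definition of $\tB_\bfi$. So the sum is finite and well-defined. First I would rewrite $b_{k,u}$ explicitly: $b_{u^-,u}=1$ (this is the case $u^- = (u^-)^+$... wait, rather $u = (u^-)^+$, giving $b_{u^-,u}=-1$? — one must be careful with signs here), $b_{u^+,u}=-1$ coming from $v=u^+$ read with roles swapped, and the off-diagonal Cartan contributions $b_{k,u}=\pm\sfc_{i_k,i_u}$ for the interleaved $k\sim i_u$.

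Next, I would substitute the closed formula for $\Lambda_{k,v}$. Using Lemma~\ref{Lem:Lambda}, whenever $k < v^+$ we may write $\Lambda_{k,v} = (\varpi_{i_k}-w_k\varpi_{i_k},\, \varpi_{i_v}+w_v\varpi_{i_v})$, and for $k \ge v^+$ one needs the transposed/negated version. The key algebraic input is the recursion $w_k = w_{k^-} s_{i_k}$ together with $s_{i_k}\varpi_j = \varpi_j - \delta_{i_k,j}\alpha_{i_k}$, which lets one telescope: the combination $b_{u^-,u}\Lambda_{u^-,v} + b_{u^+,u}\Lambda_{u^+,v}$ plus the Cartan terms should collapse because $w_u\varpi_{i_u} - w_{u^-}\varpi_{i_u} = w_u(\varpi_{i_u} - s_{i_u}\varpi_{i_u})\cdot(\text{sign})$, i.e. $w_{u^-}\varpi_{i_u} - w_u\varpi_{i_u} = w_{u^-}\alpha_{i_u}$, and $w_{u^-}\alpha_{i_u} = \sum_{j} \sfc_{j,i_u}\, w_{u^-}\varpi_j$. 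The $\sfc_{j,i_u}$ appearing here are exactly the coefficients that match, with the right signs, the Cartan entries $b_{k,u}$ for the interleaved indices $k$; this is the crux where everything must line up. The diagonal term $2\sfd_{i_u}$ should emerge from $(\alpha_{i_u},\varpi_{i_u}) = \sfd_{i_u}$ (using $(\alpha_i,\alpha_j)=\sfd_i\sfc_{ij}$ and $\alpha_{i_u}=\sum_j \sfc_{j,i_u}\varpi_j$ gives $(\varpi_{i_u},\alpha_{i_u})=\sfd_{i_u}$), contributing once from $w_v\varpi_{i_v}$ and once from $\varpi_{i_v}$ precisely when $u=v$.

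Concretely, I would organize the argument by the position of $v$ relative to the support $\{u^-,\,k\sim i_u \text{ interleaved},\,u,\,u^+\}$ of the $u$-column: the cases $v<u^-$, $v=u^-$, $u^-<v<u$, $v=u$, $u<v<u^+$, $v=u^+$, $v>u^+$, each handled by plugging in the $\Lambda$-formula and using $\sW$-invariance of $(-,-)$ to move all $w_k$'s onto one factor. In most cases the sum should telescope to $0$; only $v=u$ survives. For the degenerate boundary cases $u^-=0$ (so $w_0 = e$, meaning $i_u$ does not occur before $u$) the term $b_{0,u}$ simply drops out and one checks the computation still balances.

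The main obstacle I expect is bookkeeping the signs and the exact index ranges in the interleaved Cartan contributions — in particular making sure that for each $k$ with $i_k\sim i_u$ the entry $b_{k,u}$ (which is $\sfc_{i_u,i_k}$ or $-\sfc_{i_u,i_k}$ depending on whether $k<u<k^+<u^+$ or $u<k<u^+<k^+$) pairs against the correct $\Lambda_{k,v}$ and reproduces the term $\sfc_{j,i_u}(w_{u^-}\varpi_j,\,\cdot)$ with the matching sign. This is essentially the same compatibility computation that underlies the fact that $(\tB_\bfi,\Lambda_\bfi)$ is a compatible pair in the quantum unipotent cell / Geiß–Leclerc–Schröer setting, so I would model the case analysis on that known verification, adapting it to the infinite sequence $\bfi$ where the finiteness of each $\tB_\bfi$-column is what makes the sum legitimate.
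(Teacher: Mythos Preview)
Your plan is exactly the paper's: exploit the finiteness of the $u$-th column of $\tB_\bfi$, substitute the formula for $\Lambda_{k,v}$ via Lemma~\ref{Lem:Lambda}, and collapse everything with a Weyl-group telescoping identity and $\sW$-invariance of $(\cdot,\cdot)$. But the identity you write down at the crux is wrong, and with it the bookkeeping you worry about will not close.

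You claim $w_{u^-}\varpi_{i}-w_u\varpi_{i}=w_{u^-}\alpha_{i}$ (with $i=i_u$). Since $w_u=w_{u-1}s_i$ and $w_{u-1}\varpi_i=w_{u^-}\varpi_i$, what actually holds is
\[
w_{u^-}\varpi_i-w_u\varpi_i=w_{u-1}\alpha_i=\sum_j \sfc_{ji}\,w_{u-1}\varpi_j = 2w_{u^-}\varpi_i+\sum_{j\sim i}\sfc_{ji}\,w_{u^-(j)}\varpi_j,
\]
using $w_{u-1}\varpi_j=w_{u^-(j)}\varpi_j$ for each $j$. The point is that the terms involve $w_{u^-(j)}$, one for each $j\sim i$, \emph{not} the single element $w_{u^-}$. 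This is not cosmetic: the Cartan-type nonzero entries of $(b_{k,u})_k$ sit precisely at $k=u^-(j)$ and $k=(u^+)^-(j)$, so the paper's first move is the clean expansion
\[
\sum_k b_{k,u}\Lambda_{k,v}=\Lambda_{u^-,v}-\Lambda_{u^+,v}+\sum_{j\sim i}\sfc_{ji}\bigl(\Lambda_{u^-(j),v}-\Lambda_{(u^+)^-(j),v}\bigr),
\]
which also absorbs the dichotomy of whether a $j$-index lies in $(u,u^+)$ (if not, $u^-(j)=(u^+)^-(j)$ and the bracket vanishes). Applying the correct identity twice, at $u$ and at $u^+$, yields the vanishing of the combination
\[
w_{u^+}\varpi_i-w_{u^-}\varpi_i+\sum_{j\sim i}\sfc_{ji}\bigl(w_{(u^+)^-(j)}\varpi_j-w_{u^-(j)}\varpi_j\bigr),
\]
which instantly kills the two ``far'' cases $u^+\le v$ and $v\le u^-$; only $u\le v<u^+$ and $u^-<v<u$ need the explicit pairing computation producing $2(\alpha_i,\varpi_{i_v})=2\sfd_i\delta_{i,i_v}$. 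Your seven-way split is thus more work than necessary, and with all $w_{u^-}$'s in place of the $w_{u^-(j)}$'s the cancellation against the Cartan entries simply would not happen. (On the sign hesitation: $b_{u^-,u}=1$ and $b_{u^+,u}=-1$; your first instinct was right.)
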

\begin{proof}
Put $i \seq i_u$.
By the definition of $\tB_\bfi$, we have 
\[ \sum_{k \in \N} b_{k,u}\Lambda_{k,v} = \Lambda_{u^-,v} - \Lambda_{u^+,v} + \sum_{j \sim i} \sfc_{ji} \left(\Lambda_{u^-(j),v} - \Lambda_{(u^+)^-(j),v}\right) =: x_{u,v},\]
where we understand $\Lambda_{0,v}=0$ (then, \eqref{eq:Lambda} still holds for $u=0$). 
Note that $(u^+)^-(j)$ denotes the largest integer $u' < u^+$ such that $i_{u'} = j$.

We need to show $x_{u,v}=2 \sfd_{i_u}\delta_{u,v}$.
Since $s_i \varpi_j = \varpi_j - \delta_{ij}\alpha_i$ and $\alpha_i = 2\varpi_i + \sum_{j\sim i} \sfc_{ji} \varpi_j$, we have 
\begin{equation} \label{eq:wspi}
w_u \varpi_i = - w_{u^-} \varpi_i - \sum_{j\sim i} \sfc_{ji} w_{u^-(j)}\varpi_j
\end{equation}
for any $u \in \N$.
Using this identity twice, we obtain 
\[
y_u \seq w_{u^+}\varpi_i - w_{u^-}\varpi_i + \sum_{j \sim i} \sfc_{ji} (w_{(u^+)^-(j)}\varpi_j - w_{u^-(j)}\varpi_j) =0. 
\]
First, consider the case when $u\le v$. If $u^+ \le v$, then we have 
\[
x_{u,v} = (y_u, \varpi_{i_v}+w_v\varpi_{i_v}) = 0.
\]
When $u \le v < u^+$, we have 
\begin{align*}
x_{u,v} &= (\varpi_i - w_{u^-}\varpi_i, \varpi_{i_v}+w_v \varpi_{i_v}) + (\varpi_i + w_{u^+} \varpi_i, \varpi_{i_v}-w_v\varpi_{i_v}) \\
&\quad +\sum_{j\sim i} \sfc_{ji} \left\{ (\varpi_j - w_{u^-(j)}\varpi_j, \varpi_{i_v}+w_v \varpi_{i_v}) + (\varpi_j + w_{(u^+)^-(j)}\varpi_j, \varpi_{i_v}-w_v\varpi_{i_v})\right\} \\
& = (-s_i\varpi_i + w_u \varpi_i, \varpi_{i_v}+w_v\varpi_{i_v}) + (-s_i\varpi_i-w_u\varpi_i,\varpi_{i_v}-w_v\varpi_{i_v}) \\ 
&= 2(-s_i\varpi_i, \varpi_{i_v}) + 2(\varpi_i, \varpi_{i_v}) \\
&= 2(\alpha_i, \varpi_{i_v}) \\
&=2\sfd_i\delta_{i,i_v}.
\end{align*}
Here the first equality follows from Lemma~\ref{Lem:Lambda} (note that $v < u^+ < (u^+)^-(j)^+$).
The second one follows from \eqref{eq:wspi}.  
The third one is because $(w_u \varpi_i, w_v \varpi_{i_v}) = (w_v \varpi_i, w_v \varpi_{i_v})=(\varpi_i, \varpi_{i_v})$. 
To conclude, we note that $\delta_{i,i_v} = \delta_{u,v}$ under the condition $u \le v < u^+$.

Next, assume that $u>v$. If $u^- \ge v$, we have 
\[
x_{u,v} = (y_u, \varpi_{i_v}-w_v\varpi_{i_v}) = 0.
\]
Here we use again Lemma~\ref{Lem:Lambda} (note that $(u^+)^-(j)^+\ge u^-(j)^+>u>v$). 

When $u > v >u^-$, we have 
\begin{align*}
x_{u,v} &= (\varpi_i - w_{u^-} \varpi_i, \varpi_{i_v}+w_v\varpi_{i_v})+
(\varpi_i+w_{u^+}\varpi_i, \varpi_{i_v}-w_v \varpi_{i_v})\\
& \quad +\sum_{j\sim i} \sfc_{ji}  \left\{ - (\varpi_j+w_{u^-(j)}\varpi_j, \varpi_{i_v}-w_v \varpi_{i_v})+(\varpi_j+w_{(u^+)^-(j)}\varpi_j, \varpi_{i_v}-w_v \varpi_{i_v})\right\} \\
&= (y_u+\varpi_i+w_{u^-}\varpi_i, \varpi_{i_v}-w_v \varpi_{i_v}) + (\varpi_i - w_{u^-} \varpi_i, \varpi_{i_v}+w_v\varpi_{i_v}) \\
& = 2 (\varpi_i,\varpi_{i_v})-2(w_{u^-}\varpi_i, w_v\varpi_{i_v}) \\
& = 2 (\varpi_i,\varpi_{i_v})-2(w_{v}\varpi_i, w_v\varpi_{i_v}) \\
&= 0.
\end{align*}
Here the first equality follows from Lemma~\ref{Lem:Lambda} (note that $(u^+)^-(j)^+\ge u^-(j)^+>u>v$).
The fourth one is deduced from our assumption $u > v >u^-$. 

The calculations above complete the proof. 
\end{proof}

Let us take $n \in \N$ and set
\[ J \seq [1,n], \quad J_f \seq \{ k\in [1,n]\mid k^+ > n\}, \quad J_e \seq J \setminus J_f.\]
By truncating $\tB_\bfi$ and $\Lambda_\bfi$, we get a $J \times J_e$-matrix $\tB^n_\bfi \seq (b_{u,v})_{u \in J, v \in J_e}$ and a $J \times J$-matrix $\Lambda^n_\bfi \seq (\Lambda_{u,v})_{u,v \in J}$.

\begin{Cor} \label{Cor:BL}
For any $n \in \N$, the pair $(\tB_\bfi^n, \Lambda_\bfi^n)$ defined as above forms a compatible pair. More precisely, we have
\[
\sum_{k \in J} b_{k,u}\Lambda_{k,v} = 2  \sfd_{i_u} \delta_{u,v} \quad \text{for $u \in J_e, v \in J$}.
\]
\end{Cor}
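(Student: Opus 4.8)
The plan is to deduce Corollary~\ref{Cor:BL} from Proposition~\ref{Prop:BL} by showing that, under the restriction $u \in J_e$ and $v \in J$, the infinite sum $\sum_{k \in \N} b_{k,u}\Lambda_{k,v}$ appearing in \eqref{eq:BL} coincides with the finite sum $\sum_{k \in J} b_{k,u}\Lambda_{k,v}$. Once this is established, \eqref{eq:BL} immediately gives $\sum_{k \in J} b_{k,u}\Lambda_{k,v} = 2\sfd_{i_u}\delta_{u,v}$ for $u \in J_e$, $v \in J$, which is the stated identity; the compatibility of $(\tB_\bfi^n, \Lambda_\bfi^n)$ in the sense of quantum cluster algebras is exactly this condition (the matrix $\Lambda_\bfi^n$ being skew-symmetric by construction, and $\tB_\bfi^n$ having $\Z$-entries).

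First I would record which indices $k \in \N$ contribute to $\sum_k b_{k,u}\Lambda_{k,v}$ for a fixed $u \in J_e$. Inspecting the definition of $\tB_\bfi$, the nonzero entries $b_{k,u}$ occur only for $k$ in a bounded window around $u$: namely $k = u^-$, $k = u^+$, and $k = u^-(j)$ or $k = (u^+)^-(j)$ for $j \sim i_u$. The key observation is that all of these indices are at most $u^+$. Since $u \in J_e$ means $u^+ \le n$, every such $k$ satisfies $k \le u^+ \le n$, hence $k \in J$. Therefore the terms of the infinite sum with $k \notin J$ all vanish, and $\sum_{k \in \N} b_{k,u}\Lambda_{k,v} = \sum_{k \in J} b_{k,u}\Lambda_{k,v}$.

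The one point that needs a little care is the index $(u^+)^-(j)$: a priori this could exceed $u^+$? No — by definition $(u^+)^-(j)$ is the largest integer strictly less than $u^+$ with label $j$, so $(u^+)^-(j) < u^+ \le n$. Similarly $u^-(j) < u \le n$ and $u^- < u \le n$, while $u^+ \le n$ by the assumption $u \in J_e$. So indeed every contributing index lies in $J = [1,n]$, and no boundary subtlety arises. I do not expect a genuine obstacle here; the whole content is the elementary but essential remark that the "interaction range" of the column $b_{\bullet, u}$ of $\tB_\bfi$ is contained in $[1, u^+]$, which is exactly why truncating at a position $n$ with $u^+ \le n$ loses nothing. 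This is why $J_f$ (the "frozen" indices with $u^+ > n$) must be excluded from the mutable indices: for those the truncation genuinely cuts off the $b_{u^+, u}$ term.
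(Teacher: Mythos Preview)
Your proposal is correct and takes essentially the same approach as the paper. The paper's proof is the one-line observation that $b_{k,u}=0$ for $u \in J_e$ and $k \in \N \setminus J$, then applies Proposition~\ref{Prop:BL}; your argument simply makes this explicit by listing the indices $k \in \{u^-, u^+, u^-(j), (u^+)^-(j) : j \sim i_u\}$ at which $b_{k,u}$ can be nonzero and noting that all lie in $[1,u^+] \subset J$.
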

\begin{proof}
By the definition of $J_e$, we have $b_{k,u}=0$ for $u \in J_e$ and $k \in \N \setminus J$.
Therefore, the result follows from Proposition~\ref{Prop:BL}.
\end{proof}

\begin{Rem}
Corollary~\ref{Cor:BL} is well-known when $(i_1,\ldots,i_n)$ is a reduced word for an element of $\sW$ (see~\cite[Proposition 10.1, Lemma 11.3]{GLS13} and \cite[Proposition 10.4]{GY17}). Also Corollary~\ref{Cor:BL} gives an affirmative answer to \cite[Conjecture 1]{KO22}. 
\end{Rem}

\begin{Def} \label{Def:BL}
Let $\bfi = (i_1, \ldots, i_n)$ be an arbitrary finite sequence in $\Delta_0$.
We extend it to an infinite sequence $\tilde{\bfi} = (i_u)_{u \in \N}$ satisfying \eqref{eq:cond1} and define a compatible pair
\[ (\tB_\bfi, \Lambda_\bfi) \seq (\tB_{\tilde{\bfi}}^{n}, \Lambda_{\tilde{\bfi}}^{n}).\]
Clearly it is independent of the choice of extension $\tilde{\bfi} \in \Delta_0^{(\infty)}$ and hence it is well-defined.
\end{Def}

\subsection{Quantum cluster algebra $\cA_\bfi$} \label{Ssec:Ai}
For the generality of quantum cluster algebras, we refer the reader to Appendix~\ref{sec:QCA}. 
Let us denote by $\cA_\bfi = \cA_t(\tB_\bfi, \Lambda_\bfi)$ the quantum cluster algebra associated with the compatible pair $(\tB_\bfi, \Lambda_\bfi)$ in Proposition~\ref{Prop:BL}. 
It is a $\Z[t^{\pm 1/2}]$-subalgebra of the quantum torus $\cT(\Lambda_\bfi)$.
Here we recall that $\cT(\Lambda_\bfi)$ is the $\Z[t^{\pm 1/2}]$-algebra generated by $\{ X_u^{\pm 1}\}_{u \in \N}$ subject to the relations
\begin{itemize}
\item $X_u X_u^{-1} = X_u^{-1}X_u = 1$ for $u \in \N$,
\item $X_u X_v = t^{\Lambda_{u,v}}X_v X_u$ for $u,v \in \N$.
\end{itemize} 

For each $n \in \N$, we have the quantum cluster algebra $\cA_\bfi^n \seq \cA_t(\tB_\bfi^n, \Lambda_\bfi^n)$ with ambient quantum torus $\cT(\Lambda_\bfi^n)$. 
We naturally regard $\cT(\Lambda_\bfi^n)$ as the $\Z[t^{\pm 1/2}]$-subalgebra of $\cT(\Lambda_\bfi)$ generated by $\{X_u^{\pm 1}\}_{u \in [1,n]}$. 
Thus we have the inclusions
\[ \cA_\bfi^1 \subset \cA_\bfi^2 \subset \cdots \subset \cA_\bfi \subset \cT(\Lambda_\bfi) \quad \text{such that } \bigcup_{n \in \N} \cA_\bfi^n = \cA_\bfi.\]
In particular, each cluster variable (resp.~monomial) of $\cA_\bfi$ is a cluster variable (resp.~monomial) of $\cA_\bfi^n$ for $n \in \N$ large enough.  

\section{Relations among $\cA_\bfi$'s}\label{sectwo}

In this section, we study the relation between the quantum cluster algebras $\cA_\bfi$ and $\cA_{\bfi'}$ when $\bfi$ and $\bfi'$ are related by simple operations  (commutation moves, 
braid moves and forward shifts in respective Sections \ref{cmov}, \ref{bmov} and \ref{fshif}). 
In particular, we obtain isomorphisms and embeddings of quantum cluster algebras which are relevant for our purposes. We ``keep track" of the degrees ($g$-vectors) 
of cluster monomials through these transformations which will be useful in the sequel.

In the present and the following sections, we take $\Delta$ as a Dynkin diagram of finite type $\mathrm{ADE}$. 
With each sequence $\bfi = (i_u)_{u \in \N} \in \Delta_0^{(\infty)}$, we associate the infinite quiver $\Gamma_\bfi$ defined as follows:
The set of vertices of $\Gamma_\bfi$ is simply $\N$. 
For $u,v \in \N$, we assign an arrow $u \to v$ in $\Gamma_\bfi$ when either
\begin{itemize}   
\item $i_u \sim i_v$ and $u < v < u^+ < v^+$, or
\item $i_u = i_v$ and $u = v^+$.
\end{itemize}
Then, we have
\begin{equation} \label{eq:tBdef}
b_{u,v} = |\{\text{arrows $v \to u$ in $\Gamma_\bfi$}\}| - |\{\text{arrows $u \to v$ in $\Gamma_\bfi$}\}|.
\end{equation}

For a set $X$, we denote by $\SG_X$ the group of permutations of $X$. 
For a matrix $A = (A_{u,v})_{u,v \in X}$ and a permutation $\pi \in \SG_X$, we set $
\pi A \seq (A_{\pi^{-1}(u), \pi^{-1}(v)})_{u,v \in X}$.
For each $k \in \N$, let $\sigma_{k} \in \SG_\N$ be the simple transposition of $k$ and $k+1$. 

In what follows, we work with two sequences $\bfi = (i_u)_{u \in \N}, \bfi' = (i'_u)_{u \in \N} \in \Delta_0^{(\infty)}$.

\subsection{Commutation moves}\label{cmov}
First, we consider the case of a simple permutation of two successive elements of the sequence which are not adjacent in the Dynkin diagram. 
Precisely, it is the case when $(i'_k, i'_{k+1}) = (i_{k+1}, i_k)$ with $i_k \neq i_{k+1}$ and $i_k \not \sim i_{k+1}$ for some $k \in \N$, and $i_u = i'_u$ for $u \not \in \{k, k+1\}$. 
In this case, we write $\bfi' = \gamma_k \bfi$.

\begin{Lem} \label{Lem:gamma}
When $\bfi' = \gamma_k \bfi$, we have $\tB_{\bfi'} = \sigma_k \tB_\bfi$ and $\Lambda_{\bfi'} = \sigma_k \Lambda_\bfi$.
\end{Lem}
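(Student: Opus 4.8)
The statement to prove is Lemma~\ref{Lem:gamma}: when $\bfi' = \gamma_k \bfi$ (a commutation move at position $k$, with $i_k \neq i_{k+1}$ and $i_k \not\sim i_{k+1}$), we have $\tB_{\bfi'} = \sigma_k \tB_\bfi$ and $\Lambda_{\bfi'} = \sigma_k \Lambda_\bfi$. Both identities assert that swapping the labels at positions $k$ and $k+1$ in the sequence has the same effect on the matrices as relabeling the index set by the transposition $\sigma_k$. The plan is to verify the two matrix identities separately, entry by entry, using the explicit combinatorial formulas for $\tB_\bfi$ and $\Lambda_\bfi$ together with the fact that the ``positional shift'' operations $u \mapsto u^\pm$ and $u \mapsto u^\pm(j)$ transform under $\gamma_k$ exactly by conjugation with $\sigma_k$.

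\textbf{Step 1: bookkeeping of the shift operations.} First I would record how the data $u^{\pm}_{\bfi'}$, $u^\pm_{\bfi'}(j)$, $w^{\bfi'}_u$ relate to those of $\bfi$. Since $\bfi'$ and $\bfi$ differ only by swapping positions $k$ and $k+1$ and since $i_k \neq i_{k+1}$, for every $j \in \Delta_0$ the set $\{u \mid i_u = j\}$ is sent to $\{u \mid i'_u = j\}$ by $\sigma_k$; hence $(\sigma_k u)^{\pm}_{\bfi'} = \sigma_k(u^{\pm}_{\bfi})$ and similarly for $u^\pm(j)$. For the Weyl group elements, because $i_k \not\sim i_{k+1}$ the simple reflections $s_{i_k}$ and $s_{i_{k+1}}$ commute, so $w^{\bfi'}_u = w^{\bfi}_{\sigma_k u}$ for all $u$ (the only index where the partial product could differ is $u = k$, and there $s_{i_1}\cdots s_{i_{k-1}} s_{i_{k+1}} = s_{i_1}\cdots s_{i_{k-1}} s_{i_{k+1}}$ differs from $w^{\bfi}_k = s_{i_1}\cdots s_{i_k}$ but equals $w^{\bfi}_{k-1} s_{i'_k}$; one checks $\varpi_{i'_k}$-dependence is what matters). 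More precisely I would phrase it as: for all $u$, $w^{\bfi'}_u \varpi_{i'_u} = w^{\bfi}_{\sigma_k u}\varpi_{i_{\sigma_k u}}$, which is what enters \eqref{eq:Lambda}.

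\textbf{Step 2: the $\Lambda$ identity.} Using \eqref{eq:Lambda} and Step~1, $\Lambda_{\bfi'}$ at position $(u,v)$ is $(\varpi_{i'_u} - w^{\bfi'}_u\varpi_{i'_u},\, \varpi_{i'_v} + w^{\bfi'}_v\varpi_{i'_v})$, which by Step~1 equals the corresponding expression for $\bfi$ at positions $(\sigma_k u, \sigma_k v)$, i.e. $(\sigma_k\Lambda_\bfi)_{u,v}$. One must handle the $u \le v$ convention of \eqref{eq:Lambda}: applying $\sigma_k$ can swap the order of $u,v$ only when $\{u,v\} = \{k,k+1\}$, and there the skew-symmetry $\Lambda_{v,u} = -\Lambda_{u,v}$ together with Lemma~\ref{Lem:Lambda} (applicable since $k < (k+1)^+$) resolves it; this is a short case check.

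\textbf{Step 3: the $\tB$ identity.} For $\tB$ I would compare $b^{\bfi'}_{u,v}$ with $b^{\bfi}_{\sigma_k u, \sigma_k v}$ case by case along the five branches of the defining formula. For pairs not involving $\{k,k+1\}$ this is immediate from Step~1. For pairs with at least one index in $\{k,k+1\}$: since $i_k \neq i_{k+1}$ and $i_k \not\sim i_{k+1}$ we have $\sfc_{i_k, i_{k+1}} = 0$, so no arrow is created or destroyed between positions $k$ and $k+1$ themselves, and the ``$u<v<u^+<v^+$'' type conditions for a pair $(k, v)$ versus $(k+1,v)$ with $v \notin\{k,k+1\}$ translate into each other under $\sigma_k$ because the only positions whose relative order changes are $k$ and $k+1$. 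Equivalently — and this may be the cleanest route — I would argue directly on quivers: the arrows of $\Gamma_{\bfi'}$ are obtained from those of $\Gamma_\bfi$ by the relabeling $\sigma_k$, because an arrow can only be affected if it is incident to both $k$ and $k+1$ or if reordering $k,k+1$ changes an inequality, and $\sfc_{i_k,i_{k+1}}=0$ rules out the first while the second is exactly accounted for by $\sigma_k$; then \eqref{eq:tBdef} gives $\tB_{\bfi'} = \sigma_k\tB_\bfi$.

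\textbf{Main obstacle.} The only real subtlety is the careful case analysis at the ``boundary'' positions $k$ and $k+1$ — making sure that every inequality of the form $u < v < u^+ < v^+$ and every min/max defining $u^\pm(j)$ is tracked correctly when the two swapped letters are distinct and non-adjacent, and reconciling this with the $u\le v$ ordering convention in \eqref{eq:Lambda}. The hypotheses $i_k \neq i_{k+1}$ and $\sfc_{i_k,i_{k+1}} = 0$ are exactly what make all the cross terms vanish, so the verification is routine but must be done with care; I expect no conceptual difficulty beyond this bookkeeping.
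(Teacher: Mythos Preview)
Your proposal is correct and follows essentially the same approach as the paper: the quiver isomorphism $\Gamma_\bfi \simeq \Gamma_{\bfi'}$ under $\sigma_k$ for $\tB$, and the identity $w^{\bfi'}_u\varpi_{i'_u} = w^{\bfi}_{\sigma_k u}\varpi_{i_{\sigma_k u}}$ (from the commutation $s_{i_k}s_{i_{k+1}} = s_{i_{k+1}}s_{i_k}$) plugged into \eqref{eq:Lambda} for $\Lambda$, with Lemma~\ref{Lem:Lambda} handling the order-swap at $\{k,k+1\}$. The paper's write-up is somewhat terser, organizing the $\Lambda$-check into four explicit cases (both indices outside $\{k,k+1\}$; one index equal to $k$; one index equal to $k+1$; the pair $\{k,k+1\}$ itself), but the content is the same.
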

\begin{proof}
Thanks to the commutation relation $s_{i_k} s_{i_{k+1}} = s_{i_{k+1}}s_{i_k}$, it is immediate that the permutation $\sigma_k$ induces a quiver isomorphism $\Gamma_\bfi \simeq \Gamma_{\bfi'}$. 
Therefore we obtain $\sigma_k \tB_\bfi = \tB_{\bfi'}$.  
To show $\sigma_k \Lambda_\bfi = \Lambda_{\bfi'}$, let $\Lambda_\bfi = (\Lambda_{u,v})_{u,v \in \N}$ and $\Lambda_{\bfi'} = (\Lambda'_{u,v})_{u,v \in \N}$.
For $u,v \not \in \{k,k+1\}$, we have to prove the followings:
\begin{itemize}
\item[(i)] $\Lambda_{u,v} = \Lambda'_{u,v}$, 
\item[(ii)] $\Lambda_{u,k} = \Lambda'_{u,k+1}$, 
\item[(iii)] $\Lambda_{u,k+1} = \Lambda'_{u,k}$,
\item[(iv)] $\Lambda_{k,k+1} = \Lambda'_{k+1,k}$.
\end{itemize} 
For $a \neq k$, we have $w^\bfi_a = w^{\bfi'}_a$, from which (i) follows.
Moreover, we observe $w^\bfi_k \varpi_{i_k} = w^{\bfi}_{k+1}\varpi_{i_k} = w^{\bfi'}_{k+1}\varpi_{i'_{k+1}}$, from which (ii) follows. 
(iii) is dual to (ii).
To see (iv), we may use Lemma~\ref{Lem:Lambda}. 
\end{proof}

Lemma~\ref{Lem:gamma} implies the following (see Appendix~\ref{Ssec:perm}).

\begin{Prop}
Assume that $\bfi, \bfi' \in \Delta_0^{(\infty)}$ are related by $\bfi' = \gamma_k \bfi$ for some $k \in \N$.
Then, we have an isomorphism of $\Z[t^{\pm 1/2}]$-algebras
\[ \gamma_k^* (= \sigma_k^*) \colon \cA_{\bfi'} \simeq \cA_\bfi \qquad \text{given by $X_u \mapsto X_{\sigma_k(u)}$ for all $u \in \N$},\] 
which induces a bijection between the sets of quantum cluster monomials.
\end{Prop}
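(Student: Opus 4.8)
The plan is to deduce this from Lemma~\ref{Lem:gamma} together with the general functoriality of quantum cluster algebras under permutations of the (infinite) index set, which is established in Appendix~\ref{Ssec:perm}. Recall that a permutation $\pi$ of the index set carries a compatible pair $(\tB,\Lambda)$ to $(\pi\tB,\pi\Lambda)$, and that this induces an isomorphism of the ambient quantum tori $\cT(\Lambda)\simeq\cT(\pi\Lambda)$ by $X_u\mapsto X_{\pi(u)}$, which in turn restricts to an isomorphism of the quantum cluster algebras and matches up seeds, mutations, cluster variables and cluster monomials. So once we know that $\bfi'=\gamma_k\bfi$ forces $(\tB_{\bfi'},\Lambda_{\bfi'})=(\sigma_k\tB_\bfi,\sigma_k\Lambda_\bfi)$ — which is exactly Lemma~\ref{Lem:gamma} — the desired isomorphism $\gamma_k^*=\sigma_k^*$ is just the permutation isomorphism attached to $\pi=\sigma_k$.

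Concretely, I would proceed as follows. First, invoke Lemma~\ref{Lem:gamma} to identify $(\tB_{\bfi'},\Lambda_{\bfi'})$ with $(\sigma_k\tB_\bfi,\sigma_k\Lambda_\bfi)$. Second, note that $\sigma_k$ is an involution, so $\sigma_k^{-1}=\sigma_k$ and the map $X_u\mapsto X_{\sigma_k(u)}$ is its own inverse on the level of tori; one checks directly from the commutation relations $X_uX_v=t^{\Lambda_{u,v}}X_vX_u$ that $X_u\mapsto X_{\sigma_k(u)}$ is a well-defined $\Z[t^{\pm1/2}]$-algebra isomorphism $\cT(\Lambda_{\bfi'})=\cT(\sigma_k\Lambda_\bfi)\xrightarrow{\ \sim\ }\cT(\Lambda_\bfi)$, since $(\sigma_k\Lambda_\bfi)_{u,v}=(\Lambda_\bfi)_{\sigma_k(u),\sigma_k(v)}$. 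Third, appeal to the appendix: a permutation isomorphism of ambient tori carries the initial seed of $\cA_t(\sigma_k\tB_\bfi,\sigma_k\Lambda_\bfi)$ to the initial seed of $\cA_t(\tB_\bfi,\Lambda_\bfi)$ (the initial quantum cluster variables $X_u$ go to $X_{\sigma_k(u)}$, and the exchange matrix is transported correctly by the very definition of $\sigma_k\tB_\bfi$), and it intertwines mutations because mutation is defined purely in terms of the seed data; hence it restricts to an isomorphism $\cA_{\bfi'}\simeq\cA_\bfi$ sending clusters to clusters. Fourth, since the isomorphism matches seeds and commutes with mutation, it sends each cluster variable of $\cA_{\bfi'}$ to a cluster variable of $\cA_\bfi$ and more generally each quantum cluster monomial to a quantum cluster monomial, and the inverse (which is again $\sigma_k^*$) does the same; this gives the asserted bijection between the sets of quantum cluster monomials.

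I do not expect any serious obstacle here: the only real content is Lemma~\ref{Lem:gamma}, which is already proved, and the formal behaviour of quantum cluster algebras under permutations of the index set, which is recorded in Appendix~\ref{Ssec:perm}. The one point requiring a little care is bookkeeping with the \emph{infinite} index set $\N$: $\tB_\bfi$ and $\Lambda_\bfi$ are genuinely infinite matrices, and one should make sure that the permutation formalism of the appendix is stated in the infinite (or exhausting-by-finite) setting, so that the isomorphism $\cA_{\bfi'}\simeq\cA_\bfi$ is compatible with the exhaustion $\bigcup_{n}\cA_\bfi^n=\cA_\bfi$ from Section~\ref{Ssec:Ai}. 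Since $\sigma_k$ fixes all but two indices, for every $n\ge k+1$ it restricts to a permutation of $[1,n]$ preserving $J_f$ and $J_e$, so $\sigma_k^*$ restricts to isomorphisms $\cA_{\bfi'}^n\simeq\cA_\bfi^n$ for all large $n$, and the claim follows by passing to the union. Thus the proof is essentially a one-line reduction to Lemma~\ref{Lem:gamma} plus the appendix, and I would write it as such.
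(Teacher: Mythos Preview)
Your proposal is correct and takes essentially the same approach as the paper: the paper's proof is literally the one-line remark that Lemma~\ref{Lem:gamma} yields $(\tB_{\bfi'},\Lambda_{\bfi'})=(\sigma_k\tB_\bfi,\sigma_k\Lambda_\bfi)$, whereupon the permutation formalism of Appendix~\ref{Ssec:perm} gives the isomorphism $\sigma_k^*$ and the bijection on cluster monomials. Your additional remarks on the infinite index set and the compatibility with the exhaustion $\cA_\bfi=\bigcup_n\cA_\bfi^n$ are fine and anticipate the paper's subsequent Remark that $\gamma_k^*$ restricts to $\cA_{\bfi'}^n\simeq\cA_\bfi^n$ for $n\neq k$.
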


\begin{Rem}
When $n \neq k$, the isomorphism $\gamma_k^*$ restricts to the isomorphism
\[ \gamma_{k}^* \colon \cA_{\bfi'}^{n} \simeq \cA_{\bfi}^{n}.\]
\end{Rem}

Now, let us explain how the degrees of cluster monomials get modified.

\begin{Lem} \label{Lem:cg}
Assume that $\bfi, \bfi' \in \Delta_0^{(\infty)}$ are related by $\bfi' = \gamma_k \bfi$ for some $k \in \N$.
If $x \in \cA_{\bfi'}$ is a cluster monomial whose degree is $\bfg' = (g'_u)_{u \in \N} \in \Z^{\oplus \N}$, then the element $\gamma_k^* x \in \cA_\bfi$ is the cluster monomial whose degree $\bfg = (g_u)_{u \in \N}$ is given by
\begin{equation} \label{eq:cg} 
g_u = \begin{cases}
g'_{k+1} & \text{if $u=k$}, \\
g'_k & \text{if $u=k+1$}, \\
g'_u & \text{otherwise}.
\end{cases}
\end{equation}
\end{Lem}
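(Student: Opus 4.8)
The statement to prove is Lemma~\ref{Lem:cg}, which tracks how the $g$-vector of a cluster monomial transforms under the commutation-move isomorphism $\gamma_k^*$. The plan is to reduce everything to the general behavior of $g$-vectors under a permutation of (relabelled) indices, so that the only substantive input is the already-established identity $\tB_{\bfi'} = \sigma_k\tB_\bfi$, $\Lambda_{\bfi'} = \sigma_k\Lambda_\bfi$ from Lemma~\ref{Lem:gamma}. First I would recall that $g$-vectors are defined combinatorially from the exchange matrix alone (independently of $\Lambda$), via the recursion of Fomin--Zelevinsky for cluster monomials in terms of an initial seed, and that two quantum cluster algebras whose compatible pairs differ by relabelling via a permutation $\pi$ of the index set have cluster structures identified by the monomial change of variables $X_u \mapsto X_{\pi(u)}$ (this is exactly the content of the appendix material on permutations referenced before the commutation-move Proposition). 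Under such an identification, a cluster monomial with $g$-vector $\bfg'=(g'_u)_u$ in $\cA_{\bfi'}$ corresponds to the cluster monomial in $\cA_\bfi$ with $g$-vector $\pi\bfg' = (g'_{\pi^{-1}(u)})_u$.

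Concretely, I would proceed as follows. Step one: observe that the initial seed of $\cA_{\bfi'}$ is carried to the initial seed of $\cA_\bfi$ by $\gamma_k^* = \sigma_k^*$, because $\sigma_k\tB_\bfi = \tB_{\bfi'}$ means the quivers $\Gamma_\bfi, \Gamma_{\bfi'}$ are isomorphic via the transposition $\sigma_k$, and $\sigma_k\Lambda_\bfi = \Lambda_{\bfi'}$ means the ambient quantum tori are matched by $X_u \mapsto X_{\sigma_k(u)}$. Step two: invoke the compatibility of mutation with seed isomorphisms (permutations of vertices): if two seeds are related by a vertex permutation $\pi$, then their mutations at corresponding vertices are again related by $\pi$, and the cluster variables/monomials correspond accordingly. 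Hence every cluster monomial $x \in \cA_{\bfi'}$ is matched with $\gamma_k^* x \in \cA_\bfi$. Step three: recall that the $g$-vector of a cluster monomial is read off from its expansion in the initial cluster by the standard $\deg$-homogeneity with respect to the $\Z^{\oplus\N}$-grading in which $X_u$ has degree $\bfe_u$ (the $u$-th standard basis vector), adjusted by the exchange matrix. Since $\gamma_k^*$ sends $X_u \mapsto X_{\sigma_k(u)}$, it sends the graded piece of degree $\bfg'$ to the graded piece of degree $\sigma_k \bfg'$, which is precisely the vector $\bfg$ described by \eqref{eq:cg}. This yields the claim.

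\textbf{Main obstacle.} The only genuinely delicate point is making the notion of $g$-vector precise in this infinite-rank setting and checking that it behaves naturally under the relabelling — i.e., that $\gamma_k^*$, being a monomial isomorphism of quantum tori intertwining the initial seeds, automatically intertwines the $\Z^{\oplus\N}$-gradings used to define $g$-vectors, and that this grading statement survives passing from the initial cluster to an arbitrary cluster (where one uses the mutation rule for $g$-vectors together with $\sigma_k\tB_\bfi = \tB_{\bfi'}$). Since each cluster monomial of $\cA_\bfi$ already lies in some finite truncation $\cA_\bfi^n$ with $n\neq k$ (as noted in the Remark after the commutation-move Proposition, where $\gamma_k^*$ restricts to $\cA_{\bfi'}^n \simeq \cA_\bfi^n$), one can in fact reduce the whole argument to the finite-rank quantum cluster algebras $\cA_\bfi^n$, where the theory of $g$-vectors and their compatibility with seed isomorphisms is standard (see the appendix on quantum cluster algebras). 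Thus the proof is essentially a bookkeeping argument built on Lemma~\ref{Lem:gamma}, with no hard estimate or new construction required.
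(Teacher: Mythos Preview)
Your proposal is correct and follows essentially the same approach as the paper, which simply writes ``Immediate from the construction.'' The construction in question is precisely that $\gamma_k^* = \sigma_k^*$ is the monomial isomorphism $X_u \mapsto X_{\sigma_k(u)}$, and the paper records the general fact you are using as Remark~\ref{Rem:permg} in the appendix: $\deg(\pi^* x) = (g_{\pi(j)})_{j \in J}$ whenever $\deg x = (g_j)_{j \in J}$.
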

\begin{proof}
Immediate from the construction.
\end{proof}      

\begin{Def}[Commutation equivalence] \label{Def:comm-eq}
Let $J = \N$, or $J = [1,n]$ for some $n \in \N$.
Let $\pi \in \SG_J$ be a permutation.
We say that two sequences $\bfi = (i_1,i_2,\ldots)$ and $\bfi' = (i'_1, i'_2, \ldots) \in \Delta_0^J$ are \emph{commutation-equivalent by $\pi$} if  $i_{u} = i'_{\pi(u)}$ for all $u \in J$ and we have [$i_u \neq i_v$ and $i_u \not \sim i_v$] whenever [$u < v$ and $\pi(u) > \pi(v)$]. 
We simply say that $\bfi$ and $\bfi'$ are \emph{commutation-equivalent} if they are commutation-equivalent by some $\pi \in \SG_J$.
\end{Def}

The above discussion can be generalized as follows. 

\begin{Prop} \label{Prop:comm-eq}
Let $J = \N$, or $J = [1,n]$ for some $n \in \N$.
Assume that two sequences $\bfi$ and $\bfi'$ are commutation-equivalent by a permutation $\pi \in \SG_J$.
Then, we have $(\tB_{\bfi'}, \Lambda_\bfi') = (\pi\tB_{\bfi}, \pi\Lambda_{\bfi})$ and hence we have the isomorphism of $\Z[t^{\pm 1/2}]$-algebras $\pi^* \colon \cA_{\bfi'} \simeq \cA_{\bfi}$, which induces a bijection between the sets of quantum cluster monomials.
\end{Prop}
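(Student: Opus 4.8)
The plan is to reduce Proposition~\ref{Prop:comm-eq} to the elementary case $\bfi' = \gamma_k \bfi$ already handled by Lemma~\ref{Lem:gamma} and the subsequent Proposition, by factoring the permutation $\pi$ into a product of simple transpositions. First I would show that if $\bfi$ and $\bfi'$ are commutation-equivalent by $\pi \in \SG_J$ and $\pi \neq \id$, then there is some $k$ with $\pi(k) > \pi(k+1)$; by the definition of commutation-equivalence this forces $i_k \neq i_{k+1}$ and $i_k \not\sim i_{k+1}$, so $\bfi'' \seq \gamma_k \bfi$ is defined and $\bfi''$ is commutation-equivalent to $\bfi$ by $\sigma_k$. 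The key point is then that $\bfi'$ is commutation-equivalent to $\bfi''$ by $\pi \sigma_k$, whose number of inversions is strictly smaller than that of $\pi$: this is the standard fact that right-multiplying by $\sigma_k$ removes the inversion $(k,k+1)$ when $\pi(k) > \pi(k+1)$, and one checks the ``non-adjacency'' clause of Definition~\ref{Def:comm-eq} is preserved because the pairs of indices whose relative order is changed between $\bfi'$ and $\bfi''$ form a subset of those changed between $\bfi'$ and $\bfi$ — more precisely, $\{u<v : (\pi\sigma_k)(u) > (\pi\sigma_k)(v)\}$, after relabeling via $\sigma_k$, is contained in $\{u < v : \pi(u) > \pi(v)\}$.

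With this inductive structure in place, I would argue by induction on the number of inversions of $\pi$ (which is finite: since each value occurs infinitely often in $\bfi$ and $\bfi'$, one sees that $\pi$ moves only finitely many indices, so $\pi$ lies in the subgroup of $\SG_J$ generated by finitely many simple transpositions and has finite length). In the base case $\pi = \id$ there is nothing to prove. For the inductive step, choose $k$ as above; by Lemma~\ref{Lem:gamma} we have $(\tB_{\bfi''}, \Lambda_{\bfi''}) = (\sigma_k \tB_\bfi, \sigma_k \Lambda_\bfi)$, and by the inductive hypothesis applied to the pair $(\bfi'', \bfi')$ with permutation $\pi \sigma_k$ we get $(\tB_{\bfi'}, \Lambda_{\bfi'}) = ((\pi\sigma_k) \tB_{\bfi''}, (\pi\sigma_k)\Lambda_{\bfi''})$. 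Composing and using that $(\pi\sigma_k)(\sigma_k A) = \pi A$ for any matrix $A$ (since the action of $\SG_J$ on matrices by simultaneous index permutation is a group action), we obtain $(\tB_{\bfi'}, \Lambda_{\bfi'}) = (\pi \tB_\bfi, \pi \Lambda_\bfi)$ as desired.

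The statement about the isomorphism of quantum cluster algebras then follows formally from the general fact (recalled in Appendix~\ref{Ssec:perm}) that a permutation relating two compatible pairs in this way induces an isomorphism $\pi^* \colon \cA_{\bfi'} \simeq \cA_\bfi$, $X_u \mapsto X_{\pi(u)}$, carrying quantum cluster monomials to quantum cluster monomials; alternatively one composes the isomorphisms $\gamma_k^*$ from the preceding Proposition along the chain of elementary moves produced by the induction, noting that the composite sends $X_u$ to $X_{\pi(u)}$. I expect the only genuinely delicate point to be the bookkeeping in the inductive step: verifying that the ``non-adjacency'' condition of Definition~\ref{Def:comm-eq} really does persist for the pair $(\bfi'', \bfi')$ with permutation $\pi\sigma_k$, i.e.\ that no new pair of indices which are equal or adjacent in the Dynkin diagram gets its order reversed. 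This is a purely combinatorial check about inversions of permutations and the fact that $\sigma_k$ only swaps the images of $k$ and $k+1$, but it is where the argument must be written carefully rather than waved through.
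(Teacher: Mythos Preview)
Your reduction to the single-move case via induction on the number of inversions of $\pi$ is natural and works cleanly when $J = [1,n]$. The paper gives no proof at all (it just says ``the above discussion can be generalized''), so in spirit your approach matches what the authors have in mind.

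There is, however, a genuine gap in your treatment of $J = \N$: the claim that ``$\pi$ moves only finitely many indices'' is false. Take $\Delta$ of type $\mathrm{A}_3$ (so $1\not\sim 3$), $\bfi = (1,3,2,1,3,2,\ldots)$ and $\bfi' = (3,1,2,3,1,2,\ldots)$, with $\pi$ swapping $3k+1$ and $3k+2$ for every $k\ge 0$. Both sequences lie in $\Delta_0^{(\infty)}$, they are commutation-equivalent by this $\pi$, and $\pi$ has infinite support and infinitely many inversions. Your induction therefore never terminates, and the appeal to ``each value occurs infinitely often'' does not rescue it.

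The proposition is nonetheless true; one fixes the infinite case either by a localization argument (each entry of $\tB_\bfi$ and $\Lambda_\bfi$ is determined by a finite initial segment, and finitely many $\gamma_k$'s suffice to match any prescribed finite segment of $\bfi$ with that of $\bfi'$) or, more directly, by checking entries. For the direct route, the key identity is $w_u^{\bfi}\varpi_{i_u} = w_{\pi(u)}^{\bfi'}\varpi_{i_u}$: writing $A=[1,u]$, $B=\pi^{-1}[1,\pi(u)]$, $S=A\cap B$, the commutation condition lets you factor $w_u^\bfi = w_S\cdot w_{A\setminus B}$ and $w_{\pi(u)}^{\bfi'} = w_S\cdot w_{B\setminus A}$, and both tails fix $\varpi_{i_u}$. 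When $u<v$ but $\pi(u)>\pi(v)$ one also needs $(w_u\varpi_{i_u},w_v\varpi_{i_v})=(\varpi_{i_u},\varpi_{i_v})$, which follows by a similar decomposition and the fact that $(\varpi_i,\alpha_j)=0$ for $j\neq i$. This is presumably the kind of direct check the paper has in mind.
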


\subsection{Braid moves}\label{bmov}

Next, we consider the case of a braid transformation of three 
successive elements of the sequence. Precisely, we assume that $(i_k, i_{k+1}, i_{k+2}) = (i,j,i)$ with $i\sim j$ for some $k \in \N$, and $\bfi'$ is obtained from $\bfi$ by applying the braid move $(i,j,i) \leadsto (j,i,j)$. Namely we have $i'_l = i_l$ for $l \not\in \{k,k+1,k+2\}$ and 
$(i'_k, i'_{k+1}, i'_{k+2}) = (j,i,j)$.
In this case, we write $\bfi' = \beta_k \bfi$.
Let $\mu_k$ denote the mutation at $k$.

\begin{Lem} \label{Lem:beta1}
When $\bfi' = \beta_k \bfi$, the permutation $\sigma_{k+1}$ induces a quiver isomorphism $\mu_k\Gamma_\bfi \simeq \Gamma_{\bfi'}$. In other words, we have $\tB_{\bfi'} = \sigma_{k+1} \mu_k \tB_\bfi$. 
\end{Lem}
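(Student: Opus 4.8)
The plan is to verify the claimed quiver isomorphism $\mu_k\Gamma_\bfi \simeq \Gamma_{\bfi'}$ by a direct local analysis of the arrows, exploiting the fact that everything happens in a bounded region around the positions $k, k+1, k+2$. First I would set up the notation carefully: write $i = i_k = i_{k+2}$ and $j = i_{k+1}$, so that in $\bfi'$ we have $i'_k = i'_{k+2} = j$ and $i'_{k+1} = i$. The key combinatorial observation is that for any position $u \notin \{k, k+1, k+2\}$, the sequence $\bfi'$ agrees with $\bfi$, and the "successor/predecessor of the same color" functions $u^\pm$, as well as $u^\pm(l)$ for colors $l$, are essentially unchanged except possibly when the relevant successor/predecessor lies inside $\{k,k+1,k+2\}$. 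So the only arrows of $\Gamma_\bfi$ and $\Gamma_{\bfi'}$ that can differ are those incident to $k$, $k+1$, or $k+2$ (after applying $\sigma_{k+1}$), and these only involve the finitely many positions: $k,k+1,k+2$ themselves, the position $k^-(i)$ (the last occurrence of $i$ before $k$), the position $k^-(j)$, the first occurrence of $i$ after $k+2$ (call it $k^{++}$ in color $i$), the first occurrence of $j$ after $k+2$, and — because of the $i \sim j$ condition — possibly occurrences of colors $l \sim i$ or $l \sim j$ in the windows $(k^-(l), k)$ etc.

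Next I would carry out the computation of $\mu_k \tB_\bfi$ using the matrix mutation rule together with formula \eqref{eq:tBdef}, and separately compute $\tB_{\bfi'}$ directly from the defining cases of $\tB$, and then check that $\sigma_{k+1}$ conjugates one to the other. In fact, since the statement is phrased at the level of quivers, the cleanest route is to draw the local picture of $\Gamma_\bfi$: the vertices $k,k+1,k+2$ carry colors $i,j,i$; there is an arrow $k \to k+1$ (from $i \sim j$, $k < k+1 < k^+ = k+2 < (k+1)^+$), an arrow $k+1 \to k+2$ (from $i \sim j$, $k+1 < k+2 < (k+1)^+ < (k+2)^+$? — one must check the relative order of $(k+1)^+$ and $(k+2)^+$, which depends on $\bfi$, but the braid hypothesis $i_k = i_{k+2} = i$ forces $k^+ = k+2$ so the local pattern is rigid), and an arrow $k+2 \to k$ if... — and then apply the mutation at $k$: reverse the arrows at $k$, and for each path $v \to k \to w$ add an arrow $v \to w$ (cancelling 2-cycles). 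Comparing with the local picture of $\Gamma_{\bfi'}$ (colors $j,i,j$ at positions $k,k+1,k+2$) after swapping $k+1 \leftrightarrow k+2$ should match on the nose. The arrows connecting $\{k,k+1,k+2\}$ to the outside — coming from $k^-(i)$, $k^-(j)$, the next occurrences of $i$ and $j$, and from neighbors $l \sim i$, $l \sim j$ — need to be checked case by case using the same bookkeeping; the relation $\sfc_{ij}\sfc_{ji} = 1$ (valid in simply-laced type) makes the arrow multiplicities between color-$i$ and color-$j$ positions all equal to $1$, which keeps the mutation formula from producing unexpected multiple arrows.

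I expect the main obstacle to be the careful treatment of the "long-range" arrows, i.e. the arrows between the block $\{k,k+1,k+2\}$ and positions $u$ carrying a color $l$ with $l \sim i$ or $l \sim j$ (and $l \neq i,j$). For such $u$, whether there is an arrow depends on the interleaving of the windows $[u, u^+]$ and the relevant window among $[k,k+2]$, $[k, k^+(j)]$, etc., and one must check that after the braid move (which permutes the colors $i,j$ and the positions within the block) and the mutation $\mu_k$ together with the transposition $\sigma_{k+1}$, all these arrows are preserved — including the arrows that $\mu_k$ creates or destroys via compositions through vertex $k$. The bookkeeping is somewhat delicate because a single vertex $u$ outside the block can be connected to several of $k,k+1,k+2$ simultaneously. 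I would organize this by fixing a color $l$ and the nearest occurrences of $l$ on either side of the block, reducing to a finite check; the braid relation $s_i s_j s_i = s_j s_i s_j$ guarantees that the elements $w_u$ for $u$ outside the block, as well as $w_{k+2}$, are unchanged, which is the conceptual reason the outside structure is preserved. This is a finite but multi-case verification, and writing it out completely is the bulk of the work; here I only sketch it.

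Finally, once the quiver isomorphism $\mu_k\Gamma_\bfi \simeq \Gamma_{\bfi'}$ induced by $\sigma_{k+1}$ is established, the identity $\tB_{\bfi'} = \sigma_{k+1}\mu_k\tB_\bfi$ is an immediate consequence of \eqref{eq:tBdef}, since the exchange matrix is read off from the quiver. This will complete the proof of Lemma~\ref{Lem:beta1}, and it sets up the subsequent (presumably) companion lemma handling the behavior of $\Lambda_\bfi$ under braid moves, which would use Proposition~\ref{Prop:BL} / Corollary~\ref{Cor:BL} to pin down $\Lambda_{\bfi'}$ from compatibility once $\tB_{\bfi'}$ is known.
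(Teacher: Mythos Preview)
Your approach is essentially the same as the paper's --- a direct local comparison of $\mu_k\Gamma_\bfi$ with $\sigma_{k+1}\Gamma_{\bfi'}$ --- but you are missing the one observation that turns the ``bulk of the work'' you fear into a five-vertex check. The neighborhood of the vertex $k$ in $\Gamma_\bfi$ is \emph{exactly}
\[
K = \{k,\;k+1,\;k+2,\;k^-_\bfi,\;(k+1)^-_\bfi\}\cap\N,
\]
and in particular there are \emph{no} arrows between $k$ and positions carrying a color $l\sim i$ with $l\neq j$. The reason is that $k^+_\bfi = k+2$, so the window $(k,k^+)$ contains only position $k+1$, forcing any adjacent color to be $j$; dually, an arrow $u\to k$ with $i_u\sim i$ requires $u<k<u^+<k+2$, i.e.\ $u^+=k+1$, hence again $i_u=j$ and $u=(k+1)^-_\bfi$. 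So the ``long-range'' arrows at $k$ that you list as the main obstacle simply do not exist.

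Once this is observed, the proof collapses: mutation $\mu_k$ only changes the full subquiver on $K$; one checks (as the paper does by drawing the pictures, with two dashed arrows depending on whether $k^-_\bfi\lessgtr(k+1)^-_\bfi$ and $(k+1)^+_\bfi\lessgtr(k+2)^+_\bfi$) that $\mu_k\Gamma_\bfi|_K = \sigma_{k+1}\Gamma_{\bfi'}|_K$; and the complement --- arrows with at least one endpoint outside $K$ --- is untouched by $\mu_k$ and already matches under $\sigma_{k+1}$ because $(k+2)^+_{\bfi'}=(k+1)^+_\bfi$, $(k+1)^-_{\bfi'}=k^-_\bfi$, etc. The long-range arrows you correctly anticipate at $k+1$ and $k+2$ do exist, but they are not touched by $\mu_k$ and their match under $\sigma_{k+1}$ is a one-line check rather than a multi-case analysis. (Your final remark about deducing $\Lambda_{\bfi'}$ from compatibility is not what the paper does --- it computes $\mu_k\Lambda_\bfi$ directly --- but that concerns the next lemma, not this one.)
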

\begin{proof}
We put $K = \{k, k+1, k+2, k^-_\bfi, (k+1)^-_\bfi\} \cap \N$.
This is the neighborhood of $k$ in $\Gamma_\bfi$, that is, the set of vertices which are equal or adjacent to $k$ in the (underlying graph of the) quiver $\Gamma_\bfi$.  
Since $k^-_\bfi = (k+1)^-_{\bfi'}$ and $(k+1)^-_\bfi = k^-_{\bfi'}$, the set $K$ is also the neighborhood of $k$ in the quiver $\Gamma_{\bfi'}$ and that of $\sigma_{k+1}\Gamma_{\bfi'}$.
Note that the mutation $\mu_k$ only changes the full subquiver $\Gamma_\bfi |_K$, and we have 
\[ \mu_k\Gamma_\bfi \setminus (\mu_k\Gamma_\bfi|_K) = \Gamma_\bfi \setminus (\Gamma_\bfi|_K) = \sigma_{k+1}\Gamma_{\bfi'} \setminus (\sigma_{k+1}\Gamma_{\bfi'}|_K).\]
Therefore, we only have to check $\mu_k \Gamma_\bfi |_K = \sigma_{k+1}\Gamma_{\bfi'}|_K$.
We only consider the case when both $k^-_\bfi$ and $(k+1)^-_\bfi$ are nonzero since the proof for the other case is similar, or rather simpler.
In this case, the quiver $\Gamma_\bfi |_K$  is depicted as:
\[
\Gamma_\bfi|_K = \left(
\vcenter{\xymatrix@R=12pt{
(k+1)^-_\bfi \ar[rd] && \ar[ll] k+1 \ar@{-->}[dd]^-{\beta}\\
 & k \ar[ru] \ar[ld]& \\
k^-_\bfi \ar@{-->}[uu]^-{\alpha} & & k+2 \ar[lu]
}}\right),
\]
where we have the arrow $\alpha$ if and only if $k^-_\bfi < (k+1)^-_\bfi$, and we have the arrow $\beta$ if and only if $(k+1)^+_\bfi < (k+2)^+_\bfi$.
Its mutation at $k$ is:
\[
\mu_k \Gamma_\bfi|_K = \left(
\vcenter{
\xymatrix@R=12pt{
(k+1)^-_\bfi \ar@<0.5ex>[dd] && \ar[ld] k+1 \ar@{-->}@<0.5ex>[dd]^-{\beta}\\
 & k \ar[lu] \ar[rd]& \\
k^-_\bfi \ar[ru] \ar@<0.5ex>@{-->}^-{\alpha}[uu] & & k+2 \ar[ll] \ar@<0.5ex>[uu]
}}\right)
=\left(
\vcenter{
\xymatrix@R=12pt{
(k+1)^-_\bfi \ar@{-->}[dd]_-{\alpha'} && \ar[ld] k+1 \\
 & k \ar[lu] \ar[rd]& \\
k^-_\bfi \ar[ru] & & k+2 \ar[ll] \ar@{-->}[uu]_-{\beta'}
}}\right), 
\]
where we have the arrow $\alpha'$ if and only if $k^-_\bfi > (k+1)^-_\bfi$, and we have the arrow $\beta'$ if and only if $(k+1)^+_\bfi > (k+2)^+_\bfi$.
On the other hand, the quiver $\Gamma_{\bfi'}|_K$ is depicted as:
\[
\Gamma_{\bfi'}|_K =
\left(
\vcenter{
\xymatrix@R=12pt{
k^-_{\bfi'} \ar@{-->}[dd]_-{\alpha''} && k+2 \ar[dl]\\
 & k \ar[lu] \ar[rd]& \\
(k+1)^-_{\bfi'} \ar[ru] & & k+1 \ar@{-->}[uu]_-{\beta''} \ar[ll]
}}\right),
\]
where we have the arrow $\alpha''$ if and only if $k^-_{\bfi'} < (k+1)^-_{\bfi'}$, and we have the arrow $\beta''$ if and only if $(k+1)^+_{\bfi'} < (k+2)^+_{\bfi'}$.
Since $(k^-_{\bfi'}, (k+1)^-_{\bfi'}) = ((k+1)^-_\bfi, k^-_\bfi)$ and $((k+1)^+_{\bfi'}, (k+2)^+_{\bfi'}) = ((k+2)^+_\bfi, (k+1)^+_\bfi)$, we find $\mu_k\Gamma_\bfi|_K=\sigma_{k+1}\Gamma_{\bfi'}|_K$ as desired.
\end{proof}
 
\begin{Lem} \label{Lem:beta2}
When $\bfi' = \beta_k \bfi$, we have $\Lambda_{\bfi'} = \sigma_{k+1}\mu_k \Lambda_\bfi$.
\end{Lem}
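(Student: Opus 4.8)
The plan is to verify the identity $\Lambda_{\bfi'} = \sigma_{k+1}\mu_k\Lambda_\bfi$ entrywise, using the compatibility already established in Proposition~\ref{Prop:BL} together with Lemma~\ref{Lem:beta1}. Recall that mutation of a $\Lambda$-matrix in a compatible pair $(\tB,\Lambda)$ acts by $\mu_k\Lambda = E_k^{\mathrm{t}}\Lambda E_k$ for an explicit integer matrix $E_k$ depending on the $k$-th column of $\tB_\bfi$ and the signs there; this is recalled in Appendix~\ref{sec:QCA}. Since $\bfi$ and $\bfi'$ agree outside the block indices $\{k,k+1,k+2\}$ and the $\sim$-neighbors of these, and since all the $w^\bfi_a$ with $a\notin\{k,k+1\}$ coincide with $w^{\bfi'}_{\sigma_{k+1}(a)}$ (as $s_is_js_i = s_js_is_j$ gives $w^\bfi_{k+2}=w^{\bfi'}_{k+2}$ while $w^\bfi_k, w^\bfi_{k+1}$ are replaced by $w^{\bfi'}_k, w^{\bfi'}_{k+1}$), most entries $\Lambda_{u,v}$ with $u,v$ away from the block are unchanged and the claim there is immediate. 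So I would reduce to checking the finitely many entries $\Lambda_{u,v}$ with at least one of $u,v$ in the neighborhood $K$ appearing in the proof of Lemma~\ref{Lem:beta1}.

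For these entries I would use Lemma~\ref{Lem:Lambda} to rewrite $\Lambda_{u,v}$ (for $u<v^+$) as $(\varpi_{i_u}-w_u\varpi_{i_u},\varpi_{i_v}+w_v\varpi_{i_v})$, so that everything is expressed via the Weyl group elements $w_a$ and the invariant pairing. The key algebraic input is the braid relation together with the identity \eqref{eq:wspi}: expanding $w^\bfi_k\varpi_i$, $w^\bfi_{k+1}\varpi_j$, $w^\bfi_{k+2}\varpi_i$ in terms of the $w_a$ at earlier indices, and comparing with the analogous expansions for $\bfi'$, one reads off exactly how the relevant $\Lambda$-entries transform. Then I would compare this against the explicit action of $\sigma_{k+1}\mu_k$: the permutation $\sigma_{k+1}$ just swaps the roles of indices $k+1$ and $k+2$, and $\mu_k$ mixes in the columns of $\tB_\bfi$ at $k$ (equivalently, the arrows in $\Gamma_\bfi|_K$ drawn in Lemma~\ref{Lem:beta1}) with the appropriate sign conventions. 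The sign case analysis — whether $k^-_\bfi \lessgtr (k+1)^-_\bfi$ and whether $(k+1)^+_\bfi \lessgtr (k+2)^+_\bfi$, i.e.\ the presence or absence of the arrows $\alpha,\beta$ — runs in parallel with the case analysis already done for $\tB$ in Lemma~\ref{Lem:beta1}, so the bookkeeping is organized the same way.

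An alternative, possibly cleaner route: since Proposition~\ref{Prop:BL} shows $(\tB_\bfi,\Lambda_\bfi)$ is compatible with the \emph{same} diagonal matrix $\mathrm{diag}(2\sfd_{i_u})$ for every $\bfi$, and since mutation preserves compatibility, both $\Lambda_{\bfi'}$ and $\sigma_{k+1}\mu_k\Lambda_\bfi$ are skew-symmetric matrices compatible with $\tB_{\bfi'} = \sigma_{k+1}\mu_k\tB_\bfi$ (the latter by Lemma~\ref{Lem:beta1}) for this same diagonal matrix. A skew-symmetric matrix compatible with a given $\tB$ having full-rank exchange part is determined by $\tB$ up to the kernel of the compatibility relation, so it suffices to pin down the two matrices on a complementary set of entries — concretely, on the rows/columns indexed by frozen directions near the block — which again reduces to a short finite computation. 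I would present whichever of the two is shorter once the entries are written out.

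\textbf{Main obstacle.} The genuine difficulty is purely the sign/arrow bookkeeping in the neighborhood $K$: one must match the four sub-cases of Lemma~\ref{Lem:beta1} (arrows $\alpha,\beta$ present or not) against the mutation formula for $\Lambda$, keeping the $\widetilde{}$-conventions for $\alpha',\beta',\alpha'',\beta''$ consistent. There is no conceptual barrier — the braid relation and \eqref{eq:wspi} do all the real work, exactly as in Proposition~\ref{Prop:BL} — but it is the kind of computation where a misplaced sign in the definition of $E_k$ or in $\sigma_{k+1}$ propagates everywhere, so I would set up the block matrices once, carefully, and let the generic case plus symmetry kill the remaining sub-cases.
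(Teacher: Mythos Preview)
Your first approach---direct entry-by-entry verification using the braid relation and Lemma~\ref{Lem:Lambda}---is exactly what the paper does. The paper organizes the check into ten explicit cases (i)--(x) according to where $u,v$ sit relative to $\{k,k+1,k+2\}$; the key algebraic identity (beyond $s_is_js_i=s_js_is_j$) is $(s_i-s_is_js_i)\varpi_i=\alpha_j$, which yields formulas such as $(\mu_k\Lambda_\bfi)_{k,v}=\Lambda_{(k+1)^-_\bfi,v}+\Lambda_{k+2,v}-\Lambda_{k,v}$ and reduces each case to a short pairing computation.

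Two corrections to your plan. First, the arrows $\alpha,\beta$ from Lemma~\ref{Lem:beta1} play no role here: they connect vertices \emph{other than} $k$, so they do not appear in the column $\tB_\bfi e_k$ and hence are invisible to the matrix $E_k$ defining $\mu_k\Lambda$. The paper's ten cases do not split on them, and your ``main obstacle'' is milder than you anticipate. Second, your alternative compatibility route does not go through as stated: in the infinite setting of $\cA_\bfi$ every vertex is exchangeable ($J_e=J=\N$), so there are no ``frozen directions near the block'' on which to pin down $\Lambda$. Making this approach work would require showing that $\tB_\bfi^{\mathrm{t}}$ is injective on the relevant space (so that $\tB_\bfi^{\mathrm{t}}\Lambda=2D$ determines $\Lambda$), which is a separate claim and not obviously shorter than the direct check.
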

\begin{proof}
Let $(i,j) \seq (i_k, i_{k+1})$.
As before, we put $\Lambda_\bfi = (\Lambda_{u,v})_{u,v \in \N}$ and $\Lambda_{\bfi'} = (\Lambda'_{u,v})_{u,v \in \N}$.
We have to show $\Lambda'_{u,v} = (\mu_k \Lambda_\bfi)_{\sigma_{k+1}(u),\sigma_{k+1}(v)}$ for $u<v$.
It divides into the following $10$ cases:
\begin{itemize}
\item[(i)] $\Lambda'_{u,v}=\Lambda_{u,v}$ for $u,v \not \in \{k, k+1, k+2\}$,
\item[(ii)] $\Lambda'_{k+2,v} = \Lambda_{k+1,v}$ for $v > k+2$,
\item[(iii)] $\Lambda'_{k+1,v} = \Lambda_{k+2,v}$ for $v > k+2$,
\item[(iv)] $\Lambda'_{k+1, k+2} = \Lambda_{k+2, k+1}$,
\item[(v)] $\Lambda'_{u,k+2} = \Lambda_{u,k+1}$ for $u < k$,
\item[(vi)] $\Lambda'_{u, k+1} = \Lambda_{u, k+2}$ for $u < k$,
\item[(vii)] $\Lambda'_{k,v} = \Lambda_{k^-(j),v}+\Lambda_{k+2,v}-\Lambda_{k,v}$ for $v > k+2$,
\item[(viii)] $\Lambda'_{k,k+2} = \Lambda_{k^-(j), k+1} + \Lambda_{k+2,k+1}-\Lambda_{k,k+1}$, 
\item[(ix)] $\Lambda'_{k,k+1} = \Lambda_{k^-(j), k+2} - \Lambda_{k,k+2}$,
\item[(x)] $\Lambda'_{u,k} = \Lambda_{u,k^-(j)} + \Lambda_{u,k+2} - \Lambda_{u,k}$ for $u < k$.
\end{itemize}
Here $k^-(j) \seq k^-_\bfi(j) = k^-_{\bfi'}(j)$. 
Recall the braid relation $s_i s_j s_i = s_j s_i s_j$.
Then we have $w_a^\bfi = w_a^{\bfi'}$ for any $a \in \N \setminus \{k, k+1\}$, from which (i) follows. 
Also, we observe $w^\bfi_{k+2}\varpi_i = w^{\bfi'}_{k+2}\varpi_i = w^{\bfi'}_{k+1}\varpi_i$, from which (ii) and (v) follow. (iii) and (vi) are dual to them. To check (iv), we may also use Lemma~\ref{Lem:Lambda}.  
To check (vii), we observe $(s_i - s_i s_j s_i)\varpi_i = \alpha_j$ and hence 
\begin{align*}
x &\seq \varpi_j - w^\bfi_{k^-(j)} \varpi_j - w^\bfi_{k+2}\varpi_i + w^\bfi_k\varpi_i \\
&= \varpi_j - w^\bfi_{k-1}( \varpi_j - (s_i - s_i s_j s_i)\varpi_i ) \\
&= \varpi_j - w^\bfi_{k-1}(\varpi_j - \alpha_j) \\
&= \varpi_j - w^{\bfi'}_k\varpi_j.
\end{align*}
Therefore we have
\[ \Lambda_{k^-(j),v}+\Lambda_{k+2,v}-\Lambda_{k,v} = (x, \varpi_{i_v}+w^\bfi_v \varpi_{i_v}) = (\varpi_j - w^{\bfi'}_k\varpi_j, \varpi_{i'_v}+w^{\bfi'}_v \varpi_{i'_v}) = \Lambda'_{k,v}.\]
We can check the remaining (viii), (ix) and (x) in a similar way with the help of Lemma~\ref{Lem:Lambda} (and a trivial fact $\Lambda_{k+2,k+2} = 0$ for (ix)).    
\end{proof}

Lemmas~\ref{Lem:beta1} \& \ref{Lem:beta2} show the equality $(\Lambda_{\bfi'}, \tB_{\bfi'}) = \sigma_{k+1}\mu_k (\Lambda_{\bfi}, \tB_{\bfi})$ in the notation of Appendix~\ref{sec:QCA}.
Therefore we have the following.

\begin{Prop}
Assume that $\bfi, \bfi' \in \Delta_0^{(\infty)}$ are related by $\bfi' = \beta_k \bfi$ for some $k \in \N$. 
Then, we have an isomorphism of $\Z[t^{\pm 1/2}]$-algebras
\[ \beta_k^*(= \mu_k^* \sigma_{k+1}^*)\colon \cA_{\bfi'} \simeq \cA_\bfi \qquad \text{given by $X_u \mapsto \mu_{k}^*(X_{\sigma_{k+1}(u)})$ for all $u \in \N$},\]
which induces a bijection between the sets of quantum cluster monomials. 
\end{Prop}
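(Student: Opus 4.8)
The plan is to assemble the isomorphism $\beta_k^*$ from the two structural identities just established, exactly as in the commutation-move case. By Lemma~\ref{Lem:beta1} and Lemma~\ref{Lem:beta2} we have the single equality of compatible-pair data
\[
(\Lambda_{\bfi'}, \tB_{\bfi'}) = \sigma_{k+1}\mu_k(\Lambda_\bfi, \tB_\bfi)
\]
in the sense of Appendix~\ref{sec:QCA}. So the strategy is: first invoke the general functoriality of quantum cluster algebras under mutation, which gives a canonical $\Z[t^{\pm1/2}]$-algebra isomorphism $\mu_k^*\colon \cA_t(\mu_k(\tB_\bfi,\Lambda_\bfi)) \simeq \cA_\bfi$ matching seeds and hence cluster monomials; then compose with the relabelling isomorphism $\sigma_{k+1}^*$ induced by the permutation $\sigma_{k+1}\in\SG_\N$ (as recalled in Appendix~\ref{Ssec:perm}), which identifies $\cA_{\bfi'} = \cA_t(\sigma_{k+1}\mu_k(\tB_\bfi,\Lambda_\bfi))$ with $\cA_t(\mu_k(\tB_\bfi,\Lambda_\bfi))$ and sends $X_u \mapsto X_{\sigma_{k+1}(u)}$. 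The composite $\beta_k^* = \mu_k^*\circ\sigma_{k+1}^*$ is then the desired isomorphism, and the explicit formula $X_u \mapsto \mu_k^*(X_{\sigma_{k+1}(u)})$ is read off directly from the definitions.

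For the statement that $\beta_k^*$ induces a bijection on quantum cluster monomials, I would note that each of the two building blocks does so: $\sigma_{k+1}^*$ because it is a seed-preserving relabelling (Proposition~\ref{Prop:comm-eq}-type reasoning, or the appendix), and $\mu_k^*$ because mutation of the initial seed does not change the quantum cluster algebra, only the choice of initial cluster, so the set of cluster monomials is preserved. Composition of bijections on cluster monomials is again such a bijection. One small point to be careful about is that the mutation $\mu_k$ must be admissible, i.e. $k$ is an exchangeable (non-frozen) index — but here we are working with the infinite pair $(\tB_\bfi,\Lambda_\bfi)$ in which every index is exchangeable, so there is no issue; for the truncated versions $\cA_\bfi^n$ one would need $k\in J_e$, which holds when $k^+ \le n$, and this can be recorded as a remark parallel to the one after the commutation-move proposition.

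The main obstacle is not in this Proposition itself — it is essentially a formal consequence of the two preceding lemmas together with the generalities of Appendix~\ref{sec:QCA} — but rather in having set up cleanly the bookkeeping of which permutation/mutation acts on which side. The one genuinely substantive input, the compatibility of $\Lambda$ with the mutation rule (Lemma~\ref{Lem:beta2}), has already been proved by the explicit ten-case computation using the braid relation $s_is_js_i = s_js_is_j$ and Lemma~\ref{Lem:Lambda}; here all that remains is to invoke it. I would also, in the same spirit as Lemma~\ref{Lem:cg}, plan to follow this Proposition with a lemma tracking the effect of $\beta_k^*$ on the $g$-vectors of cluster monomials — this is where the mutation of $g$-vectors (the tropical/piecewise-linear mutation rule) enters, and it is the part that will actually be used later when keeping track of degrees through sequences of moves; but that is a separate statement from the one at hand.
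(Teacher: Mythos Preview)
Your proposal is correct and matches the paper's approach exactly: the paper likewise observes that Lemmas~\ref{Lem:beta1} and \ref{Lem:beta2} give $(\Lambda_{\bfi'}, \tB_{\bfi'}) = \sigma_{k+1}\mu_k(\Lambda_\bfi, \tB_\bfi)$ and then simply invokes the generalities of Appendix~\ref{sec:QCA} (mutation isomorphism $\mu_k^*$ and permutation isomorphism $\sigma_{k+1}^*$) to conclude. Your additional remarks about exchangeability of $k$ and the follow-up $g$-vector lemma are accurate and indeed the paper immediately follows this Proposition with both a truncation remark and Lemma~\ref{Lem:bg}.
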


\begin{Rem}
When $n \not \in \{ k, k+1\}$, the isomorphism $\beta_k^*$ restricts to the isomorphism
\[ \beta_{k}^*  \colon \cA_{\bfi'}^{n} \simeq \cA_{\bfi}^{n}.\]
\end{Rem}

Now, let us explain how the degrees of cluster monomials get modified.

\begin{Lem} \label{Lem:bg}
Assume that $\bfi, \bfi' \in \Delta_0^{(\infty)}$ are related by $\bfi' = \beta_k \bfi$ for some $k \in \N$.
If $x \in \cA_{\bfi'}$ is a cluster monomial whose degree is $\bfg' = (g'_u)_{u \in \N} \in \Z^{\oplus \N}$, then the element $\beta_k^* x \in \cA_\bfi$ is the cluster monomial whose degree $\bfg = (g_u)_{u \in \N}$ is given by
\begin{equation} \label{eq:bg}
g_u = \begin{cases}
-g'_k & \text{if $u=k$}, \\
g'_{\sigma_{k+1}(u)} + \max(g'_k,0) & \text{if $u \in \{ k+2, (k+1)^-_{\bfi}\}$}, \\
g'_{\sigma_{k+1}(u)} + \min(g'_k,0) & \text{if $u \in \{ k+1, k^-_{\bfi}\}$}, \\
g'_u & \text{otherwise}.
\end{cases}
\end{equation}  
\end{Lem}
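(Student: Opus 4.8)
The plan is to trace the $g$-vector transformation through the composite $\beta_k^* = \mu_k^* \sigma_{k+1}^*$ using two independent facts established earlier: first, that $\beta_k^*$ sends a cluster monomial to a cluster monomial (this is the Proposition immediately preceding the Lemma), so the only task is to compute the \emph{degree} of the image; second, the description of how $g$-vectors behave under a single mutation, which I would invoke in the form recorded in Appendix~\ref{sec:QCA} (the standard Fomin--Zelevinsky--type formula expressing the new $g$-vector in terms of the old one, the exchange matrix column at $k$, and the sign of the $k$-th coordinate $g'_k$). Concretely, I would first apply $\sigma_{k+1}^*$: this is a pure relabelling, so if $x \in \cA_{\bfi'}$ has $g$-vector $\bfg' = (g'_u)_u$, then $\sigma_{k+1}^* x$ (viewed in the quantum cluster algebra attached to $\sigma_{k+1}\tB_{\bfi'} = \mu_k \tB_\bfi$ by Lemma~\ref{Lem:beta1}) has $g$-vector with coordinates $g'_{\sigma_{k+1}(u)}$. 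Then I would apply the mutation $\mu_k^*$ and read off the change.

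The key computation is therefore: starting from a $g$-vector $\bfh = (h_u)_u$ with $h_u = g'_{\sigma_{k+1}(u)}$ relative to the seed $\mu_k \tB_\bfi$, and mutating \emph{back} at $k$ to reach the seed $\tB_\bfi$, determine the resulting $g$-vector $\bfg$. The $g$-vector mutation rule says $g_k = -h_k$ (which gives the first case, since $h_k = g'_{\sigma_{k+1}(k)} = g'_{k+1}$... here I must be careful: $\sigma_{k+1}$ swaps $k+1$ and $k+2$ and fixes $k$, so $h_k = g'_k$, yielding $g_k = -g'_k$ as stated), and for $u \neq k$ it says $g_u = h_u + [\varepsilon b_{u,k}^{\mu}]_+ \, h_k - b_{u,k}^{\mu}\,[\varepsilon h_k]_+$ for an appropriate sign $\varepsilon = \pm 1$, where $b^\mu$ denotes the exchange matrix entries of $\mu_k \tB_\bfi$ and $[\cdot]_+ = \max(\cdot, 0)$. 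The neighbours of $k$ in $\mu_k \Gamma_\bfi$ are exactly the vertices $k+1, k+2, k^-_\bfi, (k+1)^-_\bfi$, as identified in the proof of Lemma~\ref{Lem:beta1} (the set $K$ there), and the explicit arrow pattern drawn in that proof tells me the signs of $b^\mu_{u,k}$ for each of these four vertices: two of them ($k+2$ and $(k+1)^-_\bfi$) lie on one side and two ($k+1$ and $k^-_\bfi$) on the other, which is precisely the dichotomy $\max(g'_k, 0)$ versus $\min(g'_k,0)$ appearing in \eqref{eq:bg}. For all other $u$, $b^\mu_{u,k} = 0$, so $g_u = h_u = g'_{\sigma_{k+1}(u)} = g'_u$ (the last case, using that $\sigma_{k+1}$ fixes everything outside $\{k+1,k+2\}$ and that $\max(g'_k,0) + \min(g'_k,0)$-type cancellations are irrelevant here).

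So the core of the argument is a bookkeeping exercise: plug the four neighbour-signs read off from the quiver picture in Lemma~\ref{Lem:beta1}'s proof into the $g$-vector mutation formula, and match terms with \eqref{eq:bg}. I would present it as: (1) recall the $g$-vector mutation formula from the Appendix; (2) note $\sigma_{k+1}^*$ permutes $g$-vector coordinates by $\sigma_{k+1}$; (3) from the explicit form of $\mu_k\Gamma_\bfi|_K$ in the proof of Lemma~\ref{Lem:beta1}, extract that $b^\mu_{u,k} > 0$ for $u \in \{k+2, (k+1)^-_\bfi\}$ and $b^\mu_{u,k} < 0$ for $u \in \{k+1, k^-_\bfi\}$ (with absolute value $1$ throughout, since $\sg$ is of type $\mathrm{ADE}$ and the relevant quiver is simply-laced), and $b^\mu_{u,k}=0$ otherwise; (4) substitute and simplify, using $[g'_k]_+ = \max(g'_k,0)$ and $-[-g'_k]_+ = \min(g'_k,0)$.

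The main obstacle is purely a matter of sign-convention hygiene: the $g$-vector mutation formula has several equivalent forms depending on whether one mutates the seed or its opposite, and whether $g$-vectors are defined with respect to the initial or the final seed, and one must also correctly account for the order of composition $\mu_k^* \circ \sigma_{k+1}^*$ (i.e.\ that we mutate \emph{at} $k$ after relabelling, going from the seed $\mu_k\tB_\bfi$ \emph{to} $\tB_\bfi$, which is the ``inverse'' direction). Getting the four signs consistent with the figure in Lemma~\ref{Lem:beta1} — in particular making sure the $\alpha/\beta$ conditional arrows there do not disturb the \emph{sign} of $b^\mu_{u,k}$ for $u \in \{k^-_\bfi, (k+1)^-_\bfi\}$, only its presence, which is already encoded in whether that neighbour exists — is where an error is most likely to creep in. Everything else follows ``immediately from the construction'', as the proof indeed asserts.

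\begin{proof}
By the preceding Proposition, $\beta_k^* x$ is a cluster monomial of $\cA_\bfi$; it remains to compute its degree. Recall the $g$-vector mutation rule (Appendix~\ref{sec:QCA}): if a cluster monomial has $g$-vector $\bfh = (h_v)_v$ with respect to a seed with exchange matrix $B = (b_{v,w})$, then with respect to the seed obtained by mutating at $k$ its $g$-vector $\bfg = (g_v)_v$ is given by $g_k = -h_k$ and, for $v \neq k$,
\[ g_v = h_v + [b_{v,k}]_+\,[h_k]_+ - [-b_{v,k}]_+\,[-h_k]_+ , \]
where $[a]_+ = \max(a,0)$; this formula is an involution in the appropriate sense, so it computes the degree change in either direction of the mutation.

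Decompose $\beta_k^* = \mu_k^* \circ \sigma_{k+1}^*$. First, $\sigma_{k+1}^*$ is a relabelling of variables by $\sigma_{k+1}$, and by Lemma~\ref{Lem:beta1} it carries the seed of $\cA_{\bfi'}$ to the seed of $\cA_\bfi$ mutated at $k$, i.e.\ the one with exchange matrix $\mu_k \tB_\bfi$. Hence $\sigma_{k+1}^* x$ has, with respect to the seed $\mu_k\tB_\bfi$, the $g$-vector $\bfh = (h_v)_v$ with $h_v = g'_{\sigma_{k+1}(v)}$. In particular $h_k = g'_k$, so after mutating at $k$ (which is $\mu_k^*$, landing in the seed $\tB_\bfi$) we get $g_k = -g'_k$, the first case of \eqref{eq:bg}.

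Next we determine the column $(b^\mu_{v,k})_v$ of $\mu_k\tB_\bfi$. By the proof of Lemma~\ref{Lem:beta1}, the neighbourhood $K$ of $k$ in $\mu_k\Gamma_\bfi$ is $\{k,k+1,k+2,k^-_\bfi,(k+1)^-_\bfi\} \cap \N$, and the depicted quiver $\mu_k\Gamma_\bfi|_K$ shows that, for each existing neighbour, there is exactly one arrow between it and $k$ (recall $\sg$ is of type $\mathrm{ADE}$, so the relevant part of the quiver is simply laced), with
\[ b^\mu_{v,k} > 0 \ \text{ for } v \in \{k+2,(k+1)^-_\bfi\}, \qquad b^\mu_{v,k} < 0 \ \text{ for } v \in \{k+1,k^-_\bfi\}, \]
and $b^\mu_{v,k} = 0$ for all other $v \neq k$; moreover $|b^\mu_{v,k}| = 1$ whenever it is nonzero. (The conditional arrows $\alpha,\beta$ in that figure only affect whether $k^-_\bfi,(k+1)^-_\bfi$ are connected to each other, not the sign of their arrow to $k$.) Substituting into the mutation formula: for $v \in \{k+2,(k+1)^-_\bfi\}$ we get $g_v = h_v + [h_k]_+ = g'_{\sigma_{k+1}(v)} + \max(g'_k,0)$; for $v \in \{k+1,k^-_\bfi\}$ we get $g_v = h_v - [-h_k]_+ = g'_{\sigma_{k+1}(v)} + \min(g'_k,0)$; and for all remaining $v \neq k$ we get $g_v = h_v = g'_{\sigma_{k+1}(v)} = g'_v$, since $\sigma_{k+1}$ fixes every index outside $\{k+1,k+2\}$. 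This is exactly \eqref{eq:bg}.
\end{proof}
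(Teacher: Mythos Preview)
Your proof is correct and takes exactly the same approach as the paper, which simply says ``Apply the formula in Theorem~\ref{Thm:trop}.'' You have carefully spelled out what that entails: pulling back the $g$-vector through $\sigma_{k+1}^*$ via Remark~\ref{Rem:permg}, reading off the column $(\mu_k\tB_\bfi)_{v,k}$ from the quiver picture in the proof of Lemma~\ref{Lem:beta1}, and then substituting into the tropical mutation rule.
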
      
\begin{proof}
Apply the formula in Theorem~\ref{Thm:trop}.
\end{proof}

\subsection{Forward shifts}\label{fshif}

Now, we consider a global forward shift of the elements of the sequence. 
Precisely, assume that the sequences $\bfi = (i_u)_{u \in \N}, \bfi' = (i'_u)_{u \in \N} \in \Delta_0^{(\infty)}$ satisfy
\[ i'_u = i_{u+1} \quad \text{for all $u \in \N$}. \] 
In this case, we write $\bfi' = \partial_+ \bfi$.
Let $i \seq i_1$.
Define an increasing sequence of positive integers $(x_n)_{n \in \N}$ by the condition $x_1 = 1$ and $x_n = (x_{n-1})_{\bfi}^+$ for $n >1$, so that $\{ x_n \}_{ n \in \N } = \{k \in \N \mid i_k = i\}$.  
In the following lemmas, we consider the infinite mutation sequence corresponding to  $(x_1, x_2, \ldots)$.
Let $\sigma_+ \in \SG_\N$ be the permutation defined by 
\[ \sigma_+(k) =\begin{cases}
k^+_\bfi-1 & \text{if $i_k = i$}, \\
k-1 & \text{if $i_k \neq i$},
\end{cases} \]
whose inverse $\sigma_+^{-1}$ is given by 
\[ \sigma_+^{-1}(k) =\begin{cases}
(k+1)^-_\bfi & \text{if $i_{k+1} = i$}, \\
k+1 & \text{if $i_{k+1} \neq i$}.
\end{cases} \]

\begin{Lem} \label{Lem:shiftB}
Assume $\bfi' = \partial_+ \bfi$ and keep the above notation.
Then, the limit 
\[ \lim_{n \to \infty} \mu_{x_n}\cdots \mu_{x_2}\mu_{x_1} \tB_\bfi \]
yields a well-defined matrix $\mu_+\tB_\bfi$ and we have $\sigma_+ \mu_+ \tB_\bfi = \tB_{\bfi'}$.
\end{Lem}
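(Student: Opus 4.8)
The plan is to prove the stronger, quiver-level statement by giving an explicit description of the finite-step quivers, the decisive point being that all the mutations involved are ``eventually local''. Write $\Gamma^{(m)} \seq \mu_{x_m}\cdots\mu_{x_1}\Gamma_\bfi$, and let $\tB^{(m)}$ denote the corresponding skew-symmetrizable matrix. The candidate for the limit is $\mu_+\tB_\bfi = \sigma_+^{-1}\tB_{\bfi'}$, which is equivalent to the desired identity $\sigma_+\mu_+\tB_\bfi = \tB_{\bfi'}$; this is the natural guess, since by the very definition of $\sigma_+^{-1}$ the permutation $\sigma_+$ matches positions of $\bfi'$ with positions of $\bfi$ carrying the same element of $\Delta_0$, so that $\sigma_+^{-1}\Gamma_{\bfi'}$ is just $\Gamma_{\bfi'}$ redrawn on the vertex set $\N$ in the ``$\bfi$-indexing''.

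First I would formulate and prove, by induction on $m$, a description of $\Gamma^{(m)}$ of the following shape: the full subquiver of $\Gamma^{(m)}$ on $\{1,\dots,x_{m+1}-1\}$ coincides with that of $\sigma_+^{-1}\Gamma_{\bfi'}$; the full subquiver on $\{x_{m+1},x_{m+1}+1,\dots\}$ coincides with that of $\Gamma_\bfi$; and the arrows incident to $x_{m+1}$ that point into the lower region are given by a fixed local rule, an interface picture around a distinguished letter, of the same type as the neighbourhood pictures appearing in the proof of Lemma~\ref{Lem:beta1}. The base case $m=1$ is a direct local computation of $\mu_{x_1}\Gamma_\bfi$ around the vertex $1=x_1$. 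For the inductive step, one uses that $\mu_{x_m}$ alters only the full subquiver on $\{x_m\}\cup N(x_m)$, where by the inductive description $N(x_m)$ is a bounded window of vertices around $x_m$ whose least element tends to $\infty$ with $m$ (essentially $\{x_{m-1},x_m^+\}\cup\{x_m^-(j),(x_m^+)^-(j):j\sim i\}$); mutating this local piece turns the interface at $x_m$ into the interface at $x_{m+1}$ and enlarges the ``converged region'' from $\{1,\dots,x_m-1\}$ to $\{1,\dots,x_{m+1}-1\}$. This is a finite case analysis of the same flavour as the commutation- and braid-move computations in the proofs of Lemmas~\ref{Lem:gamma} and~\ref{Lem:beta1}, and it is the technical heart of the argument.

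Granting this description, the two assertions of the lemma follow quickly. Fix $u,v\in\N$. Since each $j\in\Delta_0$ occurs infinitely often in $\bfi$, the quantities $(x_m)^-_\bfi(j)$ grow without bound in $m$; hence one may choose $m$ so large that every element of $\{x_{m+1}-1\}\cup\{(x_{m+1})^-_\bfi(j):j\in\Delta_0\}$ exceeds $\max(u,v)$. Then $u$ and $v$ lie in the converged region of $\Gamma^{(m')}$ for all $m'\ge m$, and every subsequent mutation $\mu_{x_{m'}}$ with $m'\ge m$ touches only vertices in $\{x_{m'}\}\cup N(x_{m'})$, all of which exceed $\max(u,v)$. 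Therefore $\tB^{(m')}_{u,v}$ is independent of $m'\ge m$, so the limit $\mu_+\tB_\bfi$ is well defined, and by the inductive description its $(u,v)$-entry equals $(\sigma_+^{-1}\tB_{\bfi'})_{u,v}$. Hence $\mu_+\tB_\bfi = \sigma_+^{-1}\tB_{\bfi'}$, that is, $\sigma_+\mu_+\tB_\bfi = \tB_{\bfi'}$.

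The main obstacle is the inductive step of the quiver description: one must first pin down the correct interface picture (there are subcases according to which neighbours $j\sim i$ occur among the letters strictly between consecutive copies of $i$, and according to the relative order of the nearby occurrences of $i$ and of its neighbours), and then verify that a single mutation propagates this picture forward by exactly one ``slice'' while never producing an arrow to a vertex far from $x_m$ --- and it is precisely this absence of long-range arrows that makes the locality argument of the previous paragraph work.
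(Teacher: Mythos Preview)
Your outline is correct and the strategy---prove a quiver-level inductive statement, extract locality of the mutations, conclude that each entry stabilises---is exactly the paper's. The difference lies entirely in how the inductive step (which you rightly flag as the crux) is organised.

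Rather than tracking the full quiver $\Gamma^{(m)}$ with a ``converged region below $x_{m+1}$ / untouched region above / interface in between'', the paper decomposes the problem by Dynkin neighbours. For each $j\sim i$ it restricts to the full subquiver on $X_{ij}\seq X_i\cup X_j$, where $X_j=\{u\in\N\mid \im_u=j\}$. On such a slice the quiver has a very specific ``ladder'' shape, and the paper isolates the single local calculation needed as a separate Lemma (Lemma~\ref{Lem:mu}): for a quiver with a linear $i$-chain $0\to 1\leftarrow 2\leftarrow\cdots\leftarrow n+1$ and one extra vertex $\bullet$ attached as $1\to\bullet\to n+1$, the mutation sequence $\mu_n\cdots\mu_1$ just slides the attachment point of $\bullet$ from $1$ to $0$. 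Iterating this across consecutive segments of $X_i$ between occurrences of $j$ gives the limit $\mu_+(\Gamma_\bfi|_{X_{ij}})$ explicitly, and one reads off directly that it is $\sigma_+^{-1}(\Gamma_{\bfi'}|_{\sigma_+(X_{ij})})$.

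What makes this slice-by-slice analysis legitimate is exactly the locality you anticipated, but in a sharper form: the paper shows (again via Lemma~\ref{Lem:mu}) that in $\mu_{x_{n-1}}\cdots\mu_{x_1}\Gamma_\bfi$ the \emph{only} arrows pointing into $x_n$ are $x_{n-1}\to x_n$ and $x_{n+1}\to x_n$. Hence mutating at $x_n$ creates new arrows only between $\{x_{n-1},x_{n+1}\}\subset X_i$ and the old out-neighbours of $x_n$, so no arrow is ever created between distinct $X_j$'s, and the restriction to each $X_{ij}$ commutes with the whole mutation sequence. This single observation replaces the ``fixed local rule for the interface, with subcases according to which neighbours $j\sim i$ occur'' that you would otherwise have to carry through your induction, and it is also precisely what gives the entrywise stabilisation of $\tB^{(m)}_{u,v}$ for $m$ large.

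So: same proof, but the paper's slice decomposition plus Lemma~\ref{Lem:mu} turns your anticipated case analysis into one clean local computation.
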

\begin{proof}
For $j \in \Delta_0$, we put $X_j \seq \{ k \in \N \mid i_k = j \}$. 
By definition, we have $X_i = \{x_n\}_{n \in \N}$.
Take an index $j \in \Delta_0$ adjacent to $i$, and consider the map $X_i \to X_j \cup \{0\}$ given by $x_n \mapsto (x_n)_\bfi^-(j)$. 
Let $\{ 0 = y_0 < y_1 < y_2 < \cdots\}$ be the natural ordering of the image of this map.  
For each $n \in \N_0$, let $l_n$ be the largest integer $(>0)$ satisfying $(x_{l_n})_\bfi^-(j) = y_n$. 
The quiver $\Gamma_\bfi$ restricted to $X_{ij} \seq X_i \cup X_j$ is depicted as:
\[\vcenter{
\xymatrix@!C=4mm{x_1 \ar@{<-}[r] & \cdots \ar@{<-}[r] & x_{l_0-1} \ar@{<-}[r] & x_{l_0} \ar@{<-}[r] \ar[dr]& x_{l_0+1} \ar@{<-}[r]& \cdots \ar@{<-}[r] & x_{l_1} \ar@{<-}[r] \ar[rd]& x_{l_1+1} \ar@{<-}[r] & \cdots \ar@{<-}[r] & x_{l_2} \ar@{<-}[r] & \cdots & \\
&&&& y_1 \ar@{<-}[r] \ar[rru]& (y_1)^+_\bfi \ar@{<-}[r] & \cdots \ar@{<-}[r] & y_2 \ar@{<-}[r] \ar[rru]& (y_2)^+_\bfi \ar@{<-}[r] & \cdots  &  
}}.
\] 
We easily see that the mutated quiver $\mu_{x_{l_0-1}} \cdots \mu_{x_1}(\Gamma_\bfi |_{X_{ij}})$ is:
\[\vcenter{
\xymatrix@!C=4mm{x_1 \ar@{<-}[r] & \cdots \ar@{<-}[r] & x_{l_0-1} \ar[r] & x_{l_0} \ar@{<-}[r] \ar[dr]& x_{l_0+1} \ar@{<-}[r]& \cdots \ar@{<-}[r] & x_{l_1} \ar@{<-}[r] \ar[rd]& x_{l_1+1} \ar@{<-}[r] & \cdots \ar@{<-}[r] & x_{l_2} \ar@{<-}[r] & \cdots & \\
&&&& y_1 \ar@{<-}[r] \ar[rru]& (y_1)^+_\bfi \ar@{<-}[r] & \cdots \ar@{<-}[r] & y_2 \ar@{<-}[r] \ar[rru]& (y_2)^+_\bfi \ar@{<-}[r] & \cdots  &  
}}.
\]  
Then, applying Lemma~\ref{Lem:mu} below, the quiver $\mu_{x_{l_1-1}} \cdots \mu_{x_1}(\Gamma_\bfi |_{X_{ij}})$ is depicted as:
\[\vcenter{
\xymatrix@!C=4mm{x_1 \ar@{<-}[r] & \cdots \ar@{<-}[r] & x_{l_0-1} \ar@{<-}[r] \ar@{-->}[drr]& x_{l_0} \ar@{<-}[r]& \cdots \ar@{<-}[r]& x_{l_1-1} \ar[r] & x_{l_1} \ar@{<-}[r] \ar[rd]& x_{l_1+1} \ar@{<-}[r] & \cdots \ar@{<-}[r] & x_{l_2} \ar@{<-}[r] & \cdots & \\
&&&& y_1 \ar@{<-}[r] \ar[ru]& (y_1)^+_\bfi \ar@{<-}[r] & \cdots \ar@{<-}[r] & y_2 \ar@{<-}[r] \ar[rru]& (y_2)^+_\bfi \ar@{<-}[r] & \cdots  &  
}},
\] 
where we have the dashed arrow if and only if $l_0 > 1$. 
By repetition, we finally find that the limit $\lim_{n \to \infty}\mu_{x_n} \cdots \mu_{x_1}(\Gamma_\bfi |_{X_{ij}})$ gives a well-defined quiver depicted as:
\[\vcenter{
\xymatrix@!C=4mm{x_1 \ar@{<-}[r] & \cdots \ar@{<-}[r] & x_{l_0-1} \ar@{<-}[r] \ar@{-->}[drr]& x_{l_0} \ar@{<-}[r]& \cdots \ar@{<-}[r]& x_{l_1-1} \ar@{<-}[r] \ar[rrd] & x_{l_1} \ar@{<-}[r] & \cdots \ar@{<-}[r] & x_{l_2-1} \ar@{<-}[r] & x_{l_2} \ar@{<-}[r] & \cdots & \\
&&&& y_1 \ar@{<-}[r] \ar[ru]& (y_1)^+_\bfi \ar@{<-}[r] & \cdots \ar@{<-}[r] & y_2 \ar@{<-}[r] \ar[ru]& (y_2)^+_\bfi \ar@{<-}[r] & \cdots  &  
}},
\] 
which is isomorphic to $\Gamma_{\bfi'}|_{\sigma_+(X_{ij})}$. 
Moreover, for each $n \in \N$, there are no arrows going into the vertex $x_n$ in the quiver $\mu_{x_{n-1}} \cdots \mu_{x_1}\Gamma_{\bfi}$ other than $x_{n-1} \to x_n$ and $x_{n+1} \to x_n$ (see the latter assertion of Lemma~\ref{Lem:mu} below).
Thus, we have 
\[\mu_{x_n} \cdots \mu_{x_1}(\Gamma_\bfi |_{X_{ij}}) = (\mu_{x_n} \cdots \mu_{x_1} \Gamma_\bfi)|_{X_{ij}}.\]
Since the mutations $\{\mu_{x_1}, \mu_{x_2}, \ldots\}$ change only the full subquivers supported on $\bigcup_{j \sim i}X_{ij}$, we find that the limit $\mu_+ \Gamma_\bfi \seq \lim_{n \to \infty} \mu_{x_n} \cdots \mu_{x_1} \Gamma_\bfi$ is well-defined. 
Moreover, the permutation $\sigma_+$ gives an isomorphism of the quivers $\sigma_{+} \colon \mu_+ \Gamma_\bfi \simeq \Gamma_{\bfi'}$.
Thus we obtain the conclusion.
\end{proof}

\begin{Lem} \label{Lem:mu}
Let $n \in \N$ and consider a quiver $Q$ given by
\[ Q = \left(\vcenter{\xymatrix@!C=4mm{
0 \ar[r]& 1 \ar@{<-}[r] & 2 \ar@{<-}[r] & \cdots \ar@{<-}[r] & n \ar@{<-}[r]& n+1 \\
&&& \ar@{<-}[llu] \bullet \ar[rru]& 
}}
\right).\]
Then, we have
\[ \mu_n \cdots  \mu_2 \mu_1 Q = \left(\vcenter{\xymatrix@!C=4mm{
0 \ar@{<-}[r]& 1 \ar@{<-}[r] & 2 \ar@{<-}[r] & \cdots \ar@{<-}[r] & n \ar[r]& n+1 \\
&& \ar@{<-}[llu] \bullet \ar[rru]& 
}}
\right).\]
Moreover, for each $k \in [1,n]$, there are no arrows going into the vertex $k$ in the quiver $\mu_{k-1} \cdots \mu_1Q$ other than $k-1 \to k$ and $k+1 \to k$.
\end{Lem}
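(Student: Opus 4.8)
I would prove this by an explicit computation, performing the mutations $\mu_1, \mu_2, \ldots, \mu_n$ one after another and tracking the intermediate quivers. Set $Q_1 \seq Q$ and $Q_{k+1} \seq \mu_k Q_k$, so that $Q_{n+1} = \mu_n \cdots \mu_1 Q$ is the quiver to be identified. The key claim, proved by induction on $k$, is that for $2 \le k \le n$ the quiver $Q_k$ consists of the path
\[ 0 \leftarrow 1 \leftarrow \cdots \leftarrow (k-1) \to k \leftarrow (k+1) \leftarrow \cdots \leftarrow n \leftarrow (n+1) \]
together with the vertex $\bullet$ carrying exactly the four arrows $0 \to \bullet$, $k \to \bullet$, $\bullet \to (k-1)$, $\bullet \to (n+1)$, and no others.

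For the base case $k = 2$ one computes $\mu_1 Q$ straight from the given shape of $Q = Q_1$: the vertex $1$ has the two incoming arrows $0 \to 1$, $2 \to 1$ and the single outgoing arrow $1 \to \bullet$, so $\mu_1$ reverses these three arrows and adjoins $0 \to \bullet$ and $2 \to \bullet$ coming from the length-two paths through $1$; as $\bullet$ has no arrow into $2$ when $n \ge 2$, no two-cycle appears and one reads off exactly the asserted form of $Q_2$. For the inductive step $k \mapsto k+1$ with $2 \le k \le n-1$, I would apply the quiver mutation rule at the vertex $k$ of $Q_k$: by the inductive description, $k$ has precisely the incoming arrows $(k-1) \to k$, $(k+1) \to k$ and the single outgoing arrow $k \to \bullet$, so $\mu_k$ turns these into $k \to (k-1)$, $k \to (k+1)$, $\bullet \to k$ and creates $(k-1) \to \bullet$ and $(k+1) \to \bullet$ from the two length-two paths through $k$; the former cancels the arrow $\bullet \to (k-1)$ present in $Q_k$, while the latter survives because the only arrow leaving $\bullet$ other than $\bullet \to (k-1)$ is $\bullet \to (n+1)$ and $k+1 \le n < n+1$. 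Relabelling, this is precisely $Q_{k+1}$. Applying $\mu_n$ to $Q_n$ by the same computation --- now with the extra feature that $(n+1) \to \bullet$ does cancel $\bullet \to (n+1)$ --- yields the quiver in the statement; and the degenerate case $n = 1$, where the inductive range is empty, is covered by the single direct computation of $\mu_1 Q$, in which $2 \to \bullet$ cancels $\bullet \to (n+1)$.

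The latter assertion of the lemma then drops out of this description: in $Q_k$ the vertex $k$ receives arrows only from $k-1$ and $k+1$ for $2 \le k \le n$, and in $Q_1 = Q$ the vertex $1$ receives arrows only from $0$ and $2$, since at no stage is there an arrow $\bullet \to k$.

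The whole argument is a mechanical unwinding of the mutation rule; the only delicate point is the bookkeeping of the (up to four) arrows incident to $\bullet$ --- in particular, identifying at each step which two-cycle is created and has to be removed --- together with the separate, routine handling of the boundary mutations $\mu_1$ and $\mu_n$ and of the small case $n = 1$. I expect this bookkeeping to be the main, though minor, obstacle.
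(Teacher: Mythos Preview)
Your proposal is correct and takes essentially the same approach as the paper: the paper's proof simply declares the computation ``straightforward'' and displays the intermediate quivers $\mu_1 Q$, $\mu_2\mu_1 Q$, \ldots, $\mu_{n-1}\cdots\mu_1 Q$ as pictures, which is exactly your inductive description of $Q_k$ written out diagrammatically. Your version is slightly more careful in that it isolates the invariant (the four arrows incident to $\bullet$) and explicitly treats the boundary cases $\mu_1$, $\mu_n$, and $n=1$, but the content is identical.
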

\begin{proof} The proof is straightforward:
\begin{align*} \mu_1Q &= \left(\vcenter{\xymatrix@!C=4mm{
0 \ar@{<-}[r] \ar[rrrd]& 1 \ar[r] & 2 \ar@{<-}[r] \ar[rd]& 3 \ar@{<-}[r] & \cdots \ar@{<-}[r]& n+1 \\
&&& \ar[llu] \bullet \ar[rru]& 
}}
\right), \allowdisplaybreaks \\
\mu_2\mu_1Q &= \left(\vcenter{\xymatrix@!C=4mm{
0 \ar@{<-}[r] \ar[rrrd]& 1 \ar@{<-}[r] & 2 \ar[r] \ar@{<-}[rd]& 3 \ar@{<-}[r] \ar[d]& \cdots \ar@{<-}[r]& n+1 \\
&&& \bullet \ar[rru]& 
}}
\right), \allowdisplaybreaks\\
\cdots & \allowdisplaybreaks\\
\mu_{n-1} \cdots \mu_1Q &= \left(\vcenter{\xymatrix@!C=4mm{
0 \ar@{<-}[r] \ar[rrrd]& 1 \ar@{<-}[r] & \cdots \ar@{<-}[r] & n-1 \ar[r] \ar@{<-}[d]& n \ar@{<-}[r] \ar[ld]& n+1 \\
&&& \bullet \ar[rru]& 
}}
\right). \qedhere
\end{align*}
\end{proof}

\begin{Lem} \label{Lem:shiftL}
Assume $\bfi' = \partial_+ \bfi$ and keep the above notation.
Then, the limit 
\[ \lim_{n \to \infty} \mu_{x_n}\cdots \mu_{x_2}\mu_{x_1} \Lambda_\bfi \]
yields a well-defined matrix $\mu_+\Lambda_\bfi$ and we have $\sigma_+ \mu_+ \Lambda_\bfi = \Lambda_{\bfi'}$.
\end{Lem}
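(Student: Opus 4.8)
The plan is to reduce the statement about $\Lambda$ to the mutation formula for the $\Lambda$-part of a compatible pair and then to exploit the compatibility relation \eqref{eq:BL} together with the already-established result for $\tB_\bfi$ (Lemma~\ref{Lem:shiftB}). First I would recall from Appendix~\ref{sec:QCA} the explicit formula for how $\Lambda$ transforms under a single mutation $\mu_k$: writing $E_k$ for the relevant matrix built from the $k$-th column of $\tB$, one has $\mu_k\Lambda = E_k^{\mathsf T}\Lambda E_k$. Thus the composite $\mu_{x_n}\cdots\mu_{x_1}\Lambda_\bfi$ is a product of such congruences, and the question is whether this stabilizes as $n\to\infty$ and whether the limit, after applying $\sigma_+$, equals $\Lambda_{\bfi'}$.

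The key observation is that the entries of $\Lambda_\bfi$ are stable in a strong sense: for fixed $u,v$, once $n$ is large enough that both $u$ and $v$ lie in $[1,N]$ for suitable $N$, the value $(\mu_{x_n}\cdots\mu_{x_1}\Lambda_\bfi)_{u,v}$ no longer changes, because the mutations $\mu_{x_m}$ for $m$ large only involve vertices $x_m$ far out in the sequence, and—by the local structure of $\Gamma_\bfi$ established in the proof of Lemma~\ref{Lem:shiftB}—the only arrows into $x_m$ come from its two neighbors on the $X_i$-chain and the finitely many $y$-type vertices adjacent to it, all of which eventually exceed any fixed bound. Hence the entrywise limit $\mu_+\Lambda_\bfi$ exists. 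This is essentially the $\Lambda$-analog of the quiver-stabilization argument already carried out for $\tB$, and I would phrase it as: the restriction of $\mu_{x_n}\cdots\mu_{x_1}\Lambda_\bfi$ to any finite index block $[1,N]\times[1,N]$ stabilizes, since the mutations at $x_m$ with $x_m > $ (something depending on $N$) act by congruence matrices $E_{x_m}$ whose non-trivial off-diagonal entries are supported outside $[1,N]$.

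Having the limit, it remains to identify $\sigma_+\mu_+\Lambda_\bfi$ with $\Lambda_{\bfi'}$. Here I would use the compatible-pair relation: by Corollary~\ref{Cor:BL}, each finite truncation $(\tB^n_\bfi,\Lambda^n_\bfi)$ is a compatible pair, and compatibility is preserved under mutation, so $(\mu_{x_n}\cdots\mu_{x_1}\tB_\bfi,\ \mu_{x_n}\cdots\mu_{x_1}\Lambda_\bfi)$ remains compatible at every stage, and so does its limit $(\mu_+\tB_\bfi,\mu_+\Lambda_\bfi)$ in the appropriate truncated sense. Applying $\sigma_+$ and invoking Lemma~\ref{Lem:shiftB}, the pair $(\tB_{\bfi'},\ \sigma_+\mu_+\Lambda_\bfi)$ is compatible, i.e. $\sum_k b^{\bfi'}_{k,u}(\sigma_+\mu_+\Lambda_\bfi)_{k,v} = 2\sfd_{i'_u}\delta_{u,v}$. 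Since $(\tB_{\bfi'},\Lambda_{\bfi'})$ is also compatible by Proposition~\ref{Prop:BL}, the difference $D \seq \sigma_+\mu_+\Lambda_\bfi - \Lambda_{\bfi'}$ satisfies $\sum_k b^{\bfi'}_{k,u}D_{k,v}=0$ for all $u,v$; because $\tB_{\bfi'}$ has full column rank (each principal part coming from a compatible pair is of full rank, cf.\ the standard fact that compatibility forces $\tB$ to have rank equal to the number of exchangeable indices), this forces $D$ to vanish on all exchangeable rows, and a separate direct check handles the frozen rows. The main obstacle is precisely this last rigidity step: controlling the ``frozen'' directions where the compatibility pairing gives no information, so that uniqueness of $\Lambda$ given $\tB$ and the compatibility relation cannot be invoked verbatim. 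I expect to close that gap by a direct computation of the finitely many affected entries—tracking, exactly as in the proof of Lemma~\ref{Lem:shiftB} via Lemma~\ref{Lem:mu}, how $\Lambda_{u,v} = (\varpi_{i_u}-w_u\varpi_{i_u},\varpi_{i_v}+w_v\varpi_{i_v})$ behaves under the forward shift $w^{\bfi}_u \mapsto w^{\bfi'}_{u}$, using $w^{\bfi'}_u = s_i w^{\bfi}_{\sigma_+^{-1}(u)} s_i$ or an analogous relation—and matching it termwise against the mutation formula, much as in the proof of Lemma~\ref{Lem:beta2}.
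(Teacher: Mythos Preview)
Your stabilization argument for the existence of $\mu_+\Lambda_\bfi$ is fine and is essentially what the paper does: once one knows (from the proof of Lemma~\ref{Lem:shiftB}) that the only arrows into $x_k$ in $\mu_{x_{k-1}}\cdots\mu_{x_1}\Gamma_\bfi$ are $x_{k-1}\to x_k$ and $x_{k+1}\to x_k$, the congruence matrix $E^{(k)}$ affects only row and column $x_k$, so any fixed finite block of $\Lambda$ stabilizes.

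The identification step, however, has a genuine gap, and you misdiagnose its source. First, there are \emph{no} frozen indices here: since $\bfi\in\Delta_0^{(\infty)}$, every $u\in\N$ satisfies $u^+<\infty$, so $J_e=J=\N$. Your worry about ``frozen rows'' is therefore empty. Second, and more importantly, the compatibility relation does \emph{not} determine $\Lambda$ from $\tB$. What compatibility gives is $(\tB_{\bfi'})^T D=0$ for $D\seq\sigma_+\mu_+\Lambda_\bfi-\Lambda_{\bfi'}$. You want to conclude $D=0$. The ``full rank'' fact you invoke says that $\tB_{\bfi'}$ is injective on $\Z^{\oplus\N}$, but the columns of $D$ live in $\Z^{\N}$ (they need not be finitely supported), and on $\Z^{\N}$ the kernel of $\tB_{\bfi'}$ is typically nontrivial. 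For example, in type $\mathrm{A}_1$ with $\bfi'=(1,1,1,\ldots)$, the vector $(1,0,1,0,\ldots)\in\Z^{\N}$ lies in $\ker\tB_{\bfi'}$. So the uniqueness you appeal to is false in the infinite setting, and the argument cannot be completed along these lines without substantial additional input.

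The paper avoids this entirely by a direct computation. Using the explicit form of each $E^{(k)}$ (only three nonzero entries in column $x_k$), one computes the infinite product $E=\lim_n E^{(1)}\cdots E^{(n)}$ in closed form:
\[
e_{u,v}=\begin{cases}\delta_{u,v}&\text{if }i_v\neq i,\\ \delta_{u,v^+}-\delta_{u,1}&\text{if }i_v=i.\end{cases}
\]
Then $\mu_+\Lambda_\bfi=E^T\Lambda_\bfi E$ is written out explicitly in four cases, and each is checked to equal $\Lambda'_{\sigma_+(u),\sigma_+(v)}$ using the formula $\Lambda_{u,v}=(\varpi_{i_u}-w_u\varpi_{i_u},\varpi_{i_v}+w_v\varpi_{i_v})$ and the relation $w^{\bfi'}_u=s_i w^{\bfi}_{u+1}$. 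This is exactly the ``direct computation'' you mention as a fallback, but it is the whole proof, not a patch; the compatibility route never gets off the ground.
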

\begin{proof}
We put $\Lambda_\bfi = (\Lambda_{u,v})_{u,v \in \N}$ and $\Lambda_{\bfi'} = (\Lambda'_{u,v})_{u,v \in \N}$.
As we have seen in the proof of Lemma~\ref{Lem:shiftB}, there are no arrows going into the vertex $x_k$ in the quiver $\mu_{x_{k-1}} \cdots \mu_{x_1}\Gamma_{\bfi}$ other than $x_{k-1} \to x_k$ and $x_{k+1} \to x_k$ for each $k \in \N$.
Thus, we have 
\[\mu_{x_n} \cdots \mu_{x_2}\mu_{x_1}\Lambda_\bfi = (E^{(1)} E^{(2)} \cdots E^{(n)})^T \Lambda_\bfi (E^{(1)} E^{(2)} \cdots E^{(n)}),\] where the matrix $E^{(k)} = (e^{(k)}_{u,v})_{u,v \in \N}$ is given by 
\[ 
e^{(k)}_{u,v} = \begin{cases}
\delta_{u,v} & \text{if $v \neq x_k$}, \\
-\delta_{u,x_{k}} + \delta_{u,x_{k-1}} + \delta_{u,x_{k+1}} & \text{if $v = x_k$}.
\end{cases}
\]
Therefore, the limit $\lim_{n \to \infty} E^{(1)} E^{(2)} \cdots E^{(n)}$ yields a well-defined matrix $E$ and hence 
\[\mu_+\Lambda_\bfi = \lim_{n \to \infty} \mu_{x_n}\cdots \mu_{x_1} \Lambda_\bfi = E^T \Lambda E\] is also well-defined.
More precisely, letting $E = (e_{u,v})_{u,v \in \N}$, we have
\[ e_{u,v} = \sum_{a_1, \ldots, a_{n-1} \in \N} e^{(1)}_{u,a_1} e^{(2)}_{a_1, a_2} \cdots e^{(n)}_{a_{n-1},v},\]
where $n \in \N$ is an integer such that $v \le x_n$. 
From this, we find that $e_{u,v} = \delta_{u,v}$ if $i_v \neq i \seq i_1$, and that $e_{u,x_n} = -\delta_{u,x_n} + e_{u, x_{n-1}} + \delta_{u,x_{n+1}}$.
By the latter equality, we further compute 
\[ e_{u,x_n} - \delta_{u, x_{n+1}} = e_{u,x_{n-1}} - \delta_{u,x_n} = \cdots = e_{u,x_1} - \delta_{u,x_2} = - \delta_{u,1}, \]
where the last equality follows since $e_{u,x_1} = e^{(1)}_{u,1} = -\delta_{u,1} + \delta_{u,x_2}$.
Thus, we get
\begin{equation} \label{eq:edelta}
e_{u,v} = \begin{cases}
\delta_{u,v} & \text{if $i_v \neq i$}, \\
\delta_{u, v^+} - \delta_{u,1}& \text{if $i_v = i$},
\end{cases}
\end{equation}
where $v^+ = v^+_\bfi$.
Therefore, we have
\[  (\mu_+ \Lambda_\bfi)_{u,v} = (E^T \Lambda_\bfi E)_{u,v} = \begin{cases}
\Lambda_{u,v} & \text{if $i \not \in \{i_u, i_v \}$}, \\
\Lambda_{u^+, v} - \Lambda_{1,v} & \text{if $i_u = i$ and $i_v \neq i$}, \\
\Lambda_{u,v^+} - \Lambda_{u,1} & \text{if $i_u \neq i$ and $i_v = i$}, \\
\Lambda_{u^+, v^+} - \Lambda_{1,v^+} - \Lambda_{u^+, 1} & \text{if $i_u = i_v = i$}.
\end{cases}
\]
From this, the desired equality $(\mu_+ \Lambda_\bfi)_{u,v}  = \Lambda'_{\sigma_+(u), \sigma_+(v)}$ follows for any $u,v \in \N$.  
For example, when $i_u = i_v = i$ and $u<v$, we have
\begin{align*}
 \Lambda'_{\sigma_+(u), \sigma_+(v)} &= (\varpi_i -s_i w^\bfi_{u^+} \varpi_i, \varpi_i + s_i w^\bfi_{v^+}\varpi_i) \\
&= ( (1 - s_i )\varpi_i + s_i (1-w^\bfi_{u^+}) \varpi_i, (1-s_i)\varpi_i + s_i(1+ w^\bfi_{v^+})\varpi_i) \\
&= ((1-s_i)\varpi_i, (1-s_i)\varpi_i) - ((1-w^\bfi_{u^+})\varpi_i, (1-s_i)\varpi_i) \\
& \qquad - ((1-s_i)\varpi_i, (1+w^\bfi_{v^+})\varpi_i) + ((1-w^\bfi_{u^+})\varpi_i, (1+w^\bfi_{v^+})\varpi_i) \\
&= \Lambda_{1, u^+} - \Lambda_{1, v^+} + \Lambda_{u^+, v^+}.
\end{align*}
Here, for the last equality, we used the identity
\[((1-s_i)\varpi_i, (1-s_i)\varpi_i) = ((1-s_i)\varpi_i, 2\varpi_i).\] 
For the other case, we can check the desired equality with a similar computation.
\end{proof}

For each $u \in \N$, there is $n_u \in \N$ such that $x_{n_u} > u$ since $\{x_n\}_{n \in \N}$ is increasing. 
Then, we have $\mu_{x_{n}}^*(X_u) = X_u$ if $n \ge n_u$.
We define a homomorphism of $\Z[t^{\pm 1/2}]$-algebras
\[ \mu_+^* \colon \cA_t(\mu_+(\Lambda_\bfi, \tB_\bfi)) \to \cA_t(\Lambda_\bfi, \tB_\bfi)\]
by $\mu_+^*(X_u) \seq \mu_{x_1}^*\mu_{x_2}^* \cdots \mu_{x_{n_u}}^*(X_u)$ for all $u \in \N$.
Note that $\mu_+^*(X_u)$ is independent of the choice of $n_u$ with $x_{n_u} > u$, and hence $\mu_+^*$ is well-defined.
By construction, it sends each quantum cluster monomial to a quantum cluster monomial. 

Lemmas~\ref{Lem:shiftB} \& \ref{Lem:shiftL} show the equality $(\Lambda_{\bfi'}, \tB_{\bfi'}) = \sigma_+\mu_+(\Lambda_\bfi, \tB_\bfi)$ of compatible pairs. 
Therefore we have obtained the following.
\begin{Prop}
Assume that $\bfi, \bfi' \in \Delta_0^{(\infty)}$ are related by $\bfi' = \partial_+ \bfi$.
Then, we have a homomorphism of $\Z[t^{\pm 1/2}]$-algebras
\[ \partial_+^* (= \mu_+^* \sigma_+^*) \colon \cA_{\bfi'} \to \cA_\bfi \qquad \text{given by $X_u \mapsto \mu_+(X_{\sigma_+^{-1}(u)})$ for all $u \in \N$},\]
which sends each cluster monomial in $\cA_{\bfi'}$ to a cluster monomial in $\cA_\bfi$.
\end{Prop}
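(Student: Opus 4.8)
The plan is to deduce the statement by combining Lemmas~\ref{Lem:shiftB} and~\ref{Lem:shiftL} with the general functoriality of quantum cluster algebras under mutations and permutations recalled in Appendix~\ref{sec:QCA}; the only genuine work is to make sense of an infinite-rank limit. First I would treat the homomorphism $\mu_+^*$ introduced just before the statement. For each fixed $u\in\N$ one has $\mu_{x_n}^*(X_u)=X_u$ once $x_n>u$, so the partial composites $\mu_{x_1}^*\cdots\mu_{x_n}^*(X_u)$ stabilize and $\mu_+^*(X_u)$ is independent of the chosen truncation. Restricted to any finite truncation $\cA_t(\mu_+(\Lambda_\bfi,\tB_\bfi))^n$, the map $\mu_+^*$ therefore agrees with a finite partial composite $\mu_{x_1}^*\cdots\mu_{x_N}^*$, which is an isomorphism of finite-rank quantum cluster algebras by the mutation theorem; this shows at once that $\mu_+^*\colon\cA_t(\mu_+(\Lambda_\bfi,\tB_\bfi))\to\cA_\bfi$ is a well-defined $\Z[t^{\pm1/2}]$-algebra homomorphism (in fact injective, being injective on each truncation) which carries quantum cluster monomials to quantum cluster monomials, since any such monomial already lies in some truncation.

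Next I would use Lemmas~\ref{Lem:shiftB} and~\ref{Lem:shiftL}: together they give the equality $(\Lambda_{\bfi'},\tB_{\bfi'})=\sigma_+\,\mu_+(\Lambda_\bfi,\tB_\bfi)$; in particular $\mu_+(\Lambda_\bfi,\tB_\bfi)=\sigma_+^{-1}(\Lambda_{\bfi'},\tB_{\bfi'})$ is a compatible pair, being a permutation of the compatible pair $(\Lambda_{\bfi'},\tB_{\bfi'})$ of Proposition~\ref{Prop:BL} applied to $\bfi'$. By the permutation-compatibility of quantum cluster algebras (Appendix~\ref{Ssec:perm}, exactly as in the commutation-move case) the permutation $\sigma_+$ then yields an isomorphism of $\Z[t^{\pm1/2}]$-algebras $\sigma_+^*\colon\cA_{\bfi'}\simeq\cA_t(\mu_+(\Lambda_\bfi,\tB_\bfi))$, $X_u\mapsto X_{\sigma_+^{-1}(u)}$, inducing a bijection on quantum cluster monomials. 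Composing, $\partial_+^*\seq\mu_+^*\circ\sigma_+^*\colon\cA_{\bfi'}\to\cA_\bfi$ is a $\Z[t^{\pm1/2}]$-algebra homomorphism with $\partial_+^*(X_u)=\mu_+^*\!\left(X_{\sigma_+^{-1}(u)}\right)$ for all $u\in\N$, and it sends cluster monomials to cluster monomials because each of $\sigma_+^*$ and $\mu_+^*$ does.

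The step I expect to demand the most care — and the only content not already packaged in Lemmas~\ref{Lem:shiftB}--\ref{Lem:shiftL} — is the truncation bookkeeping underlying the two reductions above: one must check that the matrix entries of $\mu_{x_n}\cdots\mu_{x_1}\Lambda_\bfi$ and of $\mu_{x_n}\cdots\mu_{x_1}\tB_\bfi$ stabilize coordinatewise, and that for any given finite family of generators one can pick a truncation level $N$ in which all the relevant mutation indices are \emph{exchangeable} for $(\tB_\bfi^N,\Lambda_\bfi^N)$, so that the partial composite $\mu_{x_1}^*\cdots\mu_{x_N}^*$ really is a composition of mutation isomorphisms between honest finite-rank quantum cluster algebras. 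Here the essential input is the structural observation from the proof of Lemma~\ref{Lem:shiftB} that in $\mu_{x_{k-1}}\cdots\mu_{x_1}\Gamma_\bfi$ the only arrows into $x_k$ come from $x_{k-1}$ and $x_{k+1}$; this localizes the successive exchange relations and yields the identity $\mu_{x_n}\cdots\mu_{x_1}(\Gamma_\bfi|_{X_{ij}})=(\mu_{x_n}\cdots\mu_{x_1}\Gamma_\bfi)|_{X_{ij}}$ used there. Once these finite models are in place, the conclusion follows immediately from the mutation and permutation theorems of Appendix~\ref{sec:QCA}.
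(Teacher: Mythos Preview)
Your proposal is correct and follows essentially the same approach as the paper: the paper's ``proof'' is just the one-line observation preceding the proposition that Lemmas~\ref{Lem:shiftB} and~\ref{Lem:shiftL} give $(\Lambda_{\bfi'},\tB_{\bfi'})=\sigma_+\mu_+(\Lambda_\bfi,\tB_\bfi)$, after which the result follows from the already-established well-definedness of $\mu_+^*$ (stated just before the proposition) together with the permutation isomorphism of Appendix~\ref{Ssec:perm}. Your write-up simply unpacks the truncation and stabilization bookkeeping that the paper leaves implicit.
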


Now, let us explain how the degrees of cluster monomials get modified under a certain condition.
For $\bfi \in \Delta_0^{(\infty)}$, let $C_\bfi \subset \Z^{\oplus \N}$ denote a cone given by 
\begin{equation} \label{eq:cone}
C_\bfi \seq \left\{\bfg = (g_u)_{u \in \N} \in \Z^{\oplus \N} \; \middle | \; \sum_{v \ge u, i_v = i_u} g_v \ge 0, \forall u \in \N   \right\} 
= \sum_{u \in \N} \N_0 (\bfe_u - \bfe_{u^-_{\bfi}}),
\end{equation}
where $\{\bfe_u\}_{u \in \N}$ is the natural basis of $\Z^{\oplus \N}$ and we understand $\bfe_0 = 0$.

\begin{Lem} \label{Lem:dg}
Assume that $\bfi, \bfi' \in \Delta_0^{(\infty)}$ are related by $\bfi' = \partial_+ \bfi$. 
Let $x' \in \cA_{\bfi'}$ be a cluster monomial of degree $\bfg' = (g'_u)_{u \in \N}$. Assuming $\bfg' \in C_{\bfi'}$, the degree $\bfg = (g_u)_{u \in \N}$ of the cluster monomial $\partial_+^* x' \in \cA_\bfi$ is given by
\begin{equation} \label{eq:dg}
g_u = \begin{cases}
-\sum_{v \in \N; i'_v = i_1} g'_v & \text{if $u=1$}, \\
g'_{u-1} & \text{if $u > 1$}. \\
\end{cases} 
\end{equation}
In other words, the assignment $\bfg' \mapsto \bfg$ is given by the $\N_0$-linear map $C_{\bfi'} \to C_\bfi$ which sends $\bfe_u - \bfe_{u^-_{\bfi'}} \in C_{\bfi'}$ to $%\bfe_{u+1} - \bfe_{u^-_{\bfi'}+1} = 
\bfe_{u+1} - \bfe_{(u+1)^-_{\bfi}} \in C_{\bfi}$ for all $u \in \N$.
\end{Lem}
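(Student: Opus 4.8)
The plan is to reduce the statement to tracking $g$-vectors through the infinite mutation sequence $\mu_+ = \lim_{n\to\infty}\mu_{x_n}\cdots\mu_{x_1}$ and the permutation $\sigma_+$, exactly as the homomorphism $\partial_+^* = \mu_+^*\sigma_+^*$ was built. Since $\sigma_+$ merely reindexes, the substantive content is the behaviour of the $g$-vector under the sequence of mutations $\mu_{x_1},\mu_{x_2},\ldots$. First I would invoke the tropical sign-coherent $g$-vector transformation rule (Theorem~\ref{Thm:trop}, applied iteratively): a single mutation $\mu_k$ transforms a $g$-vector $\bfg$ by negating $g_k$ and adding to the other coordinates a correction term governed by the exchange matrix column $b_{\bullet,k}$ and the tropical sign of $g_k$ (equivalently, the $c$-vector sign at that seed). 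The hypothesis $\bfg'\in C_{\bfi'}$ is what pins down all these signs uniformly, so that the infinite composite has a clean closed form.

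The key steps, in order: \textbf{(1)} Using the quiver description in the proof of Lemma~\ref{Lem:shiftB} — namely that at the seed reached just before mutating $x_k$, the only arrows into $x_k$ are $x_{k-1}\to x_k$ and $x_{k+1}\to x_k$, and the arrows out of $x_k$ go to the neighbours $y$ at level $j\sim i$ — read off the relevant exchange-matrix columns at each stage. \textbf{(2)} Show, by induction on $k$ using the linearity of the $g$-vector transformation on the cone, that the partial $g$-vector stays in (the appropriate image of) $C_{\bfi'}$, so the tropical sign of the coordinate at $x_k$ is determined: writing $\bfg' = \sum_u m_u(\bfe_u - \bfe_{u^-_{\bfi'}})$ with $m_u\ge 0$, the condition $C_{\bfi'}$ forces $\sum_{v\ge x_k,\, i'_v = i_1} g'_v \ge 0$, which is exactly the sign needed so that the $\max/\min$ in the single-step formula always picks the same branch. \textbf{(3)} With the signs fixed, the composite mutation on $g$-vectors becomes the composition of elementary $\N_0$-linear maps; combining this with $\sigma_+$ and passing to the limit (well-defined by the stabilization already proved in Lemmas~\ref{Lem:shiftB} and~\ref{Lem:shiftL}, since any fixed coordinate is affected only finitely often), one obtains precisely that $\bfe_u - \bfe_{u^-_{\bfi'}}\mapsto \bfe_{u+1} - \bfe_{(u+1)^-_{\bfi}}$. \textbf{(4)} Expanding this linear map on a general element of $C_{\bfi'}$ and simplifying the telescoping sums yields the case distinction \eqref{eq:dg}: the coordinate at $u=1$ collects $-\sum_{v:\, i'_v = i_1} g'_v$ (the shift creates one new "leftmost" vertex in colour $i_1$), while every other coordinate is just shifted by one.

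The main obstacle I expect is \textbf{step (2)}: controlling the tropical signs along the whole infinite mutation sequence. A priori, after mutating $x_1,\ldots,x_{k-1}$ the $g$-vector could leave the cone and the sign at $x_k$ could flip, breaking the uniform formula. The point is to prove an invariance statement — that the (suitably truncated) $g$-vector remains in a cone on which all the relevant $c$-vectors have a fixed sign — and this requires carefully matching the combinatorics of $C_{\bfi'}$ versus $C_{\bfi}$ with the quiver neighbourhoods appearing in Lemma~\ref{Lem:shiftB}. Once that sign-coherence along the sequence is established, the rest is a bookkeeping computation with the elementary matrices $E^{(k)}$ of Lemma~\ref{Lem:shiftL} transposed to the $g$-vector side (one expects $\bfg \mapsto E^{-T}\bfg$ or a close variant on the cone), and the final formula drops out by the same telescoping identity $e_{u,x_k} - \delta_{u,x_{k+1}} = -\delta_{u,1}$ used there.
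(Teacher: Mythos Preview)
Your proposal is correct and takes essentially the same route as the paper. Two minor points: for your step~(2) the paper does something simpler than the cone-invariance argument you outline --- one directly computes that the $x_m$-coordinate of $E^{(m+1)}\cdots E^{(n)}\bfg''$ equals the partial sum $s_m = \sum_{l\ge m} g''_{x_l}$, and this is precisely one of the defining inequalities of $C_{\bfi'}$ after unwinding $\sigma_+$; and the linear map on the cone is $\bfg = E\bfg''$ with the same $E$ as in the proof of Lemma~\ref{Lem:shiftL}, not $E^{-T}$ (the transpose appears in the $\Lambda$-transformation, not for $g$-vectors).
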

\begin{proof}
This is an application of Theorem~\ref{Thm:trop}.
To be more precise, let $\bfg'' = (g''_u)_{u \in \N} \seq (g'_{\sigma_+(u)})_{u \in \N}$ so that we have $\deg\sigma_+^*(x') = \bfg''$ (see Remark~\ref{Rem:permg}).
Take a large integer $n \in \N$ such that $g''_u =0$ for all $u > x_n$.
Then we have $\partial_+^* x' = \mu_{x_1}^*\mu_{x_2}^* \cdots \mu_{x_n}^*\sigma_+^* x'$.
The assumption $\bfg' \in C_{\bfi'}$ implies 
\[
s_{m} \seq \sum_{m \le l \le n} g''_{x_l} \ge 0 
\]
for all $m \in [2,n]$.
Since $g''_{x_n} = s_n \ge 0$, we can apply Remark~\ref{Rem:trop} to get
\[\deg(\mu_{x_n}^* \sigma_+^*x') = E^{(n)} \bfg'', \]
where $E^{(n)}$ is the matrix as in the proof of Lemma~\ref{Lem:shiftL}.
Again, since $(E^{(n)}\bfg'')_{x_{n-1}} = s_{n-1} \ge 0$, we can apply Remark~\ref{Rem:trop} to find 
\[\deg(\mu_{x_{n-1}}^*\mu_{x_n}^* \sigma_+^*x') = E^{(n-1)}E^{(n)} \bfg''. \]
Since $(E^{(m+1)} \cdots E^{(n)}\bfg'')_{x_{m}} = s_m \ge 0$ for any $m \in [2,n-1]$, we can successively apply the similar argument, finally arriving at
\[ \bfg = \deg(\mu_{x_1}^*\mu_{x_2}^* \cdots \mu_{x_n}^*\sigma_+^* x') = E^{(1)}E^{(2)} \cdots E^{(n)}\bfg'' = E\bfg'',\]
where $E = (e_{u,v})_{u,v \in \N}$ is the matrix given by \eqref{eq:edelta}. 
The last equation is equivalent to the desired equation \eqref{eq:dg}.
\end{proof}

We conclude this section by showing relevant technical lemmas. Lemma \ref{Lem:cone} will be used in the proof of Theorem~\ref{Thm:Psi=tau}, and Lemma \ref{Lem:p-inv} will be needed in the proof of Lemma~\ref{Lem:compatD}. 

\begin{Lem} \label{Lem:cone}
Assume that $\bfi, \bfi' \in \Delta_0^{(\infty)}$ are related by $\bfi' = \gamma_k \bfi$ $($resp.~$\bfi' = \beta_k\bfi)$ for some $k \in \N$.
Then, the assignment $\bfg' \mapsto \bfg$ given by the equation \eqref{eq:cg} $($resp.~\eqref{eq:bg}$)$ sends the cone $C_{\bfi'} \subset \Z^{\oplus \N}$ into the cone $C_{\bfi} \subset \Z^{\oplus \N}$.
\end{Lem}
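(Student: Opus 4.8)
The two cases are handled separately but in parallel, each reducing to a check on the generators $\bfe_u - \bfe_{u^-}$ of the cone, using the second description of $C_{\bfi}$ in \eqref{eq:cone}, namely $C_\bfi = \sum_{u} \N_0(\bfe_u - \bfe_{u^-_\bfi})$.

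\textbf{The commutation case $\bfi' = \gamma_k\bfi$.} Here the map $\bfg' \mapsto \bfg$ is simply the permutation $\sigma_k$ of coordinates. Since $i_k \neq i_{k+1}$, one checks directly from the definitions that $u^-_{\bfi'} = \sigma_k(u^-_{\bfi'})\cdots$ more precisely, for each $u$ the pair $(u, u^-_{\bfi'})$ maps under $\sigma_k$ to a pair $(v, v')$ with $v' = v^-_\bfi$ (the indices $k$ and $k+1$ carry different Dynkin labels, so swapping them does not disturb the "previous occurrence of the same label" relation except to relabel it). Hence $\sigma_k$ permutes the generating rays $\{\bfe_u - \bfe_{u^-_{\bfi'}}\}$ of $C_{\bfi'}$ onto the generating rays $\{\bfe_v - \bfe_{v^-_\bfi}\}$ of $C_\bfi$, so it maps $C_{\bfi'}$ bijectively onto $C_\bfi$. (The general commutation-equivalence version, if wanted, follows the same way using Proposition~\ref{Prop:comm-eq}.)

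\textbf{The braid case $\bfi' = \beta_k\bfi$.} This is the substantive case. Write $(i,j) = (i_k, i_{k+1})$, so $(i_k, i_{k+1}, i_{k+2}) = (i,j,i)$ and $(i'_k,i'_{k+1},i'_{k+2}) = (j,i,j)$. It suffices to show that each generator $\bfe_u - \bfe_{u^-_{\bfi'}}$ of $C_{\bfi'}$ is sent by \eqref{eq:bg} into $C_\bfi$, i.e.\ is a nonnegative integer combination of the $\bfe_v - \bfe_{v^-_\bfi}$. For $u$ outside the affected window $\{k,k+1,k+2,k^-_\bfi,(k+1)^-_\bfi\}$ this is immediate since \eqref{eq:bg} acts as the identity (or as $\sigma_{k+1}$) on the relevant coordinates, and the relation $u \mapsto u^-$ is unchanged there. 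The work is the finite list of boundary generators: $\bfe_k - \bfe_{k^-_{\bfi'}(i)}$, $\bfe_{k+1} - \bfe_{k}$, $\bfe_{k+2}-\bfe_{k+1}$, $\bfe_{(k+1)^-_\bfi}-\bfe_{(k+1)^{--}}$, $\bfe_{k^-_\bfi} - \bfe_{\cdots}$, together with the first generator $\bfe_m - \bfe_{m^-_{\bfi'}}$ whose $m^-_{\bfi'}$ lands in $\{k+1,k+2\}$. For each of these one computes the image under \eqref{eq:bg} directly — using that for $\bfg' = \bfe_u - \bfe_{u^-_{\bfi'}}$ the single entry $g'_k$ is $0$, $+1$, or $-1$ depending on whether $u$ or $u^-_{\bfi'}$ equals $k$ — and then recognizes the resulting vector as a sum of at most two generators of $C_\bfi$ (the appearance of $\max(g'_k,0)$ and $\min(g'_k,0)$ in \eqref{eq:bg} is exactly what makes the two subcases $g'_k = +1$ and $g'_k=-1$ both land in the cone). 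Since \eqref{eq:bg} is $\N_0$-linear in $\bfg'$ and $C_{\bfi'}$ is generated over $\N_0$ by these rays, this finishes the proof.

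\textbf{Expected main obstacle.} The only real difficulty is bookkeeping: in the braid case one must correctly identify, for each of the finitely many boundary generators of $C_{\bfi'}$, both the index $u^-_{\bfi'}$ (which differs from $u^-_\bfi$ precisely because the labels at $k,k+1,k+2$ get permuted, so $k^-_{\bfi'}(i)$, $(k+1)^-_{\bfi'}$, etc.\ must be tracked via $(k^-_{\bfi'}, (k+1)^-_{\bfi'}) = ((k+1)^-_\bfi, k^-_\bfi)$ as in the proof of Lemma~\ref{Lem:beta1}) and the image under \eqref{eq:bg}, and then match the result against the generating set of $C_\bfi$. There is no conceptual subtlety beyond this case analysis; the role of the assumption $\bfg' \in C_{\bfi'}$ in Lemma~\ref{Lem:dg} does not reappear here, because for $\gamma_k,\beta_k$ the whole cone (not just part of it) is preserved.
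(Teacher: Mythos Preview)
Your approach has a genuine gap in the braid case. You claim that the assignment $\bfg' \mapsto \bfg$ given by \eqref{eq:bg} is ``$\N_0$-linear in $\bfg'$'', and then reduce to checking generators of $C_{\bfi'}$. But \eqref{eq:bg} is \emph{not} additive: it involves $\max(g'_k,0)$ and $\min(g'_k,0)$, so it is only piecewise linear (tropical). Concretely, the generators $\bfe_k - \bfe_{k^-_{\bfi'}}$ and $\bfe_{k+2} - \bfe_k$ of $C_{\bfi'}$ have $g'_k = +1$ and $g'_k = -1$ respectively; one computes that the image of their sum differs from the sum of their images by a nonzero multiple of $\bfe_{k+2} + \bfe_{(k+1)^-_\bfi} - \bfe_{k+1} - \bfe_{k^-_\bfi}$, which is not in $C_\bfi$. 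So knowing that each generator lands in $C_\bfi$ does not let you conclude that an arbitrary $\N_0$-combination does.

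The paper's proof avoids this by using the \emph{inequality} description of $C_\bfi$ in \eqref{eq:cone} rather than the generator description: setting $\Sigma_u = \sum_{v \ge u,\, i_v = i_u} g_v$ and $\Sigma'_u$ analogously, one verifies directly for an arbitrary $\bfg' \in C_{\bfi'}$ that each $\Sigma_u$ is a nonnegative combination of the $\Sigma'_v$'s (with a short case split on the position of $u$ relative to $k,k+1,k+2$). This argument never decomposes $\bfg'$ into generators and so is unaffected by the non-additivity of \eqref{eq:bg}. Your commutation case is fine (there the map really is linear), but for $\beta_k$ you should switch to this inequality-based verification.
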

\begin{proof}
To simplify the notation, for $\bfg = (g_u)_{u \in \N}$ and $\bfg' = (g'_u)_{u \in \N}$, we set
\[ 
\Sigma_u \seq \sum_{v \ge u, i_v = i_u} g_v, \quad \Sigma'_u \seq \sum_{v \ge u, i'_v = i'_u} g'_v
\] 
for each $u \in \N$.
We have to show that $\Sigma'_u \ge 0$ $(\forall u \in \N)$ implies $\Sigma_u \ge 0$ $(\forall u \in \N)$ under the assumption.
When $\bfi' = \gamma_k \bfi$, the assertion is trivial.
So we only consider the case when $\bfi' = \beta_k\bfi$ and $\bfg$ is obtained from $\bfg'$ by \eqref{eq:bg}.
Assume $\Sigma'_u \ge 0$ for all $u \in \N$.
\begin{itemize}
\item If $u > k+2$, we have $\Sigma_u = \Sigma'_u \ge 0$.
\item If $u=k+2$, we have $\Sigma_{k+2} = \Sigma'_{k+1} + \max(g'_k,0) \ge 0$. 
\item If $u=k+1$, we have $\Sigma_{k+1} = \delta(g'_k \ge 0) \Sigma'_{k+2} + \delta(g'_k < 0) \Sigma'_k \ge 0$.
\item If $u =k$, we have
\begin{align*} 
\Sigma_k &= \Sigma_{k+2} + g_k \\
&= \Sigma'_{k+1} + \max(g'_k,0) - g'_k \\
&= \Sigma'_{k+1} + \max(-g'_k,0) \ge 0. 
\end{align*}
\item If $u < k$, we have $\Sigma_u = \Sigma'_u \ge 0$ since we see 
\begin{align*}
g_{k+2} + g_{k} + g_{k^-_\bfi} &= g'_{k+1} + \max(g'_k,0) - g'_k + g'_{(k+1)^-_{\bfi'}} + \min(g'_k,0) \\
&= g'_{k+1} + g'_{(k+1)^-_{\bfi'}}  
\end{align*} 
and similarly $g_{k+1} + g_{(k+1)^-_\bfi} = g'_{k+2} + g'_{k} + g'_{k^-_{\bfi'}}$.
\end{itemize}
Thus, we obtain the assertion.
\end{proof}
For $\bfg = (g_u)_{u \in \N}$ and $\bfi \in \Delta_0^{(\infty)}$, write
\begin{equation}
\sfp_{\bfi}(\bfg; i)\seq \sum_{u\in \N, i_u = i} g_u.\label{eq:p-sum}
\end{equation}
The proof of Lemma \ref{Lem:cone} shows the following invariant property of $\sfp_{\bfi}(\bfg; i)$.  
\begin{Lem} \label{Lem:p-inv}
Let $\bfi, \bfi' \in \Delta_0^{(\infty)}$. 
If $\bfi' = \gamma_k \bfi$ for some $k \in \N$, the elements $\bfg, \bfg'\in \Z^{\oplus\N}$ related by the equation \eqref{eq:cg} satisfy
\[
\sfp_{\bfi}(\bfg; i)=\sfp_{\bfi'}(\bfg'; i)
\]
for all $i\in \Delta_0$. 

If $\bfi' = \beta_k\bfi$ for some $k \in \N$, the elements $\bfg, \bfg'\in \Z^{\oplus\N}$ related by the equation \eqref{eq:bg}  satisfy
\[
\sfp_{\bfi}(\bfg; i)=
\begin{cases}
    \sfp_{\bfi'}(\bfg'; i)+\max(-g'_k,0)&\text{if $i=i_k$ and $k^-_\bfi=0$},\\
    \sfp_{\bfi'}(\bfg'; i)-\max(g'_k,0)&\text{if $i=i_{k+1}$ and $(k+1)^-_\bfi=0$},\\
    \sfp_{\bfi'}(\bfg'; i)&\text{otherwise}.
\end{cases}
\]
\end{Lem}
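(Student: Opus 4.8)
\textbf{Proof plan for Lemma~\ref{Lem:p-inv}.}
The plan is to extract the claimed identities directly from the bookkeeping already carried out in the proof of Lemma~\ref{Lem:cone}, since the quantity $\sfp_{\bfi}(\bfg; i) = \sum_{u \in \N,\, i_u = i} g_u$ differs from the partial sums $\Sigma_u$ appearing there only in that it runs over \emph{all} indices $u$ with $i_u = i$ rather than those $\ge u$. In fact, for a fixed $i \in \Delta_0$, letting $u_0 = u_0(\bfi, i) \in \N$ be the smallest index with $i_{u_0} = i$, we have $\sfp_{\bfi}(\bfg; i) = \Sigma_{u_0}$ where $\Sigma$ is defined as in the proof of Lemma~\ref{Lem:cone}, so the statement is really a statement about the behaviour of these first partial sums under the two moves.

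For the commutation move $\bfi' = \gamma_k \bfi$, the equation \eqref{eq:cg} merely swaps the $k$-th and $(k+1)$-st coordinates, and since $i_k \ne i_{k+1}$, for each $i \in \Delta_0$ the multiset $\{g_u \mid i_u = i\}$ equals $\{g'_u \mid i'_u = i\}$; hence $\sfp_{\bfi}(\bfg; i) = \sfp_{\bfi'}(\bfg'; i)$ for all $i$. For the braid move $\bfi' = \beta_k \bfi$, I would isolate the three vertices $i = i_k = i_{k+2}$ (call it $i$) and $i = i_{k+1}$ (call it $j$), since \eqref{eq:bg} only alters coordinates indexed by $k$, $k+1$, $k+2$, $k^-_\bfi$ and $(k+1)^-_\bfi$, all of which carry colour $i$ or $j$. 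For colour $i$: the relevant contribution to $\sfp_\bfi(\bfg; i)$ is $g_k + g_{k+2} + g_{k^-_\bfi}$ (with the last term present iff $k^-_\bfi \ne 0$), and the computation in the last bullet of the proof of Lemma~\ref{Lem:cone} shows $g_{k+2} + g_k + g_{k^-_\bfi} = g'_{k+1} + g'_{(k+1)^-_{\bfi'}}$ when $k^-_\bfi \ne 0$; since $(k+1)^-_{\bfi'} = k^-_\bfi \ne 0$ and $i_{k+1} = j$ here I must be careful — rather, I should recompute using $g_k = -g'_k$, $g_{k+2} = g'_{\sigma_{k+1}(k+2)} + \max(g'_k, 0) = g'_{k+1} + \max(g'_k,0)$, and $g_{k^-_\bfi} = g'_{\sigma_{k+1}(k^-_\bfi)} + \min(g'_k,0) = g'_{(k+1)^-_{\bfi'}} + \min(g'_k,0)$; summing gives $g'_{k+1} + g'_{(k+1)^-_{\bfi'}}$, and when $k^-_\bfi = 0$ the third term is absent so the sum is $g'_{k+1} + \max(g'_k,0) - g'_k = g'_{k+1} + \max(-g'_k,0)$, which accounts for the $+\max(-g'_k,0)$ correction. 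Similarly for colour $j = i_{k+1}$: the contribution is $g_{k+1} + g_{(k+1)^-_\bfi}$, and the identity $g_{k+1} + g_{(k+1)^-_\bfi} = g'_{k+2} + g'_k + g'_{k^-_{\bfi'}}$ from the same bullet (valid when $(k+1)^-_\bfi \ne 0$) shows equality; when $(k+1)^-_\bfi = 0$ the term $g_{(k+1)^-_\bfi}$ drops and one gets $g'_{k+2} + g'_k + \min(g'_k,0) - g'_k$... I should instead just expand directly: $g_{k+1} = g'_{\sigma_{k+1}(k+1)} + \min(g'_k,0) = g'_{k+2} + \min(g'_k,0)$ and $g_{(k+1)^-_\bfi} = g'_{\sigma_{k+1}((k+1)^-_\bfi)} + \max(g'_k,0) = g'_{k^-_{\bfi'}} + \max(g'_k,0)$, summing to $g'_{k+2} + g'_{k^-_{\bfi'}} + g'_k$ (since $\min + \max = $ sum, using also that these pick up the $g'_k$ from $k+1 \to k$ mutation — wait, I need $g'_{k+1}$ term too). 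The cleanest route is to trust the two displayed equalities in the final bullet of Lemma~\ref{Lem:cone}'s proof verbatim and only redo the boundary cases $k^-_\bfi = 0$ or $(k+1)^-_\bfi = 0$ by hand.

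The main obstacle is purely organisational: keeping straight which of $i_k, i_{k+1}$ the symbol $i$ refers to in the case split, and correctly handling the two degenerate cases ($k^-_\bfi = 0$, resp.\ $(k+1)^-_\bfi = 0$) where one of the five affected coordinates in \eqref{eq:bg} lies outside $\N$ and must be dropped from the sum — it is exactly these degeneracies that produce the $\pm\max(\cdot,0)$ corrections, whereas in the generic case all corrections telescope away. Once the correspondence with the proof of Lemma~\ref{Lem:cone} is set up, no new computation beyond these boundary checks is needed, so I would simply write: \emph{``Retaining the notation of the proof of Lemma~\ref{Lem:cone}, the claim for $\gamma_k$ is immediate since \eqref{eq:cg} permutes coordinates within a single colour class trivially. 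For $\beta_k$, summing \eqref{eq:bg} over each colour class and using the two displayed identities at the end of that proof (together with $\min(a,0) + \max(a,0) = a$) yields the asserted formulas; the correction terms $\max(-g'_k,0)$ and $-\max(g'_k,0)$ arise precisely when $k^-_\bfi = 0$, resp.\ $(k+1)^-_\bfi = 0$, so that the corresponding summand is absent.''}
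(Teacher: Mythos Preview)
Your proposal is correct and takes essentially the same approach as the paper: the paper's entire proof reads ``The proof of Lemma~\ref{Lem:cone} shows the following invariant property of $\sfp_{\bfi}(\bfg; i)$,'' which is exactly your plan of extracting the result from the bookkeeping already done there. Your final suggested write-up (summing \eqref{eq:bg} over each colour class, invoking the two displayed identities in the last bullet of the proof of Lemma~\ref{Lem:cone}, and handling the boundary cases $k^-_\bfi = 0$ and $(k+1)^-_\bfi = 0$ separately to produce the correction terms) is precisely what is needed; the meandering in the middle of your proposal can simply be discarded.
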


\section{Comparison with dual canonical bases}\label{secthree}

We recall that by \cite{GLS13, KKKO18, KK19} the quantum unipotent group $\cA_t[N_-]$ associated to $\sg$ has a quantum cluster algebra structure 
compatible with the dual canonical basis, which is isomorphic to some of the quantum cluster algebras $\cA_\bfi$. We establish that these isomorphisms are compatible 
with the transformations in the last section: for a composition $\cA_\bfi\simeq \cA_{\bfi'}$ of such transformations, the corresponding transformation induced on $\cA_t[N_-]$ is just the identity (Corollary \ref{Cor:bftau}). Our proof is based on the analysis of the degrees of the cluster monomials.

\subsection{Dual canonical basis}
In this subsection, we briefly recall the PBW parametrization of the dual canonical basis.
For the precise definitions, we refer to \cite[Appendix A]{FHOO} and references therein. 

Let $\cA_t[N_-]$ denote the quantum coordinate ring of the unipotent group $N_- = \exp(\sn_-)$, where $\sn_-$ is the negative part of the Lie algebra $\sg$.
More precisely, we are considering the integral form defined over $\Z[t^{\pm 1/2}]$.
It carries the normalized dual canonical basis $\tbfB$.  
Each reduced word $\bfi = (i_1,\ldots,i_{\ell})$ for the longest $w_\circ$ gives a parametrization $\tbfB = \{ \tG_\bfi(\bfc) \mid \bfc \in \N_0^{\ell}\}$ characterized by
\[ \tG_\bfi(\bfc) \equiv t^{\nu_\bfi(\bfc)}\prod^{\to}_{u \in [1,\ell]} \tD(w_u \varpi_{i_u}, w_{u-1}\varpi_{i_u})^{c_u} \mod \sum_{b \in \tbfB} t\Z[t]b\]
for each $\bfc = (c_1, \ldots, c_{\ell}) \in \N_0^{\ell}$, where
\[ \nu_\bfi(\bfc) = -\frac{1}{2}\sum_{u, v \in [1,\ell]} c_u c_v(w_{u-1}\alpha_{i_u},w_{v-1}\alpha_{i_v}) + \sum_{u \in [1,\ell]} c_u^2, \]
and $\tD(\lambda, \mu)$ denotes the renormalized quantum unipotent minor.
In particular, when $\bfc = \bfe_k \seq (\delta_{u,k})_{u \in [1, \ell]}$, we have
\[ \tG_\bfi(\bfe_k) = \tD(w_k \varpi_{i_k}, w_{k^-}\varpi_{i_k}).\]
\begin{Prop}[Lusztig~{\cite[Chapter 42]{LusztigBook}}] \label{Prop:Lusztig}
Let $\bfi, \bfi' \in \Delta_0^{\ell}$ be two reduced words for $w_\circ$, and $\bfc = (c_1, \ldots, c_{\ell}), \bfc'=(c'_1, \ldots, c'_{\ell}) \in \N_0^{\ell}$.
\begin{enumerate}
\item When $\bfi' = \gamma_k\bfi$ for some $k \in [1, \ell-1]$, we have
\[ \tG_\bfi(\bfc) = \tG_{\bfi'}(\bfc') \quad \text{if and only if} \quad 
\begin{cases}
(c_k,c_{k+1}) = (c'_{k+1},c'_k), \\
c_u = c'_u \text{ if $u \not \in \{ k, k+1\}$}.
\end{cases}
\]
\item When $\bfi' = \beta_k\bfi$ for some $k \in [1,\ell-2]$, we have
\[ \tG_\bfi(\bfc) = \tG_{\bfi'}(\bfc') \quad \text{if and only if}  \quad 
\begin{cases}
c'_k = c_{k+1} + c_{k+2} - \min(c_k, c_{k+2}), \\
c'_{k+1} = \min(c_k, c_{k+2}), \\
c'_{k+2} = c_{k+1} + c_{k} - \min(c_k, c_{k+2}), \\
c'_u = c_u \text{ if $u \not \in \{ k, k+1, k+2\}$}.
\end{cases}
\]
\end{enumerate}
\end{Prop}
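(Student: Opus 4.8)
\textbf{Proof plan for Proposition~\ref{Prop:Lusztig}.}
The statement is classical and due to Lusztig~\cite{LusztigBook}, so the "proof" I would present is essentially a translation of Lusztig's PBW-transition formulas into the normalization used here, together with a verification that the renormalized quantum minors $\tD(w_k\varpi_{i_k}, w_{k^-}\varpi_{i_k})$ are compatible with that translation. The plan is to reduce everything to the rank-$1$ and rank-$2$ computations. First I would recall that for a reduced word $\bfi$ the elements $\tG_\bfi(\bfc)$ are, up to a power of $t$ determined by $\nu_\bfi$, the ordered products $\prod^{\to}_{u}\tD(w_u\varpi_{i_u},w_{u-1}\varpi_{i_u})^{c_u}$ taken modulo $\sum_b t\Z[t]b$; so it suffices to track how the PBW monomial basis of $U_t^-(\sn_-)$ transforms under a commutation move and a braid move, and then observe that the $t$-power ambiguity is automatically fixed by the congruence condition defining $\tbfB$ (two elements of $\tbfB$ that are scalar multiples of each other must be equal).

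For part (1), the commutation case, the key point is that when $i_k\not\sim i_{k+1}$ the corresponding PBW root vectors $x_{w_{k-1}\alpha_{i_k}}$ and $x_{w_{k-1}\alpha_{i_{k+1}}}$ are associated to orthogonal roots, hence $q$-commute with trivial scalar after the standard normalization; moreover the roots $w_{u-1}\alpha_{i_u}$ for $u\notin\{k,k+1\}$ are literally unchanged by the move, while the two roots at positions $k,k+1$ are swapped. This immediately gives the claimed rule $(c_k,c_{k+1})\leftrightarrow(c'_{k+1},c'_k)$ and $c_u=c'_u$ elsewhere, after checking that $\nu_\bfi(\bfc)=\nu_{\bfi'}(\bfc')$ for these matched tuples — which follows from symmetry of the bilinear form and the fact that the multiset of pairs $(w_{u-1}\alpha_{i_u})$ is unchanged. (This is morally Lemma~\ref{Lem:gamma} at the level of the bilinear data.)

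For part (2), the braid case, I would localize to the rank-$2$ subalgebra generated by the three root vectors attached to positions $k,k+1,k+2$, whose roots are $\alpha, \alpha+\beta$ (or the appropriate positive combination), $\beta$ in one order and the reversed list in the other; this is exactly the classical $A_2$ (or $B_2/G_2$, but here only $\mathrm{ADE}$, so $A_2$) PBW-transition computation. The transition is governed by the quantum Serre / $q$-binomial identity, and the exponents transform by $c'_{k+1}=\min(c_k,c_{k+2})$, $c'_k=c_{k+1}+c_{k+2}-\min(c_k,c_{k+2})$, $c'_{k+2}=c_{k+1}+c_k-\min(c_k,c_{k+2})$, with all other $c_u$ fixed since the roots $w_{u-1}\alpha_{i_u}$ for $u\notin\{k,k+1,k+2\}$ are unchanged by the braid move. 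The remaining bookkeeping is to confirm the $t$-power in $\nu_\bfi$ matches; again I would bypass an explicit computation by invoking that both sides lie in $\tbfB$ and are equal modulo $t\Z[t]$-multiples, forcing equality on the nose.

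\textbf{Main obstacle.} The conceptual content is entirely classical; the only real work is matching conventions — the ordering $\prod^{\to}$, the sign/shift conventions in $w_u\varpi_{i_u}$ versus $w_{u-1}\alpha_{i_u}$, and the precise definition of the renormalized minors $\tD(\lambda,\mu)$ — so that Lusztig's transition maps in \cite[Chapter 42]{LusztigBook} (stated for the canonical basis of $U_t^-$) transfer verbatim to the dual canonical basis $\tbfB$ of $\cA_t[N_-]$ under the chosen normalization. I expect the bulk of the argument to consist of citing \cite[Appendix A]{FHOO} for these compatibilities and then quoting Lusztig; the $G_2$/$B_2$ cases do not arise because we have fixed $\Delta$ of type $\mathrm{ADE}$, which simplifies the rank-$2$ step to the single $A_2$ identity.
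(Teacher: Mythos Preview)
The paper does not give its own proof of this proposition: it is stated with attribution to Lusztig~\cite[Chapter 42]{LusztigBook} and immediately followed by the next subsection. Your plan --- to translate Lusztig's PBW-transition rules for the canonical basis into the dual canonical basis normalization used here, reducing to the rank-$1$ and rank-$2$ computations and using membership in $\tbfB$ to pin down the $t$-powers --- is exactly the standard way one unpacks that citation, and is correct in outline. There is therefore nothing to compare; your proposal is an appropriate elaboration of what the paper leaves as a reference.
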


\subsection{Cluster structures on $\cA_t[N_-]$}
Recall that for any finite sequence $\bfi$ in $\Delta_0$, we have associated to $\bfi$ a compatible pair $(\tB_\bfi, \Lambda_\bfi)$ in Definition~\ref{Def:BL}.

\begin{Thm}[Gei\ss--Leclerc--Schr\"oer~\cite{GLS13}, Kang--Kashiwara--Kim--Oh~\cite{KKKO18}, Kashiwara--Kim~\cite{KK19}] \label{Thm:CAN}
For each reduced word $\bfi \in \Delta_0^{\ell}$ for the longest element $w_\circ$, there is an isomorphism of $\Z[t^{\pm 1/2}]$-algebras
\[ \varphi_\bfi \colon \cA_t(\tB_\bfi, \Lambda_\bfi) \simeq \cA_t[N_-] \]
which satisfies the following: 
\begin{enumerate}
\item for any $1 \le v < u \le \ell$ with $i_u = i_v$, there exists a cluster variable which corresponds to $\tD(w_u \varpi_{i_u}, w_v\varpi_{i_v})$ under $\varphi_\bfi$,
\item every cluster monomial corresponds to an element of the basis $\tbfB$ under $\varphi_\bfi$,
\item for each $\bfc = (c_u) \in \N_0^{\ell}$, the element $\varphi_{\bfi}^{-1}\tG_\bfi(\bfc)$ is pointed and we have 
\begin{equation} \label{eq:degG}
\deg \varphi_{\bfi}^{-1}\tG_\bfi(\bfc) = \sum_{u \in [1,\ell]}c_u(\bfe_u - \bfe_{u^-})  \in \Z^{\ell},
\end{equation} 
where we understand $\bfe_0 = 0$.
In particular, the map $\deg \circ \varphi_\bfi^{-1} \colon \tbfB \to \Z^{\ell}$ is injective.  
\end{enumerate}
\end{Thm}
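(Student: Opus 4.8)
The plan is to deduce Theorem~\ref{Thm:CAN} from the known cluster structure on $\cA_t[N_-]$ attached to a \emph{fixed} reduced word, together with the compatibility results established in Sections~\ref{sectwo} and~\ref{secone}. Concretely, for one chosen reduced word $\bfi_0$ for $w_\circ$, the isomorphism $\varphi_{\bfi_0}\colon \cA_t(\tB_{\bfi_0},\Lambda_{\bfi_0})\simeq\cA_t[N_-]$ together with properties (1)--(3) is exactly what is proved in \cite{GLS13, KKKO18, KK19} (with the degree formula \eqref{eq:degG} coming from the explicit $g$-vectors of the initial quantum minors and the fact that the parametrization $\tG_{\bfi_0}(\bfc)$ is, by the PBW characterization, the pointed element with leading term $\prod^{\to}\tD(w_u\varpi_{i_u},w_{u-1}\varpi_{i_u})^{c_u}$). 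So the real content is to propagate this from $\bfi_0$ to an arbitrary reduced word $\bfi$.

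First I would recall that any two reduced words for $w_\circ$ are connected by a sequence of commutation moves $\gamma_k$ and braid moves $\beta_k$ (Tits' theorem). By the Propositions of Sections~\ref{cmov} and~\ref{bmov}, each such move induces an isomorphism of quantum cluster algebras $\gamma_k^*\colon\cA_{\bfi'}\simeq\cA_{\bfi}$ (resp.\ $\beta_k^*$), matching quantum cluster monomials. Composing these along a path from $\bfi_0$ to $\bfi$ yields an isomorphism $\cA_{\bfi}\simeq\cA_{\bfi_0}$, and we define $\varphi_\bfi$ as the composite $\cA_\bfi\simeq\cA_{\bfi_0}\xrightarrow{\varphi_{\bfi_0}}\cA_t[N_-]$. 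Property (2) is then immediate since both the move-isomorphisms and $\varphi_{\bfi_0}$ send cluster monomials into $\tbfB$. For property (1), the finite braid-chamber combinatorics show that for each pair $v<u$ with $i_u=i_v$ the element $\tD(w_u\varpi_{i_u},w_v\varpi_{i_v})$ occurs as an initial cluster variable in some seed of $\cA_{\bfi_0}$ reachable from that of $\bfi$ via the same moves (this is the usual statement that quantum unipotent minors indexed by $w_{\le u}\varpi_{i_u}$ are cluster variables, compatible with the braid-move mutation pattern); tracking it through the move-isomorphisms, which are built from $\sigma_k$, $\mu_k$, $\sigma_{k+1}$ exactly matching the combinatorics in Lemmas~\ref{Lem:beta1} and~\ref{Lem:beta2}, gives the claim.

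For property (3), I would argue that $\deg\circ\varphi_\bfi^{-1}$ on $\tbfB$ is determined by the two special cases of Proposition~\ref{Prop:Lusztig} together with the $g$-vector transformation Lemmas~\ref{Lem:cg} and~\ref{Lem:bg}. Indeed, the right-hand side of \eqref{eq:degG}, $\bfc\mapsto\sum_u c_u(\bfe_u-\bfe_{u^-})$, is an $\N_0$-linear (in the coordinates $\bfc$) parametrization of the cone $C_\bfi$; Proposition~\ref{Prop:Lusztig} says how $\bfc$ transforms under $\gamma_k$ and $\beta_k$, while Lemmas~\ref{Lem:cg} and~\ref{Lem:bg} say how $g$-vectors of cluster monomials transform under $\gamma_k^*$ and $\beta_k^*$. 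The key verification is that these two transformation rules are \emph{intertwined} by the linear map $\bfc\mapsto\sum_u c_u(\bfe_u-\bfe_{u^-})$: one checks by direct substitution (using $\min(c_k,c_{k+2})$ versus $\max(\pm g'_k,0)$, noting $g'_k$ corresponds to the partial sum $\Sigma'_k$ governing the sign) that the diagram commutes. Since \eqref{eq:degG} holds for $\bfi_0$ by construction, and both sides transform the same way along the path to $\bfi$, it holds for $\bfi$. Injectivity of $\deg\circ\varphi_\bfi^{-1}$ is then clear since $\bfc\mapsto\sum_u c_u(\bfe_u-\bfe_{u^-})$ is injective.

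The main obstacle I anticipate is property (1) — more precisely, verifying that an \emph{arbitrary} pair $(v,u)$ with $i_u=i_v$ yields a cluster variable for \emph{every} reduced word, not just for $\bfi_0$. For $\bfi_0$ this is the input theorem, but to transport it one must check that the relevant seed of $\cA_{\bfi_0}$ containing the minor $\tD(w_u\varpi_{i_u},w_v\varpi_{i_v})$ is reachable from the seed of $\cA_\bfi$ by precisely the composite mutation sequence coming from the word moves; equivalently, that the indexing of initial quantum minors is compatible with the combinatorial bookkeeping $u^\pm$, $u^\pm(j)$ under $\gamma_k,\beta_k$. This requires carefully matching the ``$v<u$ with $i_u=i_v$'' data through the braid move, where the triple $(k,k+1,k+2)$ with pattern $(i,j,i)$ forces a genuine mutation $\mu_k$ and permutation $\sigma_{k+1}$, and one must confirm the minor $\tD(w_{k+2}\varpi_i,\cdot)$ lands correctly; the other moves are routine. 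Everything else reduces to the bookkeeping already done in Lemmas~\ref{Lem:gamma}, \ref{Lem:beta1}, \ref{Lem:beta2}, \ref{Lem:cg}, \ref{Lem:bg} and the finite-type statements of \cite{GLS13, KKKO18, KK19}.
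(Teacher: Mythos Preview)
The paper does not prove this theorem: it is quoted directly from \cite{GLS13, KKKO18, KK19}, where the isomorphism $\varphi_\bfi$ and all three properties are established for \emph{every} reduced word $\bfi$ for $w_\circ$, not just one. So there is no ``paper's proof'' to compare against; your proposal is an attempt to supply an argument where the paper simply cites one.

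Your strategy of taking the cited result only for a single $\bfi_0$ and propagating along commutation and braid moves is an interesting alternative, and it is worth noting that it \emph{reverses} the logic of the paper: in the paper, Theorem~\ref{Thm:CAN} is the input used to prove Proposition~\ref{Prop:tau} (compatibility of the $\varphi_\bfi$ with moves), whereas you are trying to use move-compatibility to deduce Theorem~\ref{Thm:CAN}. Your argument for property~(2) is fine, and the piecewise-linear intertwining you describe for property~(3) is indeed a correct identity (the verification you sketch with $\min(c_k,c_{k+2})$ versus $\max(\pm g'_k,0)$ goes through, essentially by the same computation as in the proof of Proposition~\ref{Prop:tau}).

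However, there is a genuine gap in your argument for property~(3). Lemmas~\ref{Lem:cg} and~\ref{Lem:bg}, as well as the underlying Theorem~\ref{Thm:trop}, are stated only for \emph{cluster monomials}. But the elements $\tG_\bfi(\bfc)$ for general $\bfc$ are not cluster monomials, and you need to know that $\beta_k^*$ applied to the pointed element $\varphi_{\bfi_0}^{-1}\tG_{\bfi_0}(\bfc')$ is again pointed (with respect to the new exchange matrix $\tB_\bfi$), with degree given by the tropical formula. For a general pointed element this is false; for dual canonical basis elements it is true, but that is precisely the deep content established in \cite{KK19} by independent means (triangularity against dual PBW bases for each $\bfi$). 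So your reduction, as written, is circular: to carry property~(3) through a braid move you would already need the pointedness assertion you are trying to prove. Property~(1) has a similar difficulty, as you anticipated: knowing that a given minor is a cluster variable for $\bfi_0$ does not automatically tell you it remains one from the viewpoint of $\bfi$, unless you have the full Theorem~\ref{Thm:CAN} (or equivalently Corollary~\ref{Cor:bftau}) available to identify the two cluster structures on $\cA_t[N_-]$.
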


\subsection{Change of reduced words}

We establish the compatibility between the transformations $\gamma_k$, $\beta_k$ in the last section and the isomorphisms $\varphi_{\bfi}$.

\begin{Prop}
\label{Prop:tau}
Let $\bfi, \bfi' \in \Delta_0^{\ell}$ be two reduced words for $w_\circ$.
Assume that we have $\bfi' = \tau \bfi$ for some $\tau \in \{ \gamma_1, \ldots, \gamma_{\ell-1}\} \cup\{ \beta_1, \ldots, \beta_{\ell-2}\}$. 
Then the following diagram commutes:
\[ \xymatrix{
\cA_t(\tB_{\bfi'}, \Lambda_{\bfi'}) \ar[r]^-{\varphi_{\bfi'}} \ar[d]_-{\tau^*} & \cA_t[N_-] \ar@{=}[d]\\
\cA_t(\tB_\bfi, \Lambda_\bfi) \ar[r]^-{\varphi_\bfi} & \cA_t[N_-].
}
\]
\end{Prop}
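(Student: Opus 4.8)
The plan is to check commutativity of the diagram by evaluating both composites on a spanning set of cluster variables and using the injectivity of $\deg\circ\varphi_\bfi^{-1}$ from Theorem~\ref{Thm:CAN}(3). First I would reduce to showing that $\varphi_\bfi\circ\tau^* = \varphi_{\bfi'}$ on \emph{all} cluster monomials; since cluster monomials of $\cA_t(\tB_{\bfi'},\Lambda_{\bfi'})$ map under $\varphi_{\bfi'}$ onto the subset of $\tbfB$ consisting of those basis elements realized as cluster monomials (Theorem~\ref{Thm:CAN}(2)), and $\tau^*$ is a bijection between cluster monomials of the two cluster algebras (Propositions in Sections~\ref{cmov}, \ref{bmov}), both $\varphi_\bfi\circ\tau^*$ and $\varphi_{\bfi'}$ send cluster monomials of $\cA_t(\tB_{\bfi'},\Lambda_{\bfi'})$ bijectively onto the same subset of $\tbfB$. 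So it suffices to show the two bijections \emph{agree}, i.e.\ that for a cluster monomial $x'$ with $\varphi_{\bfi'}(x') = \tG_{\bfi'}(\bfc')$ for suitable $\bfc'$, one also has $\varphi_\bfi(\tau^* x') = \tG_{\bfi'}(\bfc')$.

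Next I would exploit that a dual canonical basis element is determined by its $g$-vector: by Theorem~\ref{Thm:CAN}(3) the map $\tbfB \ni \tG_\bfi(\bfc)\mapsto \deg\varphi_\bfi^{-1}\tG_\bfi(\bfc) = \sum_u c_u(\bfe_u-\bfe_{u^-})$ is injective, and the same holds for $\bfi'$. Therefore it is enough to match $g$-vectors: writing $\deg x' = \bfg'$, compute $\deg(\tau^* x') = \bfg$ via Lemma~\ref{Lem:cg} (if $\tau = \gamma_k$) or Lemma~\ref{Lem:bg} (if $\tau = \beta_k$), and compare with the change-of-PBW-coordinates rule for $\tbfB$ given by Proposition~\ref{Prop:Lusztig}. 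Concretely, if $\varphi_{\bfi'}(x') = \tG_{\bfi'}(\bfc')$ then $\bfg' = \sum_u c'_u(\bfe_u - \bfe_{u^-_{\bfi'}})$, and I must verify that the $\bfc$ defined by Proposition~\ref{Prop:Lusztig} (so that $\tG_\bfi(\bfc) = \tG_{\bfi'}(\bfc')$) satisfies $\sum_u c_u(\bfe_u - \bfe_{u^-_{\bfi}}) = \bfg$, where $\bfg$ is produced from $\bfg'$ by \eqref{eq:cg} or \eqref{eq:bg}. For $\tau = \gamma_k$ this is an immediate transcription: swapping $c'_k\leftrightarrow c'_{k+1}$ swaps the corresponding coordinates of the $g$-vector, matching \eqref{eq:cg}. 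For $\tau = \beta_k$ one inverts the triangular relation between $\bfc$-coordinates and $\bfg$-coordinates (the change of basis $\bfc\mapsto \sum c_u(\bfe_u-\bfe_{u^-})$ on the five indices $k^-_\bfi, k, k+1, k+2$, and $(k+1)^-_\bfi$), substitutes the $\beta_k$-rule from Proposition~\ref{Prop:Lusztig}, and checks it reproduces \eqref{eq:bg}; the $\min(c_k,c_{k+2})$ in Lusztig's formula is exactly what converts into the $\max(g'_k,0)$ and $\min(g'_k,0)$ appearing in \eqref{eq:bg}.

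Then I would invoke a \emph{pointedness/uniqueness} argument to upgrade the $g$-vector match to the identity $\varphi_\bfi(\tau^* x') = \varphi_{\bfi'}(x')$: both $\tau^* x'$ and $\varphi_\bfi^{-1}\tG_{\bfi'}(\bfc')$ are pointed elements of the cluster algebra (the former because $\tau^*$ carries cluster monomials to cluster monomials, which are pointed; the latter by Theorem~\ref{Thm:CAN}(3)), they lie in $\tbfB$ after applying $\varphi_\bfi$ (by Theorem~\ref{Thm:CAN}(2)), and they have the same degree, hence by the injectivity in Theorem~\ref{Thm:CAN}(3) they are equal. Applying $\varphi_\bfi$ gives $\varphi_\bfi(\tau^* x') = \tG_{\bfi'}(\bfc') = \varphi_{\bfi'}(x')$ as required. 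Since cluster monomials span $\cA_t(\tB_{\bfi'},\Lambda_{\bfi'})$ over $\Z[t^{\pm 1/2}]$ (indeed the initial cluster variables together with a few mutations already generate), the two algebra homomorphisms $\varphi_\bfi\circ\tau^*$ and $\varphi_{\bfi'}$ agree everywhere, proving the diagram commutes.

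\textbf{Main obstacle.} The routine but genuinely delicate point is the $\beta_k$ case of the $g$-vector bookkeeping: one must carefully set up the triangular linear change between PBW coordinates $\bfc$ and $g$-vector coordinates restricted to the neighborhood $\{k^-_\bfi, k, k+1, k+2, (k+1)^-_\bfi\}$, correctly track which of these indices play the role of $k^-_{\bfi'}$ and $(k+1)^-_{\bfi'}$ after the braid move (using $(k^-_{\bfi'},(k+1)^-_{\bfi'}) = ((k+1)^-_\bfi, k^-_\bfi)$ as in the proof of Lemma~\ref{Lem:beta1}), and verify that Lusztig's $\min(c_k,c_{k+2})$ transforms into precisely the $\max(\pm g'_k,0)$ terms of \eqref{eq:bg}. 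Everything else is formal once the uniqueness-via-$g$-vectors principle is in place.
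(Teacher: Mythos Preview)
Your proposal is correct and follows the same core mechanism as the paper: compare $g$-vectors via Lemmas~\ref{Lem:cg}/\ref{Lem:bg}, match against Lusztig's PBW transition (Proposition~\ref{Prop:Lusztig}), and conclude by the injectivity of $\deg\circ\varphi_\bfi^{-1}$ on $\tbfB$ from Theorem~\ref{Thm:CAN}(3).

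The difference is one of economy. You propose to verify the identity on \emph{all} cluster monomials, which forces you to match the general Lusztig rule against the general $g$-vector mutation rule---exactly the triangular inversion and $\min(c_k,c_{k+2})\leftrightarrow\max(\pm g'_k,0)$ bookkeeping you flag as the main obstacle. The paper instead observes that $\cA_t[N_-]$ is generated as a $\Z[t^{\pm1/2}]$-algebra by the dual PBW generators $\{\tG_{\bfi'}(\bfe_u)\}_{u\in[1,\ell]}$, so it suffices to check $\varphi_\bfi\tau^*\varphi_{\bfi'}^{-1}\tG_{\bfi'}(\bfe_u)=\tG_{\bfi'}(\bfe_u)$ for each $u$. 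Since $\bfc'=\bfe_u$ is a unit vector, the degree becomes $\bfe_u-\bfe_{u^-_{\bfi'}}$, and both the $g$-vector transformation and the Lusztig rule reduce to tiny explicit case lists (for $\beta_k$ one finds $\bfc_u\in\{\bfe_k,\ \bfe_{k+2},\ \bfe_k+\bfe_{k+2},\ \bfe_u\}$ according to whether $u\in\{k,k+1,k+2\}$ or not). This sidesteps your main obstacle entirely.

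One minor point: your closing sentence ``cluster monomials span $\cA_t(\tB_{\bfi'},\Lambda_{\bfi'})$'' is both unnecessary and not obviously true here. You only need that some set of cluster monomials \emph{generates} the algebra; cluster variables do so by definition, and the paper's choice of the dual PBW generators (which are cluster variables by Theorem~\ref{Thm:CAN}(1)) is the cleanest such set.
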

\begin{proof}
Since $\cA_t[N_-]$ is generated by $\{\tG_{\bfi'}(\bfe_u)\}_{u \in [1,\ell]}$,
it suffices to show the equality   
\begin{equation} \label{eq:tau}
\varphi_\bfi \tau^* \varphi_{\bfi'}^{-1}\tG_{\bfi'}(\bfe_u) = \tG_{\bfi'}(\bfe_u) 
\end{equation}
for all $u \in [1,\ell]$.
Since $\tau^*$ respects cluster monomials, 
we know that the LHS of \eqref{eq:tau} belongs to $\tbfB$ by Theorem~\ref{Thm:CAN}.
When $\tau = \gamma_k$ for some $k \in [1, \ell -1]$, we have
\[ \deg \gamma_k^* \varphi_{\bfi'}^{-1}\tG_{\bfi'}(\bfe_u) = \bfe_{\sigma_k(u)} - \bfe_{\sigma_k(u^-_{\bfi'})} = \bfe_{\sigma_k(u)} - \bfe_{\sigma_k(u)^-_{\bfi}} 
= \deg \varphi_\bfi^{-1} \tG_\bfi(\bfe_{\sigma_k(u)}) \]
by Lemma~\ref{Lem:cg} and Theorem~\ref{Thm:CAN}.
Since $\deg \circ \varphi_\bfi^{-1}$ is injective on $\tbfB$, we get $\varphi_\bfi \gamma_k^* \varphi_{\bfi'}^{-1}\tG_{\bfi'}(\bfe_u) = \tG_{\bfi}(\bfe_{\sigma_k(u)})$. 
On the other hand, we know $\tG_\bfi(\bfe_{\sigma_k(u)}) = \tG_{\bfi'}(\bfe_u)$ by Proposition~\ref{Prop:Lusztig}.
Therefore, we obtain \eqref{eq:tau} for $\tau = \gamma_k$.
Next, we consider the case when $\tau = \beta_k$ for some $k \in [1, \ell -2]$.
By Lemma~\ref{Lem:bg} and Theorem~\ref{Thm:CAN}, we have
\[ \deg \beta_k^* \varphi_{\bfi'}^{-1}\tG_{\bfi'}(\bfe_u) = 
\begin{cases}
\bfe_{k+2} - \bfe_{k} & \text{if $u=k$}, \\
\bfe_{k+2} - \bfe_{k^-_\bfi} & \text{if $u=k+1$}, \\
\bfe_{k} - \bfe_{k^-_{\bfi}} & \text{if $u=k+2$}, \\
\bfe_{u} - \bfe_{u^-_\bfi} & \text{otherwise}.
\end{cases}
\]
Again by the injectivity of $\deg \circ \varphi_\bfi^{-1}$ on $\tbfB$, we get $\varphi_\bfi \tau^* \varphi_{\bfi'}^{-1}\tG_{\bfi'}(\bfe_u) = \tG_{\bfi}(\bfc_u)$, where
\[ \bfc_u \seq \begin{cases}
\bfe_{k+2} & \text{if $u=k$}, \\
\bfe_k + \bfe_{k+2} & \text{if $u=k+1$}, \\
\bfe_{k} & \text{if $u=k+2$}, \\
\bfe_u & \text{otherwise}.
\end{cases}
\]
On the other hand, we have $\tG_\bfi(\bfc_u) = \tG_{\bfi'}(\bfe_u)$ by Proposition~\ref{Prop:Lusztig}.
Thus, we obtain \eqref{eq:tau} for $\tau = \beta_k$. 
\end{proof}

For any two reduced words $\bfi, \bfi' \in \Delta_0^{\ell}$ for $w_\circ$, 
we can always find a finite sequence 
$\boldsymbol{\tau} = (\tau_1, \ldots, \tau_l)$ in 
$\{ \gamma_1, \ldots, \gamma_{\ell-1}\} \cup \{\beta_1, \ldots, \beta_{\ell-2}\}$ 
such that $\bfi' = \tau_1 \cdots \tau_l \bfi$.
Then we have the composed isomorphism
\[ \boldsymbol{\tau}^* \seq \tau_l^* \circ \cdots \circ \tau_1^* 
\colon \cA_t(\tB_{\bfi'}, \Lambda_{\bfi'}) \simeq \cA_t(\tB_{\bfi}, \Lambda_\bfi).\]

\begin{Cor} \label{Cor:bftau}
With the above notation, the following diagram commutes:
\[ \xymatrix{
\cA_t(\tB_{\bfi'}, \Lambda_{\bfi'}) \ar[r]^-{\varphi_{\bfi'}} \ar[d]_-{\boldsymbol{\tau}^*} & \cA_t[N_-] \ar@{=}[d]\\
\cA_t(\tB_\bfi, \Lambda_\bfi) \ar[r]^-{\varphi_\bfi} & \cA_t[N_-].
}
\]
In particular, the isomorphism $\boldsymbol{\tau}^* = \varphi_{\bfi} \circ \varphi_{\bfi'}^{-1}$ depends only on the pair $(\bfi, \bfi')$, and not on the sequence $\boldsymbol{\tau}$ satisfying $\bfi' = \tau_1 \cdots \tau_l \bfi$.
\end{Cor}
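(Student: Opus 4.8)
The plan is to bootstrap from Proposition~\ref{Prop:tau}, which already gives the commutativity for a single elementary move, by stacking the resulting squares along a chain of intermediate reduced words. Given $\boldsymbol{\tau} = (\tau_1, \ldots, \tau_l)$ with $\bfi' = \tau_1 \cdots \tau_l \bfi$, I would set $\bfi^{(l)} \seq \bfi$ and $\bfi^{(m-1)} \seq \tau_m \bfi^{(m)}$ for $m = l, l-1, \ldots, 1$, so that $\bfi^{(0)} = \bfi'$, and then run Proposition~\ref{Prop:tau} on each consecutive pair $(\bfi^{(m)}, \bfi^{(m-1)})$.

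For this to make sense one must check — and this is the only genuine point in the argument — that each $\bfi^{(m)}$ is again a reduced word of length $\ell$ for $w_\circ$. This follows by downward induction on $m$: a move $\gamma_k$ replaces a window $(i,j)\leadsto(j,i)$ with $i\neq j$, $i\not\sim j$, and a move $\beta_k$ replaces $(i,j,i)\leadsto(j,i,j)$ with $i\sim j$, so in either case the product in $\sW$ and the length of the word are unchanged; hence if $\bfi^{(m)}$ is a length-$\ell$ word representing $w_\circ$, so is $\bfi^{(m-1)}$, i.e.\ it is a reduced word for $w_\circ$. Therefore Proposition~\ref{Prop:tau} applies at each step and yields the identity $\varphi_{\bfi^{(m)}} \circ \tau_m^* = \varphi_{\bfi^{(m-1)}}$ of maps $\cA_t(\tB_{\bfi^{(m-1)}}, \Lambda_{\bfi^{(m-1)}}) \to \cA_t[N_-]$.

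Then I would just compose. Using the definition $\boldsymbol{\tau}^* = \tau_l^* \circ \cdots \circ \tau_1^*$ and applying the identities $\varphi_{\bfi^{(m)}} \circ \tau_m^* = \varphi_{\bfi^{(m-1)}}$ successively for $m = l, l-1, \ldots, 1$, one gets
\[
\varphi_{\bfi} \circ \boldsymbol{\tau}^* = \varphi_{\bfi^{(l)}} \circ \tau_l^* \circ \cdots \circ \tau_1^* = \varphi_{\bfi^{(l-1)}} \circ \tau_{l-1}^* \circ \cdots \circ \tau_1^* = \cdots = \varphi_{\bfi^{(0)}} = \varphi_{\bfi'},
\]
which is exactly the commutativity of the square in the statement. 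The last assertion is then immediate: commutativity rewrites as $\boldsymbol{\tau}^* = \varphi_{\bfi}^{-1} \circ \varphi_{\bfi'}$, an expression in which $\boldsymbol{\tau}$ no longer appears, so any two sequences of elementary moves carrying $\bfi$ to $\bfi'$ induce one and the same isomorphism $\cA_t(\tB_{\bfi'}, \Lambda_{\bfi'}) \simeq \cA_t(\tB_{\bfi}, \Lambda_{\bfi})$.

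The main obstacle here is organisational rather than mathematical: one has to keep the directions of the maps $\varphi_{\bullet}$ and the order of composition of the $\tau_m^*$ consistent with the asymmetric shape of the square in Proposition~\ref{Prop:tau} (which reads $\varphi_{\mathrm{target}}\circ\tau^* = \varphi_{\mathrm{source}}$), and one must not skip the easy-but-necessary check that all intermediate words remain reduced words for $w_\circ$ so that Proposition~\ref{Prop:tau} is legitimately applicable at every step. No computation beyond what is already in Proposition~\ref{Prop:tau} is needed.
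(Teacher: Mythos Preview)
Your proof is correct and takes essentially the same approach as the paper, which treats the corollary as an immediate consequence of Proposition~\ref{Prop:tau} by composing the commuting squares along the chain of elementary moves. Your explicit verification that each intermediate word remains a reduced word for $w_\circ$ is a useful hygiene check that the paper leaves implicit.
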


\begin{Rem}
By definition, the isomorphism $\boldsymbol{\tau}^*$ induces a bijection 
between cluster monomials in $\cA_t(\tB_{\bfi'}, \Lambda_{\bfi'})$ and 
those in $\cA_t(\tB_\bfi, \Lambda_\bfi)$. 
\end{Rem}

\section{Reminders on quantum Grothendieck rings}\label{secfour}

We give general reminders on quantum Grothendieck rings of the category $\Cc$ of finite-dimensional representations of a quantum loop algebra. 
In particular, we recall quantum $T$-systems, Q-data, and Kazhdan--Lusztig type conjectures in this context. We also recall the monoidal subcategories 
$\Cc_\Z$ and $\Cc^-$ of $\Cc$ (as defined in \cite{HL10, HL16}) and we introduce subcategories $\Cc_{\le \xi}$ generalizing $\Cc^-$.

\subsection{Quantum loop algebras and the category $\Cc_\Z$}
\label{Ssec:rep}

Let $\fg$ be a complex finite-dimensional simple Lie algebra (it should not be confused with the Lie algebra $\sg$ of the previous sections).
Let $C = (c_{ij})_{i,j \in I}$ denote the Cartan matrix of $\fg$, where $I$ is the set of Dynkin indices. 
For $i,j \in I$, we write $i \sim j$ if $c_{ij} < 0$. 
Let $r \in \{1,2,3\}$ be the lacing number of $\fg$, and $d \colon I \to \{1,r\}$ the function satisfying $d_i c_{ij} = d_j c_{ji}$ for all $i, j \in I$, i.e., the minimal left symmetrizer of $C$.

Let $U_q(L\fg)$ be the quantum loop algebra associated to $\fg$.
It is a Hopf algebra defined over an algebraic closed field $\kk = \ol{\Q(q)}$, where $q$ is a formal parameter. 
Let $\Cc$ denote the rigid monoidal category of finite-dimensional $U_q(L\fg)$-modules,  with the standard type $1$ condition.  
It is endowed with the contravariant auto-equivalence $\fD^{\pm 1}$ which sends each module to its left/right dual. 
Recall that the isomorphism classes of simple modules of the category $\Cc$ are parametrized by the set $(1+z\kk[z])^I$ of $I$-tuples of monic polynomials (called Drinfeld polynomials) \cite{CP,CP95}.  

In this paper, we restrict ourselves to a nice monoidal subcategory $\Cc_\Z$ of $\Cc$ introduced by Hernandez--Leclerc~\cite{HL10}  as the Serre subcategory generated by a distinguished family of simple modules. 
Precisely, we fix a function $\epsilon \colon I \to \{0,1\}$ satisfying the condition 
\begin{equation} \label{eq:ep}
\epsilon_i \equiv \epsilon_j + \min(d_i, d_j) \pmod 2 \quad \text{whenever $i\sim j$}, 
\end{equation} 
which we call a \emph{parity function}, and let
\[ \hI \seq \{(i,p) \in I \times \Z \mid p \equiv \epsilon_i \pmod 2 \}.\]
 We introduce a formal variable $Y_{i,p}$ for each $(i,p) \in \hI$, and consider the ring of Laurent polynomials $\cY \seq \Z[Y_{i,p}^{\pm 1} \mid (i,p) \in \hI \, ]$.
Let $\cM^* \subset \cY$ be the set of all the Laurent monomials.  
An element $m \in \cM^*$ is written as
\begin{equation} \label{eq:monomial}
m = \prod_{(i,p) \in \hI} Y_{i,p}^{u_{i,p}(m)}.
\end{equation}
We say that $m \in \cM^*$ is \emph{dominant} if $u_{i,p}(m) \ge 0$ for all $(i,p) \in \hI$.
Let $\cM \subset \cM^*$ be the set of dominant monomials.
For each $m \in \cM$, we have a simple module $L(m) \in \Cc$ corresponding to the Drinfeld polynomials $(\prod_{p}(1-q^pz)^{u_{i,p}(m)})_{i \in I}$.
The category $\Cc_\Z$ is defined to be the Serre subcategory of $\Cc$ generated by the simple modules $\{ L(m) \mid m \in \cM \}$.
It is closed under taking tensor products and the auto-equivalences $\fD^{\pm 1}$, and hence forms a rigid monoidal category in itself.
Indeed, we have $\fD^{\pm 1}L(m) \simeq L(\fD^{\pm 1}m)$ for any $m \in \cM$. 
Here on the right hand side $\fD^{\pm1}$ denotes the automorphism of $\cY$ given by $Y_{i,p} \mapsto Y_{i^*,p\pm rh^\vee}$ for $(i,p) \in \hI$, where $i \mapsto i^*$ is the involution of $I$ induced by the longest Weyl group element and $h^\vee$ is the dual Coxeter number of $\fg$.
Moreover, every prime simple module of $\Cc$ (that is a simple module which can not be factorized into a non-trivial tensor product of modules) is in $\Cc_\Z$ after a suitable spectral parameter shift. 

The $q$-character homomorphism $\chi_q$ defined by Frenkel--Reshetikhin~\cite{FR99} gives an injective ring homomorphism $\chi_q \colon K(\Cc_\Z) \to \cY$.  
As a ring, the Grothendieck ring $K(\Cc_\Z)$ is isomorphic to the ring of polynomials in $\{ [L(Y_{i,p})]\}_{(i,p) \in \hI}$.  
A simple module of the form $L(Y_{i,p})$ is called a \emph{fundamental module}.

For each $(i,p) \in I \times \Z$ with $(i, p-d_i) \in \hI$, we define the element $A_{i,p} \in \cM^*$ by
\[ A_{i,p} =  Y_{i, p-d_i} Y_{i,p+d_i} \prod_{(j,s) \in \hI \colon j \sim i, |s-p| < d_i} Y_{j,s}^{-1}, \]
which is a loop analog of the $i$-th simple root \cite{FR99}.
For $m,m' \in \cM^*$, we write $m \le m'$ if $m'm^{-1}$ is a monomial in various $A_{i,p}$ for $(i,p-d_i)\in \hI$.
This defines a partial ordering on $\cM^*$, called the \emph{Nakajima partial ordering}. 
For any $m \in \cM$, we have
\begin{equation} \label{eq:L}
\chi_q(L(m)) = m + \sum_{m' \in \cM^* \colon m' < m} a[m;m'] m'
\end{equation}
for some $a[m;m'] \in \N_0$. 
See \cite{Nak01} and \cite{FM01} for the proof.

\subsection{Quantum Grothendieck ring of $\Cc_\Z$}

The quantum Cartan matrix $C(q) = (C_{ij}(q))_{i,j \in I}$ is a $\Z[q^{\pm 1}]$-valued matrix given by
\[C_{ij}(q) \seq \delta_{i,j} (q^{d_i}+q^{-d_i}) + (1-\delta_{i,j}) [c_{ij}]_q\]
for any $i,j \in I$, where $[k]_q = (q^k-q^{-k})/(q-q^{-1})$ is the standard $q$-integer.
The matrix $C(q)$ is invertible as a $\Q(q)$-valued matrix and we write $\tC(q) = (\tC_{ij}(q))_{i,j \in I}$ for its inverse.
For any $i,j \in I$, we write
\[
\tC_{ij}(q) = \sum_{u \in \Z} \tc_{ij}(u) q^{u} \in \Z(\!( q )\!)
\]
for the Laurent expansion at $q=0$ of the $(i,j)$-entry $\tC_{ij}(q)$.
In this way, we get a collection of integers $\{ \tc_{ij}(u)\mid i,j \in I, u \in \Z\}$.
We define a map $\Nn\colon (I \times \Z)^{2}  \to \Z$ by
\[
\Nn(i,p;j,s) \seq \tc_{ij}(p-s-d_{i})- \tc_{ij}(p-s+d_{i}) - \tc_{ij}(s-p-d_{i})+ \tc_{ij}(s-p+d_{i}). 
\]
It satisfies $\Nn(i,p;j,s) = - \Nn(j,s;i,p)$ for any $(i,p), (j,s) \in I \times \Z$.

Let $t$ be an indeterminate with a formal square root $t^{1/2}$.
We define the quantum torus $\cY_{t}$ to be the $\Z[t^{\pm1/2}]$-algebra presented by 
the set of generators $\{\ul{Y_{i, p}^{\pm 1}} \mid (i,p) \in \hI \ \}$ and the following relations:
\begin{itemize}
\item 
$\ul{Y_{i,p}} \cdot \ul{Y_{i,p}^{-1}}= \ul{Y_{i,p}^{-1}} \cdot \ul{Y_{i,p}}=1$ for each $(i,p) \in \hI$,
\item 
$\ul{Y_{i, p}}\cdot \ul{Y_{j, s}} = t^{\Nn(i,p;j,s)}\ul{Y_{j,s}}\cdot \ul{Y_{i,p}}$ for each $(i,p), (j,s) \in \hI$.
\end{itemize}
Note that $\cY_t$ is a deformation of $\cY$. 
Indeed, there exists a surjective $\Z$-algebra homomorphism $\evt \colon \cY_{t} \to \cY$
given by $t^{1/2} \mapsto 1$ and $\ul{Y_{i,p}} \mapsto Y_{i,p}$ for all $(i,p) \in \hI$.
An element $\tm$ of $\cY_{t}$ is called a {\em monomial} if it is a product of the generators $\ul{Y_{i,p}^{\pm 1}}$ for $(i,p) \in \hI$ and $t^{\pm 1/2}$. 
A monomial $\tm$ is said to be dominant if $\evt(\tm)$ is dominant. 
Following~\cite[\S6.3]{Her04}, we define the $\Z$-algebra anti-involution $\ol{(\cdot)}$ on $\cY_{t}$ by
\[
\ol{t^{1/2}} \seq t^{-1/2}, \qquad \ol{\ul{Y_{i,p}}} \seq \ul{Y_{i,p}}.
\]
This is called the {\em bar involution} on $\cY_{t}$. 
For any $m \in \cM^*$, we denote by $\ul{m}$ the unique monomial in $\cY_t$ satisfying $\ol{\ul{m}}= \ul{m}$ and $\evt(\ul{m}) = m$.
The elements of this form are called \emph{commutative monomials} (cf.~Appendix~\ref{ssec:Qtorus}).
%For example, we have $\ul{Y_{i,p}} = t^{1/2}\tY_{i,p}$.
Note that we have $\ul{(m^{-1})} = (\ul{m})^{-1} (=: \ul{m}^{-1})$.
The commutative monomials form a free basis 
of the $\Z[t^{\pm 1/2}]$-module $\cY_{t}$.
For any $m,m^\prime \in \cM^*$, we have
$$\ul{m\cdot m^\prime} = t^{-\Nn(m, m^\prime)/2}\ul{m} \cdot \ul{m}' = t^{\Nn(m,m^\prime)/2} \ul{m}' \cdot \ul{m}$$
where $\Nn(m,m^\prime) \in \Z$ is a skew-symmetric pairing given by
\begin{equation} \label{eq:Nnmm}
\Nn(m,m^\prime) \seq \sum_{(i,p), (j,s) \in \hI} u_{i,p}(m) u_{j,s}(m^\prime) \Nn(i,p;j,s)
\end{equation}
in the notation of \eqref{eq:monomial}.

For each $i \in I$, denote by $\cK_{i,t}$ the $\Z[t^{\pm 1/2}]$-subalgebra
of $\cY_{t}$ generated by 
\[
\left\{ \ul{Y_{i,p}} (1+t^{-1}\ul{A_{i, p+ d_{i}}^{-1}}) \; \middle| \; (i,p) \in \hI \, \right\} 
\cup \left\{ \ul{Y_{j,s}^{\pm 1}} \; \middle| \; (j,s) \in \hI, j \neq i \right\}.
\]
Following \cite{Nak04, VV03, Her04}, we define the quantum Grothendieck ring $\cK_t(\Cc_\Z)$ to be the $\Z[t^{\pm 1/2}]$-subalgebra of $\cY_t$ given by 
\[
\cK_{t}(\Cc_{\Z}) \seq \bigcap_{i \in I} \cK_{i,t}. 
\]
Note that $\cK_t(\Cc_\Z)$ is stable under the bar involution.
Moreover, we have $\evt(\cK_t(\Cc_\Z)) = \chi_q(K(\Cc_\Z))$.
For future use, we remark the following.

\begin{Lem} \label{Lem:scr}
Assume that two non-zero elements $x \in \cK_t(\Cc_\Z)$ and $y \in \cY_t$ satisfy $xy \in \cK_t(\Cc_\Z)$.
Then we have $y \in \cK_t(\Cc_\Z)$.
\end{Lem}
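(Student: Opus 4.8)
The plan is to pass to the defining intersection $\cK_t(\Cc_\Z)=\bigcap_{i\in I}\cK_{i,t}$. Since $x\in\cK_t(\Cc_\Z)$, we have $x\in\cK_{i,t}$ and $xy\in\cK_{i,t}$ for every $i\in I$, so it suffices to prove, for each fixed $i$: if $x,xy\in\cK_{i,t}$ with $x\neq 0$ and $y\in\cY_t$, then $y\in\cK_{i,t}$ (the case $y=0$ being trivial, assume $y\neq 0$). A first ingredient is that $\cY_t$ has no zero divisors: by construction it is free over $\Z[t^{\pm 1/2}]$ on the commutative monomials $\{\ul{m}\}_{m\in\cM^*}$ with $\ul{m}\cdot\ul{m'}=t^{-\Nn(m,m')/2}\ul{mm'}$, hence is the twisted group algebra of the free abelian group $\cM^*$ over the domain $\Z[t^{\pm 1/2}]$; fixing a group ordering on $\cM^*$ and comparing leading terms shows it is a domain (cf.~Appendix~\ref{ssec:Qtorus}).

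The second ingredient is the description of $\cK_{i,t}$ via screening operators. Recall from \cite{Her04} (see also \cite{Nak04, VV03}) that $\cK_{i,t}$ is the kernel of the $t$-deformed $i$-th screening operator $S_{i,t}\colon\cY_t\to\widetilde{M}_i$, where $\widetilde{M}_i$ is a free left $\cY_t$-module and $S_{i,t}$ is a twisted derivation, $S_{i,t}(ab)=S_{i,t}(a)\,b+\sigma_i(a)\,S_{i,t}(b)$, for an algebra automorphism $\sigma_i$ of $\cY_t$ (with $\widetilde{M}_i$ regarded as a $\cY_t$-bimodule). Granting this, the argument is immediate: from $S_{i,t}(x)=0$ and the Leibniz rule we get $S_{i,t}(xy)=\sigma_i(x)\,S_{i,t}(y)$, which vanishes because $xy\in\cK_{i,t}$; since $\cY_t$ is a domain we have $\sigma_i(x)\neq 0$, and since $\widetilde{M}_i$ is a free, hence torsion-free, left $\cY_t$-module, this forces $S_{i,t}(y)=0$, i.e.\ $y\in\cK_{i,t}$. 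Intersecting over $i\in I$ yields $y\in\cK_t(\Cc_\Z)$.

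The only real point to pin down is the sharp form of the screening-operator description — that $S_{i,t}$ is a twisted derivation valued in a torsion-free $\cY_t$-module, not merely that $\ker S_{i,t}=\cK_{i,t}$ — but this is exactly how $\cK_{i,t}$ is built in \cite{Her04}, so I expect this to be the only (mild) obstacle. A self-contained alternative would be to show directly that $\cK_{i,t}$ is a direct summand of $\cY_t$ as a left $\cK_{i,t}$-module, say $\cY_t=\cK_{i,t}\oplus N$ with $N$ a left $\cK_{i,t}$-submodule: writing $y=y_0+c$ accordingly, one has $xy=xy_0+xc$ with $xy_0\in\cK_{i,t}$ and $xc\in N$, so $xy\in\cK_{i,t}$ forces $xc=0$ and hence $c=0$ since $\cY_t$ is a domain. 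Producing such a splitting, however, again requires structural input on $\cK_{i,t}$ (e.g.\ a Frenkel--Mukhin-type normal form), so the screening-operator route is the cleaner one.
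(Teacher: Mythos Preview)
Your proposal is correct and follows essentially the same route as the paper: reduce to each $\cK_{i,t}$ and use that $\cK_{i,t}=\Ker S_{i,t}$ for the $t$-deformed screening operator, then apply the Leibniz rule and cancel $x$ using that $\cY_t$ is a domain and the target module is torsion-free. The only cosmetic difference is that the paper states the Leibniz rule in untwisted form $S_{i,t}(xy)=S_{i,t}(x)y+xS_{i,t}(y)$ with $\cY_{i,t}$ a $\cY_t$-bimodule (citing \cite[Theorem~4.10]{Her04}), whereas you phrase it as a $\sigma_i$-twisted derivation into a free left module; these are equivalent packagings, and your explicit justification of torsion-freeness makes transparent a point the paper leaves implicit.
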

\begin{proof} This is a quantum analog of an argument presented in \cite{HL16}. Indeed, the assertion follows from the fact that $\cK_{i,t}$ is the kernel of the $t$-deformed $i$-th screening operator $S_{i,t}$. 
Namely, there is a $\cY_t$-bimodule $\cY_{i,t}$ and a $\Z[t^{\pm 1/2}]$-linear map $S_{i,t} \colon \cY_t \to \cY_{i,t}$ such that $S_{i,t}(xy) = S_{i,t}(x)y + x S_{i,t}(y)$ for any $x,y \in \cY_t$, and $\Ker S_{i,t} = \cK_{i,t}$. 
See \cite[Theorem 4.10]{Her04} for details.
\end{proof}

We have the following result due to the second named author which will be crucial for our purposes.

\begin{Thm}[{\cite[Theorem 5.11]{Her04}}{\cite[Theorem 7.5]{Her05}}] \label{Thm:Ft}
For every dominant monomial $m \in \cM$, there exists a unique element $F_{t}(m)$ of $\cK_{t}(\Cc_\Z)$ such that $\ul{m}$ is the unique dominant monomial occurring in $F_{t}(m)$. 
It satisfies $\ol{F_t(m)} = F_t(m)$.
Moreover, the set $\{ F_{t}(m) \mid m \in \cM \}$ forms a $\Z[t^{\pm 1/2}]$-basis of $\cK_{t}(\Cc_\Z)$, and the set $\{F_t(Y_{i,p}) \mid (i,p) \in \hI\}$ generates the $\Z[t^{\pm 1/2}]$-algebra $\cK_t(\Cc_\Z)$.
\end{Thm}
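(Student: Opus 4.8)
The plan is to transpose the Frenkel--Mukhin algorithm and Nakajima's Kazhdan--Lusztig-type construction to the noncommutative ring $\cK_t(\Cc_\Z)\subset\cY_t$. The backbone is a \emph{support lemma}: among the monomials occurring in a given nonzero $x\in\cK_t(\Cc_\Z)$, a maximal one for the Nakajima ordering is dominant; in particular any nonzero element of $\cK_t(\Cc_\Z)$ has a dominant monomial. To prove it I would run the classical argument inside $\cY_t$: if $\ul m$ is such a maximal monomial and $u_{i,s}(m)<0$ for some $(i,s)\in\hI$, then $x\in\cK_{i,t}=\Ker S_{i,t}$ (the $t$-deformed $i$-th screening operator; see \cite[Theorem~4.10]{Her04}, used in Lemma~\ref{Lem:scr}) forces a monomial of the form $\ul{m\, A_{i,s'}}$, which is strictly above $\ul m$, to occur in $x$ --- a contradiction; the $t$-powers play no role since $S_{i,t}$ is homogeneous for the monomial grading. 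Granting this, the uniqueness of $F_t(m)$ and its bar-invariance are immediate: two elements of $\cK_t(\Cc_\Z)$ having $\ul m$ as their only dominant monomial (with coefficient $1$) differ by an element of $\cK_t(\Cc_\Z)$ with \emph{no} dominant monomial, hence by $0$; and $\ol{F_t(m)}$ lies in $\cK_t(\Cc_\Z)$, is fixed at $\ul m$ and has no other dominant monomial, hence equals $F_t(m)$.

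The genuinely hard step is the \textbf{existence of the fundamental $(q,t)$-characters} $F_t(Y_{i,p})$. I would build $F_t(Y_{i,p})$ by a $t$-deformation of the Frenkel--Mukhin algorithm: start from $\ul{Y_{i,p}}$ and repeatedly enforce the conditions ``lies in $\Ker S_{j,t}$'' for all $j\in I$; each time a $j$-dominant monomial $\ul{m'}$ has been produced, the kernel condition forces adding an $\mathfrak{sl}_2$-type completion with coefficients in $\Z[t^{\pm1/2}]$ determined by the requirement of bar-invariance. Two facts are needed. First, the procedure terminates and closes up: this rests on the theorem of Frenkel--Mukhin and Nakajima that every fundamental module is \emph{special} ($Y_{i,p}$ is the unique dominant monomial of $\chi_q(L(Y_{i,p}))$), so the $t$-algorithm shadows the classical one and produces only monomials $m'\le Y_{i,p}$. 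Second --- and this is the crux --- one must check the \emph{consistency} of all these constraints: the coefficient forced on a monomial by $\Ker S_{j,t}$ and by $\Ker S_{j',t}$ must agree, and be bar-invariant. This delicate verification, carried out by bookkeeping the overlaps via the explicit bimodule relations defining the $S_{j,t}$ and the skew-symmetry \eqref{eq:Nnmm} of $\Nn(-,-)$, is precisely the content of \cite[\S5]{Her04} in types $\mathrm{ADE}$ and \cite[\S7]{Her05} in general. By construction $F_t(Y_{i,p})\in\bigcap_{j\in I}\cK_{j,t}=\cK_t(\Cc_\Z)$, is bar-invariant, has $\ul{Y_{i,p}}$ as its only dominant monomial, and (comparing leading monomials and using the support lemma) any two fundamental $(q,t)$-characters $q$-commute: $F_t(Y_{i,p})F_t(Y_{j,s})=t^{\,c}F_t(Y_{j,s})F_t(Y_{i,p})$ for an integer $c$ determined by $\Nn$.

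For an arbitrary dominant $m=\prod_{(i,p)}Y_{i,p}^{u_{i,p}(m)}$ set $M_t(m)\seq t^{a(m)}\prod^{\to}_{(i,p)}F_t(Y_{i,p})^{u_{i,p}(m)}$ (factors ordered by decreasing $p$, say), with $a(m)$ chosen so that $M_t(m)$ is bar-invariant --- possible by the $q$-commutativity above. These $M_t(m)$ are exactly the ordered monomials in the $F_t(Y_{i,p})$, hence a $\Z[t^{\pm1/2}]$-basis of the subalgebra they generate (linear independence by the distinct leading commutative monomials $\ul m$). Applying $\evt$ gives $\evt(M_t(m))=\chi_q\!\big(\bigotimes L(Y_{i,p})^{\otimes u_{i,p}(m)}\big)$ (using $\evt(F_t(Y_{i,p}))=\chi_q(L(Y_{i,p}))$, which follows from the classical support lemma and $\evt(\cK_t(\Cc_\Z))=\chi_q(K(\Cc_\Z))$), whose dominant monomials are $\le m$ with $m$ of multiplicity $1$; therefore
\[ M_t(m)=\ul m+\sum_{m'<m,\ m'\ \mathrm{dominant}}c_{m,m'}(t)\,\ul{m'}+(\text{non-dominant monomials}), \qquad \ol{c_{m,m'}(t)}=c_{m,m'}(t). \]
Now define $F_t(m)\seq M_t(m)-\sum_{m'<m,\ m'\ \mathrm{dominant}}c_{m,m'}(t)\,F_t(m')$. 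This recursion is well-founded because $m'<m$ forces $\wt(m')<\wt(m)$ strictly in the dominant weight lattice of $\fg$ (each $A_{i,p}$ has weight $\alpha_i$) and each $M_t(m)$ has finite support; thus $F_t(m)$ is a finite $\Z[t^{\pm1/2}]$-combination of the $M_t(m'')$, lies in the subalgebra generated by the $F_t(Y_{i,p})$, is bar-invariant, and --- telescoping the subtractions --- has $\ul m$ as its unique dominant monomial with coefficient $1$.

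Finally, $\{M_t(m)\}$ and $\{F_t(m)\}$ are related by a unitriangular base change over $\Z[t^{\pm1/2}]$, so it remains to show $\{M_t(m)\}_{m\in\cM}$ spans all of $\cK_t(\Cc_\Z)$. Given $0\ne x\in\cK_t(\Cc_\Z)$, pick a dominant monomial $\ul m$ of $x$ maximal for the Nakajima order (it exists by the support lemma and finiteness of the support) with coefficient $c(t)$, and replace $x$ by $x-c(t)M_t(m)\in\cK_t(\Cc_\Z)$; this erases $\ul m$ from the support and introduces only dominant monomials of strictly smaller weight, so the finite multiset of weights of the dominant monomials in the support strictly decreases in the (well-founded) Dershowitz--Manna order on dominant weights below a fixed finite set. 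Iterating kills $x$, whence $x\in\sum_m\Z[t^{\pm1/2}]M_t(m)$. Thus $\cK_t(\Cc_\Z)$ is generated as a $\Z[t^{\pm1/2}]$-algebra by $\{F_t(Y_{i,p})\mid(i,p)\in\hI\}$, and $\{F_t(m)\mid m\in\cM\}$ is a $\Z[t^{\pm1/2}]$-basis of it. The main obstacle throughout is the consistency check in the construction of the fundamental $(q,t)$-characters; everything else is a faithful transcription of the Frenkel--Mukhin/Nakajima formalism together with routine triangularity and well-foundedness bookkeeping.
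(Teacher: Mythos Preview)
The paper does not prove Theorem~\ref{Thm:Ft}; it is quoted as a known result from \cite[Theorem~5.11]{Her04} and \cite[Theorem~7.5]{Her05}. Your sketch is essentially the approach of those references: the support lemma via $t$-deformed screening operators, the $t$-analog of the Frenkel--Mukhin algorithm with its consistency check for the fundamental $F_t(Y_{i,p})$, and the triangular passage from ordered products $M_t(m)$ to the $F_t(m)$.

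One point deserves correction. Your claim that any two fundamental $(q,t)$-characters $t$-commute does \emph{not} follow from ``comparing leading monomials and using the support lemma'': for a suitable $c$, the difference $F_t(Y_{i,p})F_t(Y_{j,s})-t^{c}F_t(Y_{j,s})F_t(Y_{i,p})$ loses the top monomial $\ul{Y_{i,p}Y_{j,s}}$, but the support lemma only says a nonzero element has \emph{some} dominant monomial, and indeed lower dominant monomials can survive (already at $t=1$, the product $\chi_q(L(Y_{i,p}))\chi_q(L(Y_{j,s}))$ generally has several dominant monomials when the tensor product is not simple). Fortunately this claim is unnecessary for your argument. You do not need $M_t(m)$ to be bar-invariant: define $M_t(m)$ as the ordered product normalized so that the coefficient of $\ul m$ is $1$, run your recursion $F_t(m)=M_t(m)-\sum_{m'<m}c_{m,m'}(t)F_t(m')$ to obtain an element with $\ul m$ as its unique dominant monomial, and then invoke the uniqueness you already proved in your first paragraph to conclude $\ol{F_t(m)}=F_t(m)$. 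With this adjustment your outline is correct and matches the original proofs.
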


We can construct \emph{the canonical basis} $\bfL_t$ of $\cK_{t}(\Cc_\Z)$, whose member $L_t(m)$ is conjecturally a $t$-analog of the $q$-character $\chi_q(L(m))$.

\begin{Thm}[{\cite[Theorem 8.1]{Nak04}}, {\cite[Theorem 6.9]{Her04}}]
\label{Thm:qtch}
There exists a unique $\Z[t^{\pm 1/2}]$-basis $\bfL_t = \{ L_{t}(m) \mid m \in \cM\}$ of $\cK_t(\Cc_\Z)$ characterized by the following properties: for each $m \in \cM$, we have $\ol{L_{t}(m)} = L_{t}(m)$, and
\[
L_t(m) \equiv t^{\Nn(m)}\prod_{p \in \Z}^{\to}\prod_{i \in I\colon (i,p) \in \hI}F_t(Y_{i,p})^{u_{i,p}(m)} 
\mod \sum_{m' < m}t\Z[t] L_t(m')
\]
in the notation of \eqref{eq:monomial} for some $\Nn(m)\in \frac{1}{2}\Z$. Explicitly, one has
\[ \Nn(m) = -\frac{1}{2} \sum_{(i,p), (j,s) \in \hI \colon p<s}u_{i,p}(m)u_{j,s}(m)\Nn(i,p;j,s).\]
\end{Thm}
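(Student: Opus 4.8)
The plan is to follow the classical Kazhdan--Lusztig paradigm. First I would build a ``standard'' $\Z[t^{\pm 1/2}]$-basis of $\cK_t(\Cc_\Z)$ out of ordered products of the generators $F_t(Y_{i,p})$, then check that the bar involution acts on this standard basis by a matrix which is unitriangular with respect to the Nakajima ordering (``almost bar-invariance''), and finally invoke the abstract Kazhdan--Lusztig lemma to produce and characterize the bar-invariant basis $\bfL_t$.

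Concretely, for $m\in\cM$ written as in \eqref{eq:monomial} I would set
\[ M_t(m) \seq t^{\Nn(m)}\prod^{\to}_{p\in\Z}\prod_{i\in I\colon(i,p)\in\hI}F_t(Y_{i,p})^{u_{i,p}(m)}\in\cK_t(\Cc_\Z),\]
where $\Nn(m)\in\tfrac12\Z$ is the unique normalization making $\ul m$ occur in $M_t(m)$ with coefficient $1$; a direct computation with the skew pairing \eqref{eq:Nnmm} and the identity $\Nn(i,p;j,p)=0$ (so that the inner product over $i$ does not depend on the order of its factors, there being no $i\sim j$ with $(i,p),(j,p)\in\hI$ by \eqref{eq:ep}) produces the explicit value of $\Nn(m)$ stated. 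Since $\ul{Y_{i,p}}$ is the unique dominant, hence $\le$-maximal, commutative monomial occurring in $F_t(Y_{i,p})$ (Theorem~\ref{Thm:Ft}) and the Nakajima ordering is compatible with multiplication of commutative monomials, one gets an expansion $M_t(m)=\ul m+\sum_{m'\in\cM^*\colon m'<m}\kappa_{m,m'}(t)\,\ul{m'}$ in $\cY_t$. Combined with Theorem~\ref{Thm:Ft} (the $\{F_t(m)\}_{m\in\cM}$ form a basis, $\ul m$ being the unique dominant monomial in $F_t(m)$), a downward ``peeling'' induction on the finite set $\{m'\in\cM\colon m'\le m\}$ shows that $\{M_t(m)\}_{m\in\cM}$ is again a $\Z[t^{\pm1/2}]$-basis of $\cK_t(\Cc_\Z)$, unitriangular with respect to $\{F_t(m)\}_{m\in\cM}$.

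Next I would carry out the bar computation. As $\ol{F_t(Y_{i,p})}=F_t(Y_{i,p})$ and $\ol{(\cdot)}$ is an anti-involution of $\cY_t$, we get $\ol{M_t(m)}=t^{-\Nn(m)}\prod^{\leftarrow}_{p}\prod_i F_t(Y_{i,p})^{u_{i,p}(m)}$. Reversing the order of the product, each transposition of two factors contributes at leading order the scalar dictated by the commutation relation of the corresponding $\ul{Y_{i,p}}$'s, plus strictly lower-order corrections in the Nakajima ordering; the accumulated leading scalar is exactly $t^{2\Nn(m)}$, so that $\ol{M_t(m)}$ again has $\ul m$ as leading monomial with coefficient $1$, while the corrections, re-expanded in the basis $\{M_t(m')\}$, are supported on $m'<m$. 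This yields the almost-bar-invariance $\ol{M_t(m)}=M_t(m)+\sum_{m'\in\cM\colon m'<m}\rho_{m,m'}(t)\,M_t(m')$ with $\rho_{m,m'}(t)\in\Z[t^{\pm1/2}]$. Finally I would apply the Kazhdan--Lusztig lemma (cf.~\cite{LusztigBook}) to the poset $(\cM,<)$ (whose down-sets are finite), the free module $\cK_t(\Cc_\Z)$ with basis $\{M_t(m)\}$ and the semilinear involution $\ol{(\cdot)}$: it gives a unique bar-invariant basis $\{L_t(m)\}_{m\in\cM}$ with $L_t(m)=M_t(m)+\sum_{m'<m}P_{m,m'}(t)M_t(m')$, $P_{m,m'}(t)\in t\Z[t]$; inverting this unitriangular relation (coefficients staying in $t\Z[t]$ by induction) gives $M_t(m)\equiv L_t(m)\bmod\sum_{m'<m}t\Z[t]L_t(m')$, which is the asserted congruence, and uniqueness is the uniqueness clause of the Kazhdan--Lusztig lemma.

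The main obstacle is the bar computation of the third step: one must control precisely the lower-order corrections created when the ordered product is reversed, verify they stay in the span of the $M_t(m')$ with $m'<m$, and check that the accumulated leading scalar is exactly $t^{2\Nn(m)}$ — this is where the compatibility of the Nakajima ordering with both the multiplicative and the bar structure of $\cY_t$ must be used carefully. By contrast, the fact that $\{M_t(m)\}$ is a basis (the peeling induction) is routine once Theorem~\ref{Thm:Ft} is granted, and the concluding Kazhdan--Lusztig step is purely formal.
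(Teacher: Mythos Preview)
The paper does not prove this theorem; it is quoted from \cite{Nak04} and \cite{Her04}, and the only addition in the text is the remark (just below the statement) that the $F_t(Y_{i,p})$ for fixed $p$ mutually commute, making the inner unordered product well-defined. Your sketch is exactly the standard Kazhdan--Lusztig argument carried out in those references: build the ``standard basis'' $\{M_t(m)\}$ by normalized ordered products of the $F_t(Y_{i,p})$, check unitriangularity of the bar involution with respect to the Nakajima order, and invoke the KL lemma. So the overall strategy is correct and canonical, and your computation of $\Nn(m)$ from $\Nn(i,p;j,p)=0$ is right.

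One small correction: your parenthetical justification for the commutativity of the inner product --- that ``there is no $i\sim j$ with $(i,p),(j,p)\in\hI$ by \eqref{eq:ep}'' --- fails in non-simply-laced types. For instance in type $\mathrm{B}_n$, two adjacent long nodes $i\sim j$ (both with $d_i=d_j=2$) satisfy $\epsilon_i\equiv\epsilon_j\pmod 2$ by \eqref{eq:ep}, so $(i,p)$ and $(j,p)$ can both lie in $\hI$. The identity $\Nn(i,p;j,p)=0$ only gives commutation of the \emph{leading} monomials $\ul{Y_{i,p}}$, not of the full elements $F_t(Y_{i,p})$; the latter is a separate (though not hard) fact established in \cite{Her04}. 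You should simply cite it, as the paper does, rather than offer the incorrect combinatorial reason.
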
 

Note that for a fixed $p$, the $F_t(Y_{i,p})$'s ($i\in I$) mutually commute, so the formula for $L_t(m)$ is well-defined.
The element $L_{t}(m)$ is called \emph{the $(q,t)$-character of $L(m)$}.
As a $t$-analog of the equation~\eqref{eq:L}, for each $m \in \cM$, we have
\begin{equation} \label{eq:Lt}
L_t(m) = \ul{m} + \sum_{m' \in \cM^* \colon m' < m} a_t[m;m'] \ul{m}'
\end{equation}
for some $a_t[m;m'] \in \Z[t^{\pm 1/2}]$.

\begin{Conj}[cf.~{\cite[Conjecture 7.3]{Her04}}] \label{Conj:KL}
For all $m \in \cM$, we have
\begin{enumerate}
\item \label{Conj:KL:KL} $\evt(L_{t}(m)) = \chi_{q}(L(m))$, and
\item \label{Conj:KL:pos} $a_t[m;m'] \in \N_0[t^{\pm 1/2}]$ for all $m' \in \cM^*$ with $m' < m$.
\end{enumerate}
\end{Conj}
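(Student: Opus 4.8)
The plan is to deduce both parts of the conjecture, as far as they are provable here, from one new ingredient: a quantum cluster algebra structure on the quantum Grothendieck ring of a suitable monoidal subcategory. First I would introduce a family of subcategories $\Cc_{\le\xi}\subset\Cc_\Z$ indexed by a height function $\xi$ (recovering $\Cc^-$ of~\cite{HL16} for the standard choice) and establish a $\Z[t^{\pm1/2}]$-algebra isomorphism $\cK_t(\Cc_{\le\xi})\simeq\cA_\bfi$ for an explicit $\bfi\in\Delta_0^{(\infty)}$ attached to a Q-datum for $\fg$ (built from a simply-laced $\sg$ unfolding $\fg$). The core of this step is an isomorphism of the ambient quantum tori: one matches the subtorus of $\cY_t$ generated by the relevant $\ul{Y_{i,p}}$ with $\cT(\Lambda_\bfi)$ by checking that the pairing $\Nn$, written through the inverse quantum Cartan matrix $\tC(q)$, coincides with $\Lambda_\bfi$ under a bijection $\hI\leftrightarrow\N$; here Proposition~\ref{Prop:BL} and Lemma~\ref{Lem:Lambda} supply the compatibilities needed. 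One then exhibits an initial seed whose frozen variables are the $F_t$'s of fundamental modules and whose exchangeable variables are the $F_t$'s of Kirillov--Reshetikhin modules, and checks that a single mutation realizes a quantum $T$-system relation; since the $F_t(Y_{i,p})$ generate $\cK_t(\Cc_{\le\xi})$ by Theorem~\ref{Thm:Ft} and everything lives inside a quantum torus, Laurentness together with the defining characterization of the $F_t$ forces this quantum cluster algebra to exhaust $\cK_t(\Cc_{\le\xi})$.

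Next I would transport the classical monoidal categorification of~\cite{KKOP2} to the quantum level. The isomorphism $\cK_t(\Cc_\fg)\simeq\cK_t(\Cc_\sg)$ of~\cite{FHOO} is reinterpreted as a composite $\cK_t(\Cc_\fg)\simeq\cA_\bfi\simeq\cK_t(\Cc_\sg)$ through Sections~\ref{sectwo}--\ref{secthree}: on the simply-laced side the quantum Grothendieck ring is a localization of the quantum unipotent group $\cA_t[N_-]$ of $\sg$, whose cluster structure $\varphi_\bfi$ is compatible with the dual canonical basis by Theorem~\ref{Thm:CAN}, while Corollary~\ref{Cor:bftau} together with the forward-shift compatibilities (Lemmas~\ref{Lem:dg} and~\ref{Lem:cone}) shows the identification is canonical and carries quantum cluster monomials, with their $g$-vectors, to quantum cluster monomials. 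Combining this with Nakajima's theorem~\cite{Nak04} ($\evt L_{t,\sg}(m')=\chi_q(L(m'))$ and cluster monomials lie in $\bfL_{t,\sg}$) and pulling back, one concludes that every quantum cluster monomial of $\cK_t(\Cc_{\le\xi})$ — and, by an analogous argument, of $\cK_t(\Cc_\Z)$ — belongs to the canonical basis $\bfL_{t,\fg}$; this is the quantum monoidal categorification theorem.

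Part~(1) for a reachable $L(m)$ then follows from the resulting diagram of identifications: a spectral shift of $[L(m)]$ is by definition a cluster monomial, so by the previous step $L_{t,\fg}(m)$ is the corresponding quantum cluster monomial of $\cK_t(\Cc_{\le\xi})$, which the isomorphism of~\cite{FHOO} sends to a quantum cluster monomial of $\cK_t(\Cc_\sg)$, i.e. to some $L_{t,\sg}(m')$ with $m'$ the $\ell$-highest weight matched to $m$ in~\cite[\S10.3]{FHOO}; since the $t=1$ specialization of that isomorphism restricts to the cluster isomorphism of the ordinary Grothendieck rings underlying~\cite{KKOP2}, under which the reachable simple class $[L(m)]$ corresponds to $[L(m')]$, and since $\evt L_{t,\sg}(m')=\chi_q(L(m'))$ by~\cite{Nak04}, we get $\evt L_{t,\fg}(m)=\chi_q(L(m))$. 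For part~(2) I would take an arbitrary dominant $m$, write $L_t(m)=\ul m+\sum_{m'<m}a_t[m;m']\,\ul{m'}$ as in~\eqref{eq:Lt}, use that Kirillov--Reshetikhin $(q,t)$-characters are reachable and satisfy the quantum $T$-systems (a consequence of the previous step, conjectured in~\cite{FHOO}) to express $L_t(m)$ through products of such $(q,t)$-characters, and thereby identify each $a_t[m;m']$ with a sum of structure constants of $\cK_t(\Cc_\Z)$ in the basis $\bfL_{t,\fg}$; these are structure constants of a quantum cluster algebra in its cluster monomials, which sit inside $\cA_t[N_-]$ and are positive by the quantum Laurent phenomenon and the positivity of the dual canonical basis transported through $\varphi_\bfi$, so an induction along the Nakajima order $<$ yields $a_t[m;m']\in\N_0[t^{\pm1/2}]$ for every $m$.

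The main obstacle is the first step — proving that $\cK_t(\Cc_{\le\xi})$ genuinely carries a quantum cluster algebra structure isomorphic to some $\cA_\bfi$. This demands the exact coincidence of the $\Nn$-pairing with $\Lambda_\bfi$, the identification of the correct initial seed in terms of the $F_t$'s of fundamental and Kirillov--Reshetikhin modules, and the verification that mutations reproduce the quantum $T$-systems; once this is secured, parts~(1) and~(2) follow by transport of structure through Sections~\ref{sectwo}--\ref{secthree} and the known simply-laced results. A secondary and genuine limitation is that part~(1) in its stated generality is out of reach for non-reachable $m$; the argument above establishes it only for the reachable family, which is nonetheless already large — for instance it contains all Kirillov--Reshetikhin modules.
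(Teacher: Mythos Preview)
Your architecture matches the paper's, but there is a real gap at the simply-laced pivot. You write that by Nakajima's theorem ``cluster monomials lie in $\bfL_{t,\sg}$'', and then pull back to $\fg$. Nakajima proves $\evt L_{t,\sg}(m')=\chi_q(L(m'))$; he does \emph{not} prove that quantum cluster monomials of $\cA_\bfi$ land in $\bfL_{t,\sg}$. That is precisely the simply-laced case of Theorem~\ref{Thm:main}, and the paper supplies a separate argument: induction on mutation distance, where at each exchange $xx'=t^{a_1}y_1+t^{a_2}y_2$ one expands $L_t(m)L_t(m')$ in $\bfL_t$, invokes the positivity of structure constants (Theorem~\ref{Thm:pos}, from \cite{VV03,FHOO}), and compares the $t=1$ specialization with the classical exchange relation (via \cite{KKOP2} and Nakajima's result) to force the coefficients to be single powers of $t$. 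Without this step you cannot bootstrap types $\mathrm{CFG}$ from $\mathrm{ADE}$.

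For part~(2) your justification is also off. The canonical basis $\bfL_t$ strictly contains the cluster monomials, so ``quantum Laurent phenomenon and positivity of the dual canonical basis'' do not control arbitrary $\bfL_t$-structure constants; the correct input is again Theorem~\ref{Thm:pos}, already established in \cite{FHOO}. The paper's argument (same as \cite[Theorem 11.7]{FHOO}) uses Corollary~\ref{Cor:KRF} ($F_t=L_t$ for KR modules, a consequence of Theorems~\ref{Thm:qcl} and~\ref{Thm:main}) together with the truncation Lemma~\ref{Lem:qKR} to identify each $a_t[m;m']$ directly with a structure constant; no induction on the Nakajima order is needed. A minor correction to your Step~1: $\cA_\bfi$ has no frozen variables, the initial cluster variables are the $F_t$ of the KR modules $m^{(\im)}[p,\xi_\im]$ (not fundamentals sitting as frozen), and the inclusion $\teta_\bfi(\cA_\bfi)\subset\cK_{t,\le\xi}$ comes from the screening-operator Lemma~\ref{Lem:scr} and Lemma~\ref{Lem:tr}, not from Laurentness.
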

Conjecture~\ref{Conj:KL} (1) is called the \emph{Kazhdan--Lusztig type conjecture} and was motivated by the results of Nakajima. At this moment, we know that Conjecture~\ref{Conj:KL} is true when $\fg$ is of type $\mathrm{ADE}$ by Nakajima \cite{Nak04}, and when $\fg$ is of type $\mathrm{B}$ by \cite{FHOO}. Moreover, Conjecture~\ref{Conj:KL} (1) for general $\fg$ holds true when $m$ belongs to $\cM_{\cQ}$ for some Q-data $\cQ$ for $\fg$ (see Section \ref{Ssec:Qdata} for the definition of $\cM_{\cQ}$) \cite{FHOO}. 

In this paper, we will prove Conjecture~\ref{Conj:KL} (1) for \emph{reachable} modules $L(m)$ (see Definition \ref{Def:reachable}) and  
Conjecture~\ref{Conj:KL} (2), for general $\fg$. See Corollaries~\ref{Cor:KLr} and \ref{Cor:pos}. 

We have the following positivity result concerning the canonical basis $\bfL_{t}$, which is proved by Varagnolo--Vasserot~\cite{VV03} for type $\mathrm{ADE}$ and by the authors \cite{FHOO} in general.

\begin{Thm}[{\cite[Theorem 4.1]{VV03}, \cite[Corollary 10.7]{FHOO}}] \label{Thm:pos}
The structure constants of $\cK_t(\Cc_\Z)$ with respect to the canonical basis $\bfL_t$ belong to $\N_0[t^{\pm 1/2}]$.  
\end{Thm}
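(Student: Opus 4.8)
The plan is to treat the simply-laced and the general case separately, reducing the latter to the former through the isomorphism~\eqref{eq:CZisom}. \textbf{Type $\mathrm{ADE}$.} Here I would invoke the geometric realization of the quantum Grothendieck ring due to Nakajima~\cite{Nak04} and Varagnolo--Vasserot~\cite{VV03}. When $\fg$ is simply-laced, $\cK_t(\Cc_\Z)$ is identified with a $t$-deformed Grothendieck ring built from $\C^{\times}$-equivariant perverse sheaves on Nakajima's graded quiver varieties, in such a way that each canonical basis element $L_t(m)$ goes to the class of an intersection cohomology complex $IC_m$, the bar involution goes to Verdier duality, and the multiplication of $\cK_t(\Cc_\Z)$ is realized by a convolution--restriction operation on these sheaves. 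Under this dictionary the structure constant of the product $L_t(m)L_t(m')$ along $L_t(m'')$ becomes a graded multiplicity of $IC_{m''}$ in a suitable direct image (equivalently, a graded dimension of a stalk), which is manifestly a Laurent polynomial with non-negative integer coefficients. This is exactly \cite[Theorem~4.1]{VV03}, and it is the only step in which genuine geometric input is needed.

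\textbf{General $\fg$.} Suppose now $\fg$ is not of type $\mathrm{ADE}$, the simply-laced case being settled above, and pick a simply-laced $\sg$ whose Dynkin diagram unfolds that of $\fg$. By \cite{FHOO} there is a $\Z[t^{\pm 1/2}]$-algebra isomorphism $\Phi \colon \cK_t(\Cc_{\sg}) \xrightarrow{\sim} \cK_t(\Cc_{\fg})$ as in~\eqref{eq:CZisom} whose restriction is a bijection $\bfL_{t,\sg} \xrightarrow{\sim} \bfL_{t,\fg}$. Given $L_t(m_1), L_t(m_2) \in \bfL_{t,\fg}$, write $L_t(m_a) = \Phi(L_t(\tilde m_a))$ with $L_t(\tilde m_a) \in \bfL_{t,\sg}$ for $a = 1,2$, expand $L_t(\tilde m_1)L_t(\tilde m_2) = \sum_{\tilde m} c_{\tilde m}(t)\,L_t(\tilde m)$ in $\cK_t(\Cc_{\sg})$, and apply $\Phi$. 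Since $\Phi$ is an algebra homomorphism carrying each $L_t(\tilde m)$ to the corresponding $L_t(m) \in \bfL_{t,\fg}$, we obtain $L_t(m_1)L_t(m_2) = \sum_m c_m(t)\,L_t(m)$ with the \emph{same} coefficients. These lie in $\N_0[t^{\pm 1/2}]$ by the $\mathrm{ADE}$ case, which proves the theorem.

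\textbf{Main obstacle.} Essentially all of the work is hidden in \cite[Theorem~4.1]{VV03}: there is no purely combinatorial argument known for positivity of the structure constants, because $\bfL_t$ is defined by a Kazhdan--Lusztig-type recursion rather than by a visibly positive construction. Once that input, together with the basis-preserving algebra isomorphism of \cite{FHOO}, is granted, the non-simply-laced case is a formal transport of structure; the only point demanding care is to verify that the map $\Phi$ is indeed a $\Z[t^{\pm 1/2}]$-algebra isomorphism matching the two canonical bases, which is precisely the content of the main theorem of \cite{FHOO} recalled around~\eqref{eq:CZisom}.
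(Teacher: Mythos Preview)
Your proposal is correct and matches the paper's approach: the paper does not give its own proof of this statement but simply cites \cite[Theorem~4.1]{VV03} for the simply-laced case and \cite[Corollary~10.7]{FHOO} for the general case, and your outline accurately reconstructs the content of those two references (geometric positivity via perverse sheaves on graded quiver varieties in type $\mathrm{ADE}$, followed by transport through the basis-preserving algebra isomorphism~\eqref{eq:CZisom}).
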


Let $\fD_t^{\pm 1}$ be the $\Z[t^{\pm 1/2}]$-algebra automorphism of $\cY_t$ given by $\fD_t^{\pm 1}\ul{m} = \ul{\fD^{\pm 1} m}$ for all $m \in \cM^*$.
It satisfies $\evt \circ \fD^{\pm 1}_t = \fD^{\pm 1} \circ \evt$ and preserves the subalgebra $\cK_t(\Cc_{\Z}) \subset \cY_t$. 
Moreover, we have 
\[\fD_t^{\pm 1}(F_t(m)) = F_t(\fD^{\pm 1}m)\quad \text{ and } \quad \fD_{t}^{\pm 1}(L_t(m)) = L_t(\fD^{\pm 1}m)\] 
for any $m \in \cM$ (see \cite[Lemma 3.11]{FHOO}).

\subsection{Q-data and associated subcategories}
\label{Ssec:Qdata}

With our simple Lie algebra $\fg$, we associate a unique pair $(\Delta, \sigma)$, which we call the \emph{unfolding} of $\fg$, consisting of a simply-laced Dynkin diagram $\Delta$ and a graph automorphism $\sigma$ of $\Delta$ as given in the Table~\ref{table:cl}, where $\id \colon \Delta_0 \to \Delta_0$ is the identity map and
the automorphisms $\vee$ and $\tvee$ are given by the blue arrows in Figure~\ref{Fig:unf} below.  

\begin{table}[h]
\centering
 { \arraycolsep=1.6pt\def\arraystretch{1.5}
\begin{tabular}{|c|c|c|c|c|c|}
\hline
$r$ & $\fg$ & $\Delta$ (or $\sg$) & $\sigma$ & $h^{\vee}$ & $\ell $ \\
\hline
\hline
& $\mathrm{A}_{n}$ &$\mathrm{A}_{n}$ & $\id$ & $n+1$ & $n(n+1)/2$ \\
 $1$ & $\mathrm{D}_{n}$ & $\mathrm{D}_{n}$ & $\id$ &$2n-2$ & $n(n-1)$ \\
  & $\mathrm{E}_{6,7,8}$ & $\mathrm{E}_{6,7,8}$ & $\id$ & $12, 18, 30$& $36, 63, 120$ \\
\hline
& $\mathrm{B}_{n}$ & $\mathrm{A}_{2n-1}$ & $\vee$ & $2n-1$ & $n(2n-1)$\\
$2$ & $\mathrm{C}_{n}$ & $\mathrm{D}_{n+1}$ & $\vee$ &  $n+1$ & $n(n+1)$ \\
 & $\mathrm{F}_{4}$ & $\mathrm{E}_{6}$ & $\vee$ & $9$ & $36$ \\
\hline
$3$ & $\mathrm{G}_{2}$ & $\mathrm{D}_{4}$ & $\widetilde{\vee}$ &  $4$ & $12$ \\
\hline
\end{tabular}
  }\\[1.5ex]
\caption{Unfoldings and associated numerical data} \label{table:cl} 
\end{table}

\begin{figure}[ht]
\begin{center}
\begin{tikzpicture}[xscale=1.25,yscale=.7]
\node (A2n1) at (-0.2,4.5) {$(\mathrm{A}_{2n-1}, \vee)$};
\node[dynkdot,label={below:\footnotesize$n+1$}] (A6) at (4,4) {};
\node[dynkdot,label={below:\footnotesize$n+2$}] (A7) at (3,4) {};
\node[dynkdot,label={below:\footnotesize$2n-2$}] (A8) at (2,4) {};
\node[dynkdot,label={below:\footnotesize$2n-1$}] (A9) at (1,4) {};
\node[dynkdot,label={above:\footnotesize$n-1$}] (A4) at (4,5) {};
\node[dynkdot,label={above:\footnotesize$n-2$}] (A3) at (3,5) {};
\node (Au) at (2.5, 5) {$\cdots$};
\node (Al) at (2.5, 4) {$\cdots$};
\node[dynkdot,label={above:\footnotesize$2$}] (A2) at (2,5) {};
\node[dynkdot,label={above:\footnotesize$1$}] (A1) at (1,5) {};
\node[dynkdot,label={above:\footnotesize$n$}] (A5) at (5,4.5) {};
\path[-]
 (A1) edge (A2)
 (A3) edge (A4)
 (A4) edge (A5)
 (A5) edge (A6)
 (A6) edge (A7)
 (A8) edge (A9);
\path[-] (A2) edge (Au) (Au) edge (A3) (A7) edge (Al) (Al) edge (A8);
\path[<->,thick,blue] (A1) edge (A9) (A2) edge (A8) (A3) edge (A7) (A4) edge (A6);
\path[->, thick, blue] (A5) edge [loop below] (A5);
\def\Foffset{6.5}
\node (Bn) at (-0.2,\Foffset) {$\mathrm{B}_n$};
\foreach \x in {1,2}
{\node[dynkdot,label={above:\footnotesize$\x$}] (B\x) at (\x,\Foffset) {};}
\node[dynkdot,label={above:\footnotesize$n-2$}] (B3) at (3,\Foffset) {};
\node[dynkdot,label={above:\footnotesize$n-1$}] (B4) at (4,\Foffset) {};
\node[dynkdot,label={above:\footnotesize$n$}] (B5) at (5,\Foffset) {};
\node (Bm) at (2.5,\Foffset) {$\cdots$};
\path[-] (B1) edge (B2) (B2) edge (Bm) (Bm) edge (B3) (B3) edge (B4);
\draw[-] (B4.30) -- (B5.150);
\draw[-] (B4.330) -- (B5.210);
\draw[-] (4.55,\Foffset) -- (4.45,\Foffset+.2);
\draw[-] (4.55,\Foffset) -- (4.45,\Foffset-.2);
\draw[-,dotted] (A1) -- (B1);
\draw[-,dotted] (A2) -- (B2);
\draw[-,dotted] (A3) -- (B3);
\draw[-,dotted] (A4) -- (B4);
\draw[-,dotted] (A5) -- (B5);
\draw[|->] (Bn) -- (A2n1);
\node (Dn1) at (-0.2,0) {$(\mathrm{D}_{n+1}, \vee)$};
\node[dynkdot,label={above:\footnotesize$1$}] (D1) at (1,0){};
\node[dynkdot,label={above:\footnotesize$2$}] (D2) at (2,0) {};
\node (Dm) at (2.5,0) {$\cdots$};
\node[dynkdot,label={above:\footnotesize$n-2$}] (D3) at (3,0) {};
\node[dynkdot,label={above:\footnotesize$n-1$}] (D4) at (4,0) {};
\node[dynkdot,label={above:\footnotesize$n$}] (D6) at (5,.5) {};
\node[dynkdot,label={below:\footnotesize$n+1$}] (D5) at (5,-.5) {};
\path[-] (D1) edge (D2)
  (D2) edge (Dm)
  (Dm) edge (D3)
  (D3) edge (D4)
  (D4) edge (D5)
  (D4) edge (D6);
\path[<->,thick,blue] (D6) edge (D5);
\path[->,thick,blue] (D1) edge [loop below] (D1)
(D2) edge [loop below] (D2)
(D3) edge [loop below] (D3)
(D4) edge [loop below] (D4);
\def\Coffset{1.8}
\node (Cn) at (-0.2,\Coffset) {$\mathrm{C}_n$};
\foreach \x in {1,2}
{\node[dynkdot,label={above:\footnotesize$\x$}] (C\x) at (\x,\Coffset) {};}
\node (Cm) at (2.5, \Coffset) {$\cdots$};
\node[dynkdot,label={above:\footnotesize$n-2$}] (C3) at (3,\Coffset) {};
\node[dynkdot,label={above:\footnotesize$n-1$}] (C4) at (4,\Coffset) {};
\node[dynkdot,label={above:\footnotesize$n$}] (C5) at (5,\Coffset) {};
\draw[-] (C1) -- (C2);
\draw[-] (C2) -- (Cm);
\draw[-] (Cm) -- (C3);
\draw[-] (C3) -- (C4);
\draw[-] (C4.30) -- (C5.150);
\draw[-] (C4.330) -- (C5.210);
\draw[-] (4.55,\Coffset+.2) -- (4.45,\Coffset) -- (4.55,\Coffset-.2);
\draw[-,dotted] (C1) -- (D1);
\draw[-,dotted] (C2) -- (D2);
\draw[-,dotted] (C3) -- (D3);
\draw[-,dotted] (C4) -- (D4);
\draw[-,dotted] (C5) -- (D6);
\draw[|->] (Cn) -- (Dn1);
\node (E6desc) at (6.8,4.5) {$(\mathrm{E}_6, \vee)$};
\node[dynkdot,label={above:\footnotesize$2$}] (E2) at (10.8,4.5) {};
\node[dynkdot,label={above:\footnotesize$4$}] (E4) at (9.8,4.5) {};
\node[dynkdot,label={above:\footnotesize$5$}] (E5) at (8.8,5) {};
\node[dynkdot,label={above:\footnotesize$6$}] (E6) at (7.8,5) {};
\node[dynkdot,label={below:\footnotesize$3$}] (E3) at (8.8,4) {};
\node[dynkdot,label={below:\footnotesize$1$}] (E1) at (7.8,4) {};
\path[-]
 (E2) edge (E4)
 (E4) edge (E5)
 (E4) edge (E3)
 (E5) edge (E6)
 (E3) edge (E1);
\path[<->,thick,blue] (E3) edge (E5) (E1) edge (E6);
\path[->, thick, blue] (E4) edge[loop below] (E4); 
\path[->, thick, blue] (E2) edge[loop below] (E2); 
\def\Foffset{6.5}
\node (F4desc) at (6.8,\Foffset) {$\mathrm{F}_4$};
\foreach \x in {1,2,3,4}
{\node[dynkdot,label={above:\footnotesize$\x$}] (F\x) at (\x+6.8,\Foffset) {};}
\draw[-] (F1.east) -- (F2.west);
\draw[-] (F3) -- (F4);
\draw[-] (F2.30) -- (F3.150);
\draw[-] (F2.330) -- (F3.210);
\draw[-] (9.35,\Foffset) -- (9.25,\Foffset+.2);
\draw[-] (9.35,\Foffset) -- (9.25,\Foffset-.2);
\draw[|->] (F4desc) -- (E6desc);
\path[-, dotted] (F1) edge (E6)
(F2) edge (E5) (F3) edge (E4) (F4) edge (E2);

\node (D4desc) at (6.8,0) {$(\mathrm{D}_{4}, \tvee)$};
\node[dynkdot,label={above:\footnotesize$1$}] (D1) at (7.8,.6){};
\node[dynkdot,label={above:\footnotesize$2$}] (D2) at (8.8,0) {};
\node[dynkdot,label={left:\footnotesize$3$}] (D3) at (7.8,0) {};
\node[dynkdot,label={below:\footnotesize$4$}] (D4) at (7.8,-.6) {};
\draw[-] (D1) -- (D2);
\draw[-] (D3) -- (D2);
\draw[-] (D4) -- (D2);
\path[->,blue,thick]
(D1) edge [bend left=0] (D3)
(D3) edge [bend left=0](D4)
(D4) edge[bend left=90] (D1);
\path[->, thick, blue] (D2) edge[loop below] (D2); 
\def\Goffset{1.8}
\node (G2desc) at (6.8,\Goffset) {$\mathrm{G}_2$};
\node[dynkdot,label={above:\footnotesize$1$}] (G1) at (7.8,\Goffset){};
\node[dynkdot,label={above:\footnotesize$2$}] (G2) at (8.8,\Goffset) {};
\draw[-] (G1) -- (G2);
\draw[-] (G1.40) -- (G2.140);
\draw[-] (G1.320) -- (G2.220);
\draw[-] (8.25,\Goffset+.2) -- (8.35,\Goffset) -- (8.25,\Goffset-.2);
\draw[|->] (G2desc) -- (D4desc);
\path[-, dotted] (D1) edge (G1) (D2) edge (G2);
\end{tikzpicture}
\end{center}
\caption{Unfoldings for non-simply-laced $\fg$} \label{Fig:unf}
\end{figure}
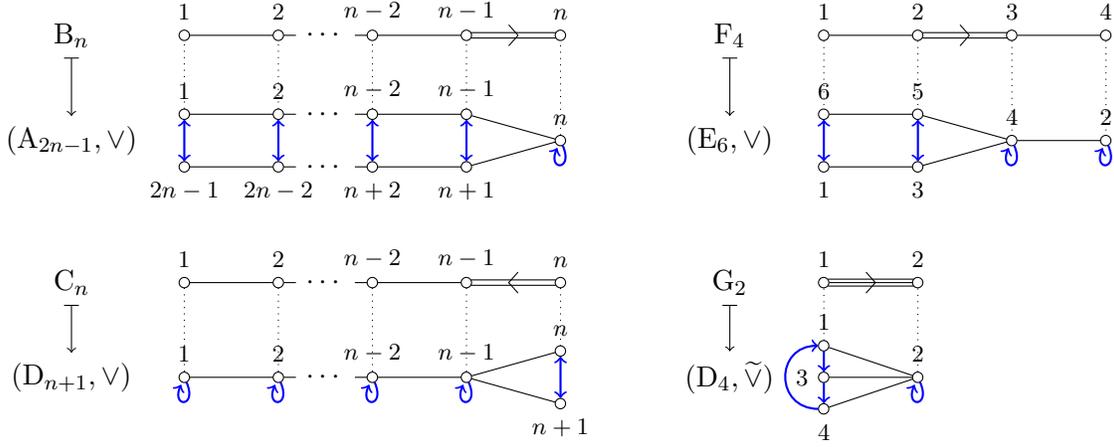

Hereafter, we denote by $\sg$ the simple Lie algebra associated with the simply-laced Dynkin diagram $\Delta$ and apply the notation in \S\ref{Ssec:notation} for this $\sg$ except that we adopt the symbols $\im, \jm, \ldots$ to denote elements of $\Delta_0$ in order to reserve the symbols $i,j,\ldots$ for elements of $I$.
Under the above assignment $\fg \mapsto (\Delta, \sigma)$, we identify the lacing number $r$ of $\fg$ with the order of the automorphism $\sigma$, 
and also identify the set $I$ of Dynkin indices of $\fg$ with the set $\Delta_{0}/\langle \sigma \rangle$ of $\sigma$-orbits in $\Delta_0$ as suggested by the dotted lines in Figure~\ref{Fig:unf}.   
Then the positive integer $d_i \in \{ 1,r \}$ coincides with the cardinality of the $\sigma$-orbit  $i \in I$.     
We denote the natural quotient map $\Delta_0 \to I = \Delta_{0}/\langle \sigma \rangle$
by $\im \mapsto \bar{\im}$. 
%By definition, for $\im \in \Delta_0$ and $i \in I$, we have $i = \bar{\im}$ if and only if $\im \in i$.
%Note that the involution $*$ on $\Delta_{0}$ induces an involution
%on the quotient set $I$, which coincides with the involution $i \mapsto i^*$ that appeared
%in the description of $\fD$ in \S\ref{Ssec:rep}. 

Let us recall the following notion introduced in \cite{FO21} and which will be important in the following. A \emph{Q-datum} for $\fg$ is defined to be a triple $\cQ=(\Delta, \sigma, \xi)$ such that $(\Delta, \sigma)$ is the unfolding of $\fg$ and $\xi \colon \Delta_0 \to \Z$ is a function satisfying the following properties:
\begin{enumerate}
\item For $\im, \jm \in \Delta_{0}$ with $\im \sim \jm$ and $d_{\bar{\im}}=d_{\bar{\jm}}$, we have $|\xi_{\im} - \xi_{\jm}| = d_{\bar{\im}}=d_{\bar{\jm}}$.
\item For $i, j \in I$ with $i \sim j$ and $d_{i} = 1 < d_{j}=r$, there is a unique $\jm \in j$ such that $|\xi_{\im} - \xi_{\jm}| = 1$ and
$\xi_{\sigma^{k}(\jm)} = \xi_{\jm} - 2k$ for any $1 \le k < r$, where $i=\{\im\}$. 
\end{enumerate} 
We refer to the function $\xi$ as a \emph{height function}.  
Recall that we have chosen a parity function $\epsilon \colon I \to \{0,1\}$ satisfying \eqref{eq:ep} in \S\ref{Ssec:rep}. 
In what follows, we always assume without loss of generality that a Q-datum $\cQ=(\Delta, \sigma, \xi)$ satisfies the condition
\begin{equation} \label{eq:xiep}
\xi_\im \equiv \epsilon_{\bar{\im}} \pmod 2 \quad \text{for any $\im \in \Delta_0$}. 
\end{equation}

For a Q-datum $\cQ = (\Delta, \sigma, \xi)$ for $\fg$, we define another Q-datum $\fD^{\pm 1} \cQ = (\Delta, \sigma, \fD^{\pm 1}\xi)$ for $\fg$ by
\begin{equation} \label{eq:DQ} 
(\fD^{\pm 1}\xi)_\im \seq \xi_{\im^*} \pm rh^\vee \quad \text{for $\im \in \Delta_0$}.
\end{equation}
One can easily check that $(\fD^{-1}\xi)_\im < \xi_\im < (\fD\xi)_\im$ holds for each $\im \in \Delta_0$ and that $\fD^{\pm 1}\cQ$ satisfies the condition \eqref{eq:xiep} whenever $\cQ$ does.

Now, given a Q-datum $\cQ=(\Delta, \sigma, \xi)$ for $\fg$, we consider the following sets 
\begin{align*}
\hD_{[\xi]} & \seq \{ (\im, p) \in \Delta_0 \times \Z \mid \xi_\im - p \in 2d_{\bar{\im}}\Z \}, &&\\ 
\hD_{\le \xi} & \seq \{ (\im, p) \in \hD_{[\xi]} \mid p \le \xi_\im \}, 
& \hI_{\le \xi} &\seq f(\hD_{\le \xi}), \\
\hD_{\cQ} & \seq \{ (\im, p) \in \hD_{[\xi]} \mid (\fD^{-1}\xi)_{\im} < p \le \xi_{\im} \}, 
& \hI_\cQ &\seq f(\hD_{\cQ}), 
\end{align*}
where $f \colon \Delta_0 \times \Z \to I \times \Z$ is the \emph{folding map} $(\im,p) \mapsto (\bar{\im},p)$.
Note that we have $\hI_{\cQ} \subset \hI_{\le \xi} \subset \hI$, and $f$ induces the bijections $\hD_{[\xi]} \simeq \hI$, $\hD_{\le \xi} \simeq \hI_{\le \xi}$, and $\hD_{\cQ} \simeq \hI_{\cQ}$.
Let $\cY_{t, \le \xi}$ (resp.~$\cY_{t, \cQ}$) be the $\Z[t^{\pm 1/2}]$-subalgebra of the quantum torus $\cY_t$ generated by all the elements $\ul{Y_{i,p}^{\pm 1}}$ with $(i,p) \in \hI_{\le \xi}$ (resp.~$\hI_\cQ$).
Then the algebra $\cY_{\le \xi} \seq \evt(\cY_{t,\le \xi})$ (resp.~$\cY_{\cQ} \seq \evt(\cY_{t,\cQ})$) is identical to the ring of Laurent polynomials in the variables $Y_{i,p}$ with $(i,p) \in \hI_{\le \xi}$ (resp.~$\hI_{\cQ}$).
We set $\cM_{\le \xi} \seq \cM \cap \cY_{\le \xi}$ and $\cM_{\cQ} \seq \cM \cap \cY_{\cQ}$. 

Let $\Cc_{\le \xi}$ (resp.~$\Cc_{\cQ}$) be the Serre subcategory of $\Cc_\Z$ generated by all the simple modules $L(m)$ with $m \in \cM_{\le \xi}$ (resp.~$\cM_\cQ$).
It is closed under taking tensor products. 
Moreover, the category $\Cc_{\le \xi}$ is stable under the functor $\fD^{-1}$, and we have $\fD^{-1}\Cc_{\le \xi} = \Cc_{\le \fD^{-1}\xi}$.
We define $\cK_t(\Cc_{\le \xi})$ (resp.~$\cK_t(\Cc_\cQ)$) to be the $\Z[t^{\pm 1/2}]$-subalgebra of $\cK_t(\Cc_\Z)$ generated by all the $F_t(Y_{i,p})$ with $(i,p) \in \hI_{\le \xi}$ (resp.~$\hI_\cQ$).
It is endowed with the canonical $\Z[t^{\pm 1/2}]$-basis $\bfL_{t, \le \xi}$ (resp.~$\bfL_{t, \cQ}$) consisting of all the elements $L_t(m)$ with $m \in \cM_{\le \xi}$ (resp.~$\cM_\cQ$). 
Moreover, the algebra $\cK_t(\Cc_{\le \xi})$ is stable under the automorphism $\fD_t^{-1}$, and we have $\fD_t^{-1}\cK_t(\Cc_{\le \xi}) = \cK_t(\Cc_{\le \fD^{-1}\xi})$. 

Let us recall the following truncation procedure.
For an element $y \in \cY_{t}$, we denote by $y_{\le \xi}$
the element of $\cY_{t, \le \xi}$ obtained from $y$ by discarding all the monomials containing  
the factors $\ul{Y_{i, p}^{\pm 1}}$ with $(i,p) \in \hI \setminus \hI_{\le \xi}$.
This assignment $y \mapsto y_{\le \xi}$ defines 
a $\Z[t^{\pm 1/2}]$-linear map $(\cdot)_{\le \xi} \colon \cY_{t} \to \cY_{t, \le \xi}$,
which is not an algebra homomorphism. 
%We denote the similar $\Z$-linear map defined for $\cY$ by the same symbol $(\cdot)_{\le \xi} \colon \cY \to \cY_{\le \xi}$, so that we have $\evt \circ (\cdot)_{\le \xi} = (\cdot)_{\le \xi} \circ \evt$.
However, it restricts to the injective algebra homomorphisms $\cK_{t}(\Cc_{\le \xi}) \hookrightarrow \cY_{t, \le \xi}$ and $\cK_{t}(\Cc_{\cQ}) \hookrightarrow \cY_{t, \cQ}$ (see \cite[\S5.4]{FHOO}). 

For future use, we remark the following fact. 

\begin{Lem} \label{Lem:tr} 
Let $\cQ= (\Delta, \sigma, \xi)$ be a Q-datum for $\fg$. We have
\[ \cK_t(\Cc_{\le \xi}) = \cK_t(\Cc_\Z) \cap \cY_{t, \le \fD \xi}. \]
In other words, an element $x \in \cK_t(\Cc_\Z)$ belongs to $\cK_t(\Cc_{\le \xi})$ if and only if it satisfies that $x_{\le \fD\xi} = x$.  
\end{Lem}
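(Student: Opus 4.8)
The plan is to prove the two inclusions $\cK_t(\Cc_{\le \xi}) \subseteq \cK_t(\Cc_\Z) \cap \cY_{t,\le \fD\xi}$ and $\cK_t(\Cc_\Z) \cap \cY_{t,\le \fD\xi} \subseteq \cK_t(\Cc_{\le \xi})$ separately, then extract the ``in other words'' reformulation as a corollary of the set-equality together with the fact that $(\cdot)_{\le \fD\xi}$ restricts to an injective algebra homomorphism on $\cK_t(\Cc_\Z) \cap \cY_{t,\le\fD\xi}$ (a consequence of the cited statement in \cite[\S5.4]{FHOO} applied to the Q-datum $\fD\xi$). For the forward inclusion, let $x \in \cK_t(\Cc_{\le\xi})$. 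By definition $\cK_t(\Cc_{\le\xi}) \subseteq \cK_t(\Cc_\Z)$, so it remains to check $x \in \cY_{t,\le\fD\xi}$, i.e.\ that every monomial occurring in $x$ involves only the variables $\ul{Y_{i,p}}$ with $(i,p)\in\hI_{\le\fD\xi}$. Since $\cK_t(\Cc_{\le\xi})$ is generated as an algebra by the $F_t(Y_{i,p})$ with $(i,p)\in\hI_{\le\xi}$, it suffices to show $F_t(Y_{i,p}) \in \cY_{t,\le\fD\xi}$ for each such $(i,p)$. This should follow from the structure of the $q$-character of a fundamental module: every monomial $m'$ with $m' \le Y_{i,p}$ is obtained from $Y_{i,p}$ by multiplying by inverses $A_{j,s}^{-1}$, and such multiplications can only lower the ``$p$-coordinates'', so all variables appearing stay within the range $\le \fD\xi$; here one uses the relation $(\fD^{-1}\xi)_\im < \xi_\im < (\fD\xi)_\im$ and the explicit form of $A_{j,s}$. (Alternatively one can invoke the compatibility $\fD_t^{-1}\cK_t(\Cc_{\le\xi}) = \cK_t(\Cc_{\le\fD^{-1}\xi})$ recalled just above the lemma, together with $\cK_t(\Cc_{\le\fD^{-1}\xi}) \subseteq \cY_{t,\le\xi}$, to reduce to a cleaner range statement.)

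For the reverse inclusion, let $x \in \cK_t(\Cc_\Z) \cap \cY_{t,\le\fD\xi}$. Expand $x$ in the canonical basis $\bfL_t$ of $\cK_t(\Cc_\Z)$, say $x = \sum_{m\in\cM} \lambda_m L_t(m)$ with $\lambda_m \in \Z[t^{\pm1/2}]$, almost all zero. I want to show $\lambda_m \neq 0 \implies m \in \cM_{\le\xi}$, which gives $x \in \cK_t(\Cc_{\le\xi})$ by definition of the latter. Order the $m$ with $\lambda_m \neq 0$ and consider a maximal one, say $m_0$, with respect to the Nakajima ordering among those appearing. By \eqref{eq:Lt}, the highest monomial $\ul{m_0}$ occurs in $L_t(m_0)$ but in no other $L_t(m)$ with $m \neq m_0$ appearing (maximality), so $\ul{m_0}$ occurs in $x$ with nonzero coefficient; since $x \in \cY_{t,\le\fD\xi}$, this forces $m_0 \in \cM \cap \cY_{\le\fD\xi}$. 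Now I claim $m_0 \in \cM_{\le\xi}$ — this is the crux and requires showing that a \emph{dominant} monomial lying in $\cY_{\le\fD\xi}$ which is the highest weight monomial of an element of $\cK_t(\Cc_\Z)\cap\cY_{t,\le\fD\xi}$ must in fact lie in $\cY_{\le\xi}$; more precisely, one uses that if $m_0 \in \cM$ has a factor $Y_{i,p}$ with $\xi_i < p \le (\fD\xi)_i$, then $L_t(m_0)$ necessarily contains monomials involving $Y_{j,s}$ with $s > (\fD\xi)_j$ (coming from the ``tail'' of the fundamental module at the boundary of the strip), contradicting $L_t(m_0)_{\le\fD\xi} \ne L_t(m_0)$ — but then $x - \lambda_{m_0}L_t(m_0) \in \cK_t(\Cc_\Z)$ is still in $\cY_{t,\le\fD\xi}$ only if one is careful, so instead I would argue directly that $m_0$ dominant and $\le\fD\xi$ plus the $\cK_t$-closure forces $m_0 \le \xi$ by the same ``boundary of the fundamental polytope'' analysis. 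Having placed $m_0$ in $\cM_{\le\xi}$, subtract: $x - \lambda_{m_0}L_t(m_0)$ again lies in $\cK_t(\Cc_\Z)$, and lies in $\cY_{t,\le\fD\xi}$ since both terms do (using $L_t(m_0) \in \cK_t(\Cc_{\le\xi}) \subseteq \cY_{t,\le\fD\xi}$ by the forward inclusion already proved). Induct on the (finite) number of nonzero $\lambda_m$.

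The main obstacle I expect is the boundary analysis showing that a dominant monomial $m_0$ with $m_0 \in \cY_{\le\fD\xi}$ but $m_0 \notin \cY_{\le\xi}$ cannot be the highest weight monomial of an element of $\cK_t(\Cc_\Z) \cap \cY_{t,\le\fD\xi}$ — equivalently, that $L_t(m_0)$ (or already its classical shadow $F_t(m_0)$) must then poke out of the strip $\{p \le (\fD\xi)_i\}$. This is really a statement about $q$-characters of (standard or fundamental) modules near the edge of a ``slice'' of $\hI$, and it is the place where the precise combinatorics of Q-data — in particular properties (1) and (2) in the definition and the relation $(\fD^{-1}\xi)_\im < \xi_\im < (\fD\xi)_\im$ — must be used. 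I would handle it by reducing to fundamental modules via Theorem~\ref{Thm:Ft} (which says $\cK_t(\Cc_\Z)$ is generated by the $F_t(Y_{i,p})$ and each $F_t(m)$ has $\ul m$ as its unique dominant monomial), and then invoking the known description of $\chi_q$ of fundamental modules from \cite{FM01,Nak01} together with the "Frenkel--Mukhin algorithm" picture to pin down exactly which $Y_{j,s}^{\pm1}$ can appear. An alternative, cleaner route — which I would try first — is to deduce the lemma formally from the already-recalled facts $\fD_t^{\pm1}\cK_t(\Cc_\Z) = \cK_t(\Cc_\Z)$, $\fD_t^{-1}\cK_t(\Cc_{\le\xi}) = \cK_t(\Cc_{\le\fD^{-1}\xi})$, and the analogous (presumably easier) statement $\cK_t(\Cc_{\le\xi}) = \cK_t(\Cc_\Z) \cap \cY_{t,\le\xi}$ for the truncation at the \emph{right} endpoint, by applying $\fD_t$; this trades the boundary analysis for a one-line symmetry argument, provided such a companion statement is available or easy.
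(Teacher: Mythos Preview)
Your approach has two genuine gaps that the paper avoids by a different route.

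\textbf{Forward inclusion.} Your claim that multiplying by $A_{j,s}^{-1}$ ``can only lower the $p$-coordinates'' is backwards: $A_{j,s}^{-1}$ contributes $Y_{j,s+d_j}^{-1}$, so descending in Nakajima order \emph{raises} the spectral parameters. The monomials in $F_t(Y_{i,p})$ live in $\cY_{t,[p,\,p+rh^\vee]}$, not below $p$. Your fallback (reduce via $\fD_t^{-1}$ to $\cK_t(\Cc_{\le\fD^{-1}\xi})\subseteq\cY_{t,\le\xi}$) is circular: that is the very statement you are proving, shifted by $\fD$.

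\textbf{Reverse inclusion.} Your maximal-monomial argument gets you only $m_0\in\cM_{\le\fD\xi}$, not $m_0\in\cM_{\le\xi}$, and this gap is exactly the point of the lemma: there are plenty of dominant monomials supported in the strip $(\xi,\fD\xi]$. The ``alternative, cleaner route'' you suggest at the end is based on the companion identity $\cK_t(\Cc_{\le\xi})=\cK_t(\Cc_\Z)\cap\cY_{t,\le\xi}$, but that identity is \emph{false}: already $F_t(Y_{i,\xi_\im})$ contains the monomial $Y_{i,\xi_\im}A_{i,\xi_\im+d_i}^{-1}$ involving $Y_{i,\xi_\im+2d_i}^{-1}\notin\cY_{t,\le\xi}$, so $\cK_t(\Cc_{\le\xi})\not\subseteq\cY_{t,\le\xi}$. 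This is why the truncation $(\cdot)_{\le\xi}$ is not the identity on $\cK_t(\Cc_{\le\xi})$.

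\textbf{What the paper does.} The asymmetry you are fighting (upper bounds on $p$ are hard, lower bounds are easy, because leading monomials have the \emph{smallest} $p$'s) is resolved by the algebra involution $\omega_t\colon\cY_t\to\cY_t$ sending $t^{1/2}\mapsto t^{-1/2}$ and $\ul{Y_{i,p}}\mapsto\ul{Y_{i,-p}^{-1}}$. One has $\omega_t(\cY_{t,\le\xi})=\cY_{t,\ge-\xi}$ and $\omega_t(F_t(Y_{i,p}))=F_t(Y_{i^*,-p-rh^\vee})$, so $\omega_t$ carries $\cK_t(\Cc_{\le\xi})$ to the subalgebra $\cK_t(\Cc_{\ge-\fD\xi})$ generated by $F_t(Y_{i,p})$ with $(i,p)\in\hI_{\ge-\fD\xi}$. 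For this ``$\ge$'' subalgebra the identity $\cK_t(\Cc_{\ge-\fD\xi})=\cK_t(\Cc_\Z)\cap\cY_{t,\ge-\fD\xi}$ \emph{is} the easy one: the key input $F_t(Y_{i,p})\in\cY_{t,\ge-\fD\xi}$ for $(i,p)\in\hI_{\ge-\fD\xi}$ holds because all monomials of $F_t(Y_{i,p})$ have spectral parameters $\ge p$, and the reverse inclusion follows from your own maximal-monomial argument (which works in this direction since a dominant $m_0\in\cY_{\ge-\fD\xi}$ automatically lies in $\cM_{\ge-\fD\xi}$). Applying $\omega_t$ back gives the lemma. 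So the right ``symmetry argument'' is via $\omega_t$, not $\fD_t$.
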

\begin{proof}
We consider the subset $\hI_{\ge -\xi} \seq \{ (i,p) \in \hI \mid (i,-p) \in \hI_{\le \xi}\}$ of $\hI$ and the corresponding $\Z[t^{\pm 1/2}]$-subalgebra $\cY_{t, \ge -\xi}$ of $\cY_t$.
Then, for each $(i,p) \in \hI_{\ge -\xi}$, we have $F_t(Y_{i,p}) \in \cY_{\ge -\xi}$ (see \cite[Proof of Corollary 5.12]{FHOO}).
In particular, if we define $\cK_t(\Cc_{\ge -\xi})$ to be the $\Z[t^{\pm 1/2}]$-subalgebra generated by $\{ F_t(Y_{i,p}) \mid (i,p) \in \hI_{\ge -\xi}\}$, we have
\[ \cK_t(\Cc_{\ge -\xi}) = \cK_{t}(\Cc_\Z) \cap \cY_{t, \ge -\xi}. \]
Let $\omega_t$ be an algebra involution of $\cY_t$ given by $\omega_t(t^{\pm 1/2}) = t^{\mp 1/2}$ and $\omega_t(\ul{Y_{i,p}}) = \ul{Y_{i,-p}^{-1}}$ for all $(i,p) \in \hI$.
By definition, we have $\omega_t(\cY_{t, \le \xi}) = \cY_{t, \ge -\xi}$.
Moreover, it satisfies $\omega_t(F_t(Y_{i,p})) = F_t(Y_{i^*, -p-rh^\vee})$ for any $(i,p) \in \hI$ (see \cite[\S3.4]{FHOO}).
Therefore, we have 
\[ \omega_t(\cK_t(\Cc_{\le \xi})) = \cK_t(\Cc_{\ge -\fD \xi}) = \cK_{t}(\Cc_\Z) \cap \cY_{t, \ge -\fD\xi} = \omega_t(\cK_{t}(\Cc_\Z) \cap \cY_{t, \le \fD\xi}), \]
which yields the assertion.
\end{proof}

\subsection{Kirillov--Reshetikhin modules}

Recall that a \emph{Kirillov--Reshetikhin (KR) module} is a simple object of $\Cc$ whose Drinfeld polynomials are of the form 
\[\left(\prod_{k=0}^l(1-aq^{2d_ik}z)^{\delta_{i,j}}\right)_{j \in I}\] 
for some $i \in I$, $l \in \N$ and $a \in \kk^\times$. 
\begin{Prop}
\label{Prop:KRF}
For a KR module $L(m)$, we have $\evt (F_t(m)) = \chi_q(L(m))$.
\end{Prop}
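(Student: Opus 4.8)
The plan is to characterize both $\evt(F_t(m))$ and $\chi_q(L(m))$ as the \emph{same} distinguished element of $\chi_q(K(\Cc_\Z)) \subset \cY$ — namely the unique one whose only dominant monomial is $m$, occurring with coefficient $1$ — and then conclude by uniqueness. The only genuinely non-formal ingredient will be the \emph{specialness} of Kirillov--Reshetikhin modules.

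First I would record the relevant uniqueness statement for $q$-characters. Because $K(\Cc_\Z)$ is free over $\Z$ with basis $\{[L(m')]\}_{m' \in \cM}$, any element of $\chi_q(K(\Cc_\Z))$ is a \emph{finite} sum $\sum_{m'} c_{m'}\chi_q(L(m'))$; by \eqref{eq:L} each $\chi_q(L(m'))$ equals $m'$ plus a $\Z$-linear combination of monomials strictly below $m'$ in the Nakajima partial ordering. Hence if the sum is nonzero, a maximal $m'$ in its (finite) support contributes the dominant monomial $m'$ with coefficient $c_{m'}\neq 0$, and this contribution cannot be cancelled by the strictly lower terms coming from the other basis elements. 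Consequently every nonzero element of $\chi_q(K(\Cc_\Z))$ has at least one dominant monomial, and therefore an element of $\chi_q(K(\Cc_\Z))$ is determined by the multiset of its dominant monomials. Next, $\evt(F_t(m))$ lies in $\chi_q(K(\Cc_\Z))$ since $\evt(\cK_t(\Cc_\Z)) = \chi_q(K(\Cc_\Z))$. Writing $F_t(m) = \sum_{m'} c_{m'}(t)\,\ul{m'}$ in the basis of commutative monomials, Theorem~\ref{Thm:Ft} gives $c_m(t) = 1$ and $c_{m'}(t) = 0$ whenever $m'\neq m$ is dominant. Since $\evt$ sends each $\ul{m'}$ to $m'$ and these are pairwise distinct, $\evt(F_t(m)) = m + \sum_{m'\text{ not dominant}} c_{m'}(1)\,m'$ has $m$ as its \emph{unique} dominant monomial, with coefficient $1$.

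The key input I would then invoke is that Kirillov--Reshetikhin modules are \emph{special}: for a KR module $L(m)$, the $q$-character $\chi_q(L(m))$ has $m$ as its unique dominant monomial (proved by Nakajima in type $\mathrm{ADE}$ and by Hernandez in general, in connection with the proof of the $T$-systems; see also \eqref{eq:L}, which forces the coefficient of $m$ to be $1$). Combining the three points: $\evt(F_t(m))$ and $\chi_q(L(m))$ both belong to $\chi_q(K(\Cc_\Z))$ and have the same unique dominant monomial $m$ with coefficient $1$, so their difference is an element of $\chi_q(K(\Cc_\Z))$ with no dominant monomial, hence zero. This gives $\evt(F_t(m)) = \chi_q(L(m))$. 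The main obstacle, were it not already available in the literature, would be exactly the specialness of KR modules; the rest is bookkeeping with the Nakajima order, the definition of $F_t$ via Theorem~\ref{Thm:Ft}, and the identity $\evt(\cK_t(\Cc_\Z)) = \chi_q(K(\Cc_\Z))$.
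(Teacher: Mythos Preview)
Your proof is correct and follows essentially the same approach as the paper's own proof: both rely on (i) the specialness of KR modules (unique dominant monomial in $\chi_q(L(m))$), (ii) the fact that $\evt(\cK_t(\Cc_\Z)) = \chi_q(K(\Cc_\Z))$, and (iii) the characterization of an element of $\chi_q(K(\Cc_\Z))$ by its dominant monomials. You have simply spelled out (iii) directly via the Nakajima ordering and \eqref{eq:L}, whereas the paper cites Frenkel--Mukhin for this fact.
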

\begin{proof} The $q$-character of a KR-module has a unique 
dominant monomial as proved in \cite{Nakajima03II, Her06}.
Moreover, the image of the quantum Grothendieck ring by $\evt$ is the ring of $q$-characters \cite{Nak04, Her04}. 
But a $q$-character is characterized by the multiplicity of its dominant monomial \cite{FM01}. This implies the result.
\end{proof}

Having Proposition~\ref{Prop:KRF}, in view of Conjecture~\ref{Conj:KL}, one may expect the following.

\begin{Conj} \label{Conj:KRF}
For a KR module $L(m)$, we have $F_t(m) = L_t(m)$.
\end{Conj}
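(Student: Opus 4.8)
The plan is to prove Conjecture~\ref{Conj:KRF} ($F_t(m) = L_t(m)$ for KR modules) by combining the quantum cluster algebra structure on $\cK_t(\Cc_{\le \xi})$ (to be established in Section~\ref{ssecfive}) with the characterization of $F_t(m)$ and $L_t(m)$ via their dominant monomials. First I would reduce to the case where $m \in \cM_{\le \xi}$ for a suitable Q-datum $\cQ = (\Delta, \sigma, \xi)$: by choosing $\xi$ so that the relevant KR module lies in $\Cc_{\le \xi}$ (using a sufficiently large spectral-parameter window and the $\fD_t$-equivariance $\fD_t^{\pm 1}F_t(m) = F_t(\fD^{\pm 1}m)$, $\fD_t^{\pm 1}L_t(m) = L_t(\fD^{\pm 1}m)$ recorded above), it suffices to prove the identity inside $\cK_t(\Cc_{\le \xi})$. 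The key input is that, under the isomorphism $\cK_t(\Cc_{\le \xi}) \simeq \cA_\bfi$ of Theorem~\ref{Thm:qcl}, each KR module is \emph{reachable}, i.e.\ its $(q,t)$-character (up to shift) is a quantum cluster monomial; this is precisely the content of the monoidal categorification picture, and KR modules are the prototypical reachable modules.

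The main steps, in order, would be: (1) Show $L_t(m)$ is a quantum cluster monomial of $\cA_\bfi$ for a KR module $L(m)$ — this follows once Theorem~\ref{Thm:main} (quantum cluster monomials lie in $\bfL_{t,\fg}$) is available together with the explicit identification of KR modules with the initial cluster variables and their mutations, exactly as in the classical case of \cite{HL16, KKOP2}. (2) Invoke the positivity of the canonical basis (Theorem~\ref{Thm:pos}) and the fact that quantum cluster monomials of $\cA_\bfi$ have positive coefficients in the quantum torus $\cY_t$ (Laurent positivity for quantum cluster algebras, or via the bijection with $\tbfB$ through Theorem~\ref{Thm:CAN}), to conclude $L_t(m) \in \N_0[t^{\pm 1/2}]\text{-span of commutative monomials}$. (3) Analyze the dominant monomials occurring in $L_t(m)$: by \eqref{eq:Lt} the highest one is $\ul{m}$ with coefficient $1$; I would argue that \emph{no other} dominant monomial occurs. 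Here the reachability is crucial — a quantum cluster monomial corresponds under $\varphi_\bfi$ to an element of the dual canonical basis $\tbfB$, and these are known to have a unique dominant/extremal term in the relevant parametrization (this is where the Frenkel--Mukhin-type argument of Proposition~\ref{Prop:KRF} and the results of \cite{Nakajima03II, Her06} on uniqueness of the dominant monomial for KR $q$-characters enter). (4) Finally, invoke the \emph{uniqueness} clause of Theorem~\ref{Thm:Ft}: $F_t(m)$ is the \emph{unique} element of $\cK_t(\Cc_\Z)$ in which $\ul{m}$ is the only dominant monomial. Since $L_t(m) \in \cK_t(\Cc_\Z)$ has this property by step~(3), we get $F_t(m) = L_t(m)$.

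The hard part will be step~(3), establishing that the $(q,t)$-character of a reachable KR module has $\ul{m}$ as its \emph{only} dominant monomial. At $t=1$ this is classical (Proposition~\ref{Prop:KRF}, plus \cite{Nakajima03II, Her06}), but at the quantum level one must rule out the appearance of dominant commutative monomials $\ul{m'}$ with $m' < m$ and coefficient in $t\Z[t]$ that would cancel only after specialization — a priori compatible with $\evt(L_t(m)) = \evt(F_t(m))$ but not with equality of the $t$-deformed objects. I would handle this by transporting the question through $\varphi_\bfi$ to $\cA_t[N_-]$ and using the $\mathbf{B}^*$-compatibility: the cluster variable attached to a KR module is a renormalized quantum unipotent minor $\tD(w_u\varpi_{i_u}, w_v\varpi_{i_v})$, whose expansion in the relevant PBW-type monomials is known (via Theorem~\ref{Thm:CAN}(3), the $g$-vector formula, and the fact that minors are themselves dual canonical basis elements with a single extremal weight vector). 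Translating the single-extremal-weight property back across the isomorphism of quantum tori (Corollary~\ref{Cor:teta}) should yield the single-dominant-monomial property of $L_t(m)$, completing the argument. An alternative route, avoiding some of this, is to prove the quantum $T$-system directly for the $L_t(m)$ (as the paper announces it does, following \cite{FHOO}) and then induct on the level $l$ of the KR module, using that $F_t(m)$ also satisfies the $T$-system relations by an analogous argument — but either way the crux is the same: controlling the lower-order terms of the $(q,t)$-character, which the cluster structure makes tractable.
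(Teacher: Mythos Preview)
Your plan works in principle but takes a substantial detour around the paper's three-line argument. The key point you are missing is that Theorem~\ref{Thm:qcl} already establishes that for a KR module $L(m)$ in $\Cc_{\le\xi}$, the element $F_t(m)$ \emph{itself} corresponds to a quantum cluster variable under $\eta_\bfi$. Once Theorem~\ref{Thm:main} is in hand, this gives $F_t(m)\in\bfL_t$ immediately, i.e.\ $F_t(m)=L_t(m')$ for some $m'\in\cM$; comparing the unique dominant monomial $\ul{m}$ of $F_t(m)$ (Theorem~\ref{Thm:Ft}) with the leading term $\ul{m'}$ of $L_t(m')$ (equation~\eqref{eq:Lt}) forces $m'=m$. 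That is the entire proof of Corollary~\ref{Cor:KRF}.

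In other words, the paper reverses the direction of your argument: rather than starting from $L_t(m)$ and trying to prove it has a unique dominant monomial (your step~(3)), it starts from $F_t(m)$ --- which has a unique dominant monomial \emph{by definition} --- and shows it lies in the canonical basis. Your ``hard part'' simply evaporates. Indeed, your step~(1) combined with the last clause of Theorem~\ref{Thm:qcl} already proves the conjecture: the same cluster variable $x$ satisfies $\eta_\bfi(x)=F_t(m)$ by Theorem~\ref{Thm:qcl} and $\eta_\bfi(x)=L_t(m)$ by Theorem~\ref{Thm:main} (since $\bar\eta_\bfi(\evt(x))=[L(m)]$ by Proposition~\ref{Prop:KRF}). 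Your steps~(2)--(4) are superfluous, and the sketch for step~(3) --- transporting to $\cA_t[N_-]$ and invoking an ``extremal weight'' property of quantum minors, then translating this back through Corollary~\ref{Cor:teta} into a ``single dominant monomial'' statement in $\cY_t$ --- is vague and would require real work to make precise, for no gain.
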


At this moment, we know that Conjecture~\ref{Conj:KRF} is true when $\fg$ is of type $\mathrm{ABDE}$ by \cite{Nakajima03II} and \cite[Theorem 11.6]{FHOO}. We will prove Conjecture \ref{Conj:KRF} for the remaining case as one of the results of this paper, that is, when $\fg$ is of type $\mathrm{CFG}$. See Corollary \ref{Cor:KRF}.  

For our purpose, it is useful to introduce the following notation for KR modules.
Let $\cQ = (\Delta, \sigma, \xi)$ be a Q-datum for $\fg$.
For $\im \in \Delta_0$ and integers $a,b \in \Z$ with $a \le b$, we define a dominant monomial $m^{(\im)}[a,b] \in \cM$ by
\[ m^{(\im)}[a,b] \seq \prod_{(\im, p) \in \hD_{[\xi]} \colon p \in [a,b]} Y_{\bar{\im},p}. \]
The corresponding simple module $L^{(\im)}[a,b] \seq L(m^{(\im)}[a,b])$ is a KR module. 
Any KR module in the category $\Cc_\Z$ is written in this form.
%In addition, any KR module in the category $\Cc_\Z$ belongs to $\Cc_{\le \xi}$ for some height function $\xi$. 
In what follows, we use simplify the notation by setting $F_t^{(\im)}[a,b] \seq F_t(m^{(\im)}[a,b])$ and $L_t^{(\im)}[a,b]\seq L_t(m^{(\im)}[a,b])$. 
Letting $(a,b] \seq [a,b]\setminus \{a\}$, $[a,b) \seq [a,b]\setminus\{b\}$, $(a,b) \seq [a,b] \setminus \{a,b\}$, we define $F_t^{(\im)}(a,b]$, $F_t^{(\im)}[a,b)$, $F_t^{(\im)}(a,b)$ in the same way as $F^{(\im)}_t[a,b]$.

The following states that certain truncations of these elements are just a single monomial.

\begin{Lem}[{\cite[Lemma 6.7]{FHOO}}]
\label{Lem:qKR}
For any $(\im, p) \in \hD_{\le \xi}$, we have
\[ F_{t}^{(\im)}[p,\xi_\im]_{\le \xi}= \ul{m^{(\im)}[p,\xi_{\im}]}.\]
\end{Lem}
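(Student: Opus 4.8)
The plan is to compute the truncation $F_t^{(\im)}[p,\xi_\im]_{\le \xi}$ directly by exploiting the inductive structure of the KR monomials together with the characterization of $F_t(m)$ via dominant monomials (Theorem~\ref{Thm:Ft}). First I would set up notation: write $m = m^{(\im)}[p,\xi_\im]$, so that $\ul{m}$ is the unique dominant monomial of $F_t(m)$, and recall that by \eqref{eq:Lt}-type expansion every other monomial $\ul{m}'$ appearing in $F_t(m)$ satisfies $m' < m$ in the Nakajima order, i.e. $m' = m \cdot A^{-1}$ where $A$ is a product of various $A_{\bar{\jm},s}$ with $(\bar{\jm}, s-d_{\bar{\jm}}) \in \hI$. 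The key point to establish is: \emph{if $m' < m$ and $m' \ne m$ then $m'$ contains at least one factor $\ul{Y_{k,r}^{\pm 1}}$ with $(k,r) \in \hI \setminus \hI_{\le \xi}$}, so that $m'$ is killed by the truncation $(\cdot)_{\le \xi}$ and only the leading term $\ul{m}$ survives.

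To prove that key point, I would analyze the location of the monomials $A_{\bar{\jm},s}$ relative to the region $\hD_{\le \xi}$ (equivalently $\hI_{\le \xi}$). The set $\hD_{\le \xi}$ is a ``lower'' region cut out by the height function $\xi$, and the monomial $m = m^{(\im)}[p,\xi_\im]$ is supported exactly on the ``top row'' $\{(\im, p'), p \le p' \le \xi_\im\}$ of the $\im$-column, which sits on the boundary $p' \le \xi_\im$. The crucial combinatorial fact about Q-data (this is where the defining properties (1)--(2) of a Q-datum and the relation $A_{i,p} = Y_{i,p-d_i}Y_{i,p+d_i}\prod Y_{j,s}^{-1}$ enter) is that for any $(\bar\jm, s)$ with $(\bar\jm, s - d_{\bar\jm}) \in \hI$, the monomial $A_{\bar\jm,s}$ involves a variable $Y_{k,r}$ with $r > \xi_k$, i.e. outside $\hD_{\le \xi}$, \emph{unless} the whole of $A_{\bar\jm,s}$ lies strictly below the boundary, in which case multiplying $m$ by $A_{\bar\jm,s}^{-1}$ would destroy dominance or move $m$ outside the range — and one checks that starting from $m$ supported on the boundary top row, any nonzero product of $A^{-1}$'s that stays $\le m$ necessarily pushes some variable above the boundary. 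A clean way to organize this is: compare with the analogous (and already available) statement for $q$-characters — indeed $\chi_q(L^{(\im)}[p,\xi_\im])_{\le \xi}$ is known, and one can transport the combinatorial control of the monomials from there — or else argue by induction on $\xi_\im - p$ using the known three-term ($T$-system / short exact sequence) relations among $F_t^{(\im)}$'s together with Lemma~\ref{Lem:scr}.

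The main obstacle I anticipate is precisely the combinatorial lemma of the previous paragraph: controlling which of the lower monomials $m' < m$ in $F_t(m)$ survive the truncation requires a careful geometric analysis of how the monomials $A_{\bar\jm,s}$ interact with the boundary region $\hD_{\le \xi}$ determined by the Q-datum $\cQ$, and in the non-simply-laced case the two ``scales'' $d_i = 1$ versus $d_i = r$ in the definition of $A_{i,p}$ and of $\hD_{[\xi]}$ make the bookkeeping delicate (one must use property (2) of Q-data about the unique $\jm$ with $|\xi_\im - \xi_\jm| = 1$). Since this is cited as \cite[Lemma 6.7]{FHOO}, I would in practice invoke that reference; if reproving it, I would reduce to the statement that $F_t^{(\im)}[p,\xi_\im]$ has no dominant monomial other than $\ul{m}$ inside $\cY_{t,\le\xi}$ beyond $\ul m$ itself and that all lower monomials leave the region, and then finish by the uniqueness characterization in Theorem~\ref{Thm:Ft}.
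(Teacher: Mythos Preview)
The paper does not give its own proof of this lemma; it is simply stated with the citation \cite[Lemma~6.7]{FHOO}. Your proposal recognizes this and explicitly says you would invoke that reference in practice, so you are aligned with the paper.

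Your additional sketch of a direct argument is in the right spirit, but the key combinatorial step is not quite pinned down. The statement ``multiplying $m$ by $A_{\bar\jm,s}^{-1}$ would destroy dominance'' is misleading: $F_t(m)$ certainly contains many non-dominant monomials, and the task is not to rule them out but to show that each one carries a variable $Y_{k,r}^{\pm 1}$ with $(k,r)\notin\hI_{\le\xi}$. The clean mechanism (and this is what underlies the cited proof) is the \emph{right-negative} property of lower monomials in KR $q$-characters: for $m=m^{(\im)}[p,\xi_\im]$, every $m'\neq m$ occurring in $F_t(m)$ has its rightmost variable appearing with negative exponent and at a spectral parameter strictly greater than $\xi_\im$, hence outside $\hI_{\le\xi}$. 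This is the fact you would need to isolate; your dichotomy ``either $A_{\bar\jm,s}$ protrudes, or it lies below and destroys dominance'' does not by itself yield it, since a product of several $A^{-1}$'s inside the region could in principle land on a non-dominant monomial still supported in $\hI_{\le\xi}$, and one has to rule this out using the specific structure of KR characters.
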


The following generalizes the quantum $T$-system in simply-laced types
\cite{Nakajima03II} and deforms the general $T$-systems \cite{Her06}. 

\begin{Thm}[Quantum $T$-system {\cite[Theorem 6.8]{FHOO}}] \label{Thm:qTsys}
Let $\cQ = (\Delta, \sigma, \xi)$ be a Q-datum for $\fg$ and let $(\im, p), (\im, s) \in \hD_{[\xi]}$ satisfy $p < s$.
Then the elements in $\{ F_t^{(\jm)}(p, s) \}_{\jm \sim \im}$ are mutually commutative up to powers of $t^{\pm 1/2}$, and we have
\begin{equation} \label{eq:qTsys}
F_{t}^{(\im)}[p,s) F_{t}^{(\im)}(p,s]=
t^{a} F_{t}^{(\im)}(p,s) F_{t}^{(\im)}[p,s] + 
t^{b} \prod_{\jm \sim \im}^{\to} F_{t}^{(\jm)}(p,s)
\end{equation}
in $\cK_t(\Cc_\Z)$ for some $a,b \in \frac{1}{2}\Z$.
Here the ordered product is taken with respect to any total ordering of the set $\{ \jm \in \Delta_0 \mid \jm \sim \im\}$.
\end{Thm}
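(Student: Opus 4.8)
The plan is to deform the classical $T$-system. Applying $\chi_q$ to the exact sequence of $U_q(L\fg)$-modules underlying the Kirillov--Reshetikhin $T$-system of~\cite{Her06}, one gets in $\cY$ (writing $L^{(\im)}(p,s)\seq L(m^{(\im)}(p,s))$, etc.)
\[
\chi_q(L^{(\im)}[p,s))\,\chi_q(L^{(\im)}(p,s]) = \chi_q(L^{(\im)}(p,s))\,\chi_q(L^{(\im)}[p,s]) + \prod_{\jm \sim \im}\chi_q(L^{(\jm)}(p,s)),
\]
where moreover the family $\{L^{(\jm)}(p,s)\}_{\jm\sim\im}$ pairwise commutes (its ordered tensor product being simple). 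Since every module here is Kirillov--Reshetikhin, Proposition~\ref{Prop:KRF} identifies $\evt$ of the corresponding elements $F_t^{(\bullet)}[\bullet,\bullet]$ with these $q$-characters, so the classical identity is precisely the $\evt$-image of the asserted one for any admissible $a,b$. The quantum ingredients I would use are Theorem~\ref{Thm:Ft} --- the $\Z[t^{\pm1/2}]$-basis $\{F_t(m)\}_{m\in\cM}$ of $\cK_t(\Cc_\Z)$ with $F_t(m)=\ul m+(\text{strictly lower monomials})$ --- and the injective \emph{algebra} maps $(\cdot)_{\le\xi}\colon\cK_t(\Cc_{\le\xi})\hookrightarrow\cY_{t,\le\xi}$ together with Lemma~\ref{Lem:qKR}.

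First I would settle the commutativity assertion. Because $\{L^{(\jm)}(p,s)\}_{\jm\sim\im}$ is a commuting family, for $\jm\ne\jm'$ the pairing $\Nn(\mu,\nu)$ is constant as $\mu$ (resp.\ $\nu$) ranges over the monomials of $F_t^{(\jm)}(p,s)$ (resp.\ $F_t^{(\jm')}(p,s)$); a direct check then gives $F_t^{(\jm)}(p,s)\,F_t^{(\jm')}(p,s)=t^{c}F_t^{(\jm')}(p,s)\,F_t^{(\jm)}(p,s)$ for some $c\in\Z$. Hence $N\seq\prod_{\jm\sim\im}^{\to}F_t^{(\jm)}(p,s)$ is well-defined up to a power of $t^{1/2}$, with $\evt(N)=\prod_\jm\chi_q(L^{(\jm)}(p,s))$; its highest commutative monomial is $n\seq\prod_\jm m^{(\jm)}(p,s)$, occurring with a monomial coefficient since the only factorization $\prod_\jm\mu_\jm=n$ with $\mu_\jm\le m^{(\jm)}(p,s)$ is the leading one.

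The core step is a reduction to the quantum torus. Twisting by a power of $\fD_t$ (which preserves $\cK_t(\Cc_\Z)$ and permutes the relevant data), I may choose a Q-datum $\cQ=(\Delta,\sigma,\xi)$ with $\xi_\im=s$ for which the five KR modules $L^{(\im)}[p,s)$, $L^{(\im)}(p,s]$, $L^{(\im)}(p,s)$, $L^{(\im)}[p,s]$ and $\{L^{(\jm)}(p,s)\}_{\jm\sim\im}$ all lie in $\Cc_{\le\xi}$. Then the asserted identity lives in $\cK_t(\Cc_{\le\xi})$, and since $(\cdot)_{\le\xi}$ is an injective algebra homomorphism it is equivalent to its image in $\cY_{t,\le\xi}$. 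By Lemma~\ref{Lem:qKR}, $(F_t^{(\im)}(p,s])_{\le\xi}=\ul{m^{(\im)}(p,s]}$ and $(F_t^{(\im)}[p,s])_{\le\xi}=\ul{m^{(\im)}[p,s]}$ are single --- hence invertible --- commutative monomials, whereas $(F_t^{(\im)}[p,s))_{\le\xi}$ and $(F_t^{(\im)}(p,s))_{\le\xi}$ are not. Multiplying out, the identity to prove becomes a relation in the quantum torus $\cY_{t,\le\xi}$ among $(F_t^{(\im)}[p,s))_{\le\xi}$, $(F_t^{(\im)}(p,s))_{\le\xi}$ and $(N)_{\le\xi}$, of the form $X=t^aY+t^bZ$; here $a\in\frac{1}{2}\Z$ is \emph{forced} by cancelling the leading commutative monomials of $X$ and $t^aY$ (which agree, as $m^{(\im)}[p,s)\,m^{(\im)}(p,s]=m^{(\im)}(p,s)\,m^{(\im)}[p,s]$ as commutative monomials), and then $b\in\frac{1}{2}\Z$ is forced by matching the leading monomial $\ul n$ of $t^bZ$ with that of $X-t^aY$.

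It remains to verify this quantum-torus identity, which I would do by induction on $\ell\seq(s-p)/(2d_{\bar\im})\ge1$. The base case $\ell=1$ has $(p,s)=\emptyset$, so $F_t^{(\im)}(p,s)=1$, and the identity involves only the $F_t$'s of fundamental modules together with the single ``label-$2$'' element $F_t^{(\im)}[p,s]$: this is the non-simply-laced analogue of the fundamental quantum $T$-system, which I would prove directly from the presentation of $\cK_t(\Cc_\Z)$ by the generators $F_t(Y_{i,p})$ (Theorems~\ref{Thm:Ft} and~\ref{Thm:qtch}), the explicit $q$-characters of fundamental modules and their $t$-analogues, and the commutation rule of $\cY_t$ encoded by $\Nn$. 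For $\ell>1$, multiplying the $(\ell-1)$-instance by a suitable $F_t$ of a fundamental module and again invoking Lemma~\ref{Lem:qKR} to keep the truncated factors under control yields the $\ell$-instance, with the exponents $a,b$ emerging from the computation; the auxiliary $\fD_t$-twist is then removed using $\cK_t(\Cc_{\le\xi})=\cK_t(\Cc_\Z)\cap\cY_{t,\le\fD\xi}$ (Lemma~\ref{Lem:tr}). The main obstacle is exactly this last paragraph --- the explicit base-case computation in the quantum torus and the bookkeeping of which monomials survive $(\cdot)_{\le\xi}$; everything before it is formal. In particular the classical $T$-system is used only to certify that the answer has the predicted three-term shape and to locate its leading monomials, the quantum identity itself being established by the computation rather than by a naive deformation (which would fail since $\evt$ is far from injective).
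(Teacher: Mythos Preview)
The paper does not prove this theorem; it is quoted from \cite[Theorem~6.8]{FHOO} without argument, so there is no in-paper proof to compare against. Evaluating your sketch on its own terms, the reduction via truncation $(\cdot)_{\le\xi}$ and Lemma~\ref{Lem:qKR} to an identity in the quantum torus is a reasonable strategy, and fixing $a,b$ by matching leading monomials is sound. But two of your steps are genuine gaps, not bookkeeping.

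First, your argument for the $t$-commutativity of $\{F_t^{(\jm)}(p,s)\}_{\jm\sim\im}$ is incorrect as stated. Classical commutativity of the KR modules does \emph{not} imply that $\Nn(\mu,\nu)$ is constant as $\mu$ (resp.\ $\nu$) ranges over the monomials of $F_t^{(\jm)}(p,s)$ (resp.\ $F_t^{(\jm')}(p,s)$): the map $\evt$ kills precisely this information, so you cannot read it back. What is needed is that every monomial of $F_t^{(\jm)}(p,s)$ equals $m^{(\jm)}(p,s)$ times a product of $A_{k,r}^{-1}$'s, together with an explicit verification that the $\Nn$-pairings of those $A$-factors with all monomials of $F_t^{(\jm')}(p,s)$ vanish. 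That is an honest computation with the inverse quantum Cartan matrix $\tC(q)$, not a formal consequence of the representation theory; you have not supplied it, and it is not ``formal''.

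Second, your induction step ``multiplying the $(\ell-1)$-instance by a suitable $F_t$ of a fundamental module yields the $\ell$-instance'' does not work. One does not have $F_t^{(\im)}[p,s)=t^{?}\,F_t(Y_{\bar\im,p})\cdot F_t^{(\im)}(p,s)$: KR $(q,t)$-characters are not multiplicative in this way (already at $t=1$ the corresponding tensor product is reducible --- this is exactly what the $T$-system encodes). So multiplying the shorter identity by a fundamental $F_t$ produces extra terms that you have not accounted for. An actual proof must compare both sides in the basis $\{F_t(m)\}_{m\in\cM}$ of Theorem~\ref{Thm:Ft}, i.e.\ control all dominant monomials appearing in the products $F_t^{(\im)}[p,s)\,F_t^{(\im)}(p,s]$ and $F_t^{(\im)}(p,s)\,F_t^{(\im)}[p,s]$. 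That analysis is the substantive content of the theorem, and your sketch does not provide it.
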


Note that the factors of the last term do not commute in general but only $t$-commute, that is why the product has to be ordered.

\section{Cluster structure on $\cK_t(\Cc_{\le \xi})$}\label{ssecfive}

We show that the quantum Grothendieck ring of the category $\Cc_{\le \xi}$ has a structure of a quantum cluster algebras 
isomorphic to an algebra $\cA_\bfi$ introduced above (Theorem \ref{Thm:qcl}). This generalizes previous results \cite{HL15, Qin17, Bit}. 
For our purposes, we use the quantum affine quiver introduced in \cite{HL16} and for which we discuss several technical results. 
Then our proof is based first on an isomorphism of quantum tori that we establish (Corollary \ref{Cor:teta}). The isomorphism between the quantum Grothendieck ring and the quantum cluster algebra is then obtained by identifying their respective image by the truncation of $(q,t)$-characters and the natural inclusion, inside the quantum tori.

\subsection{Adapted sequences}
\label{Ssec:adpt}
Throughout this section, we fix a Q-datum $\cQ = (\Delta, \sigma, \xi)$ for $\fg$.
A vertex $\im \in \Delta_{0}$ is called a \emph{source} of $\cQ$ if we have $\xi_{\im} > \xi_{\jm}$
for any $\jm \in \Delta_{0}$ with $\jm \sim \im$.
In this case, we define a new Q-datum $s_{\im}\cQ = (\Delta, \sigma, s_{\im}\xi)$
by setting 
$$
(s_{\im} \xi)_{\jm} \seq \xi_{\jm} - 2 d_{\bar{\im}}\delta_{\im,\jm} \quad \text{for $\jm \in \Delta_{0}$}. 
$$
Note that, if $\cQ$ satisfies \eqref{eq:xiep}, so does $s_{\im}\cQ$ for any source $\im \in \Delta_0$ of $\cQ$. 
We say that a sequence $\bfi = (\im_1, \im_2, \ldots)$ in $\Delta_0$ 
is \emph{adapted to $\cQ$} if $\im_k$ is a source of 
the Q-datum $s_{\im_{k-1}} \cdots s_{\im_2} s_{\im_1} \cQ$ 
for all $k \in \{ 1, 2, \ldots\}$.  
We have the following combinatorial result, which is crucial for our purposes. 
Recall the notion of commutation equivalence from Definition~\ref{Def:comm-eq}. 

\begin{Prop}[{\cite[\S3.5]{FO21}}] \label{Prop:DQ}
For each Q-datum $\cQ$ for $\fg$, there is a reduced word $(\im_1, \ldots, \im_\ell)$ for the longest element $w_\circ$ adapted to $\cQ$, uniquely up to commutation-equivalence.
Moreover, we have $s_{\im_\ell} \cdots s_{\im_1}\cQ = \fD^{-1}\cQ$.
\end{Prop}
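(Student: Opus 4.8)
The plan is to prove Proposition~\ref{Prop:DQ} by combining two ingredients that are essentially combinatorial: first, a translation of "adaptedness to $\cQ$" into the language of sources for an associated acyclic quiver/height function; second, a careful induction on the length of $w_\circ$, using the source-orientation structure of Q-data. Throughout, I would keep the function $\xi$ in the foreground and track how it changes under the operations $s_{\im}$.

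\medskip

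\noindent\textbf{Step 1: Existence of one adapted reduced word.} First I would show that for a given Q-datum $\cQ$, a source $\im$ of $\cQ$ always exists (take any $\im$ with $\xi_\im$ maximal among its neighbours; since $\Delta$ is a finite tree this is possible, e.g.\ a vertex where $\xi$ attains its global maximum on $\Delta_0$), and that $s_{\im}\cQ$ is again a Q-datum satisfying the same normalization. Then I would argue that after finitely many source-removal steps $s_{\im_k}\cdots s_{\im_1}$ using each vertex the correct number of times (each $\im \in \Delta_0$ gets used, so that in terms of the folding each $i \in I$ is used $d_i$ times), one arrives exactly at $\fD^{-1}\cQ$: indeed one checks that the total displacement $\sum_{\im} (\xi_\im - (\fD^{-1}\xi)_\im) = \sum_{\im} r h^\vee / (\text{something})$ is consumed at the correct rate, $2d_{\bar\im}$ per use of $\im$, and that the number of uses of $\im$ is forced by the relation between $h^\vee$ and the Coxeter number. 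The non-obvious point is that the resulting sequence $(\im_1, \ldots, \im_\ell)$ is a reduced word for $w_\circ$; this follows because the sequence of source-mutations of an acyclic orientation of $\Delta$ returning to (the $\sigma$-twisted version of) the starting orientation corresponds to a reduced expression of the (twisted) Coxeter element power equal to $w_\circ$ --- this is the standard ``Coxeter sortable / adapted reduced word'' correspondence, which in the Q-datum setting is exactly the content of \cite{FO21, \S3.5}. So for this step I would essentially cite \cite{FO21} and only sketch why each source step lowers $\xi$ at a single vertex and why the bookkeeping closes up at $\fD^{-1}\cQ$.

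\medskip

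\noindent\textbf{Step 2: Uniqueness up to commutation-equivalence.} Suppose $(\im_1, \ldots, \im_\ell)$ and $(\im'_1, \ldots, \im'_\ell)$ are both adapted to $\cQ$. I would prove they are commutation-equivalent (Definition~\ref{Def:comm-eq}) by induction on $\ell$. The key observation is: if $\im$ is a source of $\cQ$, then $\im$ can appear as \emph{some} early entry of an adapted word, and if $\im \neq \im'_1$ are two distinct sources of $\cQ$, then $\im \not\sim \im'_1$ (two adjacent vertices cannot both be sources of the same height function, by property~(1) of a Q-datum which forces $|\xi_\im - \xi_\jm| = d$ and only one of them can be the larger), and moreover $s_\im$ and $s_{\im'_1}$ commute and $\im$ remains a source of $s_{\im'_1}\cQ$. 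Hence in the word $(\im'_1, \im'_2, \ldots)$ one may locate the first occurrence of $\im$, and all entries strictly before it are sources of intermediate Q-data that are not adjacent to $\im$, so they commute past $\im$; this lets us move $\im$ to the front via a commutation move. After doing this we have reduced the first letter of the second word to match, we strip it off (passing to the Q-datum $s_{\im_1}\cQ$, of length $\ell - 1$), and we invoke the induction hypothesis. One must be a little careful that "commutation-equivalence" is transitive and is compatible with prepending a common letter --- both are immediate from the definition.

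\medskip

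\noindent\textbf{Main obstacle.} The genuinely delicate part is Step~1: verifying that the sequence of sources, run for the right length, is a reduced word for $w_\circ$ \emph{and} terminates precisely at $\fD^{-1}\cQ$ rather than overshooting or stopping early. This is where the specific numerics of Table~\ref{table:cl} ($h^\vee$, $\ell$, the orbit sizes $d_i$) enter, and where the $\sigma$-twisting in the non-simply-laced cases makes the argument subtler than the classical ``every acyclic quiver has an adapted reduced word for $w_\circ$'' statement. Since this is exactly \cite[\S3.5]{FO21}, my proof would quote that reference for the hard kernel and devote the written argument mainly to Step~2 (uniqueness up to commutation-equivalence), which is the part actually needed downstream and which follows cleanly from the non-adjacency-of-sources observation plus induction.
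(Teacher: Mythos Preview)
The paper does not supply its own proof of this proposition: it is stated with the citation \cite[\S3.5]{FO21} and used as a black box. Your proposal therefore provides strictly more than the paper does, and your overall structure (existence via iterated source removal, uniqueness via the non-adjacency of sources plus induction) is the standard one and is correct in outline.

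One small comment on your Step~2: the induction you describe is essentially what the paper later proves in a slightly different guise as Lemma~\ref{Lem:adce} (for infinite adapted sequences, using the poset structure on $\hD_{\le \xi}$ rather than a direct commutation argument). Your direct argument via ``two sources are never adjacent, so commute the first occurrence of $\im$ to the front'' is equally valid and arguably more elementary for the finite-word case; the poset formulation has the advantage of handling infinite sequences uniformly. As you correctly identify, the genuinely nontrivial content (that the process terminates at $\fD^{-1}\cQ$ after exactly $\ell$ steps and that the resulting word is reduced for $w_\circ$) lives in \cite{FO21}, and both you and the paper defer to that reference.
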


Let $\bfi = (\im_k)_{k \in \N} \in \Delta_0^{(\infty)}$ be a sequence satisfying the condition~\eqref{eq:cond1}.
In what follows, we use the notation
\[ n_\bfi(u) \seq |\{ v \in \N \mid v < u, \im_v = \im_u \}|.\]
Then we define the map $\rho_\bfi \colon \N \to \hD_{\le \xi}$ by
\[ \rho_\bfi(u) \seq (\im_u, \xi_{\im_u} - 2d_{\bar{\im}_u} n_\bfi(u)).\] 
By the condition~\eqref{eq:cond1}, $\rho_\bfi$ is a bijection.
We set $\bar{\rho}_\bfi \seq f \circ \rho_\bfi \colon \N \to \hI_{\le \xi}$.

For $(\im,p), (\jm, s) \in \hD_{[\xi]}$, write $(\im, p) \prec (\jm,s)$ if $\im \sim \jm$ and $p = s+ \min(d_{\bar{\im}}, d_{\bar{\jm}})$. 
Taking the transitive closure of this relation, we obtain a partial ordering $\preceq$ on the set $\hD_{[\xi]}$. 

\begin{Rem} \label{Rem:Hasse}
The Hasse diagram of the poset $(\hD_{\le \xi}, \preceq)$ is identical to the repetition quiver (restricted to $\hD_{\le \xi}$) in the sense of \cite[\S3.4]{FO21}. 
\end{Rem}

\begin{Lem}
\label{Lem:ad}
An infinite sequence $\bfi \in \Delta_0^\N$ is adapted to $\cQ$ if and only if the condition \eqref{eq:cond1} is satisfied and the bijection $\rho_\bfi^{-1} \colon (\hD_{\le \xi}, \preceq) \to (\N, \le)$ is a morphism of posets.
\end{Lem}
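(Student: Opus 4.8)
The plan is to reduce the statement to the elementary fact that a bijective enumeration of a poset is a linear extension if and only if, at each step, one picks a $\preceq$-minimal element of the complement of what has been picked so far. The first and main step is the following: a vertex $\im\in\Delta_0$ is a source of $\cQ=(\Delta,\sigma,\xi)$ if and only if $(\im,\xi_\im)$ is a $\preceq$-minimal element of $\hD_{\le\xi}$. Since $\preceq$ is the transitive closure of $\prec$, the element $(\im,p)\in\hD_{\le\xi}$ is $\preceq$-minimal exactly when it has no $\prec$-predecessor, i.e.\ when $(\jm,\,p+\min(d_{\bar\im},d_{\bar\jm}))\notin\hD_{\le\xi}$ for every $\jm\sim\im$. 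So it suffices to prove, for $\jm\sim\im$, the equivalence
\[ (\jm,\,\xi_\im+\min(d_{\bar\im},d_{\bar\jm}))\in\hD_{\le\xi}\iff \xi_\jm>\xi_\im, \]
because then (recalling $\xi_\im\ne\xi_\jm$ whenever $\im\sim\jm$) $(\im,\xi_\im)$ is minimal iff $\xi_\jm<\xi_\im$ for all $\jm\sim\im$, i.e.\ iff $\im$ is a source. Here ``$\Rightarrow$'' is immediate since $\min(d_{\bar\im},d_{\bar\jm})\ge1$, while for ``$\Leftarrow$'' I would run a short case analysis on $(d_{\bar\im},d_{\bar\jm})$: the point is that $\xi_\jm>\xi_\im$ forces $\xi_\jm-\xi_\im\in\min(d_{\bar\im},d_{\bar\jm})+2d_{\bar\jm}\N_0$, which is exactly the condition for $(\jm,\xi_\im+\min(d_{\bar\im},d_{\bar\jm}))$ to lie in $\hD_{\le\xi}$; this uses the Q-datum axioms (1), (2) and the parity conditions \eqref{eq:ep}, \eqref{eq:xiep} (axiom (1) when the multiplicities agree, and axiom (2) --- which pins down the heights of the orbit-vertices adjacent to a fixed vertex --- when they differ).

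Next I would record two bookkeeping facts: for a source $\im$ of $\cQ$ one has, directly from the definitions, $\hD_{[s_\im\xi]}=\hD_{[\xi]}$ (so the poset $\preceq$ does not change) and $\hD_{\le s_\im\xi}=\hD_{\le\xi}\setminus\{(\im,\xi_\im)\}$; and for any $\bfi=(\im_u)_u\in\Delta_0^{\N}$ the map $\rho_\bfi$ is injective with image inside $\hD_{\le\xi}$. I would then prove, by induction on $k$, that $(\im_1,\dots,\im_k)$ is adapted to $\cQ$ if and only if $\rho_\bfi(j)$ is $\preceq$-minimal in $\hD_{\le\xi}\setminus\{\rho_\bfi(1),\dots,\rho_\bfi(j-1)\}$ for all $j\le k$. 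Granting this for $k-1$, the Q-data $\cQ^{(j)}\seq s_{\im_j}\cdots s_{\im_1}\cQ$ are defined for $j\le k-1$; iterating the removal fact gives $\hD_{\le\xi^{(k-1)}}=\hD_{\le\xi}\setminus\{\rho_\bfi(1),\dots,\rho_\bfi(k-1)\}$, and one checks $\rho_\bfi(k)=(\im_k,\xi^{(k-1)}_{\im_k})$; so the key step applied to $\cQ^{(k-1)}$ says exactly that ``$\im_k$ is a source of $\cQ^{(k-1)}$'' is equivalent to ``$\rho_\bfi(k)$ is minimal in $\hD_{\le\xi}\setminus\{\rho_\bfi(1),\dots,\rho_\bfi(k-1)\}$''.

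To conclude the ``if'' direction, \eqref{eq:cond1} is assumed, so $\rho_\bfi$ is a bijection onto $\hD_{\le\xi}$; the minimality property just obtained, holding for all $j$, is precisely the assertion that $(\rho_\bfi(j))_{j\in\N}$ is a linear extension of $(\hD_{\le\xi},\preceq)$, equivalently that $\rho_\bfi^{-1}$ is a morphism of posets. For the ``only if'' direction I would first deduce \eqref{eq:cond1} from adaptedness: if some vertices occurred only finitely often, connectedness of $\Delta$ would produce adjacent $\im,\jm$ with $\im$ occurring finitely often and $\jm$ infinitely often; then $\xi^{(k)}_\jm\to-\infty$ while $\xi^{(k)}_\im$ is eventually constant, contradicting that at each of its infinitely many later occurrences $\jm$ is a source and hence has strictly larger height than $\im$ --- and the case that every vertex occurs finitely often is impossible because $\bfi$ is infinite. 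With \eqref{eq:cond1} in hand $\rho_\bfi$ is again a bijection, and the induction gives that $\rho_\bfi^{-1}$ is a morphism of posets.

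The main obstacle is the equivalence at the heart of the key step in the non-simply-laced case: when $d_{\bar\im}\ne d_{\bar\jm}$ the difference $|\xi_\im-\xi_\jm|$ need not equal $\min(d_{\bar\im},d_{\bar\jm})$ (it can be any odd number up to $2r-1$), so one has to use axiom (2) carefully, together with the divisibility constraint built into the definition of $\hD_{[\xi]}$, to see that $(\im,\xi_\im)$ acquires a $\prec$-predecessor exactly when some neighbour sits above it. Everything else is bookkeeping plus the standard characterization of linear extensions.
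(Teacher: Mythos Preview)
Your proof is correct and follows essentially the same approach as the paper's. The paper's proof is much terser: it simply asserts that adaptedness implies \eqref{eq:cond1} (``easy to see'') and then states the key observation that $(\im,p)\in\hD_{\le\xi}$ is $\preceq$-minimal if and only if $p=\xi_\im$ and $\im$ is a source of $\cQ$, together with the removal identity $\hD_{\le\xi}\setminus\{(\im,\xi_\im)\}=\hD_{\le s_\im\xi}$, leaving the induction implicit. Your write-up fleshes out exactly these points --- the case analysis for the key equivalence in the non-simply-laced situation and the connectedness argument for \eqref{eq:cond1} --- so it is a detailed version of the same proof rather than a different route.
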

\begin{proof}
It is easy to see that the condition \eqref{eq:cond1} is satisfied if $\bfi \in \Delta_0^\N$ is adapted to $\cQ$. 
Then the assertion follows from the following observation:
 $(\im, p) \in \hD_{\le \xi}$ is minimal if and only if $p=\xi_{\im}$ and $\im$ is a source of $\cQ$. Moreover, if this is the case, we have $\hD_{\le \xi} \setminus \{(\im, \xi_\im)\} = \hD_{\le s_{\im}\xi}$. 
\end{proof}

\begin{Lem} \label{Lem:adce}
Assume that two sequences $\bfi = (\im_{u})_{u \in \N}$ and $\bfi' = (\im'_{u})_{u \in \N}$  are both adapted to $\cQ$.
Then $\bfi$ and $\bfi'$ are commutation-equivalent by the transformation $\rho_{\bfi'}^{-1} \circ \rho_{\bfi}$.  
\end{Lem}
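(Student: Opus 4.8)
The plan is to combine the letter-bookkeeping built into the maps $\rho_\bfi$ and $\rho_{\bfi'}$ with the poset-theoretic characterization of adaptedness in Lemma~\ref{Lem:ad}. Write $\pi \seq \rho_{\bfi'}^{-1} \circ \rho_{\bfi} \in \SG_\N$; this is exactly the transformation in the statement. Since the first component of $\rho_\bfi(u)$ equals $\im_u$ while the first component of $\rho_{\bfi'}(\pi(u)) = \rho_\bfi(u)$ equals $\im'_{\pi(u)}$, we immediately get $\im_u = \im'_{\pi(u)}$ for every $u \in \N$. So the only substantive thing to check in Definition~\ref{Def:comm-eq} is the inversion condition: whenever $u < v$ and $\pi(u) > \pi(v)$, the letters $\im_u, \im_v$ are distinct and non-adjacent in $\Delta$.

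I would prove the contrapositive. Suppose $u \ne v$ and $\im_u = \im_v$ or $\im_u \sim \im_v$; I claim $u < v \Leftrightarrow \pi(u) < \pi(v)$. Put $x \seq \rho_\bfi(u) = \rho_{\bfi'}(\pi(u))$ and $y \seq \rho_\bfi(v) = \rho_{\bfi'}(\pi(v))$, which are distinct elements of $\hD_{\le \xi}$ (distinct since $u \ne v$ and $\rho_\bfi$ is a bijection) whose first components are $\im_u$ and $\im_v$, hence equal or adjacent in $\Delta$. The crucial input is that such $x$ and $y$ are always $\preceq$-comparable. Granting this, Lemma~\ref{Lem:ad}, applied to both $\bfi$ and $\bfi'$ (which are adapted to $\cQ$), says that $\rho_\bfi^{-1}$ and $\rho_{\bfi'}^{-1}$ are order-preserving bijections onto $(\N, \le)$; being injective, they carry $\prec$ to $<$. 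Therefore $x \prec y$ gives $u < v$ and $\pi(u) < \pi(v)$ simultaneously, while $y \prec x$ gives the two reversed inequalities; in both cases $u<v \Leftrightarrow \pi(u)<\pi(v)$, as wanted, and the contrapositive (and hence the inversion condition) follows.

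The main obstacle is justifying the comparability claim: for $x,y \in \hD_{\le\xi}$ with first components equal or adjacent, $x$ and $y$ are $\preceq$-comparable. Here I would invoke the identification of the Hasse diagram of $(\hD_{\le\xi}, \preceq)$ with the repetition quiver of $\cQ$ (Remark~\ref{Rem:Hasse}, \cite[\S3.4]{FO21}): within a single $\tau$-orbit (the elements of $\hD_{\le\xi}$ sharing a fixed first component) there is a directed path from each vertex to the next, so the orbit is a chain; and for adjacent $\im \sim \jm$ the arrows between the two corresponding $\tau$-orbits interlace, so any vertex of one orbit and any vertex of the other lie on a common directed path. A naive ``oscillate between two neighbouring vertices'' argument does not suffice, since that path may leave $\hD_{[\xi]}$ when $d_{\bar\im}\ne d_{\bar\jm}$ (as already happens in the unfolding of $\mathrm{B}_2$), which is precisely why the repetition-quiver structure is needed. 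If a self-contained proof is preferred instead, one can induct on the height difference $|p-s|$, using the Q-datum axioms to produce at each step a neighbour realizing a covering relation of $\preceq$, and using the fact that $\hD_{\le\xi}$ is an up-set of $(\hD_{[\xi]},\preceq)$ to keep the path inside $\hD_{\le\xi}$.
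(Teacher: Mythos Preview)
Your proof is correct and follows essentially the same route as the paper's own argument: both reduce to Lemma~\ref{Lem:ad} (adaptedness $\Leftrightarrow$ $\rho_\bfi^{-1}$ is a poset morphism) together with the fact that two elements of $\hD_{\le\xi}$ whose first components are equal or adjacent are $\preceq$-comparable. The paper argues the direct implication (an inversion forces incomparability, hence neither equal nor adjacent, citing \cite[Remark~3.17]{FO21} with Remark~\ref{Rem:Hasse}), while you argue the contrapositive; the logical content is identical, and your discussion of why the comparability claim holds is more explicit than the paper's one-line citation.
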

\begin{proof}
Letting $\pi \seq \rho_{\bfi'}^{-1} \circ \rho_{\bfi} \in \SG_\N$, we have $\im_{u} = \im'_{\pi(u)}$ for all $u \in \N$. 
Assume that two positive integers $u, v \in \Z$ satisfy $u < v$ and $\pi(u) > \pi(v)$. 
By Lemma~\ref{Lem:ad}, $\rho_{\bfi}(u)$ and $\rho_{\bfi}(v)$ are not comparable in $(\hD_{\le \xi}, \preceq)$.
Then, we have $\im_u \neq \im_v$ and $\im_u \not\sim \im_v$ (see \cite[Remark 3.17]{FO21} together with Remark~\ref{Rem:Hasse}).
Therefore, we obtain the conclusion. 
\end{proof}

\begin{Ex} \label{Ex:periodic}
Let $\bfi = (\im_u)_{u \in \N} \in \Delta_0^\N$ be a sequence satisfying the condition
\begin{equation} \label{eq:cond2}
\begin{cases}
(1) & \text{$(\im_1, \ldots, \im_{\ell})$ is a reduced word for $w_\circ$ adapted to $\cQ$, and} \\
(2) & \text{we have $\im_{u+\ell} = \im_u^*$ for all $u \in \N$.}
\end{cases}
\end{equation}
Then, the sequence $\bfi$ is adapted to $\cQ$ by Proposition~\ref{Prop:DQ}.
\end{Ex}

\begin{Ex} \label{Ex:dimin}
Let $\{ (\im_u,p_u) \}_{u \in \N}$ be an arbitrary total ordering of the set $\hD_{\le \xi}$ satisfying $p_1 \ge p_2 \ge \cdots$.
Then, the sequence $\bfi \seq (\im_u)_{u \in \N}$ is adapted to $\cQ$ by Lemma~\ref{Lem:ad} and we have $\rho_{\bfi}(u) = (\im_u,p_u)$ for all $u \in \N$.
\end{Ex}

{\color{green}\subsection{Application to the quantum affine quiver}}
Following \cite{HL16}, we define the quiver $G$ as follows. 
The set of vertices of $G$ is $\hI$.  
For $(i,p), (j,s) \in \hI$, we assign an arrow $(i,p) \to (j,s)$ if
\[ c_{ij} \neq 0 \quad \text{and} \quad  s-d_j = p-d_i + d_i c_{ij}. \] 
Let $G_{\le \xi}$ denote the full subquiver of $G$ supported on the set $\hI_{\le \xi}$.

\begin{Rem} \label{Rem:HL}
We can always find a Q-datum $\cQ$ for $\fg$ whose height function $\xi$ satisfies $-2d_{\bar{\im}} < \xi_\im \le 0$ for all $\im \in \Delta_0$. 
In this case, the quiver $G_{\le \xi}$ is identical to the quiver $G^-$ defined in \cite{HL16}.
In this sense, the quiver $G_{\le \xi}$ is a generalization of the quiver $G^-$.
\end{Rem}

\begin{Lem}[{\cite[Proposition 7.27]{KKOP2}}] \label{Lem:quiverisom}
For any sequence $\bfi$ adapted to $\cQ$, the bijection $\bar{\rho}_\bfi \colon \N \to \hI_{\le \xi}$ induces the quiver isomorphism
\[ \Gamma_\bfi \simeq G_{\le \xi}.  \]
\end{Lem}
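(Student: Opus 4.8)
The plan is to reduce to one convenient adapted sequence and then perform a local combinatorial check. Since $\rho_\bfi\colon\N\to\hD_{\le\xi}$ is a bijection and $f$ restricts to a bijection $\hD_{\le\xi}\simeq\hI_{\le\xi}$, the map $\bar{\rho}_\bfi$ is already bijective on vertices, so it remains only to match arrows. I would first note that both $\Gamma_\bfi$ and $G$ have simple arrows and no oriented $2$-cycle: for $\Gamma_\bfi$ this is clear from the definition (an arrow $u\to v$ forces either $\im_u\sim\im_v$ and $u<v$, or $\im_u=\im_v$ and $u>v$), and for $G$ it follows from $c_{ij}\ne 0$ together with $d_ic_{ij}=d_jc_{ji}$. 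Hence a quiver isomorphism is the same as a bijection of arrow sets, i.e.\ I must prove: $u\to v$ is an arrow of $\Gamma_\bfi$ if and only if $\bar{\rho}_\bfi(u)\to\bar{\rho}_\bfi(v)$ is an arrow of $G_{\le\xi}$.

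Next I would reduce to a single adapted sequence. If $\bfi,\bfi'$ are both adapted to $\cQ$, then by Lemma~\ref{Lem:adce} and Proposition~\ref{Prop:comm-eq} the permutation $\pi\seq\rho_{\bfi'}^{-1}\circ\rho_\bfi$ satisfies $\tB_{\bfi'}=\pi\tB_\bfi$; since neither quiver has a $2$-cycle, $\pi$ induces a quiver isomorphism $\Gamma_\bfi\simeq\Gamma_{\bfi'}$ (cf.\ the proof of Lemma~\ref{Lem:gamma}), and from $\rho_\bfi=\rho_{\bfi'}\circ\pi$ we get $\bar{\rho}_\bfi=\bar{\rho}_{\bfi'}\circ\pi$, so the assertion for $\bfi'$ implies it for $\bfi$. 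I would then take the diminishing sequence of Example~\ref{Ex:dimin}; for it the second coordinate $p_u$ of $\rho_\bfi(u)=(\im_u,p_u)$ is weakly decreasing in $u$ and consecutive occurrences of a fixed $\im\in\Delta_0$ sit at positions differing by $2d_{\bar\im}$, which makes the inequalities in the definition of $\Gamma_\bfi$ transparent in terms of positions.

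For the matching itself, I would first list the arrows of $\Gamma_\bfi$ at a fixed vertex $u$ by unwinding the defining inequalities: the vertical arrows $u\to u^-$ (if $u^->0$) and $u^+\to u$, and, for each $\jm\sim\im_u$, an out-arrow $u\to v$ with $v$ the last occurrence of $\jm$ in $\{u+1,\dots,u^+-1\}$ (when that set contains one) and an in-arrow $w\to u$ with $w$ the last occurrence of $\jm$ before $u$ (present exactly when, in addition, $\{u+1,\dots,u^+-1\}$ contains an occurrence of $\jm$). Writing $\bar{\rho}_\bfi(u)=(i,p)$ with $i=\bar\im_u$ and $p=\xi_{\im_u}-2d_{\bar\im_u}n_\bfi(u)$, a vertical arrow corresponds to a pair $(i,p),(i,p+2d_i)$, which is exactly an $i=j$ arrow of $G$ since $c_{ii}=2$; conversely, using the two defining properties of a Q-datum one checks that two vertices with the same image orbit but distinct $\Delta_0$-labels always have positions differing by an amount $\ne 2d_i$, so no spurious $i=j$ arrow of $G$ occurs. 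For a diagonal arrow one locates the pertinent occurrence of $\jm$ inside the window $\{u+1,\dots,u^+-1\}$ using the spacing $2d_{\bar\jm}$ and the Q-datum axioms, computes $p_v$ (resp.\ $p_w$) as an explicit shift of $p_u$, and verifies that it matches the relation $s-d_j=p-d_i+d_ic_{ij}$ defining $G$; one also checks that when $\bar\im_u\sim\bar\im_v$ in $I$ but $\im_u\not\sim\im_v$ in $\Delta$ this relation fails, ruling out spurious arrows there too. This yields the desired quiver isomorphism, which is \cite[Proposition~7.27]{KKOP2}.

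The step I expect to be the main obstacle is this last diagonal-arrow check: it is finite but genuinely case-heavy, and the delicate cases are the folded types, where $d_{\bar\im}$ is non-constant, a window $\{u+1,\dots,u^+-1\}$ can contain zero, one, or two occurrences of a given neighbour, and adjacency in $\Delta$ does not simply correspond to adjacency in $I$.
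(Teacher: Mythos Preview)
Your reduction step---using Lemma~\ref{Lem:adce} and Proposition~\ref{Prop:comm-eq} to pass to a single convenient adapted sequence---is exactly what the paper does. The difference lies in what happens afterwards: the paper reduces to the \emph{periodic} sequence of Example~\ref{Ex:periodic} and then observes that the statement for that sequence is literally \cite[Proposition~7.27]{KKOP2}, so the proof ends by citation. You instead reduce to the \emph{diminishing} sequence of Example~\ref{Ex:dimin} and sketch a direct combinatorial verification of the arrow bijection. Your outline is sound: the preliminary remarks on simple arrows and the absence of $2$-cycles are correct, the description of vertical and diagonal arrows in $\Gamma_\bfi$ is accurate, the argument ruling out spurious $i=j$ arrows (via the bijection $f\colon\hD_{[\xi]}\simeq\hI$, which forces distinct $\Delta_0$-labels in the same orbit to have height residues differing modulo $2d_i$) is valid, and you correctly flag the need to check that $\bar\im_u\sim\bar\im_v$ in $I$ with $\im_u\not\sim\im_v$ in $\Delta$ produces no arrow of $G$. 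What your approach buys is self-containment: you are essentially re-proving the cited proposition from scratch. What it costs is precisely the case analysis you identify as the obstacle, which in the folded types (varying $d_{\bar\im}$, windows containing $0$, $1$, or several occurrences of a neighbour) is genuine bookkeeping. There is no gap in principle, but the paper's route is much shorter because the labor has already been done in \cite{KKOP2}.
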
 
\begin{proof}
By Proposition~\ref{Prop:comm-eq} and Lemma~\ref{Lem:adce}, we may assume that $\bfi$ satisfies \eqref{eq:cond2} in Example~\ref{Ex:periodic}.
Then, the assertion is identical to \cite[Proposition 7.27]{KKOP2}. 
\end{proof}

\begin{Ex}
We exhibit some examples of the quiver $G_{\le \xi}$. Here the labeling of $I$ is as in Figure~\ref{Fig:unf}. 
The symbols $\star$ indicate the vertices $(\bar{\im}, \xi_{\im})$ for $\im \in \Delta_0$.
\begin{itemize}
\item 
Type $\mathrm{A}_{5}$:
$$
\raisebox{3mm}{
\scalebox{0.60}{\xymatrix@!C=0.5mm@R=2mm{
(i\setminus p) & & -24 & -23 &-22&-21 &-20& -19 &-18& -17 & -16& -15& -14& -13&  -12
& -11 & -10 & -9 & -8 & -7 & -6 & -5 & -4 & -3 & -2 & -1  \\
1&\cdots \ar@{->}[rr] \ar@{<-}[dr]&& \bullet \ar@{->}[rr] \ar@{<-}[dr] &&\bullet \ar@{->}[rr]\ar@{<-}[dr]
&&\bullet \ar@{->}[rr] \ar@{<-}[dr] && \bullet \ar@{->}[rr] \ar@{<-}[dr] &&\bullet \ar@{->}[rr] \ar@{<-}[dr] &&  \bullet \ar@{->}[rr] \ar@{<-}[dr] 
&&\bullet \ar@{->}[rr] \ar@{<-}[dr] && \bullet \ar@{->}[rr] \ar@{<-}[dr] &&\bullet \ar@{->}[rr] \ar@{<-}[dr]  && \bullet \ar@{->}[rr]\ar@{<-}[dr] && \bullet \ar@{->}[rr] \ar@{<-}[dr] && \star  \\
2&\cdots \ar@{->}[r] &\bullet \ar@{->}[rr] \ar@{<-}[dr]\ar@{<-}[ur]&& \bullet \ar@{->}[rr] \ar@{<-}[dr]\ar@{<-}[ur] &&\bullet \ar@{->}[rr] \ar@{<-}[dr]\ar@{<-}[ur]
&& \bullet \ar@{->}[rr] \ar@{<-}[dr]\ar@{<-}[ur]&& \bullet \ar@{->}[rr]\ar@{<-}[dr] \ar@{<-}[ur]&& \bullet \ar@{->}[rr] \ar@{<-}[dr]\ar@{<-}[ur]&&\bullet \ar@{->}[rr] \ar@{<-}[dr]\ar@{<-}[ur]&
&\bullet \ar@{->}[rr] \ar@{<-}[dr]\ar@{<-}[ur]&&\bullet \ar@{->}[rr]\ar@{<-}[dr] \ar@{<-}[ur]&& \bullet \ar@{->}[rr] \ar@{<-}[dr]\ar@{<-}[ur]
&&\bullet \ar@{->}[rr] \ar@{<-}[dr]\ar@{<-}[ur]&& \star \ar@{<-}[ur]& \\ 
3&\cdots \ar@{->}[rr] \ar@{<-}[dr] \ar@{<-}[ur]&& \bullet \ar@{->}[rr] \ar@{<-}[dr] \ar@{<-}[ur] &&\bullet \ar@{->}[rr]\ar@{<-}[dr] \ar@{<-}[ur]
&&\bullet \ar@{->}[rr] \ar@{<-}[dr] \ar@{<-}[ur] && \bullet \ar@{->}[rr] \ar@{<-}[dr]\ar@{<-}[ur] &&\bullet \ar@{->}[rr] \ar@{<-}[dr] \ar@{<-}[ur]&&  \bullet \ar@{->}[rr] \ar@{<-}[dr] \ar@{<-}[ur]
&&\bullet \ar@{->}[rr] \ar@{<-}[dr] \ar@{<-}[ur] && \bullet \ar@{->}[rr] \ar@{<-}[dr] \ar@{<-}[ur]&&\bullet \ar@{->}[rr] \ar@{<-}[dr] \ar@{<-}[ur] && \bullet \ar@{->}[rr]\ar@{<-}[dr] \ar@{<-}[ur]&&
\star \ar@{<-}[ur] && \\
4&\cdots \ar@{->}[r]& \bullet \ar@{->}[rr] \ar@{<-}[ur]\ar@{<-}[dr]&&\bullet \ar@{->}[rr] \ar@{<-}[ur]\ar@{<-}[dr]&&\bullet \ar@{->}[rr] \ar@{<-}[ur]\ar@{<-}[dr] &&\bullet \ar@{->}[rr] \ar@{<-}[ur]\ar@{<-}[dr]&& \bullet \ar@{->}[rr] \ar@{<-}[ur]\ar@{<-}[dr]
&&\bullet \ar@{->}[rr] \ar@{<-}[ur]\ar@{<-}[dr]&& \bullet \ar@{->}[rr] \ar@{<-}[ur]\ar@{<-}[dr] &&\bullet \ar@{->}[rr] \ar@{<-}[ur]\ar@{<-}[dr]&&\bullet \ar@{->}[rr] \ar@{<-}[ur]\ar@{<-}[dr]&&
\bullet \ar@{->}[rr] \ar@{<-}[ur]\ar@{<-}[dr]&&\star \ar@{<-}[ur]&&& \\
5&\cdots \ar@{->}[rr]  \ar@{<-}[ur]&& \bullet \ar@{->}[rr]  \ar@{<-}[ur] &&\bullet \ar@{->}[rr] \ar@{<-}[ur]
&&\bullet \ar@{->}[rr]  \ar@{<-}[ur] && \bullet \ar@{->}[rr] \ar@{<-}[ur] &&\bullet \ar@{->}[rr]  \ar@{<-}[ur]&&  \bullet \ar@{->}[rr]  \ar@{<-}[ur]
&&\bullet \ar@{->}[rr]  \ar@{<-}[ur] && \bullet \ar@{->}[rr]  \ar@{<-}[ur]&&\bullet \ar@{->}[rr]  \ar@{<-}[ur] && \star \ar@{<-}[ur]&&&& }}}
$$
\item  
Type $\mathrm{D}_{5}$:
$$
\raisebox{3mm}{
\scalebox{0.60}{\xymatrix@!C=0.5mm@R=2mm{
(i\setminus p) & & -23 & -22 &-21&-20 &-19& -18 &-17& -16 & -15 & -14 & -13& -12&  -11
& -10 & -9 & -8 & -7 & -6 & -5 & -4 & -3 & -2 & -1  \\
1&\cdots \ar@{->}[rr] \ar@{<-}[dr]&& \bullet \ar@{->}[rr] \ar@{<-}[dr] &&\bullet \ar@{->}[rr]\ar@{<-}[dr]
&&\bullet \ar@{->}[rr] \ar@{<-}[dr] && \bullet \ar@{->}[rr] \ar@{<-}[dr] &&\bullet \ar@{->}[rr] \ar@{<-}[dr] &&  \bullet \ar@{->}[rr] \ar@{<-}[dr] 
&&\bullet \ar@{->}[rr] \ar@{<-}[dr] && \bullet \ar@{->}[rr] \ar@{<-}[dr] &&\bullet \ar@{->}[rr] \ar@{<-}[dr]  && \bullet \ar@{->}[rr]\ar@{<-}[dr] &&
\star \ar@{<-}[dr] & \\
2&\cdots \ar@{->}[r]&\bullet \ar@{->}[rr] \ar@{<-}[dr]\ar@{<-}[ur]&& \bullet \ar@{->}[rr] \ar@{<-}[dr]\ar@{<-}[ur] &&\bullet \ar@{->}[rr] \ar@{<-}[dr]\ar@{<-}[ur]
&& \bullet \ar@{->}[rr] \ar@{<-}[dr]\ar@{<-}[ur]&& \bullet \ar@{->}[rr]\ar@{<-}[dr] \ar@{<-}[ur]&& \bullet \ar@{->}[rr] \ar@{<-}[dr]\ar@{<-}[ur]&&\bullet \ar@{->}[rr] \ar@{<-}[dr]\ar@{<-}[ur]&
&\bullet \ar@{->}[rr] \ar@{<-}[dr]\ar@{<-}[ur]&&\bullet \ar@{->}[rr]\ar@{<-}[dr] \ar@{<-}[ur]&& \bullet \ar@{->}[rr] \ar@{<-}[dr]\ar@{<-}[ur]
&&\bullet \ar@{->}[rr] \ar@{<-}[dr]\ar@{<-}[ur]&& \star & \\ 
3&\cdots \ar@{->}[rr] \ar@{<-}[dr] \ar@{<-}[ur]&& \bullet \ar@{->}[rr] \ar@{<-}[dr] \ar@{<-}[ur] &&\bullet \ar@{->}[rr]\ar@{<-}[dr] \ar@{<-}[ur]
&&\bullet \ar@{->}[rr] \ar@{<-}[dr] \ar@{<-}[ur] && \bullet \ar@{->}[rr] \ar@{<-}[dr]\ar@{<-}[ur] &&\bullet \ar@{->}[rr] \ar@{<-}[dr] \ar@{<-}[ur]&&  \bullet \ar@{->}[rr] \ar@{<-}[dr] \ar@{<-}[ur]
&&\bullet \ar@{->}[rr] \ar@{<-}[dr] \ar@{<-}[ur] && \bullet \ar@{->}[rr] \ar@{<-}[dr] \ar@{<-}[ur]&&\bullet \ar@{->}[rr] \ar@{<-}[dr] \ar@{<-}[ur] && \bullet \ar@{->}[rr]\ar@{<-}[dr] \ar@{<-}[ur]&&
\star \ar@{<-}[dr] \ar@{<-}[ur] &   \\
4&\cdots \ar@{->}[r]& \bullet \ar@{->}[rr] \ar@{<-}[ur]&&\bullet \ar@{->}[rr] \ar@{<-}[ur]&&\bullet \ar@{->}[rr] \ar@{<-}[ur] &&\bullet \ar@{->}[rr] \ar@{<-}[ur]&& \bullet \ar@{->}[rr] \ar@{<-}[ur]
&&\bullet \ar@{->}[rr] \ar@{<-}[ur]&& \bullet \ar@{->}[rr] \ar@{<-}[ur] &&\bullet \ar@{->}[rr] \ar@{<-}[ur]&&\bullet \ar@{->}[rr] \ar@{<-}[ur]&&
\bullet \ar@{->}[rr] \ar@{<-}[ur]&&\bullet \ar@{->}[rr] \ar@{<-}[ur]&&\star  \\
5&\cdots \ar@{->}[r]& \bullet \ar@{->}[rr] \ar@{->}[uul]\ar@{<-}[uur]&&\bullet \ar@{->}[rr] \ar@{->}[uul]\ar@{<-}[uur]&&\bullet \ar@{->}[rr] \ar@{->}[uul]\ar@{<-}[uur] &&\bullet \ar@{->}[rr] \ar@{->}[uul]\ar@{<-}[uur]&& \bullet \ar@{->}[rr] \ar@{->}[uul]\ar@{<-}[uur]
&&\bullet \ar@{->}[rr] \ar@{->}[uul]\ar@{<-}[uur]&& \bullet \ar@{->}[rr] \ar@{->}[uul]\ar@{<-}[uur] &&\bullet \ar@{->}[rr] \ar@{->}[uul]\ar@{<-}[uur]&&\bullet \ar@{->}[rr] \ar@{->}[uul]\ar@{<-}[uur]&&
\bullet \ar@{->}[rr] \ar@{->}[uul]\ar@{<-}[uur]&&\star \ar@{->}[uul]\ar@{<-}[uur]&&
}}}
$$
\item
Type $\mathrm{B}_{3}$:
$$
\raisebox{3mm}{
\scalebox{0.60}{\xymatrix@!C=0.5mm@R=0.5mm{
(i\setminus p)& &-24 & -23 & -22 & -21 &-20&-19 &-18& -17 &-16& -15 & -14& -13 & -12& -11&  -10
& -9 & -8 & -7 & -6 & -5 & -4 & -3 & -2 & -1 &  \\
1&\cdots \ar@{->}[rrrr] \ar@{<-}[ddrr] &&&&\bullet \ar@{->}[rrrr] \ar@{<-}[ddrr]&&&& \bullet \ar@{->}[rrrr] \ar@{<-}[ddrr]&&&&  \bullet \ar@{->}[rrrr] \ar@{<-}[ddrr] &&&& \bullet \ar@{->}[rrrr]\ar@{<-}[ddrr]
&&&& \bullet \ar@{->}[rrrr] \ar@{<-}[ddrr]&&&& \star & \\ \\
2&\cdots \ar@{->}[rr]&&\bullet \ar@{->}[rrrr] \ar@{<-}[dr] \ar@{<-}[uurr]&&&&\bullet \ar@{->}[rrrr]\ar@{<-}[dr] \ar@{<-}[uurr]
&&&& \bullet \ar@{->}[rrrr] \ar@{<-}[dr]\ar@{<-}[uurr] &&&&  \bullet \ar@{->}[rrrr] \ar@{<-}[dr] \ar@{<-}[uurr]
&&&& \bullet \ar@{->}[rrrr] \ar@{<-}[dr]\ar@{<-}[uurr]&&&& \star \ar@{<-}[uurr]&&& \\
3&\cdots \ar@{->}[r]&\bullet \ar@{->}[rr] \ar@{<-}[drrr]&& \bullet \ar@{->}[rr] \ar@{<-}[urrr] && \bullet \ar@{->}[rr] \ar@{<-}[drrr]  &&\bullet \ar@{->}[rr] \ar@{<-}[urrr]  && \bullet \ar@{->}[rr] \ar@{<-}[drrr]  && \bullet \ar@{->}[rr] \ar@{<-}[urrr] 
&& \bullet \ar@{->}[rr] \ar@{<-}[drrr]  && \bullet \ar@{->}[rr] \ar@{<-}[urrr]  &&
\bullet \ar@{->}[rr] \ar@{<-}[drrr] &&\bullet \ar@{->}[rr] \ar@{<-}[urrr]  &&\star &&  && \\
2&\cdots \ar@{->}[rrrr] \ar@{<-}[ddrr] \ar@{<-}[ur]&&&&\bullet \ar@{->}[rrrr]\ar@{<-}[ddrr]\ar@{<-}[ur]&&&&\bullet \ar@{->}[rrrr]\ar@{<-}[ddrr]\ar@{<-}[ur] &&&&  \bullet \ar@{->}[rrrr] \ar@{<-}[ddrr]\ar@{<-}[ur] &&&&
\bullet \ar@{->}[rrrr] \ar@{<-}[ddrr] \ar@{<-}[ur]
&&&& \star\ar@{<-}[ur]&&&&& \\ \\
1& \cdots \ar@{->}[rr] && \bullet \ar@{->}[rrrr]  \ar@{<-}[uurr]&&&&\bullet \ar@{->}[rrrr] \ar@{<-}[uurr] &&&& \bullet \ar@{->}[rrrr] \ar@{<-}[uurr]
&&&& \bullet \ar@{->}[rrrr]   \ar@{<-}[uurr] &&&& \star \ar@{<-}[uurr]
&&&& &&&  }}}
$$
\item 
Type $\mathrm{C}_{4}$:
$$
\raisebox{3mm}{
\scalebox{0.60}{\xymatrix@!C=0.5mm@R=2mm{
(i\setminus p) & & -24 & -23 &-22&-21 &-20& -19 &-18& -17 & -16& -15 & -14& -13&  -12
& -11 & -10 & -9 & -8 & -7 & -6 & -5 & -4 & -3 & -2 & -1 \\
1&\cdots \ar@{->}[rr] \ar@{<-}[dr]&& \bullet \ar@{->}[rr] \ar@{<-}[dr] &&\bullet \ar@{->}[rr]\ar@{<-}[dr]
&&\bullet \ar@{->}[rr] \ar@{<-}[dr] && \bullet \ar@{->}[rr] \ar@{<-}[dr] &&\bullet \ar@{->}[rr] \ar@{<-}[dr] &&  \bullet \ar@{->}[rr] \ar@{<-}[dr] 
&&\bullet \ar@{->}[rr] \ar@{<-}[dr] && \bullet \ar@{->}[rr] \ar@{<-}[dr] &&\bullet \ar@{->}[rr] \ar@{<-}[dr]  && \bullet \ar@{->}[rr]\ar@{<-}[dr] &&
\star \ar@{<-}[dr] && \\
2&\cdots \ar@{->}[r]&\bullet \ar@{->}[rr] \ar@{<-}[dr]\ar@{<-}[ur]&& \bullet \ar@{->}[rr] \ar@{<-}[dr]\ar@{<-}[ur] &&\bullet \ar@{->}[rr] \ar@{<-}[dr]\ar@{<-}[ur]
&& \bullet \ar@{->}[rr] \ar@{<-}[dr]\ar@{<-}[ur]&& \bullet \ar@{->}[rr]\ar@{<-}[dr] \ar@{<-}[ur]&& \bullet \ar@{->}[rr] \ar@{<-}[dr]\ar@{<-}[ur]&&\bullet \ar@{->}[rr] \ar@{<-}[dr]\ar@{<-}[ur]&
&\bullet \ar@{->}[rr] \ar@{<-}[dr]\ar@{<-}[ur]&&\bullet \ar@{->}[rr]\ar@{<-}[dr] \ar@{<-}[ur]&& \bullet \ar@{->}[rr] \ar@{<-}[dr]\ar@{<-}[ur]
&&\bullet \ar@{->}[rr] \ar@{<-}[dr]\ar@{<-}[ur]&& \star \ar@{<-}[dr] & \\ 
3&\cdots \ar@{->}[rr] \ar@{<-}[ddrrr] \ar@{<-}[ur]&& \bullet \ar@{->}[rr] \ar@{<-}[drrr] \ar@{<-}[ur] &&\bullet \ar@{->}[rr]\ar@{<-}[ddrrr] \ar@{<-}[ur]
&&\bullet \ar@{->}[rr] \ar@{<-}[drrr] \ar@{<-}[ur] && \bullet \ar@{->}[rr] \ar@{<-}[ddrrr]\ar@{<-}[ur] &&\bullet \ar@{->}[rr] \ar@{<-}[drrr] \ar@{<-}[ur]&&  \bullet \ar@{->}[rr] \ar@{<-}[ddrrr] \ar@{<-}[ur]
&&\bullet \ar@{->}[rr] \ar@{<-}[drrr] \ar@{<-}[ur] && \bullet \ar@{->}[rr] \ar@{<-}[ddrrr] \ar@{<-}[ur]&&\bullet \ar@{->}[rr] \ar@{<-}[drrr] \ar@{<-}[ur] && \bullet \ar@{->}[rr] \ar@{<-}[ddrrr] \ar@{<-}[ur]&&
\bullet \ar@{->}[rr] \ar@{<-}[ur] && \star  \\
4&\cdots \ar@{->}[r]& \bullet \ar@{->}[rrrr] \ar@{<-}[ur]&&&&\bullet \ar@{->}[rrrr] \ar@{<-}[ur] &&&& \bullet \ar@{->}[rrrr] \ar@{<-}[ur]
&&&& \bullet \ar@{->}[rrrr] \ar@{<-}[ur] &&&&\bullet \ar@{->}[rrrr] \ar@{<-}[ur]&&&&\star \ar@{<-}[ur]&&& \\
4&\cdots \ar@{->}[rrr]&&&\bullet \ar@{->}[rrrr] \ar@{<-}[uur]&&&&\bullet \ar@{->}[rrrr] \ar@{<-}[uur]&&&&  \bullet \ar@{->}[rrrr] \ar@{<-}[uur]&&&&
\bullet \ar@{->}[rrrr] \ar@{<-}[uur]&&&& \bullet \ar@{->}[rrrr] \ar@{<-}[uur]&&&& \star \ar@{<-}[uur]& }}}
$$
\item
Type $\mathrm{G}_{2}$:
$$
\raisebox{3mm}{
\scalebox{0.60}{\xymatrix@!C=0.5mm@R=2mm{
(i\setminus p) & & -24 & -23 &-22&-21 &-20& -19 &-18& -17 & -16& -15 & -14& -13& -12
& -11 & -10 & -9 & -8 & -7 & -6 & -5 & -4 & -3 & -2 & -1 \\
1&\cdots \ar@{->}[rrrrr]&&&&&\bullet \ar@{->}[rrrrrr] \ar@{<-}[dr]&&&&&&\bullet \ar@{->}[rrrrrr] \ar@{<-}[dr]&&& 
&&&\bullet \ar@{->}[rrrrrr] \ar@{<-}[dr]&&&&&&\star  \ar@{<-}[dr]&\\
2&\cdots \ar@{->}[rr] \ar@{<-}[urrrrr]&&\bullet \ar@{->}[rr] \ar@{<-}[ddrrrrr]&&\bullet \ar@{->}[rr]\ar@{<-}[drrrrr] &&
\bullet \ar@{->}[rr] \ar@{<-}[urrrrr]&&\bullet \ar@{->}[rr] \ar@{<-}[ddrrrrr]&&\bullet \ar@{->}[rr]\ar@{<-}[drrrrr] &&
\bullet \ar@{->}[rr] \ar@{<-}[urrrrr]&&\bullet \ar@{->}[rr] \ar@{<-}[ddrrrrr]&&\bullet \ar@{->}[rr]\ar@{<-}[drrrrr] &&
\bullet \ar@{->}[rr] \ar@{<-}[urrrrr]&&\bullet \ar@{->}[rr] &&\bullet \ar@{->}[rr] &&
\star \\
1&\cdots \ar@{->}[rrr]&&&\bullet \ar@{->}[rrrrrr] \ar@{<-}[ur] &&
&&&&\bullet \ar@{->}[rrrrrr] \ar@{<-}[ur] &&
&&&&\bullet \ar@{->}[rrrrrr] \ar@{<-}[ur] &&
&&&&\star \ar@{<-}[ur] &&&\\
1& \cdots \ar@{->}[r] & \bullet \ar@{->}[rrrrrr] \ar@{<-}[uur]&&&&&& 
\bullet \ar@{->}[rrrrrr] \ar@{<-}[uur]&&&&&& 
\bullet \ar@{->}[rrrrrr] \ar@{<-}[uur]&&&&&& 
\star \ar@{<-}[uur]&&&&& \\
}}}
$$
\end{itemize}
\end{Ex}

\subsection{Technical complement} \label{Ssec:Cox}
In this subsection, we often identify the automorphism $\sigma$ of $\Delta$ with the linear operator  on $\sP$ given by $\varpi_{\im} \mapsto \varpi_{\sigma(\im)}$ for any $\im \in \Delta_0$. 
Given a Q-datum $\cQ = (\Delta, \sigma, \xi)$,
for each $i \in I$, let $i^{\circ}$ denote the unique vertex in the $\sigma$-orbit $i$
satisfying $\xi_{i^{\circ}} = \max \{ \xi_{\im} \mid \im \in i\}$.
We consider the following condition on $\cQ$:
\begin{equation} \label{eq:condQ}
\text{For each $i \in I$ and $k \in [1, d_{i}-1]$, we have 
$\xi_{\sigma^{k}(i^{\circ})} = \xi_{i^{\circ}} - 2k$}.
\end{equation}
Note that this condition is always satisfied unless $\fg$ is of type $\mathrm{B}_n$ or $\mathrm{F}_4$ (we underline however
that our results and proofs below will be uniform for all types). 

\begin{Prop}[{\cite[Section~3.6]{FO21}}] \label{Prop:Cox}
There is a unique collection $\{ \tau_{\cQ}\}_{\cQ} \subset \sW \rtimes \langle \sigma \rangle$ 
labelled by Q-data for $\fg$ and characterized by the following conditions$\colon$
\begin{enumerate}
\item \label{Prop:Cox:1} If $\cQ = (\Delta, \sigma, \xi)$ satisfies \eqref{eq:condQ}, we have
$\tau_{\cQ} = s_{i_{1}^{\circ}} \cdots s_{i_{n}^{\circ}}\sigma$, where 
$(i_{1}, \ldots, i_{n})$ is any total ordering of $I$ such that $\xi_{i_{1}^{\circ}} \ge \cdots \ge \xi_{i_{n}^{\circ}}$.
\item \label{Prop:Cox:2} If $\im \in \Delta_{0}$ is a source of $\cQ$, we have
$\tau_{s_{\im}\cQ} = s_{\im} \tau_{\cQ} s_{\im}$.
\end{enumerate}  
\end{Prop}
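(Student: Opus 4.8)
The plan is to establish existence and uniqueness of the collection $\{\tau_\cQ\}$ by building everything out of the two required properties, using the combinatorics of sources and adapted sequences already developed. First I would treat uniqueness: given any Q-datum $\cQ$, Proposition~\ref{Prop:DQ} provides a reduced word $(\im_1,\ldots,\im_\ell)$ adapted to $\cQ$, so iterating property~\eqref{Prop:Cox:2} along the sequence of sources $\im_1, \im_2, \ldots$ expresses $\tau_{\fD^{-1}\cQ}$ (where $s_{\im_\ell}\cdots s_{\im_1}\cQ = \fD^{-1}\cQ$) in terms of $\tau_\cQ$ conjugated by $s_{\im_1}\cdots s_{\im_\ell} = w_\circ$; conversely, any Q-datum can be reached from one satisfying \eqref{eq:condQ} by a sequence of moves of the two types (source-removal as in \eqref{Prop:Cox:2}, together with the fact that by Proposition~\ref{Prop:DQ} we may also move ``backwards''), so property~\eqref{Prop:Cox:1} pins down $\tau_\cQ$ on a set of ``initial'' Q-data and property~\eqref{Prop:Cox:2} propagates it to all Q-data. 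The slightly delicate point in uniqueness is that \eqref{eq:condQ} fails only in types $\mathrm{B}_n, \mathrm{F}_4$, so I must check that even there one can connect an arbitrary Q-datum to one satisfying \eqref{eq:condQ} via source operations; this follows because $s_{\im}$ and $s_{\im}^{-1}=s_{\im}$ move the height function up and down freely at sources, and the orbit of $\xi$ under all such moves is the full affine chamber structure.

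For existence, the natural approach is to define $\tau_\cQ$ directly. When $\cQ$ satisfies \eqref{eq:condQ}, set $\tau_\cQ \seq s_{i_1^\circ}\cdots s_{i_n^\circ}\sigma$ for a compatible ordering, and first check this is independent of the chosen ordering: if $\xi_{i_a^\circ} = \xi_{i_b^\circ}$ for $a \neq b$ then $i_a$ and $i_b$ are non-adjacent in the relevant sense (their associated vertices are not joined, by the Q-datum axioms), so $s_{i_a^\circ}$ and $s_{i_b^\circ}$ commute and the product is well-defined; this is exactly the commutation-equivalence phenomenon of Lemma~\ref{Lem:adce} transported to the Coxeter group. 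Then for a general $\cQ$, pick any source $\im$ of $\cQ$ and define $\tau_\cQ \seq s_{\im}\tau_{s_{\im}\cQ}s_{\im}$ by downward induction on the ``distance'' from a Q-datum satisfying \eqref{eq:condQ} (or, more intrinsically, define $\tau_\cQ$ via any adapted reduced word and check well-definedness).

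The main technical burden — and the step I expect to be the real obstacle — is showing that the definition of $\tau_\cQ$ via property~\eqref{Prop:Cox:2} is \emph{consistent}: different choices of source $\im$, or different paths of source-operations connecting $\cQ$ to a Q-datum satisfying \eqref{eq:condQ}, must yield the same element of $\sW \rtimes \langle\sigma\rangle$. Concretely, if $\im$ and $\jm$ are two distinct sources of $\cQ$, one needs $s_{\im}\tau_{s_{\im}\cQ}s_{\im} = s_{\jm}\tau_{s_{\jm}\cQ}s_{\jm}$, and since two distinct sources are automatically non-adjacent (removing one does not affect the other being a source, as in the proof of Lemma~\ref{Lem:ad}), the reflections $s_{\im}, s_{\jm}$ commute and one reduces to $\tau_{s_{\jm}s_{\im}\cQ}$ being well-defined with one fewer source-operation; this is a standard ``local confluence implies global confluence'' argument (a diamond lemma) on the poset of Q-data, whose maximal elements under source-removal are precisely the Q-data satisfying \eqref{eq:condQ} in non-$\mathrm{B},\mathrm{F}$ types. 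In types $\mathrm{B}_n$ and $\mathrm{F}_4$ the absence of a clean normal form \eqref{eq:condQ} forces a small extra argument: there one takes \eqref{Prop:Cox:1} as known on a spanning set of Q-data and verifies directly that the two defining relations are compatible, e.g.\ by an explicit check in the rank-two situation governing when $\sigma^k(i^\circ)$ appears. Once consistency is established, properties \eqref{Prop:Cox:1} and \eqref{Prop:Cox:2} hold by construction, and uniqueness as sketched above completes the proof.
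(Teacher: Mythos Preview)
The paper does not prove this proposition at all: it is quoted verbatim from \cite[Section~3.6]{FO21} and immediately followed by the definition of the generalized Coxeter element, with no \texttt{proof} environment. So there is no ``paper's own proof'' to compare your proposal against.

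That said, your sketch is a reasonable outline of how such a result is established in \cite{FO21}: one defines $\tau_\cQ$ explicitly on Q-data satisfying \eqref{eq:condQ}, propagates by source reflections via \eqref{Prop:Cox:2}, and checks consistency using that distinct sources are non-adjacent (so their reflections commute). Your caveat about types $\mathrm{B}_n$ and $\mathrm{F}_4$ is also on point: those are exactly the cases where \eqref{eq:condQ} can fail, and the argument in \cite{FO21} does treat them with a bit of extra care. One minor correction: you should not expect the Q-data satisfying \eqref{eq:condQ} to be ``maximal under source-removal'' in any useful sense---source operations act transitively on all Q-data (up to the $\fD^{\pm 1}$ shift), so the right picture is not a poset with maximal elements but a connected graph on which you fix $\tau_\cQ$ at one node and check the propagation rule is consistent along all edges.
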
  

The element $\tau_{\cQ}$ is called the \emph{generalized Coxeter element} associated with $\cQ$.

\begin{Lem} \label{Lem:Cox}
Let $\cQ = (\Delta, \sigma, \xi)$ be a Q-datum for $\fg$, and $\bfi = (\im_u)_{u \in \N} \in \Delta_0^\N$ a sequence adapted to $\cQ$. 
Then, we have
\begin{equation} \label{eq:Cox}
w^{\bfi}_{u}\varpi_{\im_u} = \tau_{\cQ}^{d_{\bar{\im}_u}(n_\bfi(u)+1)}\varpi_{\im_u} \quad \text{for any $u \in \N$}. 
\end{equation}
\end{Lem}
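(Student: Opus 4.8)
The plan is to establish \eqref{eq:Cox} by induction on $u$, the crux being the following special case: \emph{if $\jm$ is a source of a Q-datum $\cQ'$ for $\fg$, then $\tau_{\cQ'}^{\,d_{\bar{\jm}}}\varpi_{\jm} = s_{\jm}\varpi_{\jm}$.}

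Granting this, the induction runs as follows. Set $\cQ_u \seq s_{\im_{u-1}}\cdots s_{\im_1}\cQ$ for $u \in \N$ (with $\cQ_1 = \cQ$); since $\bfi$ is adapted to $\cQ$, the vertex $\im_u$ is a source of $\cQ_u$, and iterating Proposition~\ref{Prop:Cox}\,(2) yields $\tau_{\cQ_u} = (w^{\bfi}_{u-1})^{-1}\,\tau_{\cQ}\,w^{\bfi}_{u-1}$. Applying the special case to $(\cQ', \jm) = (\cQ_u, \im_u)$ and conjugating by $w^{\bfi}_{u-1}$ gives the recursion
\[ w^{\bfi}_u\varpi_{\im_u} \;=\; w^{\bfi}_{u-1}\,s_{\im_u}\varpi_{\im_u} \;=\; \tau_{\cQ}^{\,d_{\bar{\im}_u}}\bigl(w^{\bfi}_{u-1}\varpi_{\im_u}\bigr). \]
If $n_\bfi(u) = 0$, then $\im_v \neq \im_u$ for $v<u$, so $w^{\bfi}_{u-1}\varpi_{\im_u} = \varpi_{\im_u}$ and \eqref{eq:Cox} holds with exponent $d_{\bar{\im}_u}$. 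If $n_\bfi(u) \ge 1$, set $u^- = u^-_\bfi$; since $\im_v \neq \im_u$ for $u^- < v < u$, we get $w^{\bfi}_{u-1}\varpi_{\im_u} = w^{\bfi}_{u^-}\varpi_{\im_{u^-}}$, which by the inductive hypothesis equals $\tau_{\cQ}^{\,d_{\bar{\im}_u}\,n_\bfi(u)}\varpi_{\im_u}$ (using $\bar{\im}_{u^-} = \bar{\im}_u$ and $n_\bfi(u^-)+1 = n_\bfi(u)$). Substituting into the recursion gives \eqref{eq:Cox}.

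To prove the special case, I would use the explicit form of $\tau_{\cQ'}$ from Proposition~\ref{Prop:Cox}. When $\cQ'$ satisfies \eqref{eq:condQ} — which is automatic outside types $\mathrm{B}$ and $\mathrm{F}$ — one has $\tau_{\cQ'} = s_{i_1^{\circ}}\cdots s_{i_n^{\circ}}\sigma$ with $\xi_{i_1^{\circ}} \ge \cdots \ge \xi_{i_n^{\circ}}$, the ordering of ties being at our disposal. Expanding $\tau_{\cQ'}^{\,d_{\bar{\jm}}}$ into $d_{\bar{\jm}}$ successive $\sigma$-conjugates of $s_{i_1^{\circ}}\cdots s_{i_n^{\circ}}$ followed by $\sigma^{d_{\bar{\jm}}}$, one checks that $s_{\jm}$ occurs exactly once and inside the first conjugate (because a source is the $\xi$-maximal vertex of its $\sigma$-orbit), that every factor preceding it corresponds to a vertex of $\xi$-value $\ge \xi_{\jm}$ — hence, $\jm$ being a source, not adjacent to $\jm$ in $\Delta$ — and that $\sigma^{d_{\bar{\jm}}}$ fixes $\varpi_{\jm}$. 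Commuting this unique $s_{\jm}$ to the front and using that each remaining factor fixes $\varpi_{\jm}$ gives $\tau_{\cQ'}^{\,d_{\bar{\jm}}}\varpi_{\jm} = s_{\jm}\varpi_{\jm}$. For types $\mathrm{B}$ and $\mathrm{F}$, where \eqref{eq:condQ} may fail, I would first move $\cQ'$ to a Q-datum satisfying \eqref{eq:condQ} by an appropriate sequence of source reflections and transport the identity along the way via Proposition~\ref{Prop:Cox}\,(2). Carrying out this last reduction compatibly is the step I expect to be the main obstacle, being the point where the finer combinatorics of Q-data from \cite{FO21} enter.
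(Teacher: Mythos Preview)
Your reduction of \eqref{eq:Cox} to the identity $\tau_{\cQ'}^{\,d_{\bar{\jm}}}\varpi_\jm = s_\jm\varpi_\jm$ (for every Q-datum $\cQ'$ and every source $\jm$) via induction on $u$ is correct and organizes the proof differently from the paper. The paper instead uses commutation equivalence (Lemma~\ref{Lem:adce}) to reduce to a single periodic adapted sequence, verifies \eqref{eq:Cox} for that sequence directly when $\cQ$ satisfies \eqref{eq:condQ}, and then transports to arbitrary $\cQ$ via Proposition~\ref{Prop:Cox}\,\eqref{Prop:Cox:2}. Both routes end in a ``base case under \eqref{eq:condQ} plus propagation by source reflections'' structure; yours has the virtue of isolating the single identity above, at the cost of needing it for \emph{every} source rather than only the one occurring first in a chosen $\bfi$.

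There is, however, a genuine gap in your argument for the base case: the parenthetical claim ``a source is the $\xi$-maximal vertex of its $\sigma$-orbit'' is false in types $\mathrm{B}$ and $\mathrm{F}$, even under \eqref{eq:condQ}. Take $\fg = \mathrm{B}_3$ (so $\Delta = \mathrm{A}_5$, $\sigma$ swapping $k \leftrightarrow 6-k$) with height function $\xi = (1,-1,0,1,3)$: one checks this satisfies \eqref{eq:condQ}, vertex $1$ is a source (its only neighbor is $2$, with $\xi_2 = -1$), yet $\xi_1 = 1 < \xi_5 = 3$. In $\tau_{\cQ'}^{\,2} = s_5s_4s_3 \cdot s_1s_2s_3$ the factor $s_1$ lies in the \emph{second} block, and the preceding $s_3$ has $\xi_3 = 0 < \xi_1$ --- so both of your intermediate claims fail. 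The conclusion $\tau_{\cQ'}^{\,2}\varpi_1 = s_1\varpi_1$ does still hold here (since $3,4,5 \not\sim 1$ in $\mathrm{A}_5$), but your reasoning does not establish it. A correct argument uses instead that two distinct sources of the same Q-datum can never be adjacent (each would have strictly larger $\xi$ than the other); combined with adaptedness of the periodic $\bfi$, this shows that every $\im_v$ preceding the first occurrence of $\jm$ is non-adjacent to $\jm$. That repairs the base case, but is essentially the paper's direct computation in another guise --- and you still owe the reduction step you flagged as the main obstacle.
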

\begin{proof}
Let $\bfi' = (\im'_u)_{u \in \N} \in \Delta_0^\N$ be another sequence adapted to $\cQ$. 
Thanks to Lemma~\ref{Lem:adce}, it follows that $w^\bfi_u \varpi_{\im_u} = w^{\bfi'}_{u'}\varpi_{\im'_u}$ when $\im_u = \im'_{u'}$ and $n_{\bfi}(u) = n_{\bfi'}(u')$.
Therefore, it is enough to prove the assertion for a specific choice of $\bfi$ adapted to $\cQ$.  

First we assume that $\cQ$ satisfies the condition \eqref{eq:condQ}. 
Let $I = \{i_1,\ldots, i_n \}$ be a total ordering such that $\xi_{i^\circ_1} \ge \cdots \ge \xi_{i^\circ_n}$ holds as in Proposition~\ref{Prop:Cox}~\eqref{Prop:Cox:1}. 
Then, we define a sequence $\bfi = (\im_u)_{u \in \N}$ by setting $\im_{u} \seq i_u^\circ$ for $u \in [1, n]$ and $\im_{u + n} \seq \sigma(\im_u)$ for all $u \in \N$.
It is easy to see that the sequence $\bfi$ is adapted to $\cQ$. 
For this $\bfi$, \eqref{eq:Cox} holds by a direct computation using Proposition~\ref{Prop:Cox}~\eqref{Prop:Cox:1}.

Next, we shall prove that \eqref{eq:Cox} holds for Q-datum $s_{\im}\cQ$ assuming that it holds for $\cQ$, where $\im \in \Delta_0$ be a source of $\cQ$.
Let $\bfi = (\im_u)_{u \in \N}$ be a sequence adapted to $\cQ$ such that $\im_1 = \im$.
Then the sequence $\bfi' = (\im'_u)_{u \in \N} \seq \partial_+ \bfi$ is adapted to $s_\im\cQ$.
Take any $u \in \N$ and set $\jm \seq \im'_u = \im_{u+1}$.
Under these assumptions, we have
\begin{align*} 
\tau_{s_\im \cQ}^{d_{\bar{\jm}}(n_{\bfi'}(u)+1)} \varpi_{\jm} &= s_\im \tau_{\cQ}^{d_{\bar{\jm}}(n_{\bfi}(u+1)-\delta_{\im,\jm}+1)}s_{\im}\varpi_{\jm} && \text{by Proposition~\ref{Prop:Cox}~\eqref{Prop:Cox:2}} \allowdisplaybreaks \\ 
&= s_{\im}\tau_{\cQ}^{d_{\bar{\jm}}(n_{\bfi}(u+1)+1)}\varpi_{\jm} && \text{by $s_{\im} \varpi_{\jm} = w_{1}^{\bfi}\varpi_{\jm} = \tau_\cQ^{d_{\bar{\jm}} \delta_{\im,\jm}}\varpi_{\jm}$} \allowdisplaybreaks \\ 
& = s_\im w^{\bfi}_{u+1} \varpi_{\jm}&& \text{by \eqref{eq:Cox} for $\bfi$} \allowdisplaybreaks \\ 
& = w^{\bfi'}_u \varpi_{\jm}.
\end{align*}
Thus we get \eqref{eq:Cox} for $s_\im \cQ$.

Finally, recall that every Q-datum can be obtained from one satisfying \eqref{eq:condQ} by applying source reflections (see \cite[Equation (3.11)]{FO21}).  
Therefore, we conclude that \eqref{eq:Cox} holds for any Q-datum $\cQ$ for $\fg$.
\end{proof}

\subsection{Isomorphism of quantum tori}

We establish first isomorphisms of quantum tori. 
Again, we fix a Q-datum $\cQ = (\Delta, \sigma, \xi)$ for $\fg$.

\begin{Prop} \label{Prop:L=N}
Let $\bfi \in \Delta_0^\N$ be a sequence adapted to $\cQ$ and $\Lambda_{\bfi} = (\Lambda_{u,v})_{u,v \in \N}$ the skew-symmetric matrix defined by \eqref{eq:Lambda}.
For any $u, v \in \N$, we have
\begin{equation} \label{eq:L=N}
 \Lambda_{u,v} = \Nn(m^{(\im)}[p,\xi_{\im}], m^{(\jm)}[s,\xi_{\jm}]),
\end{equation}
where $(\im,p) = \rho_{\bfi}(u)$ and $(\jm,s) = \rho_{\bfi}(v)$. 
\end{Prop}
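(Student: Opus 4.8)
The plan is to prove the identity $\Lambda_{u,v} = \Nn(m^{(\im)}[p,\xi_\im], m^{(\jm)}[s,\xi_\jm])$ by reducing both sides to a computation with roots/weights of $\sg$ and the generalized Coxeter element $\tau_\cQ$, and then matching them. First I would record that, by Lemma~\ref{Lem:ad} and Lemma~\ref{Lem:quiverisom}, the combinatorics of $\bfi$ adapted to $\cQ$ is encoded in the repetition-type poset $(\hD_{\le\xi},\preceq)$, and by Lemma~\ref{Lem:adce} combined with Proposition~\ref{Prop:comm-eq}, the matrix $\Lambda_\bfi$ is independent (up to the relabelling $\rho_\bfi$) of which adapted sequence we pick; the right-hand side depends only on $(\im,p)$ and $(\jm,s)$, not on $\bfi$. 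Hence it suffices to verify \eqref{eq:L=N} for a single convenient choice of $\bfi$ adapted to $\cQ$ — e.g. the periodic one of Example~\ref{Ex:periodic} satisfying \eqref{eq:cond2}, or the $\xi$-decreasing one of Example~\ref{Ex:dimin}. By Lemma~\ref{Lem:Lambda}, for $u\le v$ (which we may assume, both sides being skew-symmetric) we have $\Lambda_{u,v} = (\varpi_{\im}-w^\bfi_u\varpi_\im,\ \varpi_{\jm}+w^\bfi_v\varpi_\jm)$, and by Lemma~\ref{Lem:Cox}, $w^\bfi_u\varpi_\im = \tau_\cQ^{d_{\bar\im}(n_\bfi(u)+1)}\varpi_\im$; since $p = \xi_\im - 2d_{\bar\im}n_\bfi(u)$, the exponent $d_{\bar\im}(n_\bfi(u)+1)$ is an explicit affine function of $(\xi_\im - p)$, and similarly for $v$. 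So the left-hand side becomes a pairing of the form $(\varpi_\im - \tau_\cQ^{a}\varpi_\im,\ \varpi_\jm + \tau_\cQ^{b}\varpi_\jm)$ with $a,b$ determined by $(\im,p),(\jm,s)$.

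Next I would turn to the right-hand side. The quantity $\Nn(m,m')$ is defined in \eqref{eq:Nnmm} as a bilinear sum of the elementary pairings $\Nn(i,p;j,s)$, which in turn are built from the Laurent coefficients $\tc_{ij}(u)$ of the inverse quantum Cartan matrix $\tC(q)$. The key tool is the known interpretation of $\tC(q)$ (equivalently of the $\tc_{ij}(u)$) in terms of the $\tau_\cQ$-action on the root/weight lattice of $\sg$: there is a formula expressing $\sum_{u}\tc_{ij}(u)q^{u}$ — or rather the relevant combinations $\tc_{ij}(p-s\pm d_i)$ — as pairings $(\varpi_{?}, \tau_\cQ^{?}\varpi_{?})$, going back to the Q-data framework of \cite{FO21} and used in \cite{FHOO}. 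I would invoke the precise such identity (it should appear in or be citable from \cite{FO21}/\cite{FHOO}, and is exactly the bridge making the "unfolding" isomorphisms work) to rewrite $\Nn(m^{(\im)}[p,\xi_\im], m^{(\jm)}[s,\xi_\jm])$ as the same Coxeter-theoretic pairing obtained above. Concretely, each KR monomial $m^{(\im)}[p,\xi_\im]$ is a product $\prod_{p'\in[p,\xi_\im]\cap(\xi_\im+2d_{\bar\im}\Z)} Y_{\bar\im,p'}$, so $\Nn$ of the two monomials is a double sum over such arithmetic progressions of $\Nn(\bar\im,p';\bar\jm,s')$; telescoping this double sum against the $\tau_\cQ$-pairing formula for $\Nn$ should collapse it to the two-term expression $(\varpi_\im - \tau_\cQ^{a}\varpi_\im,\ \varpi_\jm + \tau_\cQ^{b}\varpi_\jm)$.

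The main obstacle, I expect, is this last matching step: carefully setting up the dictionary between $\tc_{ij}(u)$ and the $\tau_\cQ$-pairings with the correct shifts and signs (the $\pm d_i$ shifts in the definition of $\Nn(i,p;j,s)$ interact delicately with the $\min(d_{\bar\im},d_{\bar\jm})$ appearing in $\preceq$ and with the orbit sizes $d_i$), and then executing the telescoping of the double sum over the two progressions without sign errors. The folding map $f$ and the fact that $d_i$ can be $1$, $2$, or $3$ means I would need to treat the pairing $(\alpha_i,\alpha_j) = \sfd_i\sfc_{ij}$ of $\sg$ versus the symmetrized data of $\fg$ with care; a uniform argument, rather than a case check on types, is preferable and should be available since Lemma~\ref{Lem:Cox} is uniform. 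An alternative, possibly cleaner route: prove \eqref{eq:L=N} by induction on source reflections $\cQ\mapsto s_\im\cQ$ using Proposition~\ref{Prop:Cox}~(2) on the left and the known $\tau_\cQ$-covariance of $\Nn$ on the right (base case being a $\cQ$ satisfying \eqref{eq:condQ}, where both sides are fully explicit), mirroring the structure of the proof of Lemma~\ref{Lem:Cox}; this trades the telescoping computation for a cleaner inductive bookkeeping and is the approach I would try first.
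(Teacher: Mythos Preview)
Your proposal is correct and follows essentially the same line as the paper's proof: reduce to a specific adapted sequence via Proposition~\ref{Prop:comm-eq} and Lemma~\ref{Lem:adce}, express $\Lambda_{u,v}$ as $(\varpi_\im - \tau_\cQ^{a}\varpi_\im,\ \varpi_\jm + \tau_\cQ^{b}\varpi_\jm)$ using Lemma~\ref{Lem:Cox}, and match this with the right-hand side via a Coxeter-theoretic formula for $\Nn$ of KR monomials. The formula you anticipate ``should appear in or be citable from \cite{FO21}/\cite{FHOO}'' is exactly \cite[Proposition~8.4]{FHOO}, which the paper invokes directly; so the ``main obstacle'' you flag (the telescoping of the double sum and the dictionary between $\tc_{ij}(u)$ and $\tau_\cQ$-pairings) is already packaged there and need not be redone. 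One small point: the paper specifically picks the sequence of Example~\ref{Ex:dimin} (decreasing $p$-values) rather than Example~\ref{Ex:periodic}, because with that choice $u<v$ forces $s\le p$, which is the ordering hypothesis under which \cite[Proposition~8.4]{FHOO} is stated; your alternative inductive approach via source reflections is plausible but unnecessary once this citation is in hand.
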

\begin{proof}
Thanks to Proposition~\ref{Prop:comm-eq} and Lemma~\ref{Lem:adce}, it is enough to prove the assertion for a specific sequence $\bfi$ adapted to $\cQ$. 
Therefore, we may assume that our $\bfi$ is as in Example~\ref{Ex:dimin}.
Moreover, since the both sides of \eqref{eq:L=N} are skew-symmetric, we may assume that $u < v$.
Under these assumptions, we have $s \le p$. 
Applying \cite[Proposition 8.4]{FHOO}, we find that
\[ \Nn(m^{(\im)}[p,\xi_{\im}], m^{(\jm)}[s,\xi_{\jm}]) = (\varpi_{\im} - \tau_\cQ^{d_{\bar{\im}} (n_\bfi(u)+1)}\varpi_{\im}, \varpi_{\jm} + \tau_\cQ^{d_{\bar{\jm}}(n_\bfi(v)+1)}\varpi_{\jm}),\]
where $\tau_{\cQ}$ denotes the generalized Coxeter element associated with $\cQ$ (see \S\ref{Ssec:Cox} above). 
Here we remind that $p = \xi_\im - 2d_{\bar{\im}}n_{\bfi}(u)$ and $s = \xi_{\jm} - 2d_{\bar{\jm}}n_{\bfi}(v)$.  
Now, we obtain the desired equality~\eqref{eq:L=N} by Lemma~\ref{Lem:Cox}.  
\end{proof}

\begin{Cor} \label{Cor:teta}
Let $\bfi \in \Delta_0^\N$ be a sequence adapted to $\cQ$.
We have an isomorphism of $\Z[t^{\pm 1/2}]$-algebras 
\[ \teta_{\bfi} \colon \cT(\Lambda_{\bfi}) \simeq \cY_{t, \le \xi} \quad \text{given by $X_u \mapsto \ul{m^{(\im)}[p,\xi_{\im}]}$ for $u \in \N$},\]
where $(\im,p) = \rho_{\bfi}(u)$. 
In addition, we have $\teta_\bfi \circ \ol{(\cdot)} = \ol{(\cdot)} \circ \teta_\bfi$.
\end{Cor}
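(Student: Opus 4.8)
The statement asserts that the assignment $X_u \mapsto \ul{m^{(\im)}[p,\xi_\im]}$ (where $(\im,p)=\rho_\bfi(u)$) extends to an algebra isomorphism $\teta_\bfi\colon\cT(\Lambda_\bfi)\simeq\cY_{t,\le\xi}$ commuting with the bar involutions. First I would check that the assignment respects the defining relations of the quantum torus $\cT(\Lambda_\bfi)$. The only non-trivial relations are $X_uX_v=t^{\Lambda_{u,v}}X_vX_u$, and these are transported to the identity
\[
\ul{m^{(\im)}[p,\xi_\im]}\cdot\ul{m^{(\jm)}[s,\xi_\jm]}=t^{\Nn(m^{(\im)}[p,\xi_\im],\,m^{(\jm)}[s,\xi_\jm])}\,\ul{m^{(\jm)}[s,\xi_\jm]}\cdot\ul{m^{(\im)}[p,\xi_\im]}
\]
which holds in $\cY_t$ by the commutation rule for commutative monomials recalled in Section~\ref{secfour}. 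The equality $\Lambda_{u,v}=\Nn(m^{(\im)}[p,\xi_\im],m^{(\jm)}[s,\xi_\jm])$ is exactly Proposition~\ref{Prop:L=N}, so the map is a well-defined $\Z[t^{\pm1/2}]$-algebra homomorphism $\cT(\Lambda_\bfi)\to\cY_{t,\le\xi}$ (noting each $X_u$ lands on an invertible element, a commutative monomial, so $X_u^{-1}$ maps to its inverse).

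\smallskip

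For surjectivity, it suffices to show that every generator $\ul{Y_{\bar\im,p}^{\pm1}}$ of $\cY_{t,\le\xi}$ with $(\bar\im,p)\in\hI_{\le\xi}$ lies in the image. Using the bijection $\rho_\bfi\colon\N\to\hD_{\le\xi}$ and the telescoping description $m^{(\im)}[p,\xi_\im]=m^{(\im)}[p,\xi_\im]$, one has $\ul{Y_{\bar\im,p}}$ expressed as a ratio of two consecutive such KR-monomials; concretely, if $(\im,p)=\rho_\bfi(u)$ and $u'=u^+_\bfi$ (the next occurrence of $\im$ in $\bfi$, with $\rho_\bfi(u')=(\im,p-2d_{\bar\im})$), then $m^{(\im)}[p-2d_{\bar\im},\xi_\im]=Y_{\bar\im,p-2d_{\bar\im}}\cdot m^{(\im)}[p,\xi_\im]$, so $\ul{Y_{\bar\im,p-2d_{\bar\im}}}$ is (up to a power of $t^{1/2}$) $\teta_\bfi(X_{u'}X_u^{-1})$; and the minimal $(\im,\xi_\im)$ gives $\ul{Y_{\bar\im,\xi_\im}}=\teta_\bfi(X_u)$ directly. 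Hence the image contains all generators, so $\teta_\bfi$ is surjective.

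\smallskip

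For injectivity, I would use that $\cT(\Lambda_\bfi)$ is a free $\Z[t^{\pm1/2}]$-module on the ordered Laurent monomials $\prod_u X_u^{a_u}$ ($\bfa\in\Z^{\oplus\N}$), and $\cY_{t,\le\xi}$ is likewise free on the commutative monomials $\ul{m}$ with $m\in\cY_{\le\xi}$. Since $\rho_\bfi$ identifies $\N$ with $\hD_{\le\xi}\simeq\hI_{\le\xi}$, and the exponents of the $Y_{i,p}$ in $m^{(\im)}[p,\xi_\im]$ are triangular with respect to the natural ordering (each KR-monomial $m^{(\im)}[p,\xi_\im]$ contains $Y_{\bar\im,p}$ with exponent one and only variables $Y_{\bar\im,p'}$ with $p'\ge p$), the matrix expressing the images $\teta_\bfi(X_u)$ in terms of the $\ul{Y_{i,p}}$ is unitriangular and hence invertible over $\Z$. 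Therefore $\teta_\bfi$ sends a $\Z[t^{\pm1/2}]$-basis to a $\Z[t^{\pm1/2}]$-basis, giving bijectivity. Finally, compatibility with the bar involution is immediate: $\ol{X_u}=X_u$ in $\cT(\Lambda_\bfi)$ (a length-one ordered monomial is bar-invariant), and each $\ul{m^{(\im)}[p,\xi_\im]}$ is bar-invariant by definition of commutative monomials, so $\teta_\bfi\circ\ol{(\cdot)}$ and $\ol{(\cdot)}\circ\teta_\bfi$ agree on generators, hence everywhere. The main obstacle is organizing the triangularity/basis-change argument cleanly — in particular verifying that the change-of-basis between ordered monomials in the $X_u$ and commutative monomials in the $\ul{Y_{i,p}}$ is genuinely invertible over $\Z[t^{\pm1/2}]$ rather than merely over the fraction field — but this follows from the locally finite, unitriangular structure of the $m^{(\im)}[p,\xi_\im]$ with respect to the partial ordering $\preceq$ on $\hD_{\le\xi}$.
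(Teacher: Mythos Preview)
Your proof is correct and follows essentially the same approach as the paper. The paper's proof is more concise: it observes that the set $\{\ul{m^{(\im)}[p,\xi_\im]}\}_{(\im,p)\in\hD_{\le\xi}}$ generates $\cY_{t,\le\xi}$ (via the same telescoping identity $Y_{\bar\im,p}=m^{(\im)}[p,\xi_\im]/m^{(\im)}[p+2d_{\bar\im},\xi_\im]$ you use), and then invokes Proposition~\ref{Prop:L=N} to conclude that the presentation of $\cY_{t,\le\xi}$ in these generators is identical to that of $\cT(\Lambda_\bfi)$, whence the isomorphism is immediate. Your explicit unitriangular change-of-basis argument for injectivity is what underlies the paper's shortcut that the two quantum tori have \emph{the same presentation}; it is correct but not strictly necessary once one recognizes both sides as quantum tori on the same index set with the same commutation matrix.
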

\begin{proof}
For each $(\im,p) \in \hD_{\le \xi}$, we have $Y_{\bar{\im},p} = m^{(\im)}[p,\xi_{\im}]/m^{(\im)}[p+2d_{\bar{\im}}, \xi_{\im}]$. 
This implies that the $\Z[t^{\pm 1/2}]$-algebra $\cY_{t, \le \xi}$ is generated by $\{\ul{m^{(\im)}[p,\xi_{\im}]}\}_{(\im,p) \in \hD_{\le \xi}}$.
Then, Proposition~\ref{Prop:L=N} asserts that the presentation of $\cY_{t,\le \xi}$ in terms these generators is identical to that of $\cT(\Lambda_\bfi)$ in \S\ref{Ssec:Ai} under the correspondence $X_u \mapsto \ul{m^{(\im)}[p,\xi_\im]}$ as in the statement. 
Thus, we have the isomorphism $\teta_\bfi$.
The compatibility with the bar involutions is obvious from the definition.
\end{proof}
We conclude this subsection by the following important observation. 
\begin{Prop}[{\cite[Lemma 4.15]{HL16}}]\label{Prop:y-hat}
Let $\bfi \in \Delta_0^\N$ be a sequence adapted to $\cQ$. For $u\in \N$, we set $\bfe_u=(\delta_{ku})_{k\in \N}\in \Z^{\oplus \N}$, and 
\[
\bfb_u=\tB_{\bfi}\bfe_u=\sum_{k \in \N} b_{k,u}\bfe_{k}. 
\]
Then we have 
\[
\teta_{\bfi}(X^{\bfb_u})=\ul{A_{i, p-d_i}^{-1}} 
\]
where $(i,p) = \bar{\rho}_\bfi(u)$. 
\end{Prop}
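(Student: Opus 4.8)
The plan is to reduce the assertion to the corresponding identity in the commutative Laurent ring $\cY$, and then to run that (classical) computation through the combinatorics of the quiver $\Gamma_\bfi$. By Lemma~\ref{Lem:adce} and Proposition~\ref{Prop:comm-eq}, any two sequences adapted to $\cQ$ are commutation-equivalent, and the resulting isomorphism of quantum cluster algebras is intertwined with the maps $\teta$ by the corresponding relabelling of variables; hence one may freely replace $\bfi$ by any convenient adapted sequence, e.g.\ one of the form in Example~\ref{Ex:dimin} or Example~\ref{Ex:periodic}. (This reduction is a convenience and not strictly necessary, since the final formula depends only on the bijection $\rho_\bfi$ and on $\cQ$.)

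Next I would dispose of the quantum part. Recall that $X^{\bfb_u}$ denotes the bar-invariant monomial of $\cT(\Lambda_\bfi)$ with exponent vector $\bfb_u$, and that $\teta_\bfi$ commutes with the bar involution by Corollary~\ref{Cor:teta}. Writing $X^{\bfb_u} = t^{c}\prod_{v}X_v^{b_{v,u}}$ and applying the algebra homomorphism $\teta_\bfi$, the image is a product of the commutative monomials $\ul{m^{(\jm)}[q,\xi_\jm]}$ and their inverses, times a power of $t^{1/2}$; regrouping the $\ul Y$-factors, it equals $t^{c'}\ul M$ for a unique $M\in\cM^*$ and some $c'\in\frac12\Z$. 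Bar-invariance of $\teta_\bfi(X^{\bfb_u})$ forces $c'=0$, so $\teta_\bfi(X^{\bfb_u})=\ul M$ with
\[ M = \evt\bigl(\teta_\bfi(X^{\bfb_u})\bigr) = \prod_{v\in\N} m^{(\jm_v)}[q_v,\xi_{\jm_v}]^{\,b_{v,u}} \in \cY, \qquad (\jm_v,q_v)=\rho_\bfi(v). \]
It therefore remains to prove $M = A_{i,p-d_i}^{-1}$ as Laurent monomials in $\cY$, where $(i,p)=\bar{\rho}_\bfi(u)$.

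For this I would read the column $\bfb_u=\tB_\bfi\bfe_u$ off the definition of $\tB_\bfi$: its nonzero entries are $b_{u^-,u}=1$ and $b_{u^+,u}=-1$ (with the convention $\bfe_0=0$), together with entries $+1$ at the rows $v$ with $\im_v\sim\im_u$ and $u<v<u^+<v^+$, and entries $-1$ at the rows $v$ with $\im_v\sim\im_u$ and $v<u<v^+<u^+$. Translating through $\rho_\bfi$, the pair $\{u^-,u^+\}$ becomes the heights $p+2d_i$ and $p-2d_i$ in the column $\im_u$, and, since $m^{(\im)}[p',\xi_\im]=\prod_{(\im,p'')\in\hD_{[\xi]}\colon p'\le p''\le\xi_\im}Y_{\bar{\im},p''}$, the factor $m^{(\im_u)}[p+2d_i,\xi_{\im_u}]\cdot m^{(\im_u)}[p-2d_i,\xi_{\im_u}]^{-1}$ telescopes to $Y_{i,p-2d_i}^{-1}Y_{i,p}^{-1}$, which is the ``$i$-part'' of $A_{i,p-d_i}^{-1}$. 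The contributions of the rows $v$ with $\im_v\sim\im_u$ telescope in the same way; using that $\bfi$ is adapted to $\cQ$ --- concretely the description of $(\hD_{\le\xi},\preceq)$ as the repetition quiver (Remark~\ref{Rem:Hasse}, \cite{FO21}) together with the two Q-datum axioms --- one checks that what survives is exactly $\prod_{(j,s)\in\hI\colon j\sim i,\,|s-(p-d_i)|<d_i}Y_{j,s}^{+1}$, i.e.\ the remaining part of $A_{i,p-d_i}^{-1}$. Multiplying everything gives the claim.

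The main obstacle is the last telescoping in the non-simply-laced case: when $d_i\ne d_j$, a vertex of the $\sigma$-orbit $i$ has several $\Delta_0$-neighbours in the orbit $j$ or conversely (and, when $d_i>1$, there are several $\jm$-occurrences strictly between consecutive copies of $\im_u$), so one must verify that $\bfb_u$ produces precisely the $(j,s)\in\hI$ with $j\sim i$ and $|s-(p-d_i)|<d_i$, each with coefficient $+1$, and that no spurious cancellations occur. Via Corollary~\ref{Cor:teta} and the quiver isomorphism $\Gamma_\bfi\simeq G_{\le\xi}$ (Lemma~\ref{Lem:quiverisom}, \cite[Proposition~7.27]{KKOP2}) this is exactly the content of \cite[Lemma~4.15]{HL16} once the initial seed and quiver are matched --- note that $G_{\le\xi}$ differs from the quiver $G^-$ of \cite{HL16} only by a shift of the height function (Remark~\ref{Rem:HL}) --- so in the write-up one may either import that lemma after the matching or carry out the case analysis directly, guided by the examples of $G_{\le\xi}$ displayed above.
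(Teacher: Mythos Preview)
Your proposal is correct and follows essentially the same approach as the paper: reduce to the classical identity in $\cY$ via \cite[Lemma~4.15]{HL16} through the quiver isomorphism $\Gamma_\bfi\simeq G_{\le\xi}$ (Lemma~\ref{Lem:quiverisom}), and then kill the residual power of $t^{1/2}$ using that $\teta_\bfi$ commutes with the bar involution. The only cosmetic difference is the order of these two steps and the level of detail---you spell out the telescoping for the $i$-part before invoking \cite{HL16} for the neighbour contributions, whereas the paper simply cites Lemma~\ref{Lem:quiverisom} together with the argument of \cite[Lemma~4.15]{HL16} to obtain $\teta_\bfi(X^{\bfb_u})=t^{a/2}\ul{A_{i,p-d_i}^{-1}}$ and then uses bar-invariance to get $a=0$.
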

\begin{proof}
    We know that $\teta_{\bfi}(X^{\bfb_u})=t^{a/2}\ul{A_{i, p-d_i}^{-1}}$ for some $a\in \Z$ by Lemma \ref{Lem:quiverisom} and the observation similar to the proof of \cite[Lemma 4.15]{HL16}. We can conclude that $a=0$ since $\teta_{\bfi}$ is compatible with the bar involutions on $\cT(\Lambda_{\bfi})$ and $\cY_{t, \le \xi}$. 
\end{proof}
\subsection{Cluster structure on $\cK_t(\Cc_{\le \xi})$}

We prove the main result of this section: $\cK_t(\Cc_{\le \xi})$ has a quantum cluster algebra structure. Using the isomorphism $\teta_\bfi$ of quantum tori in Corollary \ref{Cor:teta}, we use the approach in 
\cite{HL16, Bit}: to identify the respective images in the quantum tori, for the first inclusion we prove that the generators  $F_t(Y_{i,p})$ correspond to some quantum cluster variables 
(by using quantum $T$-systems) and then we show a stability 
property of the quantum Grothendieck ring by quantum cluster 
mutation for the other inclusion.

 Recall one of the main results of \cite{HL16}.

\begin{Thm}[{\cite[Theorem 5.1]{HL16}}] The Grothendieck ring $K(\Cc^-)$ has a cluster algebra structure
so that the classes of Kirillov-Reshetikhin modules in $K(\Cc^-)$ are cluster variables.
\end{Thm}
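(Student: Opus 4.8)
The plan is to recall the proof from \cite{HL16}; it serves as the classical blueprint for the quantum refinement carried out in Theorem~\ref{Thm:qcl} below, so I describe it at the same level of detail.

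First I would fix a Q-datum $\cQ=(\Delta,\sigma,\xi)$ whose height function satisfies $-2d_{\bar\im}<\xi_\im\le 0$, so that $G_{\le\xi}$ is the quiver $G^-$ of \cite{HL16} and $\Cc_{\le\xi}=\Cc^-$ (Remark~\ref{Rem:HL}), and choose a sequence $\bfi\in\Delta_0^\N$ adapted to $\cQ$; then $\bar{\rho}_\bfi\colon\N\to\hI_{\le\xi}$ identifies $\Gamma_\bfi$ with $G^-$ (Lemma~\ref{Lem:quiverisom}). Let $\cA$ be the ordinary cluster algebra obtained from $\cA_\bfi$ by setting $t=1$; composing the $t=1$ specialization of the torus isomorphism of Corollary~\ref{Cor:teta} with the injective truncation homomorphism identifies both $\cA$ and the $q$-character ring $\chi_q(K(\Cc^-))$ with subrings of the Laurent polynomial ring $\cY_{\le\xi}$, in such a way that the initial cluster variable attached to $u\in\N$ becomes the image of the Kirillov--Reshetikhin class $[L^{(\im)}[p,\xi_\im]]$, where $(\im,p)=\rho_\bfi(u)$. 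The theorem thus reduces to the equality of these two subrings of $\cY_{\le\xi}$, together with the assertion that all KR classes become cluster variables.

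For $\cA\subseteq\chi_q(K(\Cc^-))$ I would argue by induction along a mutation sequence: the exchange relation at each relevant vertex of $G^-$ is exactly a $T$-system relation --- the $t=1$ case of Theorem~\ref{Thm:qTsys}, i.e.\ the $T$-systems of \cite{Her06} --- which presents the newly produced cluster variable as a polynomial combination of classes already known to lie in $\chi_q(K(\Cc^-))$. Running this over all mutations shows that every cluster variable is the $q$-character of a KR module, and in particular that every KR module of $\Cc^-$ is a cluster variable. For the reverse inclusion, $K(\Cc^-)$ is generated as a ring by the fundamental classes $[L(Y_{i,p})]$, which are KR modules (the case $l=1$), hence cluster variables by the previous step; therefore $\chi_q(K(\Cc^-))\subseteq\cA$ and the two subrings coincide.

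The main obstacle is the first inclusion: one must organize the infinite mutation bookkeeping so that every KR module of $\Cc^-$ is reached, check that quiver mutation at each vertex genuinely reproduces the $T$-system exchange relation --- with the correct frozen part and the precise local shape of $G^-$ --- and verify that the Laurent-polynomial divisions occurring along the way land inside $\chi_q(K(\Cc^-))$ rather than merely in its fraction field. This is where the combinatorics assembled in Section~\ref{sectwo} (commutation moves, braid moves, forward shifts) together with the torus isomorphism of Corollary~\ref{Cor:teta} do the work, and it is the same bookkeeping, now carrying the compatibility matrix $\Lambda_\bfi$ and invoking the quantum $T$-system, that upgrades the whole argument to $\cK_t(\Cc_{\le\xi})$ in Theorem~\ref{Thm:qcl}.
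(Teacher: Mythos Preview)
Your sketch has the two inclusions mis-labeled and, more importantly, misidentifies the mechanism for the hard one.

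The $T$-system argument does \emph{not} show $\cA\subseteq\chi_q(K(\Cc^-))$. The $T$-systems only match the exchange relations along one very particular infinite mutation sequence (the forward-shift sequence $(\partial_+^*)^k$ in the paper's language). Running \emph{that} sequence produces, step by step, the truncated $q$-characters of KR modules, and hence shows (i) every KR class is a cluster variable, and (ii) since the fundamental classes $[L(Y_{i,p})]$ generate $K(\Cc^-)$, that $\chi_q(K(\Cc^-))\subseteq\cA$. But an arbitrary cluster variable, reached by an arbitrary mutation path, is \emph{not} a KR module in general, and the corresponding exchange relation is \emph{not} a $T$-system. Your sentence ``Running this over all mutations shows that every cluster variable is the $q$-character of a KR module'' is false.

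For the genuine inclusion $\cA\subseteq\chi_q(K(\Cc^-))$ one needs a different ingredient, and this is exactly what the paper calls ``a stability property of the quantum Grothendieck ring by cluster mutation'' (see the paragraph introducing Theorem~\ref{Thm:qcl}). The point is that $\chi_q(K(\Cc_\Z))$ is the joint kernel of the screening operators, and these satisfy a Leibniz rule: if $xy$ and $y$ lie in the kernel with $y\neq0$, then so does $x$ (Lemma~\ref{Lem:scr}). One then embeds $\cA_\bfi$ into the larger torus $\cY_{\le\fD\xi}$ via $(\partial_+^*)^\ell$, uses the $T$-system step to know the \emph{initial} cluster lands in $K(\Cc_{\le\xi})$, and propagates through an arbitrary exchange $xx'=y_1+y_2$ using the screening-operator lemma together with Lemma~\ref{Lem:tr}. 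This is precisely the second half of the proof of Theorem~\ref{Thm:qcl}, which is (as the paper says) the same argument as in \cite{HL16}. Your last paragraph gestures at the difficulty (``land inside $\chi_q(K(\Cc^-))$ rather than merely in its fraction field'') but does not supply this tool.
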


The proof of the following Theorem is a generalization of the proof of \cite[Theorem 5.1]{HL16} 
above, and that of \cite[Theorem 5.2.4]{Bit} for the quantum Grothendieck ring $\cK_t(\Cc^-)$ 
of type $\mathrm{ADE}$.
Here $\Cc^-$ is a special example of the category $\Cc_{\le \xi}$ with $\xi$ satisfying the condition 
$-2d_{\bar{\im}} < \xi_\im \le 0$ for all $\im \in \Delta_0$ (cf.~Remark~\ref{Rem:HL}).

\begin{Thm} \label{Thm:qcl}
Let $\cQ = (\Delta, \sigma, \xi)$ be a Q-datum for $\fg$ and $\bfi = (\im_u)_{u \in \N} \in \Delta_0^\N$ a sequence adapted to $\cQ$.  
There is a unique isomorphism of $\Z[t^{\pm 1/2}]$-algebras
$ \eta_{\bfi} \colon \cA_\bfi \simeq \cK_t(\Cc_{\le \xi})$
which makes the following diagram commute:
\begin{equation} \label{eq:diagT0}
\vcenter{
\xymatrix{
\cA_{\bfi} \ar[r]^-{\eta_{\bfi}} \ar@{^{(}->}[d] & \cK_t(\Cc_{\le \xi}) \ar@{^{(}->}[d]^-{(\cdot)_{\le \xi}}\\
\cT(\Lambda_\bfi) \ar[r]^-{\teta_\bfi}& \cY_{t, \le \xi}.
}}
\end{equation} 
Moreover, when $m$ is a dominant monomial of a KR module in $\Cc_{\le \xi}$, the element $F_t(m)$ corresponds to a cluster variable of $\cA_\bfi$ under the isomorphism $\eta_\bfi$. 
\end{Thm}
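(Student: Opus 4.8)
The plan is to transport the quantum $T$-system relations of Theorem~\ref{Thm:qTsys} across the isomorphism of quantum tori $\teta_\bfi$ of Corollary~\ref{Cor:teta}, following the strategy of \cite[Theorem~5.1]{HL16} and \cite[Theorem~5.2.4]{Bit}. First I would fix the sequence $\bfi$ adapted to $\cQ$ and, using Proposition~\ref{Prop:comm-eq} together with Lemma~\ref{Lem:adce}, reduce to a convenient choice of $\bfi$ (e.g.\ the periodic one of Example~\ref{Ex:periodic}, so that the results of \cite{HL16} apply verbatim). The key point is to exhibit, inside $\cY_{t,\le\xi}$, the subalgebra $(\cK_t(\Cc_{\le\xi}))_{\le\xi}$ and the subalgebra $\teta_\bfi(\cA_\bfi)$ and show they coincide. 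Via $\teta_\bfi$ the initial cluster variables $X_u$ correspond to $\ul{m^{(\im)}[p,\xi_\im]}$ where $(\im,p)=\rho_\bfi(u)$, and by Lemma~\ref{Lem:qKR} these are exactly the truncations $(F_t^{(\im)}[p,\xi_\im])_{\le\xi}$ of the $(q,t)$-characters of the KR modules generating $\Cc_{\le\xi}$; moreover Proposition~\ref{Prop:y-hat} identifies $\teta_\bfi(X^{\bfb_u})$ with $\ul{A_{i,p-d_i}^{-1}}$, so the quantum $\hat{y}$-variables of $\cA_\bfi$ are precisely the loop-analogs of simple roots.

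For the inclusion $\teta_\bfi(\cA_\bfi)\subset(\cK_t(\Cc_{\le\xi}))_{\le\xi}$: starting from the initial seed, whose cluster variables lie in $\cK_t(\Cc_{\le\xi})$ by the above, one shows that this property is preserved under each mutation. The exchange relation at a vertex $u$ reads $X_u X_u' = t^{?}X^{[\bfb_u]_+} + t^{?}X^{[-\bfb_u]_+}$; one recognizes, after applying $\teta_\bfi$, that this is (the truncation of) the quantum $T$-system relation~\eqref{eq:qTsys} for the relevant KR module, so the new cluster variable $X_u'$ maps to the truncation of another $F_t^{(\jm)}[\,\cdot\,,\cdot]$, hence to an element of $\cK_t(\Cc_{\le\xi})$; iterating gives all cluster monomials. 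This is where the combinatorics of the quiver $\Gamma_\bfi\simeq G_{\le\xi}$ (Lemma~\ref{Lem:quiverisom}) must be matched against the $T$-system combinatorics exactly as in \cite[\S5]{HL16}; the $t$-powers are pinned down by bar-invariance, since $\teta_\bfi$ commutes with $\ol{(\cdot)}$ and both $F_t(m)$ and quantum cluster variables are bar-invariant.

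For the reverse inclusion $(\cK_t(\Cc_{\le\xi}))_{\le\xi}\subset\teta_\bfi(\cA_\bfi)$: since $\cK_t(\Cc_{\le\xi})$ is generated as a $\Z[t^{\pm1/2}]$-algebra by the $F_t(Y_{i,p})$ with $(i,p)\in\hI_{\le\xi}$ (by definition), and each $F_t(Y_{i,p})$ is the spectral-parameter limit / a truncation of a KR character already shown to be a cluster variable (here one uses that $Y_{i,p}$ itself, $= m^{(\im)}[p,p]$, arises as $m^{(\im)}[p,\xi_\im]/m^{(\im)}[p+2d_{\bar\im},\xi_\im]$ and the corresponding $F_t$ is obtained from the cluster variables by the $T$-system, cf.\ the argument for $K(\Cc^-)$), every generator lands in $\teta_\bfi(\cA_\bfi)$. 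Defining $\eta_\bfi\seq(\cdot)_{\le\xi}^{-1}\circ\teta_\bfi$ on $\cA_\bfi$ then makes \eqref{eq:diagT0} commute, and uniqueness follows because $(\cdot)_{\le\xi}$ is injective on $\cK_t(\Cc_{\le\xi})$ and $\cA_\bfi$ is generated by cluster variables whose images are forced. The last sentence — that $F_t(m)$ for $m$ a KR monomial in $\Cc_{\le\xi}$ is a cluster variable — then follows since every such $m$ is $m^{(\im)}[a,b]$ for some $(\im,a)\preceq\cdots$, and applying a sequence of mutations realizing the relevant $T$-system chain produces $F_t^{(\im)}[a,b]$ from the initial variables, so its truncation is a cluster variable and hence so is $F_t^{(\im)}[a,b]$ itself by injectivity of $(\cdot)_{\le\xi}$.

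The main obstacle I expect is the bookkeeping in the mutation-by-mutation argument: one must verify that a suitable sequence of quantum mutations of $\cA_\bfi$ reproduces, term by term and with the correct $t$-powers, the truncated quantum $T$-system~\eqref{eq:qTsys}, and that the two monomials appearing on the right-hand side of an exchange relation match $[\bfb_u]_+$ and $[-\bfb_u]_+$ under $\teta_\bfi$ — this is exactly the computation that makes Proposition~\ref{Prop:y-hat} and the quiver identification Lemma~\ref{Lem:quiverisom} indispensable, and where the generalization beyond type $\mathrm{ADE}$ (allowing $d_{\bar\im}=r>1$) requires care, though the uniform Q-datum formalism of \S\ref{Ssec:Qdata}--\S\ref{Ssec:Cox} is designed precisely to absorb this.
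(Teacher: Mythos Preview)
Your plan conflates the two inclusions. The quantum $T$-system argument (matching exchange relations along a specific mutation sequence with~\eqref{eq:qTsys}) only establishes $\cK_{t,\le\xi}\subset\teta_\bfi(\cA_\bfi)$: following the forward-shift mutations $\partial_+^*$ one proves, by the induction you outline, that every $F_t^{(\im)}[a,b]_{\le\xi}$ --- in particular every $F_t(Y_{i,p})_{\le\xi}$ --- is a quantum cluster variable; since the $F_t(Y_{i,p})$ generate $\cK_t(\Cc_{\le\xi})$, that direction follows, and so does the last sentence of the theorem.

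The gap is in your argument for $\teta_\bfi(\cA_\bfi)\subset\cK_{t,\le\xi}$. You assert that ``the exchange relation at a vertex $u$ \ldots\ is the truncation of the quantum $T$-system'', hence each mutated variable is again a truncated KR $(q,t)$-character. This is only true along the forward-shift sequence: after one such mutation the quiver has changed, and an exchange relation at a generic vertex is \emph{not} a $T$-system; indeed most cluster variables of $\cA_\bfi$ are not KR characters at all. So your induction cannot reach an arbitrary cluster variable, only those on the $T$-system tree, and the inclusion you want does not follow.

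The paper closes this gap with two ingredients absent from your plan. First, it lifts from the truncated torus to the untruncated one: choose $\bfi'$ adapted to $\fD\cQ$ with $\partial_+^\ell\bfi'=\bfi$ and set $\zeta\seq\teta_{\bfi'}\circ(\partial_+^*)^\ell\colon\cA_\bfi\to\cY_{t,\le\fD\xi}$. For the initial seed one has $\zeta(X_u)=F_t^{(\im)}[p,\xi_\im]$ (an honest element of $\cK_t(\Cc_{\le\xi})$, not merely its truncation), by the same $T$-system computation applied $\ell$ times. Second, for an arbitrary exchange relation $xx'=y_1+y_2$ with $\zeta(x'),\zeta(y_1),\zeta(y_2)\in\cK_t(\Cc_{\le\xi})$ by induction, one gets $\zeta(x)\zeta(x')\in\cK_t(\Cc_{\le\xi})$, and Lemma~\ref{Lem:scr} (the screening-operator Leibniz argument) forces $\zeta(x)\in\cK_t(\Cc_\Z)$; then Lemma~\ref{Lem:tr} ($\cK_t(\Cc_{\le\xi})=\cK_t(\Cc_\Z)\cap\cY_{t,\le\fD\xi}$) gives $\zeta(x)\in\cK_t(\Cc_{\le\xi})$, whence $\teta_\bfi(x)=\zeta(x)_{\le\xi}\in\cK_{t,\le\xi}$. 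The lifting is essential: $(\cdot)_{\le\xi}$ is an algebra homomorphism only on $\cK_t(\Cc_{\le\xi})$, not on $\cY_t$, so one cannot run the cancellation argument directly inside $\cY_{t,\le\xi}$.
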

\begin{proof}
%Our proof is a generalization of that of \cite[Theorem 5.1]{HL16} for the usual Grothendieck ring $K(\Cc^-)$, and that of \cite[Theorem 5.2.4]{Bit} for the quantum Grothendieck ring $\cK_t(\Cc^-)$ of type $\mathrm{ADE}$.
%Here $\Cc^-$ is a special example of the category $\Cc_{\le \xi}$ with $\xi$ satisfying the condition $-2d_{\bar{\im}} < \xi_\im \le 0$ for all $\im \in \Delta_0$ (cf.~Remark~\ref{Rem:HL}).

Let $\cK_{t, \le \xi} \subset \cY_{t, \le \xi}$ be the image of $\cK_t(\Cc_{\le \xi})$ under the truncation map $(\cdot)_{\le \xi}$.
To establish the isomorphism $\eta_\bfi$, it is enough to show that $\teta_{\bfi}(\cA_{\bfi}) = \cK_{t, \le \xi}$.

First, we shall show $\teta_{\bfi}(\cA_{\bfi}) \supset \cK_{t, \le \xi}$.
Since $\cK_t(\Cc_{\le \xi})$ is generated by the elements $F_t(Y_{i,p})$, it is enough to prove that $F_t(Y_{i,p})_{\le \xi} \in \teta_\bfi(\cA_\bfi)$ for all $(i,p) \in \hI_{\le \xi}$.
It follows from the following more general assertion: for any $k \in \N_0$, and $u \in \N$, we have 
\begin{equation} \label{eq:claim}  
\teta_\bfi (\partial_+^*)^k X_u = F_t^{(\im)}[p, (s_{\im_k}\cdots s_{\im_1}\xi)_\im]_{\le \xi} \quad \text{if $\rho_{\partial_+^k \bfi}(u) = (\im,p)$}, 
\end{equation}
where $(\partial_+^*)^k$ denotes the composition of the homomorphisms 
\[ \cA_{\partial_+^k \bfi} \xrightarrow{\partial_+^*} \cA_{\partial_+^{k-1} \bfi} \xrightarrow{\partial_+^*} \cdots \xrightarrow{\partial_+^*} \cA_{\partial_+ \bfi} \xrightarrow{\partial_+^*} \cA_{\bfi}  \]
and $X_u$ denotes the $u$-th initial cluster variable of $\cA_{\partial_+^k \bfi}$.
Once we have the assertion~\ref{eq:claim}, it follows that $\teta_\bfi^{-1}F_t(m)_{\le \xi}$ is a cluster variable for all dominant monomial $m$ corresponding to a KR module in $\Cc_{\le \xi}$, since every $F_t^{(\im)}[p,s]_{\le \xi}$ (with $(\im,p), (\im, s) \in \hI_{\le \xi}$, $p \le s$) appears in the RHS of the equality \eqref{eq:claim} when $k$ and $u$ vary.
This proves the last assertion in the statement.

Let us outline the proof of \eqref{eq:claim}, which is the same as those of previous works \cite{HL16, Bit}.
It proceeds by transfinite induction on $(k,u) \in \N_0 \times \N$ along the lexicographic order.  
When $k=0$, \eqref{eq:claim} holds for all $u \in \N$ by Lemma~\ref{Lem:qKR} and the definition of $\teta_\bfi$ in Corollary~\ref{Cor:teta}.
Let us discuss the case when $k =1$.  
We set $\im \seq \im_1$ and recall the notation $\mu_+$ and $\sigma_+$ which appeared in the definition of $\partial_+^*$ in \S\ref{fshif}.
For each $u \in \N$, let $X'_u \seq \mu_+^* X_u$ be the mutated cluster variable so that we have $\partial_+^* X_{\sigma_+(u)} = X'_u$.
Note that $X'_u = X_u$ if $\im_u \neq \im$. 
For any $v \in \N$ with $\im_v = \im$, we have the exchange relation 
\begin{equation} \label{eq:exXu}
X'_v X_v = t^a X'_{v_\bfi^-} X_{v_\bfi^+} + t^b \prod^{\to}_{\jm \sim \im}X_{v_\bfi^+(\jm)}
\end{equation}
for some $a, b \in \frac{1}{2}\Z$ (see the proof of Lemma~\ref{Lem:shiftB} and Lemma~\ref{Lem:mu}).
Now, we want to show that $\teta_\bfi(X'_v) = F_t^{(\im)}[p, \xi_\im)_{\le \xi}$ if $\rho_{\bfi}(v) = (\im, p+2d_{\bar{\im}})$, which is equivalent to \eqref{eq:claim} for $(k,u) = (1,\sigma_+(v))$ (note that $\rho_{\partial_+\bfi}(\sigma_+(v)) = (\im, p)$ in this case).
By induction, we assume that $\teta_{\bfi}(X'_{v_\bfi^-}) = F_t^{(\im)}(p,\xi_{\im})_{\le \xi}$.
Then, from \eqref{eq:exXu}, we deduce that $\teta_\bfi(X'_v)$ is the unique bar invariant element of the form
\[ \left(t^{a}F_{t}^{(\im)}[p,\xi_\im]_{\le \xi} F_{t}^{(\im)}(p,\xi_\im)_{\le \xi} + 
t^{b} \prod_{\jm \sim \im}^{\to} F_{t}^{(\jm)}(p,\xi_\im)_{\le \xi}\right) \left(F_{t}^{(\im)}(p,\xi_\im]_{\le \xi}\right)^{-1}.\]
On the other hand, the truncation of the quantum $T$-system equation~\eqref{eq:qTsys} tells us that the same property also characterizes the element $F_t^{(\im)}[p,\xi_\im)_{\le \xi}$.
Therefore, we obtain the equality $\teta_\bfi(X'_v) = F_t^{(\im)}[p, \xi_\im)_{\le \xi}$, which completes the proof of \eqref{eq:claim} for $k=1$.
The proof for the case $k > 1$ is similar and hence we omit it.

Next, we shall prove the opposite inclusion $\teta_{\bfi}(\cA_{\bfi}) \subset \cK_{t, \le \xi}$.
Take a reduced word $(\jm_1, \ldots, \jm_\ell)$ of the longest element $w_\circ$ adapted to the Q-datum $\fD \cQ$.
Then we define a sequence $\bfi' = (\im'_u)_{u \in \N}$ so that $\im'_u = \jm_u$ for $1 \le u \le \ell$ and $\im'_{u+\ell} = \im_u$ for any $u \in \N$.
By Proposition~\ref{Prop:DQ}, $\bfi'$ is adapted to $\fD \cQ$, and obviously we have $\partial_+^\ell \bfi' = \bfi$. 
Consider the following diagram:
\begin{equation} \label{eq:diagD}
\vcenter{
\xymatrix{
\cA_{\bfi'} \ar[r]^-{\teta_{\bfi'}} & \cY_{t, \le \fD\xi} \ar@{->}[d]^-{(\cdot)_{\le \xi}}\\
\cA_\bfi \ar[r]^-{\teta_\bfi} \ar[u]^{(\partial_+^*)^\ell}& \cY_{t, \le \xi}.
}}
\end{equation} 
A priori, it is not clear whether \eqref{eq:diagD} commutes (but we will see that actually it does).
Letting $\zeta \seq \teta_{\bfi'} \circ (\partial_+^*)^\ell$, we shall prove that $\zeta(x)\in \cK_t(\Cc_{\le \xi})$ and $\zeta(x)_{
\le \xi} = \teta_{\bfi}(x)$ for any quantum cluster variable $x \in \cA_\bfi$ by induction on the distance from the initial cluster.
When $x$ is an initial quantum cluster variable, it follows from~\eqref{eq:claim} (with $k=\ell$), Proposition~\ref{Prop:DQ} and Lemma~\ref{Lem:tr}.
Let $x$ be a quantum cluster variable obtained by the exchange relation $x x' = y_1 + y_2$, where $x'$ is another quantum cluster variable and $y_1, y_2$ are quantum cluster monomials multiplied by some powers of $t^{\pm 1/2}$. 
By induction, we assume that $\zeta(z) \in \cK_{t}(\Cc_{\le \xi})$ and $\zeta(z)_{\le \xi} =\teta_\bfi(z)$ for $z \in \{x', y_1, y_2\}$. 
Since $\zeta$ is an algebra homomorphism, we have 
\begin{equation} \label{eq:zeta} 
\zeta(x)\zeta(x') = \zeta(y_1) + \zeta(y_2) \in \cK_t(\Cc_{\le \xi}).
\end{equation}
Then, Lemmas~\ref{Lem:scr} \& \ref{Lem:tr} imply that $\zeta(x) \in \cK_t(\Cc_{\le \xi})$.
Recall that the restriction of $(\cdot)_{\le \xi}$ to $\cK_t(\Cc_{\le \xi})$ is an algebra homomorphism. 
We apply it to the equation \eqref{eq:zeta} to find
\[ \zeta(x)_{\le \xi} \teta_\bfi(x') = \teta_\bfi(y_1) + \teta_\bfi(y_2) = \teta_\bfi(x) \teta_\bfi(x'),\]
which implies that $\zeta(x)_{\le \xi} = \teta_\bfi(x)$.
Thus, we have proved that $\teta_\bfi(x) \in \cK_{t, \le \xi}$ for any quantum cluster variable $x \in \cA_\bfi$.  
\end{proof}

For future use, we remark the following. 

\begin{Lem} \label{Lem:degf}
Let $\cQ = (\Delta, \sigma, \xi)$ be a Q-datum for $\fg$ and $\bfi \in \Delta_0^\N$ a sequence adapted to $\cQ$.
For each $(i,p) \in \hI_{\le \xi}$, the element $\eta_\bfi^{-1}F_t(Y_{i,p})$ is a quantum cluster variable whose degree is $\bfe_u - \bfe_{u^-_\bfi}$, where $\bar{\rho}_\bfi(u) = (i,p)$.  
\end{Lem}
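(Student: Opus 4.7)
The plan is to identify $\eta_\bfi^{-1} F_t(Y_{i,p})$ explicitly as the image of an initial cluster variable under a suitable composition of forward-shift maps $\partial_+^*$, and then to read off its degree from the iteration of Lemma~\ref{Lem:dg}.

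Let $\im \in \Delta_0$ be the unique vertex with $\bar{\im} = i$ and $(\im,p) \in \hD_{\le \xi}$; then $F_t(Y_{i,p}) = F_t^{(\im)}[p,p]$. Let $u \in \N$ be the index with $\rho_\bfi(u) = (\im,p)$, and write $\bfi^{(k)} \seq \partial_+^k\bfi$. I apply the identity \eqref{eq:claim} established in the proof of Theorem~\ref{Thm:qcl} with $k = u-1$ and $u_0 = 1$. Here $\im^{(u-1)}_1 = \im_u = \im$, and the reflected height satisfies
\[
(s_{\im_{u-1}}\cdots s_{\im_1}\xi)_\im \;=\; \xi_\im - 2d_{\bar{\im}}\,|\{\ell \in [1,u-1] \mid \im_\ell = \im\}| \;=\; \xi_\im - 2d_{\bar{\im}}\,n_\bfi(u) \;=\; p,
\]
so $\rho_{\bfi^{(u-1)}}(1) = (\im,p)$ and the right-hand side of \eqref{eq:claim} collapses to $F_t^{(\im)}[p,p]_{\le \xi} = F_t(Y_{i,p})_{\le \xi}$. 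Combined with the commutative diagram \eqref{eq:diagT0} and the injectivity of $(\cdot)_{\le \xi}$ on $\cK_t(\Cc_{\le \xi})$, this forces
\[
\eta_\bfi^{-1} F_t(Y_{i,p}) = (\partial_+^*)^{u-1}(X_1),
\]
where $X_1$ denotes the first initial cluster variable of $\cA_{\bfi^{(u-1)}}$. Since $\partial_+^*$ preserves the class of quantum cluster variables (Section~\ref{fshif}), the first assertion follows.

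For the degree, observe that $X_1 \in \cA_{\bfi^{(u-1)}}$ has degree $\bfe_1 = \bfe_1 - \bfe_0$, which is the cone generator of $C_{\bfi^{(u-1)}}$ indexed by $v=1$ (indeed $1^-_{\bfi^{(u-1)}}=0$). Lemma~\ref{Lem:dg} asserts that one application of $\partial_+^*$ realises the $\N_0$-linear map $C_{\bfi'} \to C_\bfi$ sending each cone generator $\bfe_v - \bfe_{v^-_{\bfi'}}$ to the cone generator $\bfe_{v+1} - \bfe_{(v+1)^-_\bfi}$. Since the image is again a cone generator, the iteration is immediate, and a straightforward induction yields that, after $u-1$ applications of $\partial_+^*$, the generator $\bfe_1 - \bfe_0 \in C_{\bfi^{(u-1)}}$ is sent to $\bfe_u - \bfe_{u^-_\bfi} \in C_\bfi$. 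This is precisely the claimed degree.

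I do not anticipate a substantive obstacle: the argument is a pure synthesis of the explicit description of cluster variables furnished by \eqref{eq:claim} with the tropical degree-transformation rule of Lemma~\ref{Lem:dg}. The only point demanding attention is to match the successively reflected height function against the spectral parameter $p$, which reduces to the elementary computation reproduced above.
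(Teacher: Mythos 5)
Your proof is correct and follows exactly the route of the paper: identify $\eta_\bfi^{-1}F_t(Y_{i,p})$ with $(\partial_+^*)^{u-1}X_1$ via the identity \eqref{eq:claim} from the proof of Theorem~\ref{Thm:qcl}, then iterate the degree-transformation rule of Lemma~\ref{Lem:dg} on the cone generator $\bfe_1 - \bfe_0$. The paper's own proof is a two-line version of the same argument; your computation matching the reflected height function $(s_{\im_{u-1}}\cdots s_{\im_1}\xi)_{\im}$ with $p$ is the detail it leaves implicit.
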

\begin{proof}
By~\eqref{eq:claim}, we have $\eta_\bfi^{-1}F_t(Y_{i,p}) = (\partial_+^*)^{u-1}X_1$. 
Therefore, the assertion follows from Lemma~\ref{Lem:dg}. 
\end{proof}

\subsection{Relation to the HLO isomorphisms}
\label{HLO}
Let $\bfi = (\im_1, \ldots, \im_\ell)$ be a reduced word for the longest element adapted to $\cQ$ (recall Proposition~\ref{Prop:DQ}).
We extend it to be an infinite sequence $\tilde{\bfi} = (\im_{u})_{u \in \N}$ adapted to $\cQ$. 
Such an extension always exists, see Example~\ref{Ex:periodic}.
From the discussion in \cite[\S8.2]{FHOO}, we can deduce that the isomorphism  $\eta_{\tilde{\bfi}} \colon \cA_{\tilde{\bfi}} \simeq \cK_t(\Cc_{\le \xi})$ in Theorem~\ref{Thm:qcl} restricted to $\cA_t(\tB_\bfi, \Lambda_\bfi)$ yields the isomorphism of $\Z[t^{\pm 1/2}]$-subalgebras 
\[ \eta_{\bfi} \colon \cA_{\tilde{\bfi}}^\ell = \cA_t(\tB_\bfi, \Lambda_\bfi) \simeq \cK_{t}(\Cc_\cQ),\]
which does not depend on the choice of extension $\tilde{\bfi}$ of $\bfi$.
Moreover, the composition 
\[ \Phi_\cQ = \eta_\bfi \circ \varphi_\bfi^{-1} \colon \cA_t[N_-] \simeq \cA_t(\tB_\bfi, \Lambda_\bfi) \simeq \cK_t(\Cc_\cQ) \]
only depends on the Q-datum $\cQ$ (that is, it does not depend on the reduced word $\bfi$ adapted to $\cQ$, see Propositions~\ref{Prop:comm-eq}, \ref{Prop:tau}, \& \ref{Prop:DQ}) and it is identical to the isomorphism in \cite[\S6]{HL15}, \cite[\S10]{HO19}, \cite[\S8.2]{FHOO}, which is called the \emph{HLO isomorphism} in \cite{FHOO}.
%Here, we remind the following property of $\Phi_\cQ$.

%\begin{Thm}[{\cite[Corollary 8.7]{FHOO}}]
%The HLO isomorphism $\Phi_\cQ$ induces a bijection between the canonical bases $\tbfB$ and $\bfL_{t, \cQ}$.
%\end{Thm}

\section{Isomorphisms among quantum Grothendieck rings and applications}\label{secsix}

We give in Theorem \ref{Thm:Psi=tau} a cluster theoreticalal interpretation of the isomorphisms among the quantum Grothendieck rings constructed in \cite{FHOO}, together with their canonical basis (Corollary \ref{Cor:Psi}). This leads to a quantum version of the monoidal categorification Theorem of \cite{KKOP2} for the categories $\Cc_{\le \xi}$ (Theorem  \ref{Thm:main}): the quantum cluster monomials belong to the canonical basis. We obtain several applications, 
including the proof of the positivity conjecture of $(q,t)$-characters
 (Corollary \ref{Cor:pos}) and the proof of the Kazhdan--Lusztig conjecture for reachable modules (Corollary \ref{Cor:KLr}). 
% and a $t$-analog of Hernandez--Leclerc's geometric character formula.

\subsection{Isomorphisms among quantum Grothendieck rings}\label{Ssec:FHOO}

We recall the isomorphisms among the quantum Grothendieck rings constructed in \cite{FHOO}.

Let $\cQ = (\Delta, \sigma, \xi)$ be a Q-datum for $\fg$.
Take another complex simple Lie algebra $\fg'$ and a Q-datum $\cQ' = (\Delta', \sigma', \xi')$ for $\fg'$.
We assume $\Delta \simeq \Delta'$, that is, we assume either that $\fg$ and $\fg'$ are related by (un)folding, or that $\fg \simeq \fg'$ holds.   
In what follows, we identify $\Delta$ with $\Delta'$, and in particular we have $\Delta_0 = \Delta'_0$.
To avoid a possible confusion, we often denote a mathematical object $X$ by $X'$ when it is associated with $\fg'$. 
For example, we denote by $\Cc'$ the category of finite-dimensional $U_q(L\fg')$-modules of type $1$, and by $\fD^{\prime \pm1}$ the duality functors on $\Cc'$.
In \cite{FHOO}, we proved the following. 

\begin{Thm}[{\cite[\S10.3]{FHOO}}] \label{Thm:FHOO}
With the above assumption, there exists a unique isomorphism of $\Z[t^{\pm 1/2}]$-algebras 
\[ \Psi \equiv \Psi(\cQ, \cQ') \colon \cK_t(\Cc'_{\Z}) \simeq \cK_t(\Cc_{\Z})\]
satisfying the following properties:
\begin{enumerate} 
\item \label{FHOO:rest} 
restricted to the subalgebra $\cK_t(\Cc'_{\cQ'})$, it coincides with the isomorphism
\[ \Phi_{\cQ} \circ \Phi_{\cQ'}^{-1} \colon \cK_t(\Cc'_{\cQ'}) \simeq \cA_t[N_-] \simeq \cK_t(\Cc_{\cQ}),\]
\item \label{FHOO:D}
we have $\Psi \circ \fD_t^{\prime \pm 1} = \fD_t^{\pm 1}\circ\Psi$.
\end{enumerate}
Moreover, the isomorphism $\Psi$ induces a bijection between the $(q, t)$-characters of simple modules.
\end{Thm}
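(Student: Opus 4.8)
\textbf{Proof strategy for Theorem~\ref{Thm:FHOO}.}
The plan is to construct $\Psi$ by first defining it on the distinguished subalgebra $\cK_t(\Cc'_{\cQ'})$ via the composition $\Phi_\cQ \circ \Phi_{\cQ'}^{-1}$, and then extending it to all of $\cK_t(\Cc'_\Z)$ by exhausting $\Cc'_\Z$ by the spectral-parameter shifts $\fD'^{\,k}\Cc'_{\cQ'}$ of the seed category. Concretely, one observes that $\cK_t(\Cc'_\Z) = \bigcup_{k \in \Z} \fD_t'^{\,k}\cK_t(\Cc'_{\cQ'})$, since the Q-datum $\cQ'$ and its $\fD'$-translates cover $\hI'$ (recall $(\fD'^{-1}\xi')_\im < \xi'_\im < (\fD'\xi')_\im$, so the windows $\hD_{\fD'^{\,k}\cQ'}$ tile $\hD_{[\xi']}$). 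On each piece $\fD_t'^{\,k}\cK_t(\Cc'_{\cQ'})$ one is forced by property~\eqref{FHOO:D} to set $\Psi \seq \fD_t^{\,k}\circ(\Phi_\cQ\circ\Phi_{\cQ'}^{-1})\circ\fD_t'^{\,-k}$; the content is that these partial definitions agree on overlaps, which in turn reduces to checking that $\Phi_\cQ\circ\Phi_{\cQ'}^{-1}$ intertwines the two realizations on the common subalgebra $\cK_t(\Cc'_{\cQ'})\cap\fD_t'^{\,-1}\cK_t(\Cc'_{\cQ'})$, i.e.\ on $\cK_t(\Cc'_{\le\xi'})\cap\cK_t(\Cc'_{\ge\,\cdot})$ type regions. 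This is exactly the kind of compatibility that the cluster-theoretic picture makes transparent: both $\Phi_\cQ$ and $\Phi_{\cQ'}$ factor through $\cA_t[N_-]$ and through the $\cA_\bfi$'s, and the forward-shift morphisms $\partial_+^*$ of Section~\ref{fshif} are precisely the algebraic incarnation of passing from $\cQ$ to $s_\im\cQ$ (hence eventually to $\fD^{-1}\cQ$); so the gluing is governed by Corollary~\ref{Cor:bftau} together with the fact (established in \S\ref{HLO}) that $\Phi_\cQ$ depends only on $\cQ$.

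The uniqueness part is comparatively soft: property~\eqref{FHOO:rest} pins down $\Psi$ on $\cK_t(\Cc'_{\cQ'})$, property~\eqref{FHOO:D} then propagates this to every $\fD_t'^{\,k}\cK_t(\Cc'_{\cQ'})$, and these exhaust $\cK_t(\Cc'_\Z)$; since $\Psi$ is an algebra homomorphism, its values on a generating set determine it. One should double-check that $\{F_t'(Y_{i,p})\}$ with $(i,p)\in\hI'_{\cQ'}$, together with their $\fD_t'$-shifts, generate $\cK_t(\Cc'_\Z)$ — this follows from Theorem~\ref{Thm:Ft}.

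For the final assertion — that $\Psi$ induces a bijection between the $(q,t)$-characters of simple modules, i.e.\ $\Psi(\bfL_{t,\fg'}) = \bfL_{t,\fg}$ — the plan is to use the characterization of the canonical basis in Theorem~\ref{Thm:qtch}: $L_t(m)$ is the unique bar-invariant element congruent to the ordered product $\prod F_t(Y_{i,p})^{u_{i,p}(m)}$ modulo $\sum_{m'<m} t\Z[t]L_t(m')$. Since $\Psi$ commutes with the bar involution (both sides are defined to be bar-compatible, as $\Phi_\cQ$ is — this should be recorded, tracing back through $\varphi_\bfi$ and $\eta_\bfi$, the latter compatible with bars by Corollary~\ref{Cor:teta}), it suffices to show that $\Psi$ sends the distinguished products $F_t'(m')$-style generators to the analogous products for the matched $m$, and that it respects (the relevant analogue of) the Nakajima order. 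On the seed subcategory $\cK_t(\Cc'_{\cQ'})$ this is already known from \cite{FHOO}: $\Phi_{\cQ'}$ matches $\bfL_{t,\cQ'}$ with the dual canonical basis $\tbfB$ (Theorem~\ref{Thm:CAN}(2)) and $\Phi_\cQ$ matches $\tbfB$ with $\bfL_{t,\cQ}$, so $\Psi(\bfL_{t,\cQ'}) = \bfL_{t,\cQ}$; then property~\eqref{FHOO:D} together with $\fD_t^{\pm1}L_t(m) = L_t(\fD^{\pm1}m)$ propagates the statement to all $\fD_t'^{\,k}$-translates, and these exhaust all simple modules in $\Cc'_\Z$. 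The main obstacle, as in the simply-laced-to-non-simply-laced comparison generally, is the \emph{well-definedness of the gluing}: one must verify that the two formulas for $\Psi$ on an overlap $\fD_t'^{\,k}\cK_t(\Cc'_{\cQ'}) \cap \fD_t'^{\,k+1}\cK_t(\Cc'_{\cQ'})$ genuinely coincide, which amounts to a commuting-square statement relating $\Phi_{\cQ}\circ\Phi_{\cQ'}^{-1}$ at two consecutive windows via the forward shift; this is where Lemma~\ref{Lem:tr}, Corollary~\ref{Cor:bftau}, and the independence of $\Phi_\cQ$ from the reduced word all have to be combined carefully. (Alternatively, one can cite \cite[\S10.3]{FHOO} for the existence and bijection-on-bases statement and present here only the cluster-theoretic reinterpretation; the theorem as stated is attributed there.)
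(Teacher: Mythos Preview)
This theorem is not proved in the present paper: it is stated with attribution to \cite[\S10.3]{FHOO} and used as input. So there is no ``paper's own proof'' to compare against here; your final parenthetical --- that one should simply cite \cite{FHOO} --- is precisely what the authors do.

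That said, a few remarks on your sketch as a standalone argument. Your strategy (define $\Psi$ on $\cK_t(\Cc'_{\cQ'})$ via $\Phi_\cQ\circ\Phi_{\cQ'}^{-1}$, then propagate by $\fD_t$-equivariance) is the right shape, and it is essentially how the map is built in \cite{FHOO}. However, the assertion $\cK_t(\Cc'_\Z) = \bigcup_{k\in\Z}\fD_t'^{\,k}\cK_t(\Cc'_{\cQ'})$ is false as written: each $\cK_t(\Cc'_{\fD'^k\cQ'})$ is a ``window'' containing only finitely many fundamental $(q,t)$-characters, and a product of elements from different windows need not lie in any single one. What is true is that the union \emph{generates} $\cK_t(\Cc'_\Z)$ as an algebra (since the $F_t(Y_{i,p})$ over all $(i,p)\in\hI'$ do, by Theorem~\ref{Thm:Ft}), and that is all you need for uniqueness. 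For existence, though, this means you cannot simply glue the partial maps set-theoretically; you must check that the assignment on generators extends to an algebra homomorphism, which is a genuinely nontrivial relations check. You correctly flag the overlap-compatibility as the main obstacle but do not resolve it; note also that invoking the cluster machinery of the present paper (Corollary~\ref{Cor:bftau}, \S\ref{fshif}) for this would be somewhat circular in spirit, since Theorem~\ref{Thm:FHOO} is logically prior to and an ingredient in the cluster-theoretic reinterpretation (Theorem~\ref{Thm:Psi=tau}).
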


Note that the isomorphism $\Psi$ restricts to the isomorphism
\[  \cK_t(\Cc'_{\le \fD^{\prime k}\xi'}) \simeq \cK_t(\Cc_{\le \fD^{k}\xi}) \]
which induces a bijection between the canonical bases $\bfL'_{t, \le \fD^{\prime k}\xi'}$ and $\bfL_{t, \le \fD^{k}\xi}$ for all $k\in \Z$.

We conclude this section by mentioning a related work by Kashiwara, Kim, Park and the third named author. Though this observation will not be used in this paper, it is of interest itself. 
%\begin{Rem} \label{Rem:braid}
%When $\fg$ is of type $\mathrm{ADE}$,
In \cite{KKOP21}, a collection of automorphisms $\{ \sigma_\im \}_{\im \in \Delta_0}$ is constructed on the localized quantum Grothendieck ring $\cK_t(\Cc_\Z)\otimes_{\Z[t^{\pm 1/2}]} \Q(t^{1/2})$, which satisfies the braid group relations.     
When $\fg = \fg'$ and $\cQ = s_{\im}\cQ'$ with $\im \in \Delta_0$ being a source of $\cQ'$, it is easy to see that the isomorphism $\Psi(\cQ, \cQ')$ in Theorem~\ref{Thm:FHOO} is identical to the automorphism $\sigma_{\im}$ after the localization.
In particular, we obtain the following. 

\begin{Prop}\label{Prop:braid}
The braid group symmetry given by the automorphisms $\{ \sigma_\im \}_{\im \in \Delta_0}$ in \cite{KKOP21} respects the canonical basis of $\cK_t(\Cc_\Z)$.
\end{Prop}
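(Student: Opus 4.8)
The plan is to realize each automorphism $\sigma_\im$ as (the localization of) one of the isomorphisms $\Psi$ of Theorem~\ref{Thm:FHOO}, whose compatibility with the canonical basis is already built into that theorem. So the proof is an assembly of results in hand, the only delicate point being the matching of the two constructions.

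Fix $\im \in \Delta_0$. First I would choose a Q-datum $\cQ' = (\Delta, \sigma, \xi')$ for $\fg$ (so $\fg' = \fg$) for which $\im$ is a source; such a Q-datum always exists, since by the description of sources in the proof of Lemma~\ref{Lem:ad} being a source of $\cQ'$ amounts to $(\im, \xi'_\im)$ being $\preceq$-minimal in $\hD_{\le \xi'}$, which one can arrange starting from any Q-datum satisfying \eqref{eq:xiep} and applying source reflections as in \cite{FO21}. Then $\cQ \seq s_\im \cQ'$ is a well-defined Q-datum for $\fg$, and by the discussion immediately preceding this proposition the automorphism $\sigma_\im$ of $\cK_t(\Cc_\Z) \otimes_{\Z[t^{\pm 1/2}]} \Q(t^{1/2})$ coincides with the scalar extension of $\Psi(\cQ, \cQ') \colon \cK_t(\Cc_\Z) \simeq \cK_t(\Cc_\Z)$. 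Now I invoke Theorem~\ref{Thm:FHOO}: $\Psi(\cQ, \cQ')$ is an isomorphism of $\Z[t^{\pm 1/2}]$-algebras from $\cK_t(\Cc_\Z)$ to itself which induces a bijection between the $(q,t)$-characters of simple modules, i.e.\ $\Psi(\cQ, \cQ')(\bfL_t) = \bfL_t$. Since $\sigma_\im$ restricts to $\Psi(\cQ, \cQ')$ on the unlocalized subalgebra $\cK_t(\Cc_\Z)$ of the localization, it follows that $\sigma_\im$ preserves $\cK_t(\Cc_\Z)$ and permutes $\bfL_t$. As $\im \in \Delta_0$ was arbitrary, every $\sigma_\im$ respects the canonical basis, which is the claim. (The braid relations among the $\sigma_\im$ are not needed here, being already established in \cite{KKOP21}.)

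The main obstacle is the identification $\sigma_\im = \Psi(s_\im\cQ', \cQ')$ after localization — the step called ``easy to see'' in the text. Making it rigorous requires unwinding the definition of the braid-type automorphism $\sigma_\im$ in \cite{KKOP21}, comparing it on the finite piece $\cK_t(\Cc_{\cQ'})$ with the composite $\Phi_\cQ \circ \Phi_{\cQ'}^{-1}$ that occurs in Theorem~\ref{Thm:FHOO}\eqref{FHOO:rest}, and then using the uniqueness in Theorem~\ref{Thm:FHOO} (characterized by \eqref{FHOO:rest} together with the $\fD_t$-compatibility \eqref{FHOO:D}) to promote that agreement to all of $\cK_t(\Cc_\Z)$. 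The existence of a Q-datum with a prescribed source deserves a sentence but is routine; once both points are in place, the conclusion follows formally as above.
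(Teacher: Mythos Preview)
Your proposal is correct and follows exactly the paper's own reasoning: the proposition is stated without a separate proof, as an immediate consequence of the sentence preceding it, namely that $\sigma_\im$ coincides (after localization) with $\Psi(s_\im\cQ',\cQ')$, combined with the last assertion of Theorem~\ref{Thm:FHOO}. You have simply made explicit the steps the paper leaves implicit, including the existence of a Q-datum with $\im$ as a source and the use of uniqueness in Theorem~\ref{Thm:FHOO} to justify the identification.
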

%\end{Rem}

\subsection{Cluster theoretical interpretation of the isomorphisms}\label{Ssec:CTinter}

Let us choose an infinite sequence $\bfi = (\im_u)_{u \in \N}$ (resp.~$\bfi' = (\im'_u)_{u \in \N}$) satisfying the condition~\eqref{eq:cond2} in Example~\ref{Ex:periodic} with respect to $\cQ$ (resp.~$\cQ'$).  
Recall the notation $\gamma_k$ and $\beta_k$ from \S\ref{cmov} and \S\ref{bmov} respectively.
We choose and fix a finite sequence $\bftau = (\tau_1, \ldots, \tau_l)$ in $\{ \gamma_{1}, \ldots, \gamma_{\ell -1} \}\cup \{ \beta_1, \ldots, \beta_{\ell-2}\}$ such that 
\[(\im'_1, \ldots. \im'_\ell) = \tau_1 \cdots \tau_l (\im_1, \ldots, \im_\ell).\] 
%By the skew-periodicity, we have $\bfi' = \hat{\tau}_1 \cdots \hat{\tau}_l \bfi$, where we put 
%\[ \hat{\gamma}_k \seq \prod_{n=0}^{\infty} \gamma_{k+n\ell}, \qquad  \hat{\beta}_k \seq \prod_{n=0}^{\infty} \beta_{k+n\ell}. \]
For each $n \in \N_0$, we have the $\Z[t^{\pm1/2}]
$-algebra isomorphism 
\[ \hat{\bftau}^{(n)*} \seq \hat{\tau_l}^{(n)*} \cdots \hat{\tau}_1^{(n)*} \colon \cA_{\bfi'}^{n\ell} \to \cA_{\bfi}^{n\ell},\]
where we set  
\[
\hat{\gamma}_k^{(n)*} \seq \gamma_{k}^* \gamma_{k +\ell}^* \cdots \gamma_{k+(n-1)\ell}^*,
\qquad 
\hat{\beta}_k^{(n)*} \seq \beta_{k}^* \beta_{k +\ell}^* \cdots \beta_{k+(n-1)\ell}^*.
\] 
Note that $\varepsilon_1^* \varepsilon_2^* = \varepsilon_2^*\varepsilon_1^*$ whenever $\varepsilon_i \in \{\gamma_{k+n_i\ell}\}_{k \in [1,\ell-1]} \cup \{\beta_{k+n_i\ell}\}_{k \in [1,\ell-2]}$ and $n_1 \neq n_2$.
Recall that we have $\cA_\bfi = \bigcup_n \cA_\bfi^{n\ell}$ and $\cA_{\bfi'} = \bigcup_n\cA_{\bfi'}^{n\ell}$.
Taking the inductive limit, we obtain the $\Z[t^{\pm1/2}]
$-algebra isomorphism
\[ \hat{\bftau}^* \seq \lim_{n \to \infty}\hat{\bftau}^{(n)*} \colon \cA_{\bfi'}\to \cA_\bfi. \] 
By construction, the isomorphism $\hat{\bftau}^*$ induces a bijection between the sets of quantum cluster monomials.

The following is one of the main theorems of this section and gives a cluster theoretical interpretation of the isomorphism $\Psi$. Our proof has two steps: we study first the result for subcategories $\Cc_\cQ$, and then we extend it to the categories $\Cc_{\le \xi}$ using the dualities $\fD_t$.

\begin{Thm} \label{Thm:Psi=tau}
In the situation described above, the following diagram commutes:
\begin{equation} \label{eq:diagT}
\vcenter{
\xymatrix{
\cA_{\bfi'} \ar[r]^-{\eta_{\bfi'}} \ar[d]_-{\hat{\bftau}^*}& \cK_t(\Cc'_{\le \xi'}) \ar[d]^-{\Psi}\\
\cA_{\bfi} \ar[r]^-{\eta_{\bfi}}& \cK_t(\Cc_{\le \xi}).
}}
\end{equation}
\end{Thm}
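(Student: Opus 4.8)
The plan is to reduce the statement, via the duality automorphisms $\fD_t$, to the analogous commutativity statement for the finite subcategories $\Cc_\cQ$ and $\Cc'_{\cQ'}$, where we can appeal to the results of Section~\ref{secthree}. Concretely, I would first restrict the diagram \eqref{eq:diagT} to the finite-level subalgebras $\cA_t(\tB_{\bfi'}, \Lambda_{\bfi'}) \subset \cA_{\bfi'}$ and $\cA_t(\tB_\bfi, \Lambda_\bfi) \subset \cA_\bfi$. On this level, by the discussion in \S\ref{HLO}, $\eta_{\bfi}$ and $\eta_{\bfi'}$ become the isomorphisms onto $\cK_t(\Cc_\cQ)$ and $\cK_t(\Cc'_{\cQ'})$, and $\hat{\bftau}^*$ restricted to $\cA_t(\tB_{\bfi'}, \Lambda_{\bfi'})$ is precisely $\bftau^*$ from Corollary~\ref{Cor:bftau} (the higher factors $\gamma_{k+j\ell}^*$, $\beta_{k+j\ell}^*$ with $j \ge 1$ fix the variables $X_u$ with $u \le \ell$). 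Using Corollary~\ref{Cor:bftau}, $\bftau^* = \varphi_\bfi \circ \varphi_{\bfi'}^{-1}$, so that $\eta_\bfi \circ \hat{\bftau}^* = \eta_\bfi \circ \varphi_\bfi \circ \varphi_{\bfi'}^{-1} = \Phi_\cQ \circ \Phi_{\cQ'}^{-1} \circ \eta_{\bfi'}$ on this subalgebra; by Theorem~\ref{Thm:FHOO}\eqref{FHOO:rest}, $\Phi_\cQ \circ \Phi_{\cQ'}^{-1}$ is exactly $\Psi$ restricted to $\cK_t(\Cc'_{\cQ'})$. This establishes the commutativity of \eqref{eq:diagT} when both vertical and horizontal maps are restricted to the $\Cc_\cQ$-level.

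Next I would bootstrap from $\cK_t(\Cc_\cQ)$ to $\cK_t(\Cc_{\le \xi})$. The key is that $\cK_t(\Cc_{\le \xi}) = \bigcup_{k \ge 0} \fD_t^{-k}\cK_t(\Cc_{\le \fD^{k}\xi})$ — more precisely, any element of $\cK_t(\Cc_{\le \xi})$ lies in $\fD_t^{-k}\cK_t(\Cc_{\fD^k \cQ})$ for $k$ large enough, since $\cM_{\le \xi} = \bigcup_k \fD^{-k}\cM_{\fD^k\cQ}$ and $\fD_t^{-k}$ sends $F_t(m)$ to $F_t(\fD^{-k}m)$. On the cluster side, there is the matching statement that $\cA_{\bfi}$ is exhausted by the images $(\partial_+^*)^{k\ell}(\cA_t(\tB_{\bfi_k}, \Lambda_{\bfi_k}))$ for suitable reduced-word sequences $\bfi_k$ adapted to $\fD^k\cQ$, as used in the proof of Theorem~\ref{Thm:qcl}. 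So I would show: (a) $\Psi$ intertwines $\fD_t'$ with $\fD_t$ (this is Theorem~\ref{Thm:FHOO}\eqref{FHOO:D}); (b) $\hat{\bftau}^*$ is compatible with the forward-shift maps $\partial_+^*$ in an appropriate sense, i.e.\ $\hat{\bftau}^* \circ (\partial_+^*)^{\ell} = (\partial_+^*)^{\ell} \circ \hat{\bftau}^{*}_{\text{(shifted)}}$ where the shifted sequence $\bftau_{\text{shifted}}$ is the one governing the change of reduced words for $\fD\cQ$ versus $\fD\cQ'$; and (c) $\eta_\bfi \circ (\partial_+^*)^\ell = \fD_t^{-1} \circ \eta_{\bfi_1}$ on the relevant subalgebra, which follows from \eqref{eq:claim} in the proof of Theorem~\ref{Thm:qcl} together with $\fD^{\pm1}_t L_t(m)=L_t(\fD^{\pm1}m)$. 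Combining (a), (b), (c) with the already-established $\Cc_\cQ$-level commutativity and the fact that both $\Psi$ and $\eta_\bfi \circ \hat{\bftau}^* \circ \eta_{\bfi'}^{-1}$ are algebra homomorphisms determined by their values on a generating set, the full diagram \eqref{eq:diagT} commutes.

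Alternatively — and this may be cleaner — one can argue directly that both composites $\Psi \circ \eta_{\bfi'}$ and $\eta_\bfi \circ \hat{\bftau}^*$ are $\Z[t^{\pm 1/2}]$-algebra homomorphisms $\cA_{\bfi'} \to \cK_t(\Cc_{\le \xi})$, so it suffices to check they agree on a set of algebra generators of $\cA_{\bfi'}$. The initial cluster variables $X_u$ of $\cA_{\bfi'}$ are mapped by $\eta_{\bfi'}$ to the monomials $\ul{m^{(\im'_u)}[p, \xi'_{\im'_u}]}$ (truncated), hence to $F_t^{(\im'_u)}[p,\xi'_{\im'_u}]$ in $\cK_t(\Cc'_{\le \xi'})$, i.e.\ $(q,t)$-characters of KR modules; by \eqref{eq:claim} the same holds on the $\cA_\bfi$ side after $\hat{\bftau}^*$. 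Since $\Psi$ is characterized by sending $(q,t)$-characters of simple modules to $(q,t)$-characters of simple modules (Theorem~\ref{Thm:FHOO}) and — via the $\fD_t$-equivariance and the $\Cc_\cQ$-level identification with $\Phi_\cQ \circ \Phi_{\cQ'}^{-1}$ — its effect on KR $(q,t)$-characters is pinned down, one matches the two sides on generators. \textbf{The main obstacle} I expect is bookkeeping: controlling how the sequence $\bftau$ of commutation and braid moves interacts with the forward shift $\partial_+$ as one passes from $\cQ,\cQ'$ to $\fD\cQ,\fD\cQ'$, since the reduced word adapted to $\fD\cQ$ is the $*$-shift of the one adapted to $\cQ$ (Proposition~\ref{Prop:DQ}), and one must verify that the ``periodized'' move sequence $\hat{\bftau}$ is genuinely compatible with this shifting — essentially a consistency check that the transformations $\gamma_k,\beta_k,\partial_+$ at different periods commute appropriately, which is implicit in the construction of $\hat{\bftau}^*$ but needs to be spelled out together with the degree/$g$-vector tracking from Lemmas~\ref{Lem:cg}, \ref{Lem:bg}, \ref{Lem:dg} and the cone-preservation of Lemma~\ref{Lem:cone}.
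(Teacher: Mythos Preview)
Your proposal is correct and follows essentially the same strategy as the paper. The paper's organizational simplification is worth noting: rather than bootstrapping by exhaustion, it sets $\bT \seq \eta_\bfi \circ \hat{\bftau}^* \circ \eta_{\bfi'}^{-1}$ and uses the \emph{uniqueness} clause in Theorem~\ref{Thm:FHOO} to reduce to checking that $\bT$ satisfies the two characterizing properties of $\Psi$ --- restriction to $\cK_t(\Cc'_{\cQ'})$ (your first paragraph, handled exactly as you say via Corollary~\ref{Cor:bftau}) and $\fD_t$-equivariance. For the latter, the paper proves the cluster-side identity $\eta_\bfi \circ \nu \circ (\partial_+^*)^\ell = \fD_t^{-1} \circ \eta_\bfi$ (your (c)) and the commutation $\hat{\bftau}^* \nu (\partial_+^*)^\ell x = \nu(\partial_+^*)^\ell \hat{\bftau}^* x$ (your (b)), but the latter is established only for cluster monomials $x$ whose degree lies in the cone $C_{\bfi'}$, not as an operator identity; this suffices because the generators $\eta_{\bfi'}^{-1}F_t(Y_{i,p})$ have degree $\bfe_u - \bfe_{u^-}$ in $C_{\bfi'}$ by Lemma~\ref{Lem:degf}. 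Both identities are verified by matching $g$-vectors via Lemmas~\ref{Lem:dg}, \ref{Lem:cg}, \ref{Lem:bg}, \ref{Lem:cone} and invoking Theorem~\ref{Thm:gvsx}. So your anticipated ``main obstacle'' is exactly right, and the tools you list are the ones the paper uses; just be aware that the commutation in (b) is proved elementwise on the cone rather than globally.
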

\begin{proof}
Let $\bT \seq \eta_{\bfi} \circ \hat{\bftau}^* \circ \eta_{\bfi'}^{-1}$. 
We have to show $\bT = \Psi$.
By the characterization of the isomorphism $\Psi$ in Theorem~\ref{Thm:FHOO}, it suffices to prove 
\begin{enumerate}
\item \label{claim1} $\bT = \Psi$ after restricted to $\cK_t(\Cc'_{\cQ'})$, and 
\item \label{claim2} $\bT \circ \fD_t^{\prime-1} = \fD_t^{-1} \circ \bT$.
\end{enumerate}  
Restricting \eqref{eq:diagT} to $\cA_{\bfi'}^{\ell} \simeq \cA_t(\tB_{\bfi'}, \Lambda_{\bfi'})$, we obtain
\begin{equation} \label{eq:diagtau} 
\vcenter{
\xymatrix{
\cA_t(\tB_{\bfi'}, \Lambda_{\bfi'}) \ar[rr]^-{\eta_{\bfi'}} \ar[dd]_-{{\bftau}^*} \ar[dr]^-{\varphi_{\bfi'}} & & \cK_t(\Cc'_{\cQ'})  \ar[dd]^-{\Psi} \\
& \cA_t[N_-] \ar[ru]^-{\Phi_{\cQ'}} \ar[rd]_-{\Phi_{\cQ}} & \\
\cA_t(\tB_{\bfi}, \Lambda_{\bfi}) \ar[ru]_-{\varphi_{\bfi}} \ar[rr]_-{\eta_{\bfi}} & & \cK_t(\Cc_\cQ). 
}}
\end{equation}
We know that all of the 4 small triangles in the diagram \eqref{eq:diagtau} commute thanks to Corollary~\ref{Cor:bftau}, the discussion in \S\ref{HLO}, and Theorem~\ref{Thm:FHOO}~\eqref{FHOO:rest}.
Thus, the large square in \eqref{eq:diagtau} also commutes, which proves \eqref{claim1}.

Next, we shall prove \eqref{claim2}.
Note that $\bfi = \partial_+^\ell \bfi^*$ with $\bfi^* = (\im_u^*)_{u \in \N}$.
Since $\tB_\bfi = \tB_{\bfi^*}$ and $\Lambda_\bfi = \Lambda_{\bfi^*}$, there is the obvious isomorphism $\nu \colon \cA_{\bfi^*} \simeq \cA_{\bfi}$ identifying $X_u$ for all $u \in \N$.
Then, we have the identity
\begin{equation} \label{eq:D=dl}
\eta_\bfi \circ \nu \circ (\partial^*_+)^\ell = \fD_t^{-1} \circ \eta_\bfi
\end{equation} 
which relates the homomorphisms $(\partial^*_+)^\ell \colon \cA_{\bfi} \to \cA_{\bfi^*}$ and $\fD_t^{-1} \colon \cK_t(\Cc_{\le \xi}) \to \cK_t(\Cc_{\le \xi})$.
Indeed, by Lemmas~\ref{Lem:dg} \& \ref{Lem:degf} and Proposition~\ref{Prop:DQ}, we see that the cluster variables
\[\eta_{\bfi}^{-1} \fD_t^{-1} F_t(Y_{i,p})\quad \text{and} \quad \nu(\partial_+^*)^\ell \eta_{\bfi}^{-1} F_t(Y_{i,p})\] 
share the same degree, and hence they coincide for all $(i,p) \in \hI_{\le \xi}$ (see Theorem~\ref{Thm:gvsx}). 
Since the elements $\{F_t(Y_{i,p})\}_{(i,p) \in \hI_{\le \xi}}$ generate the $\Z[t^{\pm 1/2}]$-algebra $\cK_t(\Cc_{\le \xi})$, we get \eqref{eq:D=dl}.
In the same way, we have the analogous identity for $\bfi'$. 
In addition, for any cluster monomial $x \in \cA_{\bfi'}$ whose degree belongs to the cone $C_{\bfi'}$ (see~\eqref{eq:cone} for its definition), we have  
\begin{equation} \label{eq:taudl}
\hat{\bftau}^{*} \nu(\partial_+^*)^\ell x = \nu(\partial_+^*)^\ell \hat{\bftau}^* x. 
\end{equation}
Indeed, the degrees of both sides of \eqref{eq:taudl} coincide by Lemmas~\ref{Lem:dg} \& \ref{Lem:cone} and the definition of $\hat{\bftau}^*$. 
Now, we have
\begin{align*}
\bT \fD_t^{-1} F_t(Y_{i,p}) &=\eta_{\bfi}\hat{\bftau}^* \eta_{\bfi'}^{-1}\fD_t^{\prime -1} F_t(Y_{i,p}) && \allowdisplaybreaks \\
&=  \eta_{\bfi}\hat{\bftau}^* \nu(\partial_+^*)^\ell \eta_{\bfi'}^{-1} F_t(Y_{i,p}) && \text{by \eqref{eq:D=dl} for $\bfi'$} \allowdisplaybreaks \\
&= \eta_{\bfi}\nu(\partial_+^*)^\ell \hat{\bftau}^* \eta_{\bfi'}^{-1} F_t(Y_{i,p}) && \text{by \eqref{eq:taudl}} \allowdisplaybreaks \\
&= 
\fD_t^{-1} \eta_{\bfi} \hat{\bftau}^* \eta_{\bfi'}^{-1} F_t(Y_{i,p}) && \text{by \eqref{eq:D=dl} for $\bfi$} \allowdisplaybreaks \\
&= \fD_t^{-1} \bT F_t(Y_{i,p})
\end{align*}
for any $(i,p) \in \hI_{\le \xi'}$. 
Since the elements $\{F_t(Y_{i,p})\}_{(i,p) \in \hI_{\le \xi}}$ generate the $\Z[t^{\pm 1/2}]$-algebra $\cK_t(\Cc_{\le \xi})$, we obtain the claim \eqref{claim2}. 
\end{proof}

\begin{Cor} \label{Cor:Psi}
Let $\cQ = (\Delta, \sigma, \xi)$ and $\cQ' = (\Delta', \sigma', \xi')$ be two Q-data such that $\Delta = \Delta'$ as above.
For any sequences $\bfi, \bfi' \in \Delta_0^\N$ adapted to $\cQ$ and $\cQ'$ respectively, the $\Z[t^{\pm 1/2}]$-algebra isomorphism 
$\eta_\bfi^{-1} \circ \Psi(\cQ, \cQ') \circ \eta_{\bfi'}$ induces a bijection between the set of quantum cluster monomials in $\cA_{\bfi'}$ and the set of quantum cluster monomials in $\cA_\bfi$.
\end{Cor}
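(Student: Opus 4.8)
The plan is to deduce Corollary~\ref{Cor:Psi} from Theorem~\ref{Thm:Psi=tau} by reducing the general case of arbitrary adapted sequences $\bfi,\bfi'$ to the case of the periodic sequences satisfying \eqref{eq:cond2} that were used in the statement of Theorem~\ref{Thm:Psi=tau}. First I would fix, alongside the given adapted sequences $\bfi$ (adapted to $\cQ$) and $\bfi'$ (adapted to $\cQ'$), two auxiliary sequences $\bfj$ and $\bfj'$ satisfying the condition \eqref{eq:cond2} with respect to $\cQ$ and $\cQ'$ respectively; these exist by Example~\ref{Ex:periodic}. By Lemma~\ref{Lem:adce}, $\bfi$ and $\bfj$ are commutation-equivalent (via $\rho_{\bfj}^{-1}\circ\rho_{\bfi}$), and likewise $\bfi'$ and $\bfj'$; hence by Proposition~\ref{Prop:comm-eq} there are isomorphisms $\pi^*\colon \cA_{\bfi}\simeq\cA_{\bfj}$ and $\pi'^*\colon \cA_{\bfi'}\simeq\cA_{\bfj'}$, each inducing a bijection between the respective sets of quantum cluster monomials.

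The second step is to check that these commutation-equivalence isomorphisms are compatible with the isomorphisms $\eta_{\bullet}$ to the quantum Grothendieck rings, i.e.\ that $\eta_{\bfj}\circ\pi^* = \eta_{\bfi}$ as maps $\cA_{\bfi}\to\cK_t(\Cc_{\le\xi})$, and similarly for the primed data. This follows from the uniqueness clause in Theorem~\ref{Thm:qcl}: both $\eta_{\bfi}$ and $\eta_{\bfj}\circ\pi^*$ are $\Z[t^{\pm1/2}]$-algebra isomorphisms $\cA_{\bfi}\simeq\cK_t(\Cc_{\le\xi})$ fitting into the commuting square \eqref{eq:diagT0}, because $\teta_{\bfj}$ composed with the quantum-torus permutation induced by $\pi$ equals $\teta_{\bfi}$ — this last equality is immediate from the definition of $\teta_{\bullet}$ in Corollary~\ref{Cor:teta} together with Proposition~\ref{Prop:L=N} (or directly from the fact that commutation-equivalent adapted sequences induce the same bijection $\rho$ up to the permutation $\pi$, so the generators $\ul{m^{(\im)}[p,\xi_\im]}$ are matched). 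Thus the outer squares involving $\eta$ collapse the ambiguity in the choice of adapted sequence.

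The third step assembles the diagram. Combining the two compatibility squares with the commutative square of Theorem~\ref{Thm:Psi=tau} applied to $\bfj,\bfj'$ (with the appropriate $\hat{\bftau}^*$), one obtains a commutative square
\[
\xymatrix{
\cA_{\bfi'} \ar[r]^-{\eta_{\bfi'}} \ar[d] & \cK_t(\Cc'_{\le\xi'}) \ar[d]^-{\Psi(\cQ,\cQ')} \\
\cA_{\bfi} \ar[r]^-{\eta_{\bfi}} & \cK_t(\Cc_{\le\xi})
}
\]
in which the left vertical arrow is $(\pi^*)^{-1}\circ\hat{\bftau}^*\circ\pi'^*$, a composition of maps each of which sends quantum cluster monomials bijectively to quantum cluster monomials (for $\hat{\bftau}^*$ this is part of the construction in \S\ref{Ssec:CTinter}; for $\pi^*,\pi'^*$ it is Proposition~\ref{Prop:comm-eq}). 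Therefore $\eta_\bfi^{-1}\circ\Psi(\cQ,\cQ')\circ\eta_{\bfi'}$ equals this left vertical arrow and hence induces the desired bijection between quantum cluster monomials, which is the assertion.

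I expect the only real point requiring care — and thus the ``main obstacle,'' though a mild one — to be the second step: verifying that the commutation-equivalence isomorphism $\pi^*$ is genuinely compatible with $\eta_\bullet$ on the nose (not merely up to a power of $t^{1/2}$), so that the diagram commutes strictly. This is handled by invoking the uniqueness in Theorem~\ref{Thm:qcl} rather than by a direct computation, but one must make sure the hypotheses of that uniqueness (commutativity of \eqref{eq:diagT0} with the permuted $\teta$) are genuinely met; the bar-involution compatibility recorded in Corollary~\ref{Cor:teta} and the $t$-free normalization in Proposition~\ref{Prop:y-hat} are what rule out a spurious $t^{1/2}$-twist. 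Everything else is a formal diagram chase.
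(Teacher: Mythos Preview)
Your proposal is correct and follows essentially the same approach as the paper: reduce to the periodic sequences of Example~\ref{Ex:periodic} via Lemma~\ref{Lem:adce} and Proposition~\ref{Prop:comm-eq}, then invoke Theorem~\ref{Thm:Psi=tau}. You spell out more carefully than the paper does the compatibility $\eta_{\bfj}\circ\pi^*=\eta_{\bfi}$ (which the paper leaves implicit), and your verification via the uniqueness clause in Theorem~\ref{Thm:qcl} is exactly right---though the invocation of Proposition~\ref{Prop:y-hat} at the end is unnecessary, since $\teta_{\bfi}\circ\pi^*=\teta_{\bfj}$ already holds on the nose at the level of generators.
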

\begin{proof}
In view of Proposition~\ref{Prop:comm-eq}, Lemma~\ref{Lem:adce} and Example~\ref{Ex:periodic}, it is enough to prove the assertion when both $\bfi$ and $\bfi'$ are satisfying the condition~\eqref{eq:cond2}.
In this case, it is immediate from Theorem~\ref{Thm:Psi=tau}.
\end{proof}

\subsection{Quantization of monoidal categorification theorem}

Let $\cQ = (\Delta, \sigma, \xi)$ be a Q-datum for $\fg$, and $\bfi \in \Delta_0^\N$ a sequence adapted to $\cQ$.
Let $\bar{\cA}_\bfi \seq\bar{\cA}(\tB_\bfi)$ denote the (classical) cluster algebra  associated with $\tB_\bfi$, which comes with the evaluation map $\evt \colon \cA_\bfi \to \bar{\cA}_\bfi$.   
Specializing $t$ to $1$, in Theorem~\ref{Thm:qcl}, we obtain the isomorphism of commutative algebras $\bar{\eta}_{\bfi} \colon \bar{\cA}_\bfi \to K(\Cc_{\le \xi})$. 
For example, this is the isomorphism of \cite{HL16} for the category $\Cc^-$. 
Precisely, we define $\bar{\eta}_\bfi$ to be the unique isomorphism which makes the following diagram commute:
\begin{equation} \label{eq:comm-ev}
\vcenter{
\xymatrix{
\cA_\bfi \ar[r]^-{\eta_{\bfi}} \ar[d]_-{\evt}& \cK_t(\Cc_{\le \xi})  \ar@{^{(}->}[r]& \cY_t \ar[d]^-{\evt}\\
\bar{\cA}_\bfi \ar[r]^-{\bar{\eta}_\bfi}& K(\Cc_{\le \xi}) \ar@{^{(}->}[r]^-{\chi_q}& \cY.
}}
\end{equation}

Here, we remind the important monoidal categorification theorem established by Kashiwara, Kim, Park, and the third named author.

\begin{Thm}[{\cite[\S8]{KKOP2}}]
The isomorphism $\bar{\eta}_\bfi$ sends each cluster monomial of the cluster algebra $\bar{\cA}_\bfi$ to the class of a simple object of the category $\Cc_{\le \xi}$.
\end{Thm}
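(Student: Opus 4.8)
The statement is, up to a change of notation, the monoidal categorification theorem of \cite[\S8]{KKOP2}, so the plan is to record the dictionary that puts the isomorphism $\bar{\eta}_\bfi$ of \eqref{eq:comm-ev} into their framework and then quote their result. By Lemma~\ref{Lem:quiverisom} the bijection $\bar{\rho}_\bfi \colon \N \to \hI_{\le\xi}$ identifies $\Gamma_\bfi$ with the quiver $G_{\le\xi}$ of \cite{HL16}; when $-2d_{\bar{\im}} < \xi_\im \le 0$ for all $\im \in \Delta_0$ this is the quiver $G^-$ and $\Cc_{\le\xi} = \Cc^-$ (Remark~\ref{Rem:HL}), which is precisely the setting of \cite[\S8]{KKOP2}. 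The general case of an arbitrary Q-datum is covered by the same arguments of \cite{KKOP2} — their Proposition~7.27, recalled here as Lemma~\ref{Lem:quiverisom}, is already stated at that level of generality — and, if one prefers, it can also be reduced to a standard height function by a sequence of source reflections $s_\im$ as in \S\ref{Ssec:Cox}, together with the duality functor $\fD^{-1}$, which satisfies $\fD^{-1}\Cc_{\le\xi} = \Cc_{\le\fD^{-1}\xi}$.

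The key point to check is that the isomorphism $\bar{\eta}_\bfi$ coincides with the categorification isomorphism of \cite{KKOP2} under this dictionary. First I would observe that $K(\Cc_{\le\xi})$ is the polynomial ring generated by the classes $[L(Y_{i,p})]$ with $(i,p) \in \hI_{\le\xi}$, and that by Theorem~\ref{Thm:qcl} specialised at $t=1$, together with \cite[Theorem 5.1]{HL16}, each such fundamental class is a cluster variable of $\bar{\cA}_\bfi$ obtained from the initial seed by finitely many mutations; hence an algebra isomorphism out of $\bar{\cA}_\bfi$ is \emph{determined} by its values on the initial cluster variables. By the construction of $\eta_\bfi$ in Theorem~\ref{Thm:qcl} and Lemma~\ref{Lem:qKR} we have $\eta_\bfi(X_u) = F_t^{(\im)}[p,\xi_\im]_{\le\xi} = \ul{m^{(\im)}[p,\xi_\im]}$ with $(\im,p) = \rho_\bfi(u)$; since the truncation $(\cdot)_{\le\xi}$ is injective on $\cK_t(\Cc_{\le\xi})$, evaluating at $t=1$ gives $\bar{\eta}_\bfi(X_u) = [L^{(\im)}[p,\xi_\im]]$, the class of the Kirillov--Reshetikhin module attached to the vertex $u$ of $G_{\le\xi}$. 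This is the initial seed used in \cite{KKOP2}, so $\bar{\eta}_\bfi$ agrees with their categorification isomorphism, and \cite[\S8]{KKOP2} then yields that every cluster monomial of $\bar{\cA}_\bfi$ is mapped to the class of a simple object of $\Cc_{\le\xi}$.

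The main obstacle, in a genuinely self-contained proof, is the input being quoted: one must show that every cluster variable of $\bar{\cA}_\bfi$ is the class of a prime real simple $U_q(L\fg)$-module and that each exchange relation lifts to a short exact sequence of modules (the defining property of a monoidal categorification). This rests on the whole $R$-matrix machinery of Kashiwara--Kim--Oh--Park and is exactly the content of \cite[\S8]{KKOP2}, which we take as given. (By contrast, the analogous \emph{quantum} statement, Theorem~\ref{Thm:main}, is what requires the new work of the present paper, via the cluster-theoretic interpretation of the isomorphisms $\Psi$.)
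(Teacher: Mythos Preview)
Your proposal is correct and matches what the paper does: the theorem is stated purely as a citation of \cite[\S8]{KKOP2}, with no proof given, and your write-up simply spells out the dictionary (via Lemma~\ref{Lem:quiverisom} and the initial-seed identification $\bar{\eta}_\bfi(X_u)=[L^{(\im)}[p,\xi_\im]]$) that the paper leaves implicit. One small notational slip: $\eta_\bfi$ lands in $\cK_t(\Cc_{\le\xi})$, so you should write $\eta_\bfi(X_u)=F_t^{(\im)}[p,\xi_\im]$ rather than its truncation $F_t^{(\im)}[p,\xi_\im]_{\le\xi}$; the conclusion $\bar{\eta}_\bfi(X_u)=[L^{(\im)}[p,\xi_\im]]$ then follows from Proposition~\ref{Prop:KRF}.
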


Now, we shall prove a quantum analog of this result.  

\begin{Thm} \label{Thm:main}
The isomorphism $\eta_\bfi$ sends each quantum cluster monomial of the quantum cluster algebra $\cA_\bfi$ to an element of the canonical basis $\bfL_{t, \le \xi}$ of the quantum Grothendieck ring $\cK_t(\Cc_{\le \xi})$. 
More precisely, we have $\eta_\bfi(x) = L_t(m)$ for any quantum cluster monomial $x \in \cA_\bfi$, where $m \in \cM_{\le \xi}$ is the unique dominant monomial such that $\bar{\eta}_\bfi(\evt(x)) = [L(m)]$.
\end{Thm}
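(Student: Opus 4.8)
The plan is to prove Theorem~\ref{Thm:main} by reducing it to the two facts we have in hand: first, the classical monoidal categorification theorem of \cite{KKOP2} which identifies $\bar\eta_\bfi$-images of cluster monomials with classes of simple modules $[L(m)]$ with $m \in \cM_{\le\xi}$; and second, the fact that a bar-invariant element of $\cK_t(\Cc_{\le\xi})$ whose specialization at $t=1$ is $\chi_q(L(m))$ and whose leading term (in the Nakajima order, after truncation) is the commutative monomial $\ul{m}$ is forced to be $L_t(m)$ by the characterization in Theorem~\ref{Thm:qtch} together with \eqref{eq:Lt}. So the heart of the argument is: take a quantum cluster monomial $x \in \cA_\bfi$, let $m$ be the dominant monomial with $\bar\eta_\bfi(\evt(x)) = [L(m)]$ (which exists and is unique by the \cite{KKOP2} theorem), and show $\eta_\bfi(x) = L_t(m)$.

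First I would establish that $\eta_\bfi(x)$ is bar-invariant. This is immediate because $\eta_\bfi$ intertwines the bar involution on $\cA_\bfi$ (every quantum cluster monomial is bar-invariant, by the general theory of quantum cluster algebras recalled in Appendix~\ref{sec:QCA}) with the bar involution on $\cK_t(\Cc_{\le\xi}) \subset \cY_t$, via the diagram~\eqref{eq:diagT0} and the compatibility $\teta_\bfi \circ \ol{(\cdot)} = \ol{(\cdot)} \circ \teta_\bfi$ from Corollary~\ref{Cor:teta}. Next, by the commutative diagram~\eqref{eq:comm-ev}, we have $\evt(\eta_\bfi(x)) = \chi_q(\bar\eta_\bfi(\evt x)) = \chi_q(L(m))$. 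Since $\chi_q$ is injective and $\chi_q(L(m))$ has $m$ as its unique dominant monomial and all other monomials $< m$ by \eqref{eq:L}, it follows that $\eta_\bfi(x)$, viewed inside $\cY_t$, has the property that $\ul{m}$ is (up to a power of $t^{1/2}$) the unique ``dominant'' monomial appearing, and all others specialize to monomials $< m$. Bar-invariance then pins down the coefficient of $\ul{m}$ to be exactly $1$ (no extra $t$-power): an element of the form $\ul{m} + \sum_{m' < m} a_t[m';\cdot]\ul{m}'$ that is bar-invariant must have bar-invariant leading coefficient, hence $1$.

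The remaining point is to identify such an element with $L_t(m)$ rather than merely with $F_t(m)$ or some other bar-invariant lift. Here I would invoke the uniqueness clause of Theorem~\ref{Thm:qtch}: the set $\bfL_{t,\le\xi} = \{L_t(m') : m' \in \cM_{\le\xi}\}$ is a $\Z[t^{\pm 1/2}]$-basis of $\cK_t(\Cc_{\le\xi})$, and writing $\eta_\bfi(x) = \sum_{m'} c_{m'}(t) L_t(m')$, bar-invariance of both sides plus the triangularity \eqref{eq:Lt} of $L_t(m')$ with respect to the $\ul{m}'$ forces $c_m(t) = 1$ and $c_{m'}(t) = 0$ unless $m' < m$; then an induction on the Nakajima order (using that $\evt(\eta_\bfi(x)) = \chi_q(L(m))$ and that $\chi_q(L(m)) = \sum c_{m'}(1)\chi_q(L(m'))$ forces all lower $c_{m'}(1) = 0$, combined once more with bar-invariance to upgrade $c_{m'}(1)=0$ to $c_{m'}(t)=0$) yields $\eta_\bfi(x) = L_t(m)$. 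The step I expect to be the main obstacle is this last ``upgrade'' from $c_{m'}(1) = 0$ to $c_{m'}(t) = 0$: a priori a nonzero bar-invariant Laurent polynomial in $t^{1/2}$ can vanish at $t=1$, so one genuinely needs the bar-invariance of $\eta_\bfi(x)$ together with the structure of $\bfL_{t,\le\xi}$ (specifically that the transition matrix between $\{L_t(m')\}$ and the commutative monomials is bar-invariantly unitriangular) to rule this out --- this is exactly the kind of argument used in the classical Kazhdan--Lusztig setting and it should go through verbatim here, but it is the place where the proof is not purely formal. An alternative, possibly cleaner route to the same conclusion would be to first prove it for $\cQ$-categories $\Cc_\cQ$ using the HLO isomorphism $\Phi_\cQ = \eta_\bfi \circ \varphi_\bfi^{-1}$ and the known compatibility of $\varphi_\bfi$ with the dual canonical basis (Theorem~\ref{Thm:CAN}(2)), which already tells us cluster monomials map to $\tbfB$, then transport the dual-canonical-basis/$(q,t)$-character correspondence established in \cite{FHOO} for $\Cc_\cQ$, and finally bootstrap to all of $\Cc_{\le\xi}$ via the dualities $\fD_t^{\pm 1}$ and the identity~\eqref{eq:D=dl} exactly as in the proof of Theorem~\ref{Thm:Psi=tau}; I would likely present the argument in that two-step form to keep each step anchored in an already-proved statement.
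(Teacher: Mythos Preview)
Your main argument has a genuine gap at exactly the point you flag as ``the main obstacle'': the upgrade from $c_{m'}(1)=0$ to $c_{m'}(t)=0$. Bar-invariance of $\eta_\bfi(x)$ and of each $L_t(m')$ only gives that each coefficient $c_{m'}(t)$ is bar-invariant, i.e.\ $c_{m'}(t^{-1})=c_{m'}(t)$. But a bar-invariant element of $\Z[t^{\pm 1/2}]$ can certainly vanish at $t=1$ (e.g.\ $t+t^{-1}-2$). The classical Kazhdan--Lusztig uniqueness argument that you invoke does \emph{not} use only bar-invariance: it uses the crucial degree condition that off-diagonal entries lie in $t\Z[t]$. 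You have no such condition on the expansion of $\eta_\bfi(x)$ in the $L_t(m')$-basis, so the argument does not go through.

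The paper's proof is structurally different and supplies the missing ingredient through \emph{positivity}. For types $\mathrm{ABDE}$, where Conjectures~\ref{Conj:KL} and~\ref{Conj:KRF} are already known, one proceeds by induction on the distance from the initial cluster: given an exchange relation $xx'=t^{a_1}y_1+t^{a_2}y_2$ with $\eta_\bfi(x')=L_t(m')$ and $\eta_\bfi(y_k)=L_t(m_k)$ by induction, one writes $L_t(m)L_t(m')=\sum_{m''} c_{m''}(t)L_t(m'')$ with $c_{m''}(t)\in\N_0[t^{\pm 1/2}]$ by Theorem~\ref{Thm:pos}, specializes at $t=1$, and compares with the classical exchange relation $[L(m)][L(m')]=[L(m_1)]+[L(m_2)]$. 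Positivity then forces $c_{m''}(t)$ to be a single power of $t$ when $m''\in\{m_1,m_2\}$ and zero otherwise, so $L_t(m)$ satisfies the same bar-invariant characterization as $\eta_\bfi(x)$. For types $\mathrm{CFG}$ one then transports the result from the unfolded simply-laced type via Corollary~\ref{Cor:Psi} and Theorem~\ref{Thm:FHOO}. Your alternative route through $\Cc_\cQ$ and the HLO isomorphism is close in spirit to this second step, but bootstrapping from $\cA_\bfi^\ell$ to all of $\cA_\bfi$ via $\fD_t$ alone is not sufficient: an arbitrary cluster monomial in $\cA_\bfi$ need not be a $\fD_t$-shift of one in $\cA_\bfi^\ell$, and the paper instead handles the extension by the inductive exchange-relation argument above.
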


\begin{proof} Our proof has two main steps: first we prove the result for simply-laced types, and then we use a (un)folding argument for general types which is based on the new ingredients discussed above. As the first step also works directly for type $\mathrm{B}$, it is also included.

So first, we discuss the case when $\fg$ is of type $\mathrm{ABDE}$.
In this case, we know that Conjectures~\ref{Conj:KL} \& \ref{Conj:KRF} are true.
In particular, it follows that, for any $m_1,m_2 \in \cM$, the tensor product module $L(m_1)\otimes L(m_2)$ is simple if and only if we have 
\[L_t(m_1) L_t(m_2) = t^a L_t(m_1m_2)\] 
in $\cK_t(\Cc_\Z)$ for some $a \in \frac{1}{2}\Z$ (see \cite[Corollary 5.5]{HL15}, \cite[Lemma 11.5]{FHOO}).
Thanks to this fact, it is enough to show that $\eta_\bfi(x) \in \bfL_{t, \le\xi}$ for every quantum cluster \emph{variable} $x \in \cA_\bfi$ (rather than \emph{monomial}).
We prove this by induction on the distance from the initial cluster.   
When $x$ is an initial quantum cluster variable, it follows because Conjecture~\ref{Conj:KRF} is true for type $\mathrm{ABDE}$.
Assume that $x$ is a quantum cluster variable obtained by the exchange relation \begin{equation} \label{eq:exx}
x x' = t^{a_1}y_1 + t^{a_2}y_2,
\end{equation} where $x'$ is another quantum cluster variable, $y_1, y_2$ are quantum cluster monomials, and $a_1, a_2 \in \frac{1}{2}\Z$. 
Let $m,m',m_1,m_2 \in \cM_{\le \xi}$ be the unique dominant monomials satisfying
\[ \bar{\eta}_\bfi(\evt(x)) = [L(m)], \quad \bar{\eta}_\bfi(\evt(x')) = [L(m')], \quad \bar{\eta}_\bfi(\evt(y_k)) = [L(m_k)] \]
for $k = 1,2$.
We note that the relation~\eqref{eq:exx} goes to 
\begin{equation} \label{eq:exx1}
\chi_q(L(m)) \chi_q(L(m')) = \chi_q(L(m_1)) + \chi_q(L(m_2))
\end{equation}
under the homomorphism $\chi_q \circ \bar{\eta}_\bfi\circ \evt$.
By induction, we assume that
\[ \eta_\bfi(x') = L_t(m'), \quad \eta_\bfi(y_k) = L_t(m_k)\]  
for $k = 1,2$. Then, by the relation \eqref{eq:exx}, we see that the element $\eta_\bfi(x)$ is characterized as the unique bar invariant element of the form
\begin{equation} \label{eq:charL}
\left(t^{a_1}L_t(m_1) + t^{a_2}L_t(m_2)\right) L_t(m')^{-1} \quad \text{for some $a_1, a_2 \in \textstyle \frac{1}{2}\Z$}
\end{equation}
in the fraction field $\F(\cY_t)$.  
On the other hand, in $\cK_t(\Cc_{\le \xi})$, we have
\begin{equation} \label{eq:LL}
L_t(m) L_t(m') = \sum_{m'' \in \cM_{\le \xi}} c_{m''}(t) L_t(m'') 
\end{equation}
with some $c_{m''}(t) \in \N_0[t^{\pm 1/2}]$ for $m'' \in \cM_{\le \xi}$ by Theorem~\ref{Thm:pos}.
Since Conjecture~\ref{Conj:KL} is verified for type $\mathrm{ABDE}$, we can compare the specialization of \eqref{eq:LL} at $t=1$ with the relation~\eqref{eq:exx1}.
Then, the positivity forces that
\[ c_{m''}(t) = \begin{cases}
t^{b_k} \text{ for some $b_k \in \frac{1}{2}\Z$} & \text{if $m'' = m_k$, $k \in \{1,2\}$}, \\ 
0 & \text{otherwise}.
\end{cases}\] 
This implies that the element $L_t(m)$ is also the bar invariant element of the form \eqref{eq:charL} in $\F(\cY_t)$. 
Therefore, we obtain $\eta_\bfi(x) = L_t(m)$, which completes the proof for type $\mathrm{ABDE}$.

Next, we consider the other case, that is, when $\fg$ of type $\mathrm{CFG}$.
Let $x \in \cA_\bfi$ be an arbitrary quantum cluster monomial, and $m \in \cM_{\le \xi}$ the dominant monomial satisfying $\bar{\eta}_\bfi(\evt(x)) = [L(m)]$.
Let $\cQ' = (\Delta, \id, \xi')$ be a Q-datum for the simply-laced Lie algebra $\sg$ whose Dynkin diagram is $\Delta$.  
Take a sequence $\bfi' \in \Delta_0^\N$ adapted to $\cQ'$ and set $x' \seq \eta_{\bfi'}^{-1}\Psi(\cQ',\cQ)\eta_\bfi(x) \in \cA_{\bfi'}$.
By Corollary~\ref{Cor:Psi}, this $x'$ is a quantum cluster monomial.
Since we already know that the assertion of the theorem is true for simply-laced type, we have $\eta_{\bfi'}(x') \in \bfL_{t,\le \xi'}$.
Since $\Psi(\cQ, \cQ')$ respects the canonical bases (Theorem~\ref{Thm:FHOO}), we have 
$\eta_\bfi(x) = \Psi(\cQ,\cQ')(\eta_{\bfi'}(x')) \in \bfL_{t, \le \xi}$.
In other words, there is a dominant monomial $m' \in \cM_{\le \xi}$ such that $\eta_{\bfi}(x) = L_t(m')$.
By the commutativity of \eqref{eq:comm-ev}, we have $\evt L_t(m') = \chi_q(L(m))$.
By comparing \eqref{eq:L} and \eqref{eq:Lt}, it implies that $m' = m$. 
\end{proof}

Let us consider the following notion of reachable simple modules, that is of simple modules corresponding to cluster monomials.

\begin{Def} \label{Def:reachable}
We say that a simple module $L(m)$ is \emph{reachable} if there is a $\cQ$-datum $\cQ = (\Delta, \sigma, \xi)$ for $\fg$ such that $m \in \cM_{\le \xi}$ and $\bar{\eta}_\bfi^{-1}[L(m)]$ is a cluster monomial in $\bar{\cA}_\bfi$ for some (or any) sequence  $\bfi \in \Delta_0^\N$ adapted to $\cQ$.
\end{Def}

\begin{Rem}
Thanks to Theorem~\ref{Thm:main} above, $L(m)$ is reachable if and only if there is a $\cQ$-datum $\cQ = (\Delta, \sigma, \xi)$ for $\fg$ such that $m \in \cM_{\le \xi}$ and $\eta_\bfi^{-1}L_t(m)$ is a quantum cluster monomial in $\cA_\bfi$ for some (or any) sequence $\bfi \in \Delta_0^\N$  adapted to $\cQ$.
\end{Rem}

\begin{Cor}
The isomorphism $\Psi \colon \cK_t(\Cc'_\Z) \to \cK_t(\Cc_{\Z})$ in Theorem~\ref{Thm:FHOO} induces a bijection between the sets of $(q,t)$-characters of reachable modules.
\end{Cor}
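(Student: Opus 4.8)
The plan is to bootstrap from Theorem~\ref{Thm:FHOO}: it already guarantees that $\Psi = \Psi(\cQ,\cQ')$ restricts to a bijection from the canonical basis of $\cK_t(\Cc'_\Z)$ onto that of $\cK_t(\Cc_\Z)$, so that it matches up the $(q,t)$-characters of \emph{all} simple modules. Hence it is enough to show that $\Psi$ carries the $(q,t)$-character of every reachable $U_q(L\fg')$-module to the $(q,t)$-character of a reachable $U_q(L\fg)$-module, together with the same statement for $\Psi^{-1}$; injectivity of $\Psi$ on the full canonical bases then promotes ``into'' to ``onto''. Since $\Psi(\cQ',\cQ)$ and $\Psi(\cQ,\cQ')$ restrict to mutually inverse HLO isomorphisms on $\cK_t(\Cc_\cQ)$ and $\cK_t(\Cc'_{\cQ'})$ and both intertwine the duality automorphisms, the uniqueness clause of Theorem~\ref{Thm:FHOO} gives $\Psi(\cQ',\cQ) = \Psi(\cQ,\cQ')^{-1}$, and it suffices to treat one direction.

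Let $L(n)$ be a reachable $U_q(L\fg')$-module, witnessed by a Q-datum $\hat{\cQ}' = (\Delta,\sigma',\hat{\xi}')$. Since $(\fD^{\prime}\xi')_{\im} > \xi'_{\im}$ for every $\im \in \Delta_0$, we may pick $k$ large enough that $\hat{\xi}'_{\im} \le (\fD^{\prime k}\xi')_{\im}$ for all $\im$, whence $\Cc'_{\le \hat{\xi}'} \subset \Cc'_{\le \fD^{\prime k}\xi'}$ and in particular $n \in \cM'_{\le \fD^{\prime k}\xi'}$. The first step is a monotonicity property: a simple module in $\Cc'_{\le \hat{\xi}'}$ which is reachable via $\hat{\cQ}'$ stays reachable via $\fD^{\prime k}\cQ'$; this follows by specializing the forward-shift homomorphism $(\partial_+^{*})^{r}$ of \S\ref{fshif}, which sends cluster monomials to cluster monomials, once one knows that $\hat{\xi}'$ is obtained from $\fD^{\prime k}\xi'$ by a chain of source reflections. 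Consequently $\eta_{\bfi'}^{-1}L_t(n)$ is a quantum cluster monomial of $\cA_{\bfi'}$ for any $\bfi'$ adapted to $\fD^{\prime k}\cQ'$. Now fix such a $\bfi'$ and a $\bfi$ adapted to $\fD^{k}\cQ$, both chosen as in Example~\ref{Ex:periodic}, and apply Corollary~\ref{Cor:Psi} (equivalently the commutative diagram of Theorem~\ref{Thm:Psi=tau}) at ``level $k$''. This is legitimate because
\[
\Psi(\fD^{k}\cQ,\fD^{\prime k}\cQ') = \fD_t^{k}\circ\Psi(\cQ,\cQ')\circ\fD_t^{\prime -k} = \Psi(\cQ,\cQ'),
\]
where the first equality is the compatibility of the HLO isomorphisms with the dualities and the second is Theorem~\ref{Thm:FHOO}~\eqref{FHOO:D}. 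We deduce that $\eta_{\bfi}^{-1}\Psi(L_t(n))$ is a quantum cluster monomial of $\cA_{\bfi}$, so Theorem~\ref{Thm:main} provides a unique dominant monomial $m \in \cM_{\le \fD^{k}\xi}$ with $\Psi(L_t(n)) = L_t(m)$ and $\bar{\eta}_{\bfi}^{-1}[L(m)] = \evt\bigl(\eta_{\bfi}^{-1}\Psi(L_t(n))\bigr)$. The latter is a classical cluster monomial of $\bar{\cA}_{\bfi}$, so $L(m)$ is reachable by Definition~\ref{Def:reachable} (taking the Q-datum $\fD^{k}\cQ$). This proves the required implication; applying it verbatim to $\Psi^{-1} = \Psi(\cQ',\cQ)$ completes the argument.

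I expect the main obstacle to be the monotonicity step: proving that a simple module whose class is a cluster monomial of $\bar{\cA}_{\hat{\bfi}'}$ remains a cluster monomial of $\bar{\cA}_{\bfi'}$ after the category is enlarged. This is not purely formal --- it combines a combinatorial fact about height functions (any $\hat{\xi}' \le \fD^{\prime k}\xi'$ is linked to $\fD^{\prime k}\xi'$ by source reflections) with the $g$-vector and forward-shift bookkeeping of Section~\ref{sectwo} (Lemmas~\ref{Lem:dg} and~\ref{Lem:cone}), transported through the isomorphism $\eta_{\bfi}$ of Theorem~\ref{Thm:qcl} and the classical monoidal categorification theorem of \cite{KKOP2}. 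The auxiliary identity $\Psi(\fD^{k}\cQ,\fD^{\prime k}\cQ') = \Psi(\cQ,\cQ')$ is a short consequence of Theorem~\ref{Thm:FHOO}~\eqref{FHOO:D} and the behaviour of the HLO isomorphisms under duality; everything else is a direct composition of Theorems~\ref{Thm:Psi=tau}, \ref{Thm:main}, and~\ref{Thm:FHOO}, together with Corollary~\ref{Cor:Psi}.
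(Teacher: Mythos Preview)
Your argument is correct and follows the paper's route, which simply says the result is immediate from Corollary~\ref{Cor:Psi}. You have carefully spelled out what that one line actually requires---the monotonicity of reachability when enlarging the Q-datum (via the forward shifts $(\partial_+^*)^r$) and the identity $\Psi(\fD^{k}\cQ,\fD^{\prime k}\cQ')=\Psi(\cQ,\cQ')$---both of which the paper leaves implicit.
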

\begin{proof}
This is immediate from Corollary~\ref{Cor:Psi}.
\end{proof}

\subsection{Corollaries}
We explain several applications of our main results. 
The statements of Corollaries \ref{Cor:KLr}, \ref{Cor:KRF}, \ref{Cor:pos} are known in types $\mathrm{ADE}$ \cite{Nak04} and type $\mathrm{B}$ \cite{FHOO} but are new for general types.

We first state the Kazhdan--Lusztig type conjecture (= Conjecture~\ref{Conj:KL}~\eqref{Conj:KL:KL}) for reachable modules. Note that reachable modules include KR-modules, and when $\fg$ is of types $\mathrm{CFG}$, the following result is new even for KR-modules. 
%Since the Conjecture was formulated in \cite{Her04} almost 20 years ago, this is the first result in this direction for types $\mathrm{CFG}$ (for these types, the result is new even in the case of KR-modules for example).

\begin{Cor}\label{Cor:KLr}
If $L(m)$ is reachable, we have $\evt(L_t(m)) = \chi_q(L(m))$.
\end{Cor}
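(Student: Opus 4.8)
The plan is to deduce Corollary~\ref{Cor:KLr} directly from the quantized monoidal categorification theorem (Theorem~\ref{Thm:main}) together with the commutativity of the diagram \eqref{eq:comm-ev}. Recall that $L(m)$ reachable means there is a Q-datum $\cQ = (\Delta, \sigma, \xi)$ with $m \in \cM_{\le \xi}$ and a sequence $\bfi \in \Delta_0^\N$ adapted to $\cQ$ such that $\bar{\eta}_\bfi^{-1}[L(m)]$ is a (classical) cluster monomial in $\bar{\cA}_\bfi$. First I would lift this classical cluster monomial to the corresponding quantum cluster monomial $x \in \cA_\bfi$, i.e.\ the quantum cluster monomial obtained by taking the same cluster-monomial exponents in the same seed; by construction $\evt(x) = \bar{\eta}_\bfi^{-1}[L(m)]$, so $\bar{\eta}_\bfi(\evt(x)) = [L(m)]$.

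Next, I would apply Theorem~\ref{Thm:main} to this $x$: it gives $\eta_\bfi(x) = L_t(m)$, where $m$ is precisely the unique dominant monomial characterized by $\bar{\eta}_\bfi(\evt(x)) = [L(m)]$ — which is our $m$. Now apply the map $\evt \colon \cY_t \to \cY$. On one hand $\evt(\eta_\bfi(x)) = \evt(L_t(m))$. On the other hand, the commutative diagram \eqref{eq:comm-ev} says that $\evt \circ \eta_\bfi = \chi_q \circ \bar{\eta}_\bfi \circ \evt$ as maps $\cA_\bfi \to \cY$; applying this to $x$ gives $\evt(\eta_\bfi(x)) = \chi_q(\bar{\eta}_\bfi(\evt(x))) = \chi_q([L(m)]) = \chi_q(L(m))$. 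Combining the two computations yields $\evt(L_t(m)) = \chi_q(L(m))$, which is exactly the assertion.

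There is essentially no obstacle here: all the substantive work has been done in establishing Theorem~\ref{Thm:main} (which in turn rests on the HLO isomorphisms, the cluster-theoretic interpretation of $\Psi$, and the positivity results imported from \cite{FHOO, VV03}). The only point requiring a word of care is the passage from the classical cluster monomial to the quantum one and the verification that they have matching images under $\evt$ and $\bar{\eta}_\bfi$ versus $\eta_\bfi$ — but this is built into the definitions of $\eta_\bfi$, $\bar{\eta}_\bfi$ and the compatibility square \eqref{eq:comm-ev}, so it is routine. One should also note that the choice of $\cQ$ and $\bfi$ witnessing reachability is immaterial, since the conclusion $\evt(L_t(m)) = \chi_q(L(m))$ does not refer to them; so it suffices to fix one such pair.
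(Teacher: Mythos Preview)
Your proposal is correct and follows essentially the same approach as the paper: the paper's own proof simply notes that the result is an immediate consequence of Theorem~\ref{Thm:main} together with the commutativity of the diagram~\eqref{eq:comm-ev}, which is exactly the argument you have spelled out in detail.
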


\begin{proof}
Noting the commutativity of the diagram~\eqref{eq:comm-ev}, this is an immediate consequence of Theorem~\ref{Thm:main}.
\end{proof}

Then we establish Conjecture~\ref{Conj:KRF} for any $\fg$.

\begin{Cor} \label{Cor:KRF}
If $L(m)$ is a KR module, we have $F_t(m) = L_t(m)$.
\end{Cor}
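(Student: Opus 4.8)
The plan is to deduce Corollary~\ref{Cor:KRF} from the already-established case of type $\mathrm{ABDE}$ together with Theorem~\ref{Thm:main} and the $(q,t)$-character comparison furnished by Theorem~\ref{Thm:FHOO}. Since Conjecture~\ref{Conj:KRF} is known when $\fg$ is of type $\mathrm{ABDE}$ (by \cite{Nakajima03II} and \cite[Theorem 11.6]{FHOO}), it only remains to treat types $\mathrm{CFG}$, so I would assume throughout that $\fg$ is of one of these types.

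First I would reduce to KR modules inside a category of the form $\Cc_{\le \xi}$. A KR module $L(m)$ in $\Cc_\Z$ is of the form $L^{(\im)}[a,b]$ for some $\im \in \Delta_0$ and $a \le b$; applying a suitable spectral parameter shift (equivalently, replacing $\cQ$ by $\fD_t^N\cQ$ and using the compatibility of $\fD_t$ with $F_t$ and $L_t$ recalled after Theorem~\ref{Thm:pos}), we may choose a Q-datum $\cQ = (\Delta,\sigma,\xi)$ and a sequence $\bfi$ adapted to $\cQ$ so that $m \in \cM_{\le\xi}$. By the last assertion of Theorem~\ref{Thm:qcl}, $F_t(m) = \eta_\bfi(x)$ for a quantum cluster variable $x \in \cA_\bfi$, in particular a quantum cluster monomial. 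Now Theorem~\ref{Thm:main} applies: $\eta_\bfi(x) = L_t(m'')$ for the unique dominant monomial $m''$ with $\bar\eta_\bfi(\evt(x)) = [L(m'')]$. So $F_t(m) = L_t(m'')$.

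The remaining point is to identify $m'' = m$. For this I would apply $\evt$ to the equality $F_t(m) = L_t(m'')$ and use the commutativity of \eqref{eq:comm-ev} together with Proposition~\ref{Prop:KRF}: we have $\evt(F_t(m)) = \chi_q(L(m))$, hence $\evt(L_t(m'')) = \chi_q(L(m))$. On the other hand, by \eqref{eq:Lt} the element $L_t(m'')$ has $\ul{m''}$ as its unique $\le$-maximal monomial, so $\evt(L_t(m''))$ has highest monomial $m''$ with respect to the Nakajima ordering; comparing with \eqref{eq:L} for $\chi_q(L(m))$, whose highest monomial is $m$, forces $m'' = m$. (This is exactly the argument used at the end of the proof of Theorem~\ref{Thm:main} for type $\mathrm{CFG}$.) Therefore $F_t(m) = L_t(m)$, which is the assertion.

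I do not expect a genuine obstacle here: all the hard work is contained in Theorem~\ref{Thm:main} (which in turn rests on Theorem~\ref{Thm:Psi=tau}, Theorem~\ref{Thm:FHOO}, and the positivity Theorem~\ref{Thm:pos}). The only mild care needed is the bookkeeping in the first step — ensuring that every KR module of $\Cc_\Z$ can, after a spectral shift, be realized inside some $\Cc_{\le\xi}$ so that Theorem~\ref{Thm:qcl} and Theorem~\ref{Thm:main} become available — and noting that $F_t$ and $L_t$ intertwine with $\fD_t^{\pm1}$ so that proving the identity after such a shift is equivalent to proving it for the original module.
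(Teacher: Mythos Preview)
Your proposal is correct and follows the same overall route as the paper: use Theorem~\ref{Thm:qcl} to see that $F_t(m)$ is a quantum cluster variable, then Theorem~\ref{Thm:main} to conclude $F_t(m)\in\bfL_t$, and finally identify which $L_t(m')$ it is. The paper's proof is slightly leaner in two places. First, there is no need to treat types $\mathrm{ABDE}$ and $\mathrm{CFG}$ separately or to invoke spectral shifts via $\fD_t^{\pm1}$: any KR module in $\Cc_\Z$ has $m\in\cM_{\le\xi}$ for some Q-datum $\cQ$, so Theorems~\ref{Thm:qcl} and~\ref{Thm:main} apply directly and uniformly. Second, for the identification $m'=m$ the paper does not pass through $\evt$ and Proposition~\ref{Prop:KRF}; it simply compares leading monomials inside $\cY_t$: by Theorem~\ref{Thm:Ft} the unique dominant monomial of $F_t(m)$ is $\ul{m}$, while by~\eqref{eq:Lt} the $\le$-maximal monomial of $L_t(m')$ is $\ul{m'}$, forcing $m=m'$. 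Your $t=1$ argument is valid but adds an unnecessary detour.
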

\begin{proof}
Assume that $L(m)$ is a KR module.
Theorems~\ref{Thm:qcl} \& \ref{Thm:main} tell us that $F_t(m) \in \bfL_t$.
Thus, there exists $m' \in \cM$ such that $F_t(m) = L_t(m')$.
The characterization of $F_t(m)$ in Theorem~\ref{Thm:Ft} and the equation \eqref{eq:Lt} force that $m' = m$, which proves the assertion.
\end{proof}

We also obtain the positivity of the coefficients of $(q,t)$-characters of simples modules, that is, we prove Conjecture~\ref{Conj:KL}~\eqref{Conj:KL:pos} for any $\fg$, 
which was formulated for non-simply-laced types almost 20 years ago in \cite{Her04}. For types $\mathrm{CFG}$, the statement was only known for fundamental representations and was derived from a computer calculation in \cite{Her05}. Hence we obtain an explanation of these computational results and a vast generalization of the statement.

\begin{Cor}\label{Cor:pos} The coefficients of $(q,t)$-characters of simple modules are positive. 
Precisely, for any $m \in \cM$ and $m' \in \cM^*$ $($with $m' < m)$, we have $a_t[m;m'] \in \N_0[t^{\pm 1/2}]$ in the notation of \eqref{eq:Lt}. 
\end{Cor}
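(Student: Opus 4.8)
The plan is to realize each coefficient $a_t[m;m']$ appearing in \eqref{eq:Lt} as an $\N_0[t^{\pm1/2}]$-linear combination of structure constants of $\cK_t(\Cc_\Z)$ with respect to the canonical basis $\bfL_t$, and then to invoke Theorem~\ref{Thm:pos}. We may assume $m' < m$ actually occurs in $L_t(m)$, the case $m'=m$ being trivial. I would first fix a Q-datum $\cQ=(\Delta,\sigma,\xi)$ for $\fg$ whose height function is large enough that $m$ and every monomial occurring in $L_t(m)$ lie in $\cY_{t,\le\xi}$; then $L_t(m)\in\cK_t(\Cc_{\le\xi})$, the truncation $(\cdot)_{\le\xi}$ is the identity on it, and one may work inside the quantum cluster algebra $\cA_\bfi\simeq\cK_t(\Cc_{\le\xi})$ provided by Theorem~\ref{Thm:qcl} and the isomorphism $\teta_\bfi$ of quantum tori (Corollary~\ref{Cor:teta}).

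Next I would isolate the building blocks. When $L(m)$ is reachable, $L_t(m)$ is a quantum cluster monomial of $\cA_\bfi$ by Theorem~\ref{Thm:main}; since $\teta_\bfi$ carries the initial cluster variables to commutative monomials and commutes with the bar involution, the quantum Laurent phenomenon expresses $L_t(m)=L_t(m)_{\le\xi}$ as an $\N_0[t^{\pm1/2}]$-linear combination of commutative monomials $\ul{m''}$, so that $a_t[m;m']\in\N_0[t^{\pm1/2}]$ already. In particular this covers all fundamental and Kirillov--Reshetikhin modules (using $F_t=L_t$ from Corollary~\ref{Cor:KRF}), and for those the monomials that occur are moreover completely controlled by the quantum $T$-system (Theorem~\ref{Thm:qTsys}); these explicit $(q,t)$-characters of KR modules are the objects one dominantizes with below.

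For a general dominant $m$, I would dominantize: choose a dominant monomial $M$, a product of fundamental (hence KR) monomials, whose spectral parameters are arranged so that $m'M$ is dominant, and expand $L_t(m)\,L_t(M)=\sum_{m'''}c_{m'''}(t)\,L_t(m''')$ in the canonical basis, where $c_{m'''}(t)\in\N_0[t^{\pm1/2}]$ by Theorem~\ref{Thm:pos} and $c_{mM}(t)=1$. Extracting the coefficient of the commutative monomial $\ul{m'M}$ from both sides and using $\ul{a}\,\ul{b}=t^{\Nn(a,b)/2}\ul{ab}$ (notation of \eqref{eq:Nnmm}) yields an identity expressing $t^{\Nn(m',M)/2}a_t[m;m']$ in terms of: the nonnegative structure constants $c_{m'''}(t)$; the nonnegative coefficients $a_t[M;M'']$ of $L_t(M)$ (known by the previous paragraph, with their support controlled via the $T$-system); the coefficients $a_t[m;q]$ of strictly higher monomials $q>m'$ occurring in the \emph{same} $L_t(m)$; and the coefficients $a_t[m''';m'M]$ for dominant $m'''$ with $m'M\le m'''\le mM$. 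Running a simultaneous induction — well-founded on the pair formed by $m$ (or $mM$) and, within a fixed $(q,t)$-character, downward along the Nakajima order of the monomial — then solves for $a_t[m;m']$ as an $\N_0[t^{\pm1/2}]$-combination of the preceding quantities, closing the argument; finally $a_t[m;m']\in\N_0[t^{\pm1/2}]$, and combined with Corollary~\ref{Cor:KLr} this also gives the positivity of the $q$-characters in the reachable cases.

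The main obstacle is precisely the combinatorial bookkeeping of the last step: one must choose the dominantizing factor $M$ so that the commutative monomial $\ul{m'M}$ is produced from $L_t(m)\,L_t(M)$ in a sufficiently controlled way, and one must verify that the resulting recursion is well-founded. This is the point at which the explicit description of the monomials of the $(q,t)$-characters of Kirillov--Reshetikhin modules — furnished by Corollary~\ref{Cor:KRF} together with the quantum $T$-system of Theorem~\ref{Thm:qTsys} — carries the real weight of the proof.
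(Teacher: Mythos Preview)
Your overall strategy---relating the coefficients $a_t[m;m']$ to the nonnegative structure constants of Theorem~\ref{Thm:pos} via Corollary~\ref{Cor:KRF}---matches the paper's. But the execution has a genuine gap, and you are missing the key technical device the paper actually uses.

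The problem is in your third paragraph. When you extract the coefficient of $\ul{m'M}$ from $L_t(m)L_t(M)=\sum_{m'''}c_{m'''}(t)L_t(m''')$, the left side contributes
\[
t^{\Nn(m',M)/2}a_t[m;m']\;+\;\sum_{\substack{m_1m_2=m'M\\ m'<m_1\le m,\ m_2<M}} t^{\Nn(m_1,m_2)/2}\,a_t[m;m_1]\,a_t[M;m_2],
\]
so solving for $a_t[m;m']$ gives a \emph{difference}, not an $\N_0[t^{\pm1/2}]$-combination as you claim. Moreover, the right side contains the term with $m'''=mM$, for which the Nakajima gap $mM/(m'M)=m/m'$ equals that of the original pair; hence the induction you sketch (on $m$, or on the gap) is not well-founded. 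A secondary issue: in your second paragraph you invoke ``the quantum Laurent phenomenon'' to get nonnegativity of the monomial expansion for reachable $L_t(m)$, but the Laurent phenomenon only gives coefficients in $\Z[t^{\pm1/2}]$; Laurent \emph{positivity} is a separate, deeper theorem that you would have to cite.

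The paper avoids the cross-term difficulty by invoking Lemma~\ref{Lem:qKR} rather than the $T$-system: the truncation $(L_t^{(\im)}[p,\xi_\im])_{\le\xi}$ of the $(q,t)$-character of a \emph{maximal} KR module---an initial cluster variable---is a single commutative monomial $\ul{m^{(\im)}[p,\xi_\im]}$ (using Corollary~\ref{Cor:KRF}). Since $(\cdot)_{\le\xi}$ is an algebra homomorphism on $\cK_t(\Cc_{\le\xi})$, one multiplies $L_t(m)$ by a product of such maximal KR $(q,t)$-characters and \emph{then} truncates; the second factor collapses to a pure monomial, so the unwanted cross-terms on the left simply do not arise. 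This is what allows the direct identification of $a_t[m;m']$ with (a nonnegative combination of) structure constants, carried out in detail in \cite[Theorem~11.7]{FHOO}. Your dominantization idea is morally in the right direction, but it only closes once you replace ``multiply by $L_t(M)$ for $M$ a product of fundamentals'' with ``multiply by a product of maximal KR $(q,t)$-characters and truncate''---which is exactly Lemma~\ref{Lem:qKR}.
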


\begin{proof} The idea of the proof is to identify the coefficients $a_t[m;m']$ with structure constants of the quantum Grothendieck ring with respect to the canonical basis $\bfL_t$ by using Corollary~\ref{Cor:KRF} and Lemma~\ref{Lem:qKR}.
Then the desired positivity of $a_t[m;m']$ follows from the positivity of structure constants (Theorem~\ref{Thm:pos}).
Since it is the same argument as for the proof of \cite[Theorem 11.7]{FHOO}, we omit the details. 
\end{proof}

\begin{comment}
\begin{Rem}
Combined with the result by Qin~\cite{Qin12}, we also have the $t$-analog of Hernandez-Leclerc's geometric character formula~\cite[Conjecture 5.3]{HL16} (which is now a theorem thanks to \cite{KKOP2}) for any reachable module. 
This gives a geometric explanations for the positivity in Corollary \ref{Cor:pos} for reachable modules. Our general result suggests the existence of a geometric interpretation for all simple modules.
\end{Rem}
\end{comment}

\subsection{Cluster structure on $\cK_t(\Cc_\Z)$} 
In this subsection, as an application of the results from the previous subsections, we briefly discuss how one can lift the cluster algebra structure on the Grothendieck ring $K(\Cc_\Z)$ of the whole category $\Cc_\Z$ fully investigated by \cite{KKOP2}, to the quantum Grothendieck ring $\cK_t(\Cc_\Z)$.

First, we recall the cluster algebra structure on $K(\Cc_\Z)$.
In the original paper~\cite{KKOP2}, it is described by a combinatorial gadget called the \emph{admissible chains of $i$-boxes}.
Here, let us explain it with an equivalent but a little bit different terminology to make the things more suitable with our notations.   
Let $\cQ = (\Delta, \sigma, \xi)$ be a Q-datum for $\fg$.
Recall the sets $\hD_{\le \xi} \subset \hD_{[\xi]} \subset \Delta_0 \times \Z$ from \S\ref{Ssec:Qdata} and the partial ordering $\preceq$ on $\hD_{[\xi]}$ from \S\ref{Ssec:adpt}.
Put $\hD_{> \xi} \seq \hD_{[\xi]}\setminus \hD_{\le \xi}$.
We say that a bijection $\fe \colon \N \to \Delta_{[\xi]}$ is a \emph{$\cQ$-adapted enumeration} if the restrictions of its inverse $\fe^{-1}$ to the subsets $\hD_{\le \xi}$ and $\hD_{> \xi}$ give the morphisms of posets $(\hD_{\le \xi}, \preceq) \to (\N, \le)$ and $(\hD_{> \xi}, \preceq^{\mathrm{op}}) \to (\N, \le)$ respectively (cf.~Lemma~\ref{Lem:ad}).   
Given such a $\cQ$-adapted enumeration $\fe$, for each $u \in \N$, we write $\fe(u) = (\im_u, p_u) \in \Delta_0 \times \Z$ and define
\[
u^\star \seq \max(\{ v \in [1, u] \mid \im_u = \im_v, \sgn(u) \neq \sgn(v) \}\cup\{0\}),
\]
where 
\[\sgn(u) \seq \begin{cases}
1 & \text{if $\fe(u) \in \hD_{>\xi}$}, \\
-1 & \text{if $\fe(u) \in \hD_{\le \xi}$}.
\end{cases}\]
With this notation, we have a collection $\{ L(m^\fe_u)\}_{u \in \N}$ of KR modules in $\Cc_\Z$ given by 
\begin{align}\label{eq: m_u} 
m_u^\fe \seq \begin{cases}
m^{(\im_u)}[p_u, p_{u^\star}] & \text{if $\sgn(u) = -1$ and $u^\star \neq 0$}, \\
m^{(\im_u)}[p_u, \xi_{\im_u}] & \text{if $\sgn(u) = -1$ and $u^\star = 0$}, \\
m^{(\im_u)}[p_{u^\star},p_u] & \text{if $\sgn(u) = 1$ and $u^\star \neq 0$}, \\
m^{(\im_u)}(\xi_{\im_u},p_u] & \text{if $\sgn(u) = 1$ and $u^\star = 0$}.
\end{cases}
\end{align}
By \cite[Theorem 5.5]{KKOP2}, the collection $\{L(m^\fe_u)\}_{u \in \N}$ forms a commuting family of real simple modules, i.e., for any $u,v \in \N$, we have 
\[L(m^\fe_u)\otimes L(m^\fe_v) \simeq L(m^\fe_u \cdot m^\fe_v). \]
Now, we define a skew-symmetric matrix $\Lambda^\fe = (\Lambda^\fe_{u,v})_{u,v \in \N}$ by
\[ \Lambda^\fe_{u,v} \seq \Nn(m^\fe_u, m^\fe_v). \]

\begin{Thm}[{\cite[Theorem 8.1]{KKOP2}}]
\label{Thm:KKOP_CZ}
Let $\cQ = (\Delta, \sigma, \xi)$ be a Q-datum for $\fg$.
For any $\cQ$-adapted enumeration $\fe \colon \N \to \hD$, there is a unique exchange matrix $\tB^\fe = (b^\fe_{u,v})_{u,v \in \N}$ such that $\sum_{k \in \N} b^\fe_{ku}\Lambda^\fe_{kv} = 2\delta_{u,v}$ for all $u,v \in \N$.
Moreover, we have a ring isomorphism
\[ \bar{\eta}_\fe \colon \cA(\tB^\fe) \simeq K(\Cc_\Z) \]
under which the initial cluster variable $X_u$ corresponds to $[L(m^\fe_u)]$ for any $u \in \N$, and every cluster monomial corresponds to the class of a simple module. 
\end{Thm}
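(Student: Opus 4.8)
The plan is to bootstrap the statement for the whole category $\Cc_\Z$ from the statement already proved for the subcategories $\Cc_{\le\xi}$ (Theorem~\ref{Thm:qcl} and Theorem~\ref{Thm:main}), by exhausting $\cK_t(\Cc_\Z)$ by an increasing union of shifted copies of the $\cK_t(\Cc_{\le \fD^k\xi})$. Concretely: fix a Q-datum $\cQ=(\Delta,\sigma,\xi)$ and a $\cQ$-adapted enumeration $\fe\colon\N\to\hD$. First I would check existence and uniqueness of the exchange matrix $\tB^\fe$. Uniqueness is formal from the non-degeneracy of $\Lambda^\fe$ (which follows because, by the definition of a $\cQ$-adapted enumeration and \eqref{eq: m_u}, the truncations of the $m^\fe_u$ exhaust a basis of the relevant quantum torus, exactly as in Corollary~\ref{Cor:teta}); existence is obtained by identifying, for each finite truncation, the pair $(\tB^\fe,\Lambda^\fe)$ restricted to $[1,n]$ with one of the compatible pairs $(\tB_\bfi,\Lambda_\bfi)$ from Section~\ref{secone} — here one uses that a $\cQ$-adapted enumeration restricted to any initial segment, after reordering, records an adapted sequence for some Q-datum $\fD^k\cQ$, via Proposition~\ref{Prop:DQ} and Lemma~\ref{Lem:adce}. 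Then Corollary~\ref{Cor:BL} gives the compatibility $\sum_k b^\fe_{ku}\Lambda^\fe_{kv}=2\delta_{u,v}$ (the $\sfd$-factor disappears because $\sg$ is simply laced here, so $\sfd_\im=1$).

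Next I would construct the quantum upgrade $\eta_\fe\colon \cA_t(\tB^\fe,\Lambda^\fe)\xrightarrow{\ \sim\ }\cK_t(\Cc_\Z)$. The idea is that any $\cQ$-adapted enumeration $\fe$ is obtained from a ``purely decreasing'' one (as in Example~\ref{Ex:dimin}, where $\sgn(u)=-1$ for all $u$, so we are literally inside $\cK_t(\Cc_{\le\xi})$ at the bottom) by a sequence of mutations that flip signs one vertex at a time — this is precisely the inductive structure of admissible chains of $i$-boxes in \cite{KKOP2}. Each such sign-flip mutation, on the quantum-torus side, is the forward-shift operator $\partial_+^*$ of Section~\ref{fshif} (or its inverse), conjugated appropriately; the matching of exchange relations with the quantum $T$-system \eqref{eq:qTsys} is the same computation as in the proof of Theorem~\ref{Thm:qcl}, carried out now for the two-sided $i$-boxes via \eqref{eq: m_u}. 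I would define $\eta_\fe$ on the initial cluster by $X_u\mapsto L_t(m^\fe_u)=F_t(m^\fe_u)$ (the equality holds by Corollary~\ref{Cor:KRF}, since each $L(m^\fe_u)$ is a KR module) and extend it by the stability-under-mutation argument: if $x$ is obtained from $x'$ by an exchange relation, one uses Lemma~\ref{Lem:scr} to see $\eta_\fe(x)\in\cK_t(\Cc_\Z)$, and then the positivity Theorem~\ref{Thm:pos} together with Conjecture~\ref{Conj:KL}~\eqref{Conj:KL:KL} — now available for all reachable modules by Corollary~\ref{Cor:KLr} — pins down $\eta_\fe(x)$ as the unique bar-invariant element of the prescribed form, hence as an element of $\bfL_t$, exactly as in the proof of Theorem~\ref{Thm:main}. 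That each cluster monomial is the class of a simple module then follows by specializing at $t=1$ (the commutative statement is \cite[Theorem 8.1]{KKOP2}) together with the fact established in the proof of Theorem~\ref{Thm:main} that a product $L_t(m_1)L_t(m_2)$ is (a $t$-power times) $L_t(m_1m_2)$ precisely when the corresponding tensor product is simple.

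To see that $\eta_\fe$ is an \emph{isomorphism} (and that $\bar\eta_\fe\seq\evt\circ\eta_\fe\circ(\evt)^{-1}$ is well-defined and equals the map of \cite{KKOP2}), I would argue that $\cK_t(\Cc_\Z)=\bigcup_{k\ge0}\fD_t^{-k}\cK_t(\Cc_{\le\xi})$ — this is immediate from $\fD^{-1}\Cc_{\le\xi}=\Cc_{\le\fD^{-1}\xi}$ and the fact that $\{F_t(Y_{i,p})\}_{(i,p)\in\hI}$ generate $\cK_t(\Cc_\Z)$ — and that $\eta_\fe$ restricted to the subalgebra generated by the first $n$ initial cluster variables lands isomorphically onto a corresponding $\cK_t(\Cc_{\le\fD^k\xi})$-type subalgebra by Theorem~\ref{Thm:qcl}; passing to the limit gives surjectivity, and injectivity is inherited from injectivity into the ambient quantum torus $\cY_t$. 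Finally, uniqueness of $\eta_\fe$ with the stated properties follows because its values on the generating set $\{[L(m^\fe_u)]\}$ (equivalently $\{F_t(Y_{i,p})\}$) are prescribed.

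\medskip
\noindent\textbf{Main obstacle.} The delicate point is the combinatorial bookkeeping for \emph{two-sided} $i$-boxes: in $\Cc_\Z$ the enumeration $\fe$ runs over all of $\hD=\hD_{\le\xi}\sqcup\hD_{>\xi}$, and the modules $m^\fe_u$ in \eqref{eq: m_u} come in four shapes depending on $\sgn(u)$ and whether $u^\star=0$. One must verify that every finite initial segment of the mutation sequence realizing $\fe$ coincides, up to commutation moves and relabelling, with a forward-shift composite $\partial_+^*\cdots\partial_+^*$ applied to an adapted sequence for a suitable $\fD^k\cQ$ — so that the degree-tracking Lemmas~\ref{Lem:cg}, \ref{Lem:bg}, \ref{Lem:dg} and the cone Lemma~\ref{Lem:cone} apply and the quantum $T$-system \eqref{eq:qTsys} matches each exchange relation on the nose (including the powers of $t$). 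This is essentially the content of the admissibility arguments in \cite[\S5--\S8]{KKOP2} transported to the quantum setting; I expect it to require some care but no genuinely new idea, since all the needed quantum inputs (quantum $T$-systems, positivity, $F_t=L_t$ for KR modules, the HLO isomorphisms) are now in place.
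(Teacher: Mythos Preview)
You have conflated two distinct theorems. The statement labelled Theorem~\ref{Thm:KKOP_CZ} is the \emph{classical} cluster structure on $K(\Cc_\Z)$; it is quoted verbatim from \cite[Theorem~8.1]{KKOP2} and the present paper does not give a proof of it at all---it is used as a black box. What your proposal actually attempts to prove is Theorem~\ref{Thm:qmc_CZ}, the \emph{quantum} analog $\eta_\fe\colon\cA_t(\Lambda^\fe,\tB^\fe)\simeq\cK_t(\Cc_\Z)$. So there is no ``paper's own proof'' of Theorem~\ref{Thm:KKOP_CZ} to compare against.

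If we reinterpret your proposal as aimed at Theorem~\ref{Thm:qmc_CZ}, then your route is genuinely different from the paper's, and more laborious. The paper \emph{takes Theorem~\ref{Thm:KKOP_CZ} as given}: since $\{L(m^\fe_u)\}$ is a commuting family of reachable real simples, Corollary~\ref{Cor:KLr} plus Theorem~\ref{Thm:pos} give $L_t(m^\fe_u)L_t(m^\fe_v)=t^{\Nn(m^\fe_u,m^\fe_v)/2}L_t(m^\fe_u m^\fe_v)$, hence an embedding of skew fields $\F(\cT(\Lambda^\fe))\hookrightarrow\F(\cY_t)$ by $X_u\mapsto L_t(m^\fe_u)$. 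One then inducts on distance from the initial seed exactly as in the proof of Theorem~\ref{Thm:main} (this time using Corollary~\ref{Cor:KLr} and Proposition~\ref{Prop:reachable} instead of the type-$\mathrm{ABDE}$ hypothesis) to see that every quantum cluster variable lands on an $L_t(m)$; surjectivity follows because all fundamental modules are reachable. Your approach instead tries to rebuild the two-sided $i$-box combinatorics from the one-sided $\Cc_{\le\xi}$ picture via forward shifts and Q-datum translations. That is essentially reproving the admissibility machinery of \cite[\S5--\S8]{KKOP2} from scratch, and your ``main obstacle'' (matching finite initial segments of $\fe$ to adapted sequences for shifted Q-data) is precisely the content the paper outsources to \cite{KKOP2}. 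The paper's route is shorter because it never re-opens that box; your route would work but duplicates effort already encapsulated in Theorem~\ref{Thm:KKOP_CZ} and Proposition~\ref{Prop:reachable}.
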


Examples are given in Appendix \ref{sec:QCB}.

\begin{Prop}[{\cite[\S8]{KKOP2}}] \label{Prop:reachable}
The isomorphism $\bar{\eta}_\fe$ in Theorem~\ref{Thm:KKOP_CZ} induces a bijection between the cluster monomials in $\cA(\tB^\fe)$ and the classes of reachable modules in the sense of Definition~\ref{Def:reachable}.
\end{Prop}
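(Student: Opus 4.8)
The statement to prove is Proposition~\ref{Prop:reachable}, which asserts that the isomorphism $\bar\eta_\fe$ from Theorem~\ref{Thm:KKOP_CZ} matches cluster monomials in $\cA(\tB^\fe)$ with classes of reachable modules. Since this is attributed to \cite[\S8]{KKOP2}, my plan would essentially be to record how it follows from combining that reference's monoidal categorification of $\Cc_\Z$ with Definition~\ref{Def:reachable} of reachability, via the compatibility of the two cluster structures — the one on $K(\Cc_\Z)$ attached to a $\cQ$-adapted enumeration $\fe$ and the one on $K(\Cc_{\le\xi})$ attached to a sequence $\bfi$ adapted to $\cQ$.

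Let me sketch the argument. First I would note that one inclusion is essentially the definition: if $L(m)$ is reachable, then by Definition~\ref{Def:reachable} there is a Q-datum $\cQ'=(\Delta,\sigma,\xi')$ for $\fg$ such that $m\in\cM_{\le\xi'}$ and $\bar\eta_{\bfi'}^{-1}[L(m)]$ is a cluster monomial in $\bar\cA_{\bfi'}$ for $\bfi'$ adapted to $\cQ'$. One then observes that the cluster algebra $\bar\cA_{\bfi'}\simeq K(\Cc_{\le\xi'})$ embeds as a ``sub-seed'' structure inside $\bar\cA(\tB^\fe)\simeq K(\Cc_\Z)$: indeed, a $\cQ'$-adapted enumeration $\fe'$ for which the negative part $\hD_{\le\xi'}$ is enumerated first realizes $\bar\cA_{\bfi'}$ as the cluster algebra obtained by freezing (or rather, as a ``cluster algebra quotient/localization'' compatible with) an initial seed of $\bar\cA(\tB^{\fe'})$; since all $\cQ$-adapted enumerations for a fixed $\Delta$ are related by mutations and permutations (and all Q-data for $\fg$ give mutation-equivalent data on $K(\Cc_\Z)$), the cluster monomials of $\bar\cA_{\bfi'}$ go to cluster monomials of $\bar\cA(\tB^\fe)$ under $\bar\eta_\fe^{-1}\circ[\cdot]\circ\bar\eta_{\bfi'}$. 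Hence $[L(m)]$ is a cluster monomial of $\bar\cA(\tB^\fe)$.

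For the converse, I would take a cluster monomial $z$ of $\bar\cA(\tB^\fe)$. By Theorem~\ref{Thm:KKOP_CZ}, $\bar\eta_\fe(z)=[L(m)]$ for some $m\in\cM$. Any single cluster of $\cA(\tB^\fe)$ involves only finitely many mutations from the initial one and hence its cluster variables all lie in some finite ``$i$-box'' truncation, i.e. inside a subcategory $\Cc_{\le\xi'}$ for a suitable Q-datum $\cQ'$ (obtained from $\cQ$ and $\fe$ by the finiteness of the support of any fixed cluster). Concretely: the monomial $z$, being a product of finitely many cluster variables of $\cA(\tB^\fe)$, factors through one of the ``partial'' cluster algebras appearing in the admissible chain of $i$-boxes of \cite{KKOP2}, and each such partial algebra is isomorphic to some $\bar\cA_{\bfi'}$ with $\bfi'$ adapted to a Q-datum $\cQ'$ with $m\in\cM_{\le\xi'}$. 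Then $\bar\eta_{\bfi'}^{-1}[L(m)]$ is a cluster monomial in $\bar\cA_{\bfi'}$, so $L(m)$ is reachable by Definition~\ref{Def:reachable}. I would invoke Proposition~\ref{Prop:comm-eq}, Lemma~\ref{Lem:adce}, and Example~\ref{Ex:periodic} to pass freely between different choices of adapted sequence, and Proposition~\ref{Prop:DQ} to handle the passage between Q-data.

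The main obstacle I anticipate is making precise the statement that ``every fixed cluster of $\cA(\tB^\fe)$ is captured by a category $\Cc_{\le\xi'}$''. This requires a careful bookkeeping of the admissible chains of $i$-boxes of \cite{KKOP2}: one must show that the $\cQ$-adapted enumeration $\fe$, truncated appropriately, produces exactly the data $(\tB_{\bfi'},\Lambda_{\bfi'})$ of a sequence adapted to some Q-datum, and that the freezing/localization relating $\cA(\tB^\fe)$ to $\cA_{\bfi'}$ is the compatible one so that cluster monomials match up. Since all of this is done in \cite[\S8]{KKOP2}, for the purposes of this paper I would simply cite it, remarking only that the dictionary between their ``admissible chains of $i$-boxes'' and our ``$\cQ$-adapted enumerations'' is the evident one described at the start of this subsection, and that the finiteness observation above is what reduces the general case to Definition~\ref{Def:reachable}.
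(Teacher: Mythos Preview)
Your proposal is correct and follows essentially the same approach as the paper: both directions are deduced from the compatibility, established in \cite[\S8]{KKOP2}, between the cluster structure on $K(\Cc_\Z)$ attached to $\fe$ and the cluster structures on the subcategories $K(\Cc_{\le\xi})$ attached to adapted sequences $\bfi$. The paper's proof is even terser than yours---it simply records that the discussion in the proof of \cite[Theorem 8.1]{KKOP2} shows that for any Q-datum $\cQ$ with $\bar\eta_\fe(x)\in K(\Cc_{\le\xi})$ the element $\bar\eta_\bfi^{-1}\bar\eta_\fe(x)$ is a cluster monomial in $\bar\cA_\bfi$, and that the converse follows similarly---whereas you spell out the finiteness/truncation mechanism and the passage between Q-data more explicitly.
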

\begin{proof}
Let $x$ be an arbitrary cluster monomial in $\cA(\tB^\fe)$.
Then, the discussion in the proof of \cite[Theorem 8.1]{KKOP2} implicitly tells us that, for any Q-datum $\cQ = (\Delta, \sigma, \xi)$ such that $\bar{\eta}_\fe(x) \in K(\Cc_{\le \xi})$, the element $\bar{\eta}_\bfi^{-1}\bar{\eta}_{\fe}(x)$ is a cluster monomial in $\bar{\cA}_{\bfi}$ for any sequence $\bfi \in \Delta_0^\N$ adapted to $\cQ$.   
Therefore, $\bar{\eta}_\fe(x)$ is reachable.
Similarly, for any reachable $L(m)$, we see that the element $\bar{\eta}_\fe^{-1}([L(m)])$ is a cluster monomial in $\cA(\tB^\fe)$.
\end{proof}

Now, we shall prove a quantum analog of Theorem~\ref{Thm:KKOP_CZ}.

\begin{Thm} \label{Thm:qmc_CZ}
Let $\cQ = (\Delta, \sigma, \xi)$ be a Q-datum for $\fg$.
For any $\cQ$-adapted enumeration $\fe \colon \N \to \hD_{[\xi]}$, we have an isomorphism of $\Z[t^{\pm 1/2}]$-algebras
\[ \eta_\fe \colon \cA_t(\Lambda^\fe, \tB^\fe) \simeq \cK_t(\Cc_\Z) \]
under which the initial quantum cluster variable $X_u$ corresponds to $L_t(m^\fe_u)$ for any $u \in \N$, and every cluster monomial corresponds to the $(q,t)$-character of a simple module. 
Here $\tB^\fe$ is the unique exchange matrix in Theorem~\ref{Thm:KKOP_CZ} above.
\end{Thm}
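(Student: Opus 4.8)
The plan is to reduce the statement about the whole category $\Cc_\Z$ to the statements already proved for the subcategories $\Cc_{\le \xi}$, by exploiting the exhaustion $\cK_t(\Cc_\Z) = \bigcup_k \cK_t(\Cc_{\le \fD^k \xi})$ and the compatibility of all the structures with the duality automorphisms $\fD_t^{\pm 1}$. First I would observe that a $\cQ$-adapted enumeration $\fe$ restricts, on the ``negative half'' $\hD_{\le \xi}$, to a bijection $\rho_\bfi$ for a suitable sequence $\bfi$ adapted to $\cQ$; more generally, for each $k \in \Z$ the part of $\fe$ landing in $\hD_{\le \fD^k \xi}$ corresponds to a sequence adapted to $\fD^k \cQ$. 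Thus the finite truncations $\cA_t(\Lambda^\fe_{\le n}, \tB^\fe_{\le n})$ obtained by restricting indices to $[1,n]$ are, up to permutation of variables, identified with the quantum cluster algebras $\cA_\bfi^N$ of Section~\ref{secone} for $\bfi$ adapted to some $\fD^k \cQ$, and the map $\eta_\fe$ should be built as the inductive limit of the isomorphisms $\eta_\bfi$ from Theorem~\ref{Thm:qcl} precomposed with these reindexings.

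The key steps, in order, are: (1) show that $(\tB^\fe, \Lambda^\fe)$ is a well-defined compatible pair and that each finite-rank truncation is a compatible pair whose exchange matrix matches the one appearing in Theorem~\ref{Thm:KKOP_CZ}; this is the quantum analog of the $t=1$ statement and will follow from Proposition~\ref{Prop:BL}/Corollary~\ref{Cor:BL} together with Proposition~\ref{Prop:L=N}, after identifying the relevant sub-enumeration with a $\rho_\bfi$. (2) Construct $\eta_\fe$ on the level of these truncations using $\eta_\bfi$ from Theorem~\ref{Thm:qcl} and check that the $i$-box modules $m^\fe_u$ for $\sgn(u)=-1$ are sent correctly, using Lemma~\ref{Lem:qKR} and the quantum $T$-system as in the proof of Theorem~\ref{Thm:qcl}; for $\sgn(u)=1$ (the ``positive'' $i$-boxes in $\hD_{>\xi}$) one applies $\fD_t^{k}$ for appropriate $k$ to bring them into a negative region, using that $\fD_t^{\pm 1}$ preserves $\cK_t(\Cc_\Z)$, sends $L_t(m)$ to $L_t(\fD^{\pm 1}m)$, and matches the shift $\partial_+$ on the sequence side via the identity~\eqref{eq:D=dl}. (3) Check the limits glue: the compatibility of $\eta_{\fD^k \cQ}$-type isomorphisms across different $k$ is exactly the content of the HLO isomorphism discussion in \S\ref{HLO} and of Theorem~\ref{Thm:Psi=tau} (with $\cQ = \cQ'$), so the inductive limit is a well-defined algebra isomorphism onto $\cK_t(\Cc_\Z)$. (4) Finally, deduce that quantum cluster monomials go to $(q,t)$-characters of simple modules: every quantum cluster monomial of $\cA_t(\Lambda^\fe, \tB^\fe)$ lies in some finite-rank sub-cluster-algebra identified with $\cA_\bfi$, hence maps into $\bfL_{t, \le \fD^k\xi} \subset \bfL_t$ by Theorem~\ref{Thm:main} (after applying $\fD_t^{k}$), and the precise dominant monomial is pinned down by specializing at $t=1$ via the commuting square~\eqref{eq:comm-ev} together with Theorem~\ref{Thm:KKOP_CZ} and Proposition~\ref{Prop:reachable}.

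The main obstacle I expect is step (3), the gluing/well-definedness of the inductive limit: one must verify that the isomorphisms attached to the nested families $\cA_\bfi^N \hookrightarrow \cA_{\bfi''}^{N'}$ (where $\bfi''$ extends $\bfi$ by prepending a reduced word adapted to $\fD\cQ$, as in the proof of Theorem~\ref{Thm:qcl}) are mutually compatible, and that the permutations coming from different choices of $\cQ$-adapted enumeration do not obstruct the limit. Concretely, one needs the analog of the commuting diagram~\eqref{eq:diagD} to hold here, which ultimately rests on Lemma~\ref{Lem:tr}, Lemma~\ref{Lem:dg}, and the identity~\eqref{eq:D=dl}; making the bookkeeping of indices precise (especially around the ``turn'' at $\hD_{>\xi}$ versus $\hD_{\le \xi}$, where the sign function $\sgn$ and the exponent $u^\star$ interact with the two opposite poset orientations) is the delicate part. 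A secondary, more routine, difficulty is checking that the positive $i$-box generators $m^\fe_u$ with $u^\star = 0$, namely $m^{(\im_u)}(\xi_{\im_u}, p_u]$, are correctly matched; this should reduce to the negative case by the bar-compatible duality $\omega_t$ used in the proof of Lemma~\ref{Lem:tr} combined with $\fD_t$, but it requires care with the $t$-powers. Once these are in place, uniqueness of $\eta_\fe$ follows because $\cK_t(\Cc_\Z)$ is generated by the $F_t(Y_{i,p})$, each of which is a specified initial-or-mutated quantum cluster variable.
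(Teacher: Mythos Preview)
Your approach is genuinely different from the paper's, and considerably more involved. The paper does \emph{not} build $\eta_\fe$ as an inductive limit of the isomorphisms $\eta_\bfi$ for the subcategories $\Cc_{\le \fD^k\xi}$. Instead, it uses Corollary~\ref{Cor:KLr} (the Kazhdan--Lusztig conjecture for reachable modules, already established as a consequence of Theorem~\ref{Thm:main}) together with the positivity of structure constants (Theorem~\ref{Thm:pos}) as a black box: since the $L(m^\fe_u)$ form a commuting family of reachable modules, one has $L_t(m^\fe_u)L_t(m^\fe_v) = t^{\Nn(m^\fe_u,m^\fe_v)/2}L_t(m^\fe_u m^\fe_v)$, so the assignment $X_u \mapsto L_t(m^\fe_u)$ defines directly an embedding of skew fields $\F(\cT(\Lambda^\fe)) \hookrightarrow \F(\cY_t)$. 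Then one proves by induction on the distance from the initial seed, exactly as in the first half of the proof of Theorem~\ref{Thm:main}, that every quantum cluster variable goes to some $L_t(m)$, invoking Theorem~\ref{Thm:KKOP_CZ} and Proposition~\ref{Prop:reachable} at $t=1$ to control the exchange relations. Surjectivity onto $\cK_t(\Cc_\Z)$ follows since every $F_t(Y_{i,p})$ appears as the image of a cluster variable.

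Your strategy could plausibly be made to work, but note one inaccuracy: the finite truncations of $\cA_t(\Lambda^\fe,\tB^\fe)$ are \emph{not} identified with $\cA_\bfi^N$ ``up to permutation of variables'' --- the initial seeds are genuinely different (the $m^\fe_u$ are KR modules with varying intervals, not the $m^{(\im)}[p,\xi_\im]$), so the identification must go through a mutation sequence supplied by \cite{KKOP2} at $t=1$ and lifted via Lemma~\ref{Lem:evt}. This, plus the gluing across different $k$, is exactly the bookkeeping the paper's argument avoids by feeding Corollary~\ref{Cor:KLr} back into the machinery. The trade-off is that the paper's argument is short and clean but logically sits \emph{after} Theorem~\ref{Thm:main} and its corollaries, whereas your route would be more self-contained in principle but substantially harder to execute.
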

\begin{proof}
The proof is similar to the former part of the proof of Theorem~\ref{Thm:main}.
Thanks to Theorem~\ref{Thm:pos} and Corollary~\ref{Cor:KLr}, for any mutually commuting pair of reachable modules $L(m)$ and $L(m')$, we have
\[ L_t(m)L_t(m') = t^{\Nn(m,m')/2} L_t(mm').\]    
In particular, we have an embedding $\eta_\fe \colon \F(\cT(\Lambda^\fe)) \hookrightarrow \F(\cY_t)$ of skew fields over $\Q(t^{1/2})$ given by $\eta_\fe(X_u) = L_t(m_u^\fe)$ for all $u \in \N$. 
By a similar argument as in the proof of Theorem~\ref{Thm:main}, using Theorem~\ref{Thm:pos}, Corollary~\ref{Cor:KLr} and Proposition~\ref{Prop:reachable}, we can show that $\eta_\fe(X) = L_t(m)$ holds for any quantum cluster variable $X$ in $\cA_t(\Lambda^\fe, \tB^\fe)$, where $m \in \cM$ the dominant monomial such that $\bar{\eta}_\fe(\evt(X)) = [L(m)]$ in the notation of Theorem~\ref{Thm:KKOP_CZ} above.
Therefore, the image of $\cA_t(\Lambda^\fe, \tB^\fe)$ under the embedding $\eta_\fe$ is included in $\cK_t(\Cc_\Z)$.
Since every fundamental module is reachable, the above discussion also tells us that $L_t(Y_{i,p}) = F_t(Y_{i,p}) \in \eta_\fe(\cA_t(\Lambda^\fe, \tB^\fe))$ for any $(i,p) \in \hI$. 
Since the set $\{F_t(Y_{i,p})\}_{(i,p) \in \hI}$ is generating the $\Z[t^{\pm 1/2}]$-algebra $\cK_t(\Cc_\Z)$, we conclude that $\eta_\fe( \cA_t(\Lambda^\fe, \tB^\fe)) = \cK_t(\Cc_\Z)$. 
\end{proof}

\section{Substitution formulas}\label{sec:Substitution}

In Theorem \ref{Thm:Psi=tau}, we gave a cluster theoretical interpretation of $\Psi|_{\cK_t(\Cc'_{\le \xi'})}$. As an application of it, we show in this section that $\Psi$ comes from an explicit birational transformation among the variables in $\cY_t$ and $\cY'_t$, which we call \emph{substitution formulas}. It reveals a non-trivial relation among the $(q, t)$-characters of simple modules which are mutually related under $\Psi$. 

Let us state the main result of this section more precisely. We return to the assumption and the notation in Sections \ref{Ssec:FHOO} and \ref{Ssec:CTinter}. Recall that we have 
\[
\cK_t(\Cc_{\Z})\subset \cY_{t}\subset \F(\cY_t),\qquad 
\cK_t(\Cc'_{\Z})\subset \cY'_{t}\subset \F(\cY'_t),
\]
where $\F(\cY_t)$ and $\F(\cY'_t)$ denote the skew field of fractions of $\cY_t$ and $\cY'_t$, respectively. The main theorem in this section is the following: 

\begin{Thm}[Substitution formulas]\label{Thm:Substitution}
With the above assumption, there exists an isomorphism of skew fields 
\[
\tPsi\equiv \tPsi(\cQ, \cQ')\colon \F(\cY'_t)\xrightarrow{\sim} \F(\cY_t), 
\]
such that 
\begin{itemize}
    \item[(1)] $\fD_t\circ \tPsi=\tPsi\circ \fD_t^{\prime}$, 
    \item[(2)] $\tPsi(\ul{Y_{i, p}})\in \F(\cY_{t, \fD^k\cQ})$ for $k\in \Z$ and $(i, p)\in \hI'_{\fD^{\prime k}\cQ'}$,
    \item[(3)] the following diagram commutes. 
\begin{equation*} %\label{eq:diagT}
\vcenter{
\xymatrix{
\F(\cY'_t) \ar[r]^-{\tPsi}  & \F(\cY_t) \\
\cK_t(\Cc'_{\Z}) \ar@{^{(}->}[u]\ar[r]^-{\Psi}& \cK_t(\Cc_{\Z})\ar@{^{(}->}[u]
}}
\end{equation*} 
\end{itemize}
In particular, there exists a birational transformation between the variables in $\cY'_t$ and those in $\cY_t$ which makes 
the $(q, t)$-characters of simple modules in $\Cc'_{\Z}$ into those in $\Cc_{\Z}$. 
\end{Thm}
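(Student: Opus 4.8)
The plan is to derive the statement from the cluster-theoretic description of $\Psi$ in Theorem~\ref{Thm:Psi=tau}, by extending the cluster isomorphisms appearing there to the skew fields of fractions of the ambient quantum tori, and then exhausting $\cY_t$ by the subalgebras $\cY_{t,\le\fD^m\xi}$ so that the truncations become harmless. Concretely, I would fix, as in \S\ref{Ssec:CTinter}, a word $\bftau$ and, for every $m \ge 0$, periodic sequences $\bfi^{[m]}$ (resp.\ $\bfi'^{[m]}$) adapted to $\fD^m\cQ$ (resp.\ $\fD^{\prime m}\cQ'$) in the sense of \eqref{eq:cond2}, chosen so that Theorem~\ref{Thm:Psi=tau} applies at every level $m$; this is legitimate since $\Psi$ depends on $(\cQ,\cQ')$ only through $\Delta$ and the simultaneous $\fD$-shift class, as one sees from the characterization in Theorem~\ref{Thm:FHOO}. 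The elementary transformations $\gamma_k^*$, $\beta_k^* = \mu_k^*\sigma_{k+1}^*$, and hence the compositions $\hat\bftau^{[m]*}$, are built from relabellings and quantum seed mutations; since a single quantum mutation induces an isomorphism of the skew fields of fractions of the ambient quantum tori (Appendix~\ref{sec:QCA}), and the infinite compositions stabilise on each generator exactly as in \S\ref{fshif}, each $\hat\bftau^{[m]*}$ extends canonically to a skew-field isomorphism $\widehat{\hat\bftau^{[m]*}}\colon \F(\cT(\Lambda_{\bfi'^{[m]}})) \xrightarrow{\sim} \F(\cT(\Lambda_{\bfi^{[m]}}))$. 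Composing with the skew-field extensions of the torus isomorphisms $\teta$ of Corollary~\ref{Cor:teta}, I obtain isomorphisms
\[
\tPsi^{[m]} \seq \teta_{\bfi^{[m]}} \circ \widehat{\hat\bftau^{[m]*}} \circ \teta_{\bfi'^{[m]}}^{-1} \colon \F(\cY'_{t,\le\fD^{\prime m}\xi'}) \xrightarrow{\sim} \F(\cY_{t,\le\fD^{m}\xi}).
\]

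Next I would check that $\tPsi^{[m]}$ restricts to $\Psi$ on $\cK_t(\Cc'_{\le\fD^{\prime(m-1)}\xi'})$. For such an $x$, Lemma~\ref{Lem:tr} gives $x \in \cY'_{t,\le\fD^{\prime m}\xi'}$ and $\Psi(x) \in \cY_{t,\le\fD^{m}\xi}$, so the truncations implicit in the commuting square \eqref{eq:diagT0} at level $m$ act as the identity on $x$ and on $\Psi(x)$; combining \eqref{eq:diagT0} with Theorem~\ref{Thm:Psi=tau} then yields $\tPsi^{[m]}(x) = \Psi(x)$. Since $\cY'_t = \bigcup_m \cY'_{t,\le\fD^{\prime m}\xi'}$ and $\cK_t(\Cc'_\Z) = \bigcup_m \cK_t(\Cc'_{\le\fD^{\prime m}\xi'})$, it remains to glue the $\tPsi^{[m]}$ into a single isomorphism $\tPsi\colon \F(\cY'_t) \to \F(\cY_t)$, i.e.\ to show that $\tPsi^{[m+1]}$ and $\tPsi^{[m]}$ agree on $\F(\cY'_{t,\le\fD^{\prime m}\xi'})$. \textbf{This gluing is the main obstacle:} the two maps already agree on the subring $\cK_t(\Cc'_{\le\fD^{\prime(m-1)}\xi'})$, but this subring need not generate the skew field $\F(\cY'_{t,\le\fD^{\prime m}\xi'})$, so one must compare directly on the generators $\ul{Y'_{j,s}}$. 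I would do this by writing each $\ul{Y'_{j,s}}$ as a ratio of commutative monomials of Kirillov--Reshetikhin modules (Corollary~\ref{Cor:teta}, Proposition~\ref{Prop:y-hat}), identifying those with quantum cluster variables via Corollary~\ref{Cor:KRF} and Theorem~\ref{Thm:qcl}, and using the intertwining of $\hat\bftau^*$ with the forward-shift maps $\partial_+^*$ (as in \eqref{eq:D=dl}, \eqref{eq:taudl}) to see that the resulting value is independent of $m$; the inverse of $\tPsi$ is constructed symmetrically with $(\cQ,\cQ')$ swapped.

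Finally, the three asserted properties follow. Property (3) is built into the construction by the previous paragraph. Property (1) holds because $\Psi$ is $\fD_t$-equivariant (Theorem~\ref{Thm:FHOO}~\eqref{FHOO:D}) and the $\fD_t$-shift is realized cluster-theoretically by $\nu\circ(\partial_+^*)^\ell$ via \eqref{eq:D=dl}, so the data $\bfi^{[m]},\bftau$ can be taken $\fD$-compatibly and $\tPsi$ inherits the equivariance. Property (2) is the genuinely new combinatorial content: the initial cluster variables $F_t^{(\jm)}[s,\xi'_{\jm}]$ with $(\jm,s)\in\hD'_{\cQ'}$ are moved by $\hat\bftau^*$ to quantum cluster variables whose $(q,t)$-characters — by Theorem~\ref{Thm:main}, the degree formulas of Lemmas~\ref{Lem:cg}, \ref{Lem:bg}, \ref{Lem:dg}, the invariance Lemma~\ref{Lem:p-inv}, and Proposition~\ref{Prop:y-hat} — involve only the variables $\ul{Y_{j,s}}$ with $(j,s)\in\hI_\cQ$; taking the appropriate ratios and their $\fD_t$-translates gives $\tPsi(\ul{Y'_{i,p}})\in\F(\cY_{t,\fD^k\cQ})$ for $(i,p)\in\hI'_{\fD^{\prime k}\cQ'}$. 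The last sentence of the theorem is then a restatement: on generators $\tPsi$ is given by explicit non-commutative rational functions in the $\ul{Y_{j,s}}$, the substitution formulas, and they transport the $(q,t)$-characters of simple modules in $\Cc'_\Z$ to those in $\Cc_\Z$.
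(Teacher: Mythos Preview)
Your overall strategy — extend the cluster-theoretic isomorphism $\hat\bftau^*$ to the skew fields of the ambient quantum tori via $\teta$ and then exhaust $\cY'_t$ using $\fD'$-shifts — is exactly the paper's approach. You also correctly isolate the crux: the gluing of the levels (equivalently, the $\fD_t$-equivariance at the skew-field level) is the only nontrivial step, and everything else is bookkeeping.

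The gap is in how you resolve this obstacle. You appeal to \eqref{eq:D=dl} and \eqref{eq:taudl}, but those identities live at the level of the cluster isomorphism $\eta_\bfi$, which factors through the truncation $(\cdot)_{\le\xi}$; and truncation does \emph{not} commute with $\fD_t^{-1}$ on $\cK_t(\Cc_{\le\xi})$. Concretely, the skew-field extension of $\nu\circ(\partial_+^*)^\ell$ is a complicated birational map, whereas the $\teta_\bfi$-conjugate of $\fD_t^{-1}$ is the \emph{monomial} map $\tSigma\colon X^{\bfa}\mapsto X^{\tsfSg(\bfa)}$; these two agree on $\cA_\bfi$ only after composing with truncation, so \eqref{eq:D=dl} cannot be promoted to a statement about $\teta_{\bfi,\F}$. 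Likewise, \eqref{eq:taudl} is proved only for cluster monomials with degree in the cone $C_{\bfi'}$, and the generators $\ul{Y'_{j,s}}$ you need are ratios $X^{\bfe_u-\bfe_{u^-}}$, which are not cluster monomials. So your sketch does not establish $\fD_t^{-1}\circ\tPsi^{[m]} = \tPsi^{[m]}\circ\fD_t^{\prime-1}$, and hence the gluing.

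What the paper actually does (Lemma~\ref{Lem:compatD}) is work directly with $\tSigma$. One writes each $Z_u\seq\hat\bftau^*(X'_u)$ in pointed form $X^{\bfg_u}F_u$ with $F_u = 1 + \sum_{\bfn}c_{u,\bfn}X^{\sum n_k\bfb_k}$, and uses the $\ell$-periodicity of $\tB_\bfi$ coming from \eqref{eq:cond2} to obtain $\tSigma(X^{\bfb_u})=X^{\bfb_{u+\ell}}$ and the periodicity $c_{u+\ell,\sfSg(\bfn)}=c_{u,\bfn}$; this gives $\tSigma(Z_u)\simeq X^{\tsfSg(\bfg_u)-\bfg_{u+\ell}}Z_{u+\ell}$ with a monomial prefactor supported on the frozen vertices. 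The point is then that in the ratio $Z_{u^-}^{-1}Z_u$ computing $\tPsi_{\le\xi,\le\xi'}(\ul{Y'_{j,s}})$ these prefactors cancel, and Lemma~\ref{Lem:p-inv} is precisely the bookkeeping that makes $\tsfSg(\bfg_u)-\bfg_{u+\ell}-\tsfSg(\bfg_{u^-})+\bfg_{u^-+\ell}=0$. An extra locality argument (the Claim inside the proof) handles the boundary case $u\le\ell$. This periodicity-and-cancellation computation is the genuine content you are missing; once you insert it, your construction by gluing levels and the paper's construction by $\fD_t$-extension become interchangeable.
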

Our proof of Theorem \ref{Thm:Substitution} is valid under the setting when $t=1$. Hence we obtain the parallel result at $t=1$ by the same proof. Let $\F(\cY)$ (rep.~$\F(\cY')$) be the field of fractions of $\cY$ (resp.~$\cY'$). Denote by $\Psi_{t=1}$ the $\Z$-algebra isomorphism which makes the following diagram commutative:
  \[ \vcenter{
\xymatrix{
\cK_t(\Cc'_{\Z})\ar[r]^-{\Psi}_-{\sim}\ar@{->>}^-{\evt}[d]& \cK_t(\Cc_{\Z})\ar@{->>}^-{\evt}[d]\\
\chi_q(K(\Cc'_{\Z}))\ar[r]^-{\Psi_{t=1}}_-{\sim}& \chi_q(K(\Cc_{\Z}))
}}
\]
We call the image of the $(q, t)$-characters of simple modules under $\evt$ the \emph{$(q, 1)$-characters} of simple modules. 

\begin{Thm}[Substitution formulas at $t=1$]\label{Thm:Substitution_cl}
With the above assumption, there exists an isomorphism of fields 
\[
\tPsi_{t=1}\equiv \tPsi(\cQ, \cQ')_{t=1}\colon \F(\cY')\xrightarrow{\sim} \F(\cY), 
\]
such that 
\begin{itemize}
    \item[(1)] $\fD\circ \tPsi_{t=1}=\tPsi_{t=1}\circ \fD^{\prime}$, 
    \item[(2)] $\tPsi_{t=1}(Y_{i, p})\in \F(\cY_{\fD^k\cQ})$ for $k\in \Z$ and $(i, p)\in \hI'_{\fD^{\prime k}\cQ'}$,
    \item[(3)] the following diagram commutes. 
\begin{equation*}  
\vcenter{
\xymatrix{
\F(\cY') \ar[r]^-{\tPsi_{t=1}}  & \F(\cY) \\
\chi_q(K(\Cc'_{\Z}))\ar@{^{(}->}[u]\ar[r]^-{\Psi_{t=1}}& \chi_q(K(\Cc_{\Z})).\ar@{^{(}->}[u]
}}
\end{equation*} 
\end{itemize}
In particular, there exists a birational transformation between the variables in $\cY'$ and those in $\cY$ which makes the $(q, 1)$-characters of simple modules in $\Cc'_{\Z}$ into those in $\Cc_{\Z}$. 
\end{Thm}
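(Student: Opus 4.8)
The plan is to run the proof of Theorem~\ref{Thm:Substitution} with the formal parameter set equal to $t=1$ throughout, and the point to check is that this specialization is legitimate and behaves well with respect to passing to fields of fractions. Under $\evt$ every ingredient of that proof has a classical shadow: a quantum cluster algebra becomes an ordinary cluster algebra, the quantum tori $\cY_t$ and $\cT(\Lambda_\bfi)$ become the commutative Laurent polynomial rings $\cY$ and $\Z[X_u^{\pm 1}\mid u\in\N]$, the skew fields of fractions become honest fields of rational functions, each mutation $\mu_k$ becomes a birational automorphism of the ambient Laurent ring, and the isomorphism $\teta_\bfi$ of Corollary~\ref{Cor:teta} becomes an isomorphism $\ol{\teta}_\bfi\colon\F(\Z[X_u^{\pm 1}])\xrightarrow{\sim}\F(\cY_{\le\xi})$; similarly for the transformations $\gamma_k^*$, $\beta_k^*$, $\partial_+^*$ and their composite $\hat{\bftau}^*$. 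Since all rings in sight are commutative (hence Ore) domains, each of these isomorphisms and homomorphisms extends uniquely to the level of fraction fields; in particular, because $\bar{\cA}_\bfi$ and its ambient Laurent ring share the same field of fractions, the specialization of $\hat{\bftau}^*$ promotes to a field isomorphism $\hat{\bftau}^*\colon\F_{\bfi'}\xrightarrow{\sim}\F_\bfi$, where $\F_\bfi$ denotes the field of rational functions in $\{X_u\}_{u\in\N}$.

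First I would fix Q-data $\cQ,\cQ'$ with $\Delta=\Delta'$, choose adapted sequences $\bfi,\bfi'$ satisfying~\eqref{eq:cond2} together with a sequence $\bftau$ as in~\S\ref{Ssec:CTinter}, and set $\ol{\tPsi}_{\cQ,\cQ'}\seq\ol{\teta}_\bfi\circ\hat{\bftau}^*\circ\ol{\teta}_{\bfi'}^{-1}\colon\F(\cY'_{\le\xi'})\xrightarrow{\sim}\F(\cY_{\le\xi})$. By the $t=1$ shadow of Theorem~\ref{Thm:Psi=tau} together with the commutative diagram~\eqref{eq:comm-ev}, this map sends each truncated $q$-character $\chi_q(L(m'))_{\le\xi'}$ of a simple module in $\Cc'_{\le\xi'}$ to the truncation $\Psi_{t=1}(\chi_q(L(m')))_{\le\xi}$, and it sends each generator $Y'_{i,p}$ with $(i,p)\in\hI'_{\cQ'}$ into $\F(\cY_\cQ)$, since such a $Y'_{i,p}$ is a ratio of truncated $q$-characters of Kirillov--Reshetikhin modules --- hence the $\ol{\teta}_{\bfi'}$-image of a ratio of cluster variables of $\bar{\cA}_{\bfi'}$ by Lemma~\ref{Lem:qKR} --- which $\hat{\bftau}^*$ carries to cluster variables of $\bar{\cA}_\bfi$ representing Kirillov--Reshetikhin modules in $\Cc_\cQ$ (Theorem~\ref{Thm:qcl}, \S\ref{HLO}, and the $g$-vector bookkeeping of Lemmas~\ref{Lem:cg}, \ref{Lem:bg}, \ref{Lem:dg}, \ref{Lem:cone}, \ref{Lem:degf}). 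Next I would glue these maps over the $\fD$-orbit of $\cQ$: repeating at the level of fraction fields the argument used in the proof of Theorem~\ref{Thm:Psi=tau} to establish $\fD_t$-equivariance, the identity~\eqref{eq:D=dl} expressing $\fD^{-1}$ through $(\partial_+^*)^\ell$ and the relabelling isomorphism $\nu$, together with~\eqref{eq:taudl}, gives $\fD\circ\ol{\tPsi}_{\cQ,\cQ'}\circ\fD^{\prime-1}=\ol{\tPsi}_{\fD\cQ,\fD^{\prime}\cQ'}$ on the common domain; since $\cY=\bigcup_{k\ge 0}\cY_{\le\fD^{2k}\xi}$ and $\F(\cY)$ is the directed union of the subfields $\F(\cY_{\le\fD^{2k}\xi})$, and similarly for $\cY'$, the maps $\ol{\tPsi}_{\fD^{2k}\cQ,\fD^{\prime 2k}\cQ'}$ patch together to a single field isomorphism $\tPsi_{t=1}\colon\F(\cY')\xrightarrow{\sim}\F(\cY)$, and properties (1) and (2) follow from the corresponding properties of the pieces.

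The step I expect to be the main obstacle is establishing property (3), namely that $\tPsi_{t=1}$ restricts to $\Psi_{t=1}$ inside $\cY$ and not merely after the truncation maps $(\cdot)_{\le\xi}$, which are not ring homomorphisms on $\cY$. Here $\ol{\tPsi}_{\cQ,\cQ'}$ intertwines $\Psi_{t=1}$ only up to these lossy truncations, so one must exploit that any given element of $\chi_q(K(\Cc'_\Z))$ involves only finitely many variables $Y_{i,p}$, hence lies in $\chi_q(K(\Cc'_{\le\fD^{\prime 2k}\xi'}))$ for $k\gg 0$ (the $t=1$ shadow of Lemma~\ref{Lem:tr}), and that for such $k$ the relevant truncations act as the identity both on it and on its image under $\Psi_{t=1}$; on this cofinal system of subcategories the truncation is therefore harmless, which forces the patched map $\tPsi_{t=1}$ to agree with $\Psi_{t=1}$ on the nose. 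The remaining verifications --- that the infinite composites $\hat{\bftau}^*$ and $(\partial_+^*)^\ell$ and the cones $C_\bfi$ governing $g$-vectors behave as required, and that cluster variables are determined by their $g$-vectors --- are purely combinatorial and $t$-independent, so they transfer verbatim from Sections~\ref{sectwo}--\ref{secsix}.
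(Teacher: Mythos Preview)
Your overall strategy---specialize the proof of Theorem~\ref{Thm:Substitution} at $t=1$---is exactly what the paper does; the paper simply remarks that the argument goes through verbatim. However, your detailed outline misplaces the main difficulty. The key technical step in the paper's proof of Theorem~\ref{Thm:Substitution} is Lemma~\ref{Lem:compatD}, establishing $\fD_t^{-1}\circ\tPsi_{\le\xi,\le\xi'}=\tPsi_{\le\xi,\le\xi'}\circ\fD_t^{\prime-1}$ at the level of fraction fields. You assert this follows by ``repeating at the level of fraction fields the argument used in the proof of Theorem~\ref{Thm:Psi=tau}'' via \eqref{eq:D=dl} and \eqref{eq:taudl}, but the paper explicitly warns this cannot work: the identity \eqref{eq:D=dl} is stated for $\eta_\bfi$, not for $\teta_{\bfi,\F}$, and these differ precisely by the truncation $(\cdot)_{\le\xi}$, which does not commute with $\fD_t^{-1}$. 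Concretely, the field automorphism $\teta_{\bfi,\F}^{-1}\circ\fD_t^{-1}\circ\teta_{\bfi,\F}$ is the monomial map $X^{\bfa}\mapsto X^{\tsfSg(\bfa)}$, whereas $\nu\circ(\partial_+^*)^\ell$ is a genuinely birational composite of mutations; they agree on cluster variables (after $\eta_\bfi$) but not as field maps. The paper instead proves Lemma~\ref{Lem:compatD} by a careful periodicity analysis of the $g$-vectors $\bfg_u$ and $F$-polynomial coefficients $c_{u,\bfn}$ of the cluster variables $Z_u=\hat{\bftau}^*(X'_u)$, using \eqref{eq:periodicity-g}, \eqref{eq:periodicity-c}, and Lemma~\ref{Lem:p-inv}.

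Conversely, you flag property~(3) as the main obstacle, but once Lemma~\ref{Lem:compatD} is in hand, property~(3) is short: given $x\in\chi_q(K(\Cc'_\Z))$, choose $k$ with $\fD^{\prime-k}(x)\in\chi_q(K(\Cc'_{\le\fD^{\prime-1}\xi'}))$, on which the truncation is the identity (the $t=1$ shadow of Lemma~\ref{Lem:tr}); then the big diagram~\eqref{eq:diagram} gives $\tPsi_{\le\xi,\le\xi'}=\Psi_{t=1}$ there, and $\fD$-equivariance undoes the shift. So your instinct about the lossy truncations is right, but the place where it bites is the $\fD$-compatibility of the field map, not the comparison with $\Psi_{t=1}$.
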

\begin{Rem}
        We can calculate $\tPsi$ and $\tPsi_{t=1}$ explicitly by tracing a specific mutation sequence. See Appendix \ref{sec:QCC}.
\end{Rem}
    The coincidence of the $(q, 1)$-characters and the $q$-characters are now known in many cases. See Conjecture \ref{Conj:KL}, Corollary \ref{Cor:KLr}, and expositions around them. Hence the substitution formulas at $t=1$ reveal several non-trivial relations among the $q$-characters of simple modules which are mutually related under $\Psi_{t=1}$.

The rest of this section is devoted to the proof of Theorem \ref{Thm:Substitution}. We consider the following commutative diagram: 
\begin{equation}{
\scalebox{0.87}{$
\xymatrix{
&&&&&&&\\
\F(\cY'_{t, \le \xi'})\ar@/^35pt/[rrrrrr]^-{\tPsi_{\le \xi, \le \xi'}}&&
\F(\cT(\Lambda_{\bfi'}))\ar[ll]^-{\sim}_-{\teta_{\bfi', \F}}\ar[rr]_-{\sim}^-{\hat{\bftau}_{\F}^*}&&
\F(\cT(\Lambda_{\bfi}))\ar[rr]^-{\teta_{\bfi, \F}}_-{\sim}&&
\F(\cY_{t, \le \xi})\\
%&&&&&&\\
\cY'_{t, \le \xi'}\ar@{^{(}->}[u]&&
\cT(\Lambda_{\bfi'})\ar[ll]^-{\sim}_-{\teta_{\bfi'}}\ar@{^{(}->}[u]&&
\cT(\Lambda_{\bfi})\ar[rr]_{\sim}^{\teta_{\bfi}}\ar@{^{(}->}[u]&&
\cY_{t, \le \xi}\ar@{^{(}->}[u]\\
\cK_t(\Cc'_{\le \xi'})_{\le \xi'}\ar@{^{(}->}[u]&
\cK_t(\Cc'_{\le \xi'})\ar[lu]_-{(\cdot)_{\le \xi'}}\ar[l]_-{\sim}\ar@/_35pt/[rrrr]^-{\Psi|_{\cK_t(\Cc'_{\le \xi'})}}_-{\sim}&
\cA_{\bfi'}\ar[l]^-{\eta_{\bfi'}}_-{\sim}\ar[rr]^-{\hat{\bftau}^*}_-{\sim}\ar@{^{(}->}[u]&&
\cA_{\bfi}\ar[r]_-{\eta_{\bfi}}^-{\sim}\ar@{^{(}->}[u]&
\cK_t(\Cc_{\le \xi})\ar[ru]^-{(\cdot)_{\le \xi}}\ar[r]^-{\sim}&
\cK_t(\Cc_{\le \xi})_{\le \xi}\ar@{^{(}->}[u]\\
\cK_t(\Cc'_{\le \fD^{\prime -1}\xi'})\ar@{^{(}->}[u]\ar@/_30pt/[rrrrrr]^-{\Psi|_{\cK_t(\Cc'_{\le \fD^{\prime -1}\xi'})}}_-{\sim}\ar@{=}[r]^-{\mathrm{id}}&
\cK_t(\Cc'_{\le \fD^{\prime -1}\xi'})\ar@{^{(}->}[u]&&&&
\cK_t(\Cc_{\le \fD^{-1}\xi})\ar@{^{(}->}[u]\ar@{=}[r]^-{\mathrm{id}}&
\cK_t(\Cc_{\le \fD^{-1}\xi})\ar@{^{(}->}[u]\\
&&&&&
}$}}\label{eq:diagram}
\end{equation}

Here the morphisms $\teta_{\bfi, \F}, \teta_{\bfi', \F}, \hat{\bftau}_{\F}^*$, and $\tPsi_{\le \xi, \le \xi'}$ are uniquely defined so that the diagram becomes commutative. The hooked arrows denote the inclusion maps. Note that the restriction of $(\cdot)_{\le \xi}$ (resp.~$(\cdot)_{\le \xi'}$) to $\cK_t(\Cc_{\le \fD^{-1}\xi})$ (resp.~$\cK_t(\Cc'_{\le \fD^{\prime -1}\xi'})$) is the identity map by Lemma \ref{Lem:tr}.

The key step for the proof of Theorem \ref{Thm:Substitution} is to show the compatibility of $\tPsi_{\le \xi, \le \xi'}$ with the dual operations, that is, $\fD_t^{-1}\circ \tPsi_{\le \xi, \le \xi'}=\tPsi_{\le \xi, \le \xi'}\circ \fD_t^{\prime -1}$ on $\F(\cY'_{t, \le \xi'})$ (Lemma \ref{Lem:compatD} (1)). Once we established it, we can extend $\tPsi_{\le \xi, \le \xi'}$ to $\tPsi$ by using the dual operations. Note that we can not deduce the compatibility of $\tPsi_{\le \xi, \le \xi'}$ with the dual operations directly from Theorem~\ref{Thm:Psi=tau}  because the truncation map $(\cdot)_{\le \xi} \colon \cK_t(\Cc_{\le \xi}) \hookrightarrow \cY_{t, \le \xi}$ does not commute with $\fD_t^{- 1}$.  

\begin{Lem}\label{Lem:compatD}\ 
\begin{itemize}
    \item[(1)] $\left.\fD_t^{-1}\right|_{\F(\cY_{t, \le \xi})}\circ \tPsi_{\le \xi, \le \xi'}=\tPsi_{\le \xi, \le \xi'}\circ \left.\fD_t^{\prime -1}\right|_{\F(\cY'_{t, \le \xi'})}$. 
    \item[(2)] For $k\in \N_0$ and $(i, p)\in \hI'_{\fD^{\prime -k}\cQ'}$, we have $\tPsi_{\le \xi, \le \xi'}(\ul{Y_{i, p}})\in \F(\cY_{t, \fD^{-k}\cQ})$. 
\end{itemize}
\end{Lem}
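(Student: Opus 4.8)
The plan is to establish Lemma~\ref{Lem:compatD} by carefully tracing the duality operations through the big commutative diagram \eqref{eq:diagram}, using the algebraic identities relating $\fD_t$ to forward shifts $\partial_+$ already proved in Sections~\ref{sectwo} and \ref{ssecfive}. The key observation is that although the truncation $(\cdot)_{\le \xi}$ does not intertwine $\fD_t^{-1}$, the birational map $\tPsi_{\le \xi, \le \xi'}$ is \emph{defined} as a composite of the three isomorphisms $\teta_{\bfi,\F}$, $\hat{\bftau}^*_\F$, $\teta_{\bfi',\F}^{-1}$ on the quantum torus level, and each of these is compatible with an appropriate incarnation of the duality, coming from the identity \eqref{eq:D=dl} (namely $\eta_\bfi \circ \nu \circ (\partial_+^*)^\ell = \fD_t^{-1} \circ \eta_\bfi$) and its analog for $\bfi'$, together with the relation \eqref{eq:taudl}. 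So the strategy for part~(1) is: first, transport $\fD_t^{-1}$ on $\F(\cY_{t,\le\xi})$ through $\teta_{\bfi,\F}$ to an explicit automorphism of $\F(\cT(\Lambda_\bfi))$; by Corollary~\ref{Cor:teta} and Proposition~\ref{Prop:L=N} together with \eqref{eq:D=dl}, this automorphism is $\nu_\F \circ (\partial_+^{*})^\ell_\F$, the fraction-field extension of $\nu \circ (\partial_+^*)^\ell \colon \cA_\bfi \to \cA_{\bfi^*}$ composed with the canonical identification $\cT(\Lambda_{\bfi^*}) = \cT(\Lambda_\bfi)$ (recall $\bfi = \partial_+^\ell \bfi^*$). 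Do the same on the primed side. Then it remains to check that $\hat{\bftau}^*_\F$ intertwines these two shift-by-$\ell$ maps, i.e. $\hat{\bftau}^* \circ \nu'\circ(\partial_+^*)^\ell = \nu\circ(\partial_+^*)^\ell \circ \hat{\bftau}^*$ as maps $\cA_{\bfi'} \to \cA_{\bfi^*}$ — but this is exactly the content of \eqref{eq:taudl} once one checks that the relevant cluster monomials have degrees in the cone $C_{\bfi'}$, which is guaranteed because every quantum cluster monomial of $\cA_{\bfi'}$ has $g$-vector in $C_{\bfi'}$ (this is the statement that $G$-vectors of cluster monomials lie in the appropriate cone, used already in the proof of Theorem~\ref{Thm:Psi=tau}), and $\hat{\bftau}^*$ preserves this cone by Lemma~\ref{Lem:cone}. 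Combining the three compatibilities gives part~(1).

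For part~(2), the plan is to feed the explicit generator $\ul{Y_{i,p}}$ with $(i,p) \in \hI'_{\fD^{\prime -k}\cQ'}$ through the chain $\teta_{\bfi',\F}^{-1}$, $\hat{\bftau}^*_\F$, $\teta_{\bfi,\F}$. On the source side, $\teta_{\bfi'}^{-1}\ul{Y_{i,p}}$ is a ratio of two initial cluster variables $X_u X_{u^+}^{-1}$ of $\cA_{\bfi'}$ (this is the relation $Y_{\bar\im,p} = m^{(\im)}[p,\xi_\im]/m^{(\im)}[p+2d_{\bar\im},\xi_\im]$ recalled in the proof of Corollary~\ref{Cor:teta}), so $\hat{\bftau}^*_\F(\teta_{\bfi'}^{-1}\ul{Y_{i,p}})$ is a Laurent monomial in finitely many cluster variables of $\cA_\bfi$, hence lies in $\cA_\bfi \cdot (\cT(\Lambda_\bfi))$ supported on a finite truncation $\cA_\bfi^{n\ell}$. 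One then needs to pin down that $\teta_{\bfi}$ carries this finite-support element into $\F(\cY_{t,\fD^{-k}\cQ})$. The cleanest way is to use that, by Theorem~\ref{Thm:Psi=tau} and Theorem~\ref{Thm:FHOO}~\eqref{FHOO:rest}, the composite is $\left(\Psi(\cQ,\cQ')|_{\cK_t(\Cc'_{\le \fD^{\prime-k}\xi'})}\right)$ expressed on truncations, and $\Psi$ restricts to $\cK_t(\Cc'_{\fD^{\prime -k}\cQ'}) \xrightarrow{\sim} \cK_t(\Cc_{\fD^{-k}\cQ})$ via $\Phi_{\fD^{-k}\cQ}\circ \Phi_{\fD^{-k}\cQ'}^{-1}$; then invoke that the truncation $(\cdot)_{\le \fD^{-k}\xi}$ restricted to $\cK_t(\Cc_{\fD^{-k}\cQ})$ is an injective algebra homomorphism into $\cY_{t,\fD^{-k}\cQ}$ (this is the truncation statement recalled in Section~\ref{Ssec:Qdata}, \cite[\S5.4]{FHOO}). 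Passing to fraction fields and carrying the result back along $\fD_t^{-1}$-compatibility (part~(1)) handles general $k$, reducing to the case $k$ large enough that $(i,p) \in \hI'_{\le \xi'}$ and $\tPsi_{\le\xi,\le\xi'}(\ul{Y_{i,p}})$ lands in $\F(\cY_{t,\le\xi})$ with the finer support controlled by the $\cK_t(\Cc_{\cQ})$-truncation.

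I expect the main obstacle to be part~(2): controlling the \emph{support} of $\tPsi_{\le\xi,\le\xi'}(\ul{Y_{i,p}})$, i.e. showing it only involves the variables $\ul{Y_{j,s}^{\pm1}}$ with $(j,s)$ in the narrow window $\hD_{\fD^{-k}\cQ}$ rather than the whole of $\hD_{\le\xi}$. The compatibility with $\fD_t^{-1}$ from part~(1) lets one slide the window, so it suffices to treat one value of $k$, but even there one has to be careful that $\hat{\bftau}^*$ (a genuinely infinite composite of mutations after braid/commutation moves) does not spread out the finite Laurent monomial $X_u X_{u^+}^{-1}$ beyond the cluster $\cA_\bfi^\ell$ corresponding to $\cK_t(\Cc_\cQ)$. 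The way around this is precisely the already-established fact (Theorem~\ref{Thm:Psi=tau}, diagram \eqref{eq:diagtau}) that $\hat{\bftau}^*$ restricts to $\boldsymbol{\tau}^*\colon \cA_t(\tB_{\bfi'},\Lambda_{\bfi'}) \simeq \cA_t(\tB_\bfi,\Lambda_\bfi)$, a \emph{finite} cluster-algebra isomorphism, so the image stays in the finite-rank quantum torus $\cT(\Lambda_\bfi^\ell)$ and hence in $\cY_{t,\cQ}$ after applying $\teta_\bfi$. Assembling these observations should complete the proof of both parts of Lemma~\ref{Lem:compatD}; the remaining steps (extending $\tPsi_{\le\xi,\le\xi'}$ to $\tPsi$ on all of $\F(\cY'_t)$ using the exhaustion $\cY_t = \bigcup_k \fD_t^{-k}\cY_{t,\le\xi}$ and checking (1)--(3) of Theorem~\ref{Thm:Substitution}) are then formal.
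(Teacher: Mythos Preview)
Your approach to part~(1) contains a genuine gap. You claim that transporting $\fD_t^{-1}$ through $\teta_{\bfi,\F}$ yields the fraction-field extension of $\nu \circ (\partial_+^*)^\ell$, citing \eqref{eq:D=dl}. But \eqref{eq:D=dl} is the identity $\fD_t^{-1} \circ \eta_\bfi = \eta_\bfi \circ \nu \circ (\partial_+^*)^\ell$, which involves $\eta_\bfi$, not $\teta_\bfi$. The link between them is $\teta_\bfi|_{\cA_\bfi} = (\cdot)_{\le\xi} \circ \eta_\bfi$ from diagram~\eqref{eq:diagT0}, and the truncation $(\cdot)_{\le\xi}$ does \emph{not} commute with $\fD_t^{-1}$ on $\cK_t(\Cc_{\le\xi})$; this is precisely the obstruction the paper flags in the paragraph preceding the lemma. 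Concretely, setting $\tSigma \seq \teta_{\bfi,\F}^{-1} \circ \fD_t^{-1} \circ \teta_{\bfi,\F}$, a direct computation using Corollary~\ref{Cor:teta} gives $\tSigma(X_u) = X^{\bfe_{u+\ell} - \bfe_{(\ell+1)^-_\bfi(\im_u^*)}}$, a \emph{monomial} in the initial cluster, whereas $\nu \circ (\partial_+^*)^\ell$ sends $X_u$ to a genuine cluster variable obtained by mutations, generically not a monomial. So $\tSigma \neq (\nu \circ (\partial_+^*)^\ell)_\F$, and your intertwining argument via \eqref{eq:taudl} simply does not apply to the map you need.

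The paper's proof of (1) is substantially more delicate. It computes $\tSigma$ explicitly on monomials and on the $X^{\bfb_u}$ (using Proposition~\ref{Prop:y-hat} to get $\tSigma(X^{\bfb_u}) = X^{\bfb_{u+\ell}}$), then exploits the block-wise $\ell$-periodicity built into the definition of $\hat{\bftau}^*$ to establish periodicity relations for the $g$-vectors $\bfg_u$ and Laurent-expansion coefficients of the cluster variables $Z_u = \hat{\bftau}^*(X'_u)$. These yield $\tSigma(Z_u) \simeq X^{\tsfSg(\bfg_u) - \bfg_{u+\ell}} Z_{u+\ell}$ up to a power of $t^{1/2}$, with the discrepancy monomial supported on the frozen vertices $\rho_\bfi^{-1}(\hD_\cQ^{\mathrm{fr}})$. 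Lemma~\ref{Lem:p-inv} is then used to show this discrepancy cancels for $u > \ell$, giving the desired identity on $\hI'_{\le \fD^{\prime-1}\xi'}$; the remaining case $1 \le u \le \ell$ requires a further support argument (the Claim inside the proof) and passing to $\fD_t^{-2}$.

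Your outline for part~(2) is essentially correct and coincides with the paper's one-line reduction: once (1) is available, general $k$ follows from $k=0$, and for $k=0$ the finite restriction of $\hat{\bftau}^*$ to $\cA_{\bfi'}^\ell \simeq \cA_\bfi^\ell$ (diagram~\eqref{eq:diagtau}) shows $\tPsi_{\le\xi,\le\xi'}(\F(\cY'_{t,\cQ'})) \subset \F(\cY_{t,\cQ})$. (A minor correction: $\teta_{\bfi'}^{-1}\ul{Y_{i,p}}$ is $X^{\bfe_u - \bfe_{u^-}}$, not $X_u X_{u^+}^{-1}$.)
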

\begin{proof}
The statement (2) immediately follows from (1) since we have $\tPsi_{\le \xi, \le \xi'}(\F(\cY'_{t, \cQ'}))\subset \F(\cY_{t, \cQ})$ by the construction of $\tPsi_{\le \xi, \le \xi'}$. Hence we shall prove (1). 

Let us define the homomorphisms of skew fields as 
\begin{align*}
&\tSigma\seq \teta_{\bfi, \F}^{-1}\circ \fD_t^{-1}\circ\teta_{\bfi, \F} \colon \F(\cT(\Lambda_{\bfi}))\to \F(\cT(\Lambda_{\bfi})),\\
&\tSigma'\seq \teta_{\bfi', \F}^{-1}\circ \fD_t^{\prime -1}\circ\teta_{\bfi', \F} \colon \F(\cT(\Lambda_{\bfi'}))\to \F(\cT(\Lambda_{\bfi'})).
\end{align*}
Let $\sfSg, \tsfSg\colon \Z^{\oplus \N}\to \Z^{\oplus \N}$ be $\Z$-module homomorphisms determined by 
\[
\sfSg(\bfe_u)=\bfe_{u+\ell},\qquad 
\tsfSg(\bfe_u)=\bfe_{u+\ell}-\bfe_{(\ell+1)_{\bfi}^{-}(\im_u^{\ast})}\qquad \text{for all }u\in \N.
\]
Note that we have 
\begin{equation}
\tsfSg(\bfa)=\sfSg(\bfa)-\sum_{\im\in \Delta_0}\sfp_{\bfi}(\bfa; \im)\bfe_{(\ell+1)_{\bfi}^{-}(\im^{\ast})}\label{eq:tsfSg}
\end{equation}
for $\bfa\in \Z^{\oplus \N}$ by the notation defined in \eqref{eq:p-sum}. 
Recall $\rho_{\bfi}\colon \N\to \hD_{\le \xi}$ defined in Section \ref{Ssec:adpt}. Then, for $u \in \N$ with $\rho_{\bfi}(u)=(\im, p)$, we have 
\begin{align}
	\tSigma(X_u)&=\teta_{\bfi, \F}^{-1}(\fD_t^{-1}(\teta_{\bfi, \F}(X_u)))\label{eq:tSigmacalc}\\
	&=\teta_{\bfi, \F}^{-1}(\fD_t^{-1}(\ul{m^{(\im)}[p,\xi_{\im}]}))\notag\\
	&=\teta_{\bfi, \F}^{-1}(\ul{m^{(\im^{\ast})}[p-rh^{\vee},\xi_{\im}-rh^{\vee}]})\notag\\
	&=\teta_{\bfi, \F}^{-1}(\ul{m^{(\im^{\ast})}[p-rh^{\vee},\xi_{\im^{\ast}}](m^{(\im^{\ast})}((\fD^{-1}\xi)_{\im^{\ast}},\xi_{\im^{\ast}}])^{-1}})\notag\\	
	&=X^{\tsfSg(\bfe_u)}\notag
\end{align}
by the condition~\eqref{eq:cond2}. Therefore, we have $\tSigma(X^{\bfa})=X^{\tsfSg(\bfa)}$ for $\bfa\in \Z^{\oplus \N}$. 
Moreover, in the notation of Proposition \ref{Prop:y-hat}, we have 
\begin{align}
    \tSigma(X^{\bfb_u})=\teta_{\bfi, \F}^{-1}(\fD_t^{-1}(\teta_{\bfi, \F}(X^{\bfb_u})))
    =\teta_{\bfi, \F}^{-1}(\fD_t^{-1}(\ul{A_{i, p-d_i}^{-1}}))
    =\teta_{\bfi, \F}^{-1}(\ul{A_{i^{\ast}, p-rh^{\vee}-d_{i^{\ast}}}^{-1}})
    =X^{\bfb_{u+\ell}}\label{eq:a-vect}
\end{align}
for $u\in \N$ with $\bar{\rho}_{\bfi}(u)=(i, p)$ by Proposition \ref{Prop:y-hat} and the condition~\eqref{eq:cond2}. 

For $u\in \N$, set $Z_{u}\seq\hat{\bftau}^*(X'_{u})$. Then, by the definition of $\hat{\bftau}^*$, $Z_{u}$ is a quantum cluster variable in $\cA_{\bfi}$. Therefore, by \cite[Theorem 5.3]{Tran}, there exist $c_{u, \bfn}\in \Z[t^{\pm 1/2}]$ for $\bfn\in \N_0^{\oplus \N} \setminus\{0\}$ and $\bfg_u\in \Z^{\oplus \N}$ such that  
\[
Z_{u}=X^{\bfg_u}\left(1+\sum_{\bfn=(n_k)_k \in \N_0^{\oplus \N} \setminus\{0\}} c_{u, \bfn} X^{\sum_{k\in \N}n_k\bfb_k}\right).%ok 
\]
For $k\in \N_{0}$, set 
\begin{align*}
&\hD_{\fD^{ -k}\cQ}^{\mathrm{fr}}\coloneqq \{(\im, (\fD^{-(k+1)}\xi)_{\im}+2d_{\bar{\im}})\mid \im \in \Delta_0\},\\
&\hD_{\fD^{ -k}\cQ}^{\mathrm{uf}}\coloneqq \hD_{\fD^{-k}\cQ}\setminus \hD_{\fD^{ -k}\cQ}^{\mathrm{fr}}.
\end{align*}
For $u\in \N$, there exists $k\in \N_{0}$ such that the quantum cluster variable $Z_{u}$ is obtained from the initial cluster $\{X_s\mid s\in \N\}$ by the mutations only at the vertices labelled by $\hD_{\fD^{ -k}\cQ}^{\mathrm{uf}}$. See the construction of $\hat{\bftau}^*$ in Section \ref{Ssec:CTinter} and recall the fact that $\varepsilon_1^* \varepsilon_2^* = \varepsilon_2^*\varepsilon_1^*$ whenever $\varepsilon_i \in \{\gamma_{k+n_i\ell}\}_{k \in [1,\ell-1]} \cup \{\beta_{k+n_i\ell}\}_{k \in [1,\ell-2]}$ and $n_1 \neq n_2$. (If $Z_u$ belongs to the initial cluster, then we may take arbitrary $k\in \N_{0}$.) 

Moreover, if $Z_{u}$ is obtained from the initial cluster by the mutation sequence $\mu_{k_{s_u}^{(u)}}\cdots \mu_{k_1^{(u)}}$ for $k_1^{(u)},\dots, k_{s_u}^{(u)}\in \rho_{\bfi}^{-1}(\hD_{\fD^{ -k}\cQ}^{\mathrm{uf}})$, then $Z_{u+\ell}$ is obtained from the initial cluster by the mutation sequence $\mu_{k_{s_u}^{(u)}+\ell}\cdots \mu_{k_1^{(u)}+\ell}$, by the construction of $\hat{\bftau}^*$. 
Since $c_{u, \bfn}\in \Z[t^{\pm 1/2}]$ and $\bfg_u$ are determined only from $\tB_{\bfi}$ and the sequence $k_1^{(u)},\dots, k_{s_u}^{(u)}$ \cite[Theorem 5.3]{Tran}, this periodicity together with the periodicity of $\tB_{\bfi}$ coming from the condition~\eqref{eq:cond2} implies that 
\begin{align}
&\bfg_{u+\ell}
\begin{cases}
=\sfSg(\bfg_{u})&\text{if}\ u>\ell\ \text{or}\ u=(\ell+1)_{\bfi'}^{-}(\im)\ \text{for}\ \im\in \Delta_0,\\
\in \sfSg(\bfg_{u})+\sum_{\im\in \Delta_0}\Z\bfe_{(\ell+1)_{\bfi}^{-}(\im)}&\text{otherwise},
\end{cases}
\quad\text{and}\label{eq:periodicity-g}\\
&c_{u+\ell, \bfn}=
\begin{cases}
	c_{u, \sfSg^{-1}(\bfn)}&\text{if}\ \bfn\in \sfSg(\N_0^{\oplus \N}),\\
	0&\text{otherwise},
\end{cases}\label{eq:periodicity-c}
\end{align}
for $u\in \N$. (Here the statement for $\bfg_{u+\ell}$ is divided by cases since the vertices in $\hD_{\cQ}^{\mathrm{uf}}$ does not have a neighborhood corresponding to $\hD_{\fD\cQ}^{\mathrm{fr}}$ in $\Gamma_{\bfi}$.) Note that $Z_{(\ell+1)_{\bfi'}^{-}(\im)}=X_{(\ell+1)_{\bfi}^{-}(\im)}$ and $Z_{(\ell+1)_{\bfi'}^{-}(\im)+\ell}=X_{(\ell+1)_{\bfi}^{-}(\im)+\ell}$.

Therefore, for $u\in \N$, we have 
\begin{align}
	\tSigma(Z_{u})
	&=X^{\tsfSg(\bfg_u)}\left(1+\sum_{\bfn=(n_k)_k \in \N_0^{\oplus \N} \setminus\{0\}} c_{u, \bfn} X^{\sum_{k\in \N}n_k\bfb_{k+\ell}}\right)\qquad \text{by }\eqref{eq:a-vect} \allowdisplaybreaks\label{eq:dualZ}\\
	&=X^{\tsfSg(\bfg_u)}\left(1+\sum_{\bfn=(n_k)_k \in \N_0^{\oplus \N} \setminus\{0\}} c_{u+\ell, \sfSg(\bfn)} X^{\sum_{k\in \N}n_k\bfb_{k+\ell}}\right)\qquad \text{by }\eqref{eq:periodicity-c} \allowdisplaybreaks\notag\\
	&=X^{\tsfSg(\bfg_u)}\left(1+\sum_{\bfn=(n_k)_k \in \N_0^{\oplus \N} \setminus\{0\}} c_{u+\ell, \bfn} X^{\sum_{k\in \N}n_k\bfb_{k}}\right)\qquad \text{by }\eqref{eq:periodicity-c} \allowdisplaybreaks\notag\\
	&\simeq X^{\tsfSg(\bfg_u)-\bfg_{u+\ell}}Z_{u+\ell}\notag
\end{align}
Here $\simeq$ stands for the equality up to multiplication by $t^{a/2}$ for some $a\in \Z$. (We use this notation throughout this proof.) Note that, by \eqref{eq:tsfSg} and \eqref{eq:periodicity-g}, $\tsfSg(\bfg_u)-\bfg_{u+\ell}\in \sum_{\im\in \Delta_0}\Z\bfe_{(\ell+1)_{\bfi}^{-}(\im)}$, and every $Z_s$ ($s\in \N$) $t$-commutes with $X^{\tsfSg(\bfg_u)-\bfg_{u+\ell}}$ since $\rho_{\bfi}((\ell+1)_{\bfi}^{-}(\im))\in \hD_{\cQ}^{\mathrm{fr}}$ for $\im\in \Delta_0$. 

To show the statement (1), it suffices to prove 
\begin{align}
\fD_t^{-1}(\tPsi_{\le \xi, \le \xi'}(\ul{Y_{i,p}}))=\tPsi_{\le \xi, \le \xi'}(\fD_t^{\prime -1}(\ul{Y_{i,p}}))\label{eq:statement}
\end{align}
for all $(i, p)\in \hI'_{\le \xi'}$. First, we show the equality \eqref{eq:statement} for $(i, p)\in \hI'_{\le \fD^{\prime -1}\xi'}$.
Let $(i, p)\in \hI'_{\le \fD^{\prime -1}\xi'}$ and $u\coloneqq \bar{\rho}_{\bfi'}^{-1}((i, p))\in \N$. Then $u^-\seq u^-_{\bfi'}\in \N$, and
\begin{align*}
	\fD_t^{-1}(\tPsi_{\le \xi, \le \xi'}(\ul{Y_{i,p}}))
	&=\fD_t^{-1}(\teta_{\bfi, \F}(\hat{\bftau}_{\F}^*(\teta_{\bfi', \F}^{-1}(\ul{Y_{i,p}}))))\allowdisplaybreaks\\
	&=\fD_t^{-1}(\teta_{\bfi, \F}(\hat{\bftau}_{\F}^*(X^{\prime \bfe_{u}-\bfe_{u^-}})))\allowdisplaybreaks\\
	&\simeq \fD_t^{-1}(\teta_{\bfi, \F}(Z_{u^-}^{-1}Z_u))\allowdisplaybreaks\\
	&=\teta_{\bfi, \F}(\tSigma(Z_{u^-}^{-1}Z_u))\allowdisplaybreaks\\
	&\simeq \teta_{\bfi, \F}(X^{\tsfSg(\bfg_u)-\bfg_{u+\ell}-\tsfSg(\bfg_{u^-})+\bfg_{u^-+\ell}} Z_{u^-+\ell}^{-1}Z_{u+\ell}), \allowdisplaybreaks\\
	\tPsi_{\le \xi, \le \xi'}(\fD_t^{\prime -1}(\ul{Y_{i,p}}))
	&=\teta_{\bfi, \F}(\hat{\bftau}_{\F}^*(\teta_{\bfi', \F}^{-1}(\fD_t^{\prime -1}(\ul{Y_{i,p}}))))\allowdisplaybreaks\\
	&=\teta_{\bfi, \F}(\hat{\bftau}_{\F}^*(\tSigma'(\teta_{\bfi', \F}^{-1}(\ul{Y_{i,p}}))))\allowdisplaybreaks\\
	&=\teta_{\bfi, \F}(\hat{\bftau}_{\F}^*(\tSigma'(X^{\prime \bfe_{u}-\bfe_{u^-}}))) \allowdisplaybreaks\\
	&=\teta_{\bfi, \F}(\hat{\bftau}_{\F}^*(X^{\prime \bfe_{u+\ell}-\bfe_{u^-+\ell}}))\quad \text{by the argument parallel to \eqref{eq:tSigmacalc}}, \allowdisplaybreaks\\
	&\simeq \teta_{\bfi, \F}(Z_{u^-+\ell}^{-1}Z_{u+\ell}).
\end{align*}
Let us show that 
\begin{align*}
    \tsfSg(\bfg_u)-\bfg_{u+\ell}-\tsfSg(\bfg_{u^-})+\bfg_{u^-+\ell}=0
\end{align*}
for all $u>\ell$. By the iterated application of Lemmas \ref{Lem:cg}, \ref{Lem:bg}, and \ref{Lem:p-inv}, we have 
\[
\sfp_{\bfi}(\bfg_u; \im)=\sfp_{\bfi'}(\bfe_u; \im)=\delta_{\im'_u, \im}\quad\text{and}\quad \sfp_{\bfi}(\bfg_{u^-}; \im)=\sfp_{\bfi'}(\bfe_{u^-}; \im)=\delta_{\im'_{u^-}, \im}=\delta_{\im'_u, \im}.
\]
Note that, when applying Lemma \ref{Lem:p-inv}, we used the fact that the $s$-th components of $\bfg_u$ and $\bfg_{u^-}$ are equal to zero for all $s\in \rho_{\bfi}^{-1}(\hD_{\cQ}^{\mathrm{uf}})$ by the definition of $\hat{\bftau}^*$ and our assumption on $u$. Therefore, by \eqref{eq:tsfSg} and \eqref{eq:periodicity-g}, we have 
\begin{align*}
    &\tsfSg(\bfg_u)-\bfg_{u+\ell}-\tsfSg(\bfg_{u^-})+\bfg_{u^-+\ell}\\
    &=\sfSg(\bfg_u)-\bfe_{(\ell+1)_{\bfi}^{-}((\im'_u)^{\ast})}-\bfg_{u+\ell}-\sfSg(\bfg_{u^-})+\bfe_{(\ell+1)_{\bfi}^{-}((\im'_u)^{\ast})}+\bfg_{u^-+\ell}\\
    &=0.
\end{align*}
Then we have 
\[
\fD_t^{-1}(\tPsi_{\le \xi, \le \xi'}(\ul{Y_{i,p}}))
\simeq \tPsi_{\le \xi, \le \xi'}(\fD_t^{\prime -1}(\ul{Y_{i,p}})).
\]
Since $\fD_t, \fD_t^{\prime}$, and $\tPsi_{\le \xi, \le \xi'}$ are compatible with the bar involution, $\fD_t^{-1}(\tPsi_{\le \xi, \le \xi'}(\ul{Y_{i,p}}))$ and $\tPsi_{\le \xi, \le \xi'}(\fD_t^{\prime -1}(\ul{Y_{i,p}}))$ are bar-invariant. Therefore, we have 
\[
\fD_t^{-1}(\tPsi_{\le \xi, \le \xi'}(\ul{Y_{i,p}}))
= \tPsi_{\le \xi, \le \xi'}(\fD_t^{\prime -1}(\ul{Y_{i,p}})), 
\]
which completes the proof of \eqref{eq:statement} for $(i, p)\in \hI'_{\le \fD^{\prime -1}\xi'}$. 

Next, we show the equality \eqref{eq:statement} for $(i, p)\in \hI'_{\cQ'}$. 
For $u\in \N$, set 
\[
F_u\coloneqq\left(1+\sum_{\bfn=(n_k)_k \in \N_0^{\oplus \N} \setminus\{0\}} c_{u, \bfn} X^{\sum_{k\in \N}n_k\bfb_k}\right).
\]
Then the calculation in \eqref{eq:dualZ} shows that 
\[
\tSigma(F_{u})=F_{u+\ell}
\]
for $u\in \N$. We prepare the following claim. 
\begin{Claim}\label{Claim:sub}
For $(i, p)\in \hI'_{\le \fD^{\prime -2}\xi'}$, $\tPsi_{\le \xi, \le \xi'}(\ul{Y_{i, p}})\in \F(\cY_{t, \le \fD^{-2}\xi})$. 
\end{Claim}
\begin{proof}[Proof of Claim]
Write $u\coloneqq \bar{\rho}_{\bfi'}^{-1}((i, p))$ and $u^-\seq u^-_{\bfi'}$. Then, 
\begin{align*}
    \tPsi_{\le \xi, \le \xi'}(\ul{Y_{i, p}})&=\teta_{\bfi, \F}(\hat{\bftau}_{\F}^*(X^{\prime \bfe_{u}-\bfe_{u^-}}))
    \simeq \teta_{\bfi, \F}(Z_{u^-}^{-1}Z_u)
    \simeq \teta_{\bfi, \F}(F_{u^-}^{-1}X^{\bfg_u-\bfg_{u^-}}F_u). 
\end{align*}
By Proposition \ref{Prop:y-hat} and \eqref{eq:periodicity-c}, we have
\[
\teta_{\bfi, \F}(F_{u^-}^{-1}), \teta_{\bfi, \F}(F_{u})\in \F(\cY_{t, \le \fD^{-2}\xi}). 
\]
Hence it suffices to show that $\teta_{\bfi, \F}(X^{\bfg_u-\bfg_{u^-}})\in \F(\cY_{t, \le \fD^{-2}\xi})$. 
Since we already proved \eqref{eq:statement} for $(i, p)\in \hI'_{\le \fD^{\prime -1}\xi'}$, we have 
\begin{align*}
\tPsi_{\le \xi, \le \xi'}(\F(\cY'_{t, \le \fD^{\prime -2}\xi'}))
&=\tPsi_{\le \xi, \le \xi'}(\fD_t^{\prime -1}(\F(\cY'_{t, \le \fD^{\prime -1}\xi'})))\\
&=\fD_t^{-1}(\tPsi_{\le \xi, \le \xi'}(\F(\cY'_{t, \le \fD^{\prime -1}\xi'})))\subset \F(\cY_{t, \le \fD^{-1}\xi}). 
\end{align*}
Therefore, $\teta_{\bfi, \F}(F_{u^-}^{-1}X^{\bfg_u-\bfg_{u^-}}F_u)=\tPsi_{\le \xi, \le \xi'}(\ul{Y_{i, p}})\in  \F(\cY_{t, \le \fD^{-1}\xi})$, hence we have 
\begin{equation}
\teta_{\bfi, \F}(X^{\bfg_u-\bfg_{u^-}})\in \F(\cY_{t, \le \fD^{-1}\xi}).\label{eq:locality}
\end{equation}

The property \eqref{eq:periodicity-g} implies that $\teta_{\bfi, \F}(X^{\bfg_u-\bfg_{u^-}})$ is a monomial in the variables $\{\ul{m^{(\im)}[p, \xi_\im]^{\pm 1}}\mid p\leq (\fD^{-2}\xi)_{\im}+2d_{\bar{\im}}, \im\in \Delta_0\}$. Hence, we can write the monomial $\teta_{\bfi, \F}(X^{\bfg_u-\bfg_{u^-}})$ as 
\[
\teta_{\bfi, \F}(X^{\bfg_u-\bfg_{u^-}})=\ul{M\cdot \prod\nolimits_{\im\in \Delta_0}m^{(\im)}[(\fD^{-2}\xi)_{\im}+2d_{\bar{\im}}, \xi_\im]^{k_{\im}}}
\]
for some $k_\im\in \Z$ and a monomial $M$ in $\cY_{\le \fD^{-2}\xi}$. 
Here the property \eqref{eq:locality} implies $k_{\im}=0$ for all $\im\in \Delta_0$. Therefore, $\teta_{\bfi, \F}(X^{\bfg_u-\bfg_{u^-}})\in \F(\cY_{t, \le \fD^{-2}\xi})$.
\end{proof}

Assume that $1\leq u\leq \ell$. Since we already proved \eqref{eq:statement} for $u>\ell$, it suffices to show that 
\[
\fD_t^{-2}(\tPsi_{\le \xi, \le \xi'}(\teta_{\bfi', \F}(X'_u)))=\tPsi_{\le \xi, \le \xi'}(\fD_t^{\prime -2}(\teta_{\bfi', \F}(X'_u))).
\]
Indeed, since $\fD_t^{\prime -1}(\teta_{\bfi', \F}(X'_u))\in \cY'_{t, \le \fD^{\prime -1}\xi'}$, this equality implies
\begin{align*}
\fD_t^{-2}(\tPsi_{\le \xi, \le \xi'}(\teta_{\bfi', \F}(X'_u)))&=\tPsi_{\le \xi, \le \xi'}(\fD_t^{\prime -2}(\teta_{\bfi', \F}(X'_u)))=\fD_t^{-1}(\tPsi_{\le \xi, \le \xi'}(\fD_t^{\prime -1}(\teta_{\bfi', \F}(X'_u)))),
\end{align*}
which is equivalent to $\fD_t^{-1}(\tPsi_{\le \xi, \le \xi'}(\teta_{\bfi', \F}(X'_u)))=\tPsi_{\le \xi, \le \xi'}(\fD_t^{\prime -1}(\teta_{\bfi', \F}(X'_u)))$. 

Write $f\coloneqq (2\ell+1)^-_{\bfi}(\im_u)$ and $f'\coloneqq (2\ell+1)^-_{\bfi'}(\im_u)$. Note that $X_f=Z_{f'}$. Then 
\begin{align*}
	\fD_t^{-2}(\tPsi_{\le \xi, \le \xi'}(\teta_{\bfi', \F}(X'_u)))
 &=\fD_t^{-2}(\teta_{\bfi, \F}(Z_u))\\
&=\fD_t^{-2}(\teta_{\bfi, \F}(X^{\bfg_{u}}F_u))\\
&=\teta_{\bfi, \F}(\tSigma^2(X^{\bfg_{u}}F_u))\\
&=\teta_{\bfi, \F}(X^{\tsfSg^2(\bfg_{u})}F_{u+2\ell}). \\
\tPsi_{\le \xi, \le \xi'}(\fD_t^{\prime -2}(\teta_{\bfi', \F}(X'_u)))
	&=\tPsi_{\le \xi, \le \xi'}(\teta_{\bfi', \F}(\tSigma^{\prime 2}(X'_u)))\\
        &=\tPsi_{\le \xi, \le \xi'}(\teta_{\bfi', \F}(X^{\prime \bfe_{u+2\ell}-\bfe_{f'}}))\\
        &\simeq \teta_{\bfi, \F}(X_{f}^{-1}Z_{u+2\ell})\\
        &\simeq \teta_{\bfi, \F}(X^{\bfg_{u+2\ell}-\bfe_{f}}F_{u+2\ell}).
\end{align*}
By Claim, we have 
\begin{align*}
&\teta_{\bfi, \F}(X^{\tsfSg^2(\bfg_{u})-\bfg_{u+2\ell}+\bfe_{f}})\\
&\simeq \fD_t^{-2}(\tPsi_{\le \xi, \le \xi'}(\teta_{\bfi', \F}(X'_u)))\left(	\tPsi_{\le \xi, \le \xi'}(\fD_t^{\prime -2}(\teta_{\bfi', \F}(X'_u)))\right)^{-1}\in \F(\cY_{t, \le \fD^{-2}\xi}).
\end{align*}
On the other hand, by \eqref{eq:periodicity-g}, 
\[
\tsfSg^2(\bfg_{u})-\bfg_{u+2\ell}+\bfe_{f}\in \sum_{\im\in \Delta_0}\Z\bfe_{(2\ell+1)_{\bfi}^{-}(\im)}+\sum_{\im\in \Delta_0}\Z\bfe_{(\ell+1)_{\bfi}^{-}(\im)}.
\]
Hence $
\teta_{\bfi, \F}(X^{\tsfSg^2(\bfg_{u})-\bfg_{u+2\ell}+\bfe_{f}})$ is a monomial in the variables $\ul{Y_{j,s}^{\pm1}}$ for $(j,s)\in \hI_{\cQ}\cup \hI_{\fD^{-1}\cQ}$. Therefore, 
\[
\teta_{\bfi, \F}(X^{\tsfSg^2(\bfg_{u})-\bfg_{u+2\ell}+\bfe_{f}})=1,
\]
which implies $\tsfSg^2(\bfg_{u})=\bfg_{u+2\ell}-\bfe_{f}$. Therefore, 
\[
\fD_t^{-2}(\tPsi_{\le \xi, \le \xi'}(\teta_{\bfi', \F}(X'_u)))\simeq \tPsi_{\le \xi, \le \xi'}(\fD_t^{-2}(\teta_{\bfi', \F}(X'_u))).
\]
Then, by taking the bar-invariance into account again, we obtain  
\[
\fD_t^{-2}(\tPsi_{\le \xi, \le \xi'}(\teta_{\bfi', \F}(X'_u)))= \tPsi_{\le \xi, \le \xi'}(\fD_t^{-2}(\teta_{\bfi', \F}(X'_u))),
\]
which completes the proof. 
\end{proof}

We are now in the position to define $\tPsi$ in Theorem \ref{Thm:Substitution}. Define the map $\tPsi\colon \F(\cY'_t)\to \F(\cY_t)$ as follows.  
For $y\in \F(\cY'_t)$, there exists $k\in\N_0$ such that $\fD_t^{\prime -k}(y)\in \F(\cY'_{t, \le \xi'})$. Then we set 
\begin{align*}
    \tPsi(y)\coloneqq \fD_t^{k}\left(\tPsi_{\le \xi, \le \xi'}\left(\fD_t^{\prime -k}(y)\right)\right).
\end{align*}
Let us check the well-definedness of $\tPsi$. If $k, k'\in \Z_{\geq 0}$ with $k<k'$ satisfy $\fD_t^{\prime -k}(y), \fD_t^{\prime -k'}(y)\in \F(\cY'_{t, \le \xi'})$, then by Lemma \ref{Lem:compatD} (1), 
\begin{align*}
\fD_t^{k'}\left(\tPsi_{\le \xi, \le \xi'}\left(\fD_t^{\prime -k'}(y)\right)\right)
&=\fD_t^{k'}\left(\tPsi_{\le \xi, \le \xi'}\left(\fD_t^{\prime -(k'-k)}\left(\fD_t^{\prime -k}(y)\right)\right)\right) \allowdisplaybreaks\\
&=\fD_t^{k'}\left(\fD_t^{-(k'-k)}\left(\tPsi_{\le \xi, \le \xi'}\left(\fD_t^{\prime -k}(y)\right)\right)\right) \allowdisplaybreaks\\
&=\fD_t^{k}\left(\tPsi_{\le \xi, \le \xi'}\left(\fD_t^{\prime -k}(y)\right)\right),
\end{align*}
which shows the well-definedness of $\tPsi$. %Note that, in this case, $\fD_t^{\prime -2(k+s)}(y)\in \F(\cY'_{t, \le \fD^{\prime -2}\xi'})$ for all $s\in \Z_{\ge 0}$.  

The property (1) in Theorem \ref{Thm:Substitution} immediately follows from the definition of $\tPsi$. The property (2) in Theorem \ref{Thm:Substitution} follows from Lemma \ref{Lem:compatD} (2). 

Let $y_1, y_2\in \F(\cY'_t)$, and choose $k\in \Z_{\geq 0}$ satisfying $\fD_t^{\prime -k}(y_1), \fD_t^{\prime -k}(y_2)\in \F(\cY'_{t, \le \xi'})$. Then $\fD_t^{\prime -k}(y_1y_2)\in \F(\cY'_{t, \le \xi'})$, and we have 
\begin{align*}
    \tPsi(y_1y_2)&=\fD_t^{k}\left(\tPsi_{\le \xi, \le \xi'}\left(\fD_t^{\prime -k}(y_1y_2)\right)\right)\\
    &=\fD_t^{k}\left(\tPsi_{\le \xi, \le \xi'}\left(\fD_t^{\prime -k}(y_1)\right)\right)\fD_t^{ k}\left(\tPsi_{\le \xi, \le \xi'}\left(\fD_t^{\prime -k}(y_2)\right)\right)\\
    &=\tPsi(y_1)\tPsi(y_2). 
\end{align*}
We can show the linearity of $\tPsi$ in the same way. Therefore, $\tPsi$ is a homomorphism of skew fields. 

Moreover, for $x\in \cK_t(\Cc'_{\Z})$, we can take $k\in \Z_{\geq 0}$ satisfying $\fD_t^{-k}(x)\in \cK_t(\Cc'_{\le \fD^{\prime -1}\xi'})(\subset \F(\cY'_{t, \le \xi'}))$. Then, by the commutativity of the diagram \eqref{eq:diagram} and Theorem \ref{Thm:FHOO} (2), 
\begin{align*}
    \tPsi(x)&=\fD_t^{k}\left(\tPsi_{\le \xi, \le \xi'}\left(\fD_t^{\prime -k}(x)\right)\right)\\
    &=\fD_t^{k}\left(\Psi\left(\fD_t^{\prime -k}(x)\right)\right)=\fD_t^{k}\left(\fD_t^{-k}\left(\Psi(x)\right)\right)=\Psi(x).
\end{align*}
Hence $\tPsi$ satisfies the property in Theorem \ref{Thm:Substitution} (3). 

Finally, by reversing the role of $\fg$ and $\fg'$, we can construct a homomorphism of skew fields $\tPsi'\colon \F(\cY_t)\xrightarrow{\sim} \F(\cY'_t)$ which gives an inverse of $\tPsi$. Therefore $\tPsi$ is an isomorphism, which completes the proof of Theorem \ref{Thm:Substitution}. 

We provide some explicit examples of substitution formulas in Appendix \ref{sec:QCC}.
%%%%%%%%%%%%%%%%%%%%%%%%%%%%%%%%%%%%%%%%%%%%%%%%%%%%%%%%%%%%%%
%%%%%%%%%%%%%%%%%%%%%%%%%%%%%%%%%%%%%%%%%%%%%%%%%%%%%%%%%%%%%%
\appendix

\section{Quantum cluster algebras}
\label{sec:QCA}

In this appendix, we fix our notation around the quantum cluster algebras. 

%%%%%%%%%%%%%%%%%%%%%%%%%%%%%%%%%%%%%%%%%%%%%%%%%%%%%%%%%%%%%%
\subsection{Quantum torus}\label{ssec:Qtorus}

Let $t$ be an invertible indeterminate with a formal square root $t^{1/2}$.
Let $J$ be a (possibly countably infinite) set. 
For a $\Z$-valued skew-symmetric $J\times J$-matrix 
$\Lambda = (\Lambda_{ij})_{i,j \in J}$, we define \emph{the quantum torus} 
$\cT(\Lambda)$ to be the $\Z[t^{\pm 1/2}]$-algebra presented by the set of generators 
$\{ X_{j}^{\pm 1} \mid j \in J\}$ and the relations:
\begin{itemize}
\item $X_j X_j^{-1} = X_j^{-1} X_j  =1$ for $j \in J$, 
\item $X_{i} X_{j} = t^{\Lambda_{ij}} X_j X_i$ for $i,j\in J$.
\end{itemize} 
We define the $\Z$-algebra anti-involution $\ol{(\cdot)}$ of $\cT(\Lambda)$ by
\[
\ol{t^{1/2}} \seq t^{-1/2}, \qquad \ol{X_j} \seq X_j
\]
for all $j \in J$.
This is called the {\em bar involution} of $\cT(\Lambda)$. 
For $\bfa = (a_j)_{j \in J} \in \Z^{\oplus J}$, we define the \emph{commutative monomial}
$$
X^{\bfa} \seq t^{-\frac{1}{2}\sum_{i < j}a_i a_j \Lambda_{ij}} \prod^{\to}_{j \in J} X_j^{a_j},
$$
where we fixed an arbitrary total ordering $<$ of the set $J$. Note that the resulting element $X^{\bfa} \in \cT(\Lambda)$
is independent from the choice of total ordering $<$, and invariant under the bar involution. 
The set $\{X^\bfa \mid \bfa \in \Z^{\oplus J}\}$ forms a free $\Z[t^{\pm 1/2}]$-basis of $\cT(\Lambda)$.
Since $\cT(\Lambda)$ is an Ore domain, it is embedded into the skew field of fractions $\F(\cT(\Lambda))$.

\subsection{Quantum cluster algebra}
\label{ssec:qca}

Let $J_f \subset J$ be a subset and put $J_e \seq J \setminus J_f$.
Let $\tB = (b_{ij})_{i \in J, j \in J_e}$ be a $\Z$-valued $J \times J_e$-matrix whose 
\emph{principal part} $B=(b_{ij})_{i,j \in J_e}$ is skew-symmetrizable, i.e., there is a diagonal matrix $D$ with positive integer entries such that the product $DB$ is skew-symmetric. 
We assume that the set $\{ i \in J \mid b_{ij} \neq 0\}$ is finite for all $j \in J$.
Such a matrix $\tB$ is called \emph{an exchange matrix}. 
We say that a pair $(\Lambda, \tB)$ is \emph{compatible} if we have 
$$
\sum_{k \in J} b_{ki}\Lambda_{kj} = \sfd_i  \delta_{i,j} \quad \text{$(i\in J_e, j \in J)$}
$$
for some positive integer $\sfd_i$. 
In this case, $B$ is skew-symmetrizable by the diagonal matrix $D = \mathrm{diag}(\sfd_i \mid i \in J_e)$.

Given a compatible pair $(\Lambda, \tB)$ and an element $k \in J_e$, we define a new pair 
\[\mu_k (\Lambda, \tB) = (\mu_k\Lambda, \mu_k\tB) \seq (E^{T} \Lambda E, E \tB F), \]
where the matrices $E=(e_{ij})_{i,j \in J}$
and $F=(f_{ij})_{i,j \in J_e}$ are given by 
\begin{equation} \label{eq:EF}
e_{ij} \seq \begin{cases}
\delta_{i,j} & \text{if $j \neq k$}, \\
-1 & \text{if $i=j=k$}, \\
\max(0, -b_{ik}) & \text{if $i\neq j = k$},
\end{cases} 
\qquad 
f_{ij}\seq \begin{cases}
\delta_{i,j} & \text{if $i \neq k$}, \\
-1 & \text{if $i=j=k$}, \\
\max(0, -b_{ik}) & \text{if $i = k \neq j$}.
\end{cases}
\end{equation}
The pair $\mu_k(\Lambda, \tB)$ again forms a compatible pair. 
The operation $\mu_k$ is called the mutation at $k$ and it is involutive, i.e., we have $\mu_k(\mu_k(\Lambda, \tB)) = (\Lambda, \tB)$.
In addition, we define an isomorphism of $\Q(t^{1/2})$-algebras $\mu_k^* \colon \F(\cT(\mu_k\Lambda)) \cong \F(\cT(\Lambda))$ by
\begin{equation}
\mu_k^*(X_{j}) \seq 
\begin{cases} \label{eq:exrel}
X^{\bfa^\prime} + X^{\bfa^{\prime\prime}} & \text{if $j =k$}, \\
X_{j} & \text{if $j \neq k$},
\end{cases}
\end{equation}
where $\bfa^\prime = (a^\prime_{j})_{j \in J}$
 and $\bfa^\prime = (a^{\prime\prime}_{j})_{j \in J}$ are given by
\[
a^\prime_{j} \seq \begin{cases}
-1 & \text{if $i=k$}, \\
\max(0, b_{ik}) & \text{if $i \neq k$},
\end{cases}
\qquad 
a^{\prime\prime}_{j} \seq \begin{cases}
-1 & \text{if $i=k$}, \\
\max(0, -b_{ik}) & \text{if $i \neq k$}.
\end{cases}
\]
Note that the definition of $\mu_k^*$ does depend on the exchange matrix $\tB$.
The isomorphism $\mu_k^*$ is often called the cluster transformation at $k$.
We have $\mu_k^* \circ \mu_k^* = \id_{\F(\cT(\Lambda))}$ and $\mu_k^* \circ \ol{(\cdot)} = \ol{(\cdot)} \circ \mu_k^*$.

\begin{Def} 
Let $(\Lambda, \tB)$ be a compatible pair.  
We say that an element of $\F(\cT(\Lambda))$ is a \emph{quantum cluster variable} (resp.~\emph{quantum cluster monomial}) if it is written as
\[ \mu_{k_1}^*\mu_{k_2}^*\cdots \mu_{k_n}^*(X_j) \quad (\text{resp. } \mu_{k_1}^*\mu_{k_2}^*\cdots \mu_{k_n}^*(X^\bfa)), \]
for some finite sequence $(k_1, k_2, \ldots, k_n)$ in $J_e$ and $j \in J$ (resp.~$\bfa \in \N_0^{\oplus J}$).
The \emph{quantum cluster algebra} $\cA_{t}(\Lambda, \tB)$
is defined to be the $\Z[t^{\pm 1/2}]$-subalgebra of $\F(\cT(\Lambda))$
generated by all the quantum cluster variables.
\end{Def}    

Note that each quantum cluster monomial is invariant under the bar involution.
 
\begin{Thm}[The quantum Laurent phenomenon {\cite[Corollary 5.2]{BZ05}}]
The quantum cluster algebra $\cA_{t}(\Lambda, \tB)$ is contained in the quantum torus $\cT(\Lambda)$.
\end{Thm}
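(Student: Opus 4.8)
The plan is to prove the containment by induction on the number of mutations needed to reach a given quantum cluster variable, following the strategy of Berenstein--Zelevinsky. Since $\cA_{t}(\Lambda,\tB)$ is generated as a $\Z[t^{\pm1/2}]$-algebra by the quantum cluster variables, and $\cT(\Lambda)$ is a subalgebra of $\F(\cT(\Lambda))$, it suffices to show that each quantum cluster variable $z=\mu_{k_1}^*\mu_{k_2}^*\cdots\mu_{k_n}^*(X_j)$ already lies in $\cT(\Lambda)$. For $n=0$ this is clear because $X_j,X_j^{-1}\in\cT(\Lambda)$, so the whole content lies in the inductive step. Assuming that every quantum cluster variable of the mutated compatible pair $\mu_{k_1}(\Lambda,\tB)$ lies in $\cT(\mu_{k_1}\Lambda)$, one has to control the image of such an element under the cluster transformation $\mu_{k_1}^*\colon\F(\cT(\mu_{k_1}\Lambda))\xrightarrow{\sim}\F(\cT(\Lambda))$; the subtlety is that $\mu_{k_1}^*(X_{k_1})=X^{\bfa'}+X^{\bfa''}$ is a binomial, not a monomial, so applying $\mu_{k_1}^*$ to a Laurent polynomial in the $\mu_{k_1}$-cluster a priori produces denominators that are powers of this binomial.

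First I would settle the case of rank at most two. When $|J_e|\le 2$ one iterates the mutations explicitly and obtains elements satisfying three-term exchange relations $X_{m-1}X_{m+1}=t^{a_m}M_m+t^{b_m}M'_m$ with $M_m,M'_m$ commutative monomials, and then a direct induction shows that every $X_m$ is a genuine element of the rank-at-most-two quantum torus. Here the compatibility condition $\sum_{k}b_{ki}\Lambda_{kj}=\sfd_i\delta_{i,j}$ is used only to pin down the powers of $t^{1/2}$ so that the $X_m$ remain bar-invariant; this is the quantum analog of the classical rank-two Laurent phenomenon. The general case then reduces to this one by a ``caterpillar'' / coprimality argument in the spirit of Fomin--Zelevinsky: one considers three seeds obtained from a common seed by $\mu_k$ and then $\mu_\ell$ with $k\ne\ell$, writes a quantum cluster variable of the last seed, via the inductive hypothesis, as a Laurent polynomial in the cluster of the middle seed, observes that composing the two cluster transformations can introduce only denominators of the shape (a monomial in the $X_i$ with $i\ne k,\ell$) times a power of an exchange binomial $1+X^{\bfb}$, and finally shows --- after localizing $\cT(\Lambda)$ at the $X_i$ with $i\ne k,\ell$, which leaves a quantum torus of rank at most two governed by the $\hat{y}$-type element $X^{\bfb_k}$ with $\bfb_k=\tB\bfe_k$ --- that $1+X^{\bfb}$ is coprime to the numerator and therefore does not occur in the denominator.

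The main obstacle is precisely this last coprimality step in the noncommutative setting: one cannot evaluate or factor as in the commutative case, so the argument must be run inside $\cT(\Lambda)$ and its Ore localizations while carefully tracking the powers of $t^{1/2}$ produced by each cluster transformation. The compatibility of the pair $(\Lambda,\tB)$ is what makes this tractable, since it forces each $X^{\bfb_j}$ to $t$-commute suitably with the cluster variables, so that for the purpose of inversion the binomial $1+X^{\bfb_j}$ behaves like its commutative shadow. Once the quantum coprimality lemma is established the induction closes at once. All of this is the content of \cite[Theorem~5.1 and Corollary~5.2]{BZ05}, to which we ultimately appeal.
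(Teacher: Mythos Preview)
The paper does not give its own proof of this statement: it is stated as a theorem and attributed entirely to \cite[Corollary~5.2]{BZ05}, with no argument provided. Your proposal is therefore consistent with the paper's treatment --- indeed, you go further by sketching the Berenstein--Zelevinsky caterpillar/coprimality strategy before appealing to the same reference, whereas the paper simply cites the result and moves on.
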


By definition, the cluster transformation at $k \in J_e$ gives a $\Z[t^{\pm 1/2}]$-algebra isomorphism
\[ \mu_k^* \colon \cA_t(\mu_k(\Lambda, \tB)) \cong \cA_t(\Lambda, \tB), \]
which induces a bijection between the sets of quantum cluster monomials.

\subsection{Permutation} \label{Ssec:perm}
Let $\pi$ be a permutation of the index set $J$ satisfying $\pi(J_f) \subset J_f$.
Given a compatible pair $(\Lambda, \tB)$ as above, we can consider another compatible pair $\pi (\Lambda, \tB) = (\pi \Lambda, \pi \tB)$, where $\pi \Lambda \seq (\Lambda_{\pi^{-1}(i),\pi^{-1}(j)})_{i,j \in J}$ and $\pi \tB \seq (b_{\pi^{-1}(i),\pi^{-1}(j)})_{i \in J, j \in J_e}$.  
Then we have the isomorphism of $\Z[t^{\pm 1/2}]$-algebras
\[\pi^* \colon \cA_{t}(\pi(\Lambda, \tB)) \cong \cA_{t}(\Lambda, \tB) \qquad \text{given by $\pi^*(X_j) = X_{\pi^{-1}(j)}$ for all $j \in J$.} \]
Clearly, this isomorphism $\pi^*$ induces a bijection between the sets of quantum cluster monomials.

\subsection{Classical limit}
Let $\tB$ be an exchange matrix as above. 
Consider the (commutative) ring of Laurent polynomials $\Z[X_j^{\pm 1} \mid j \in J]$ and let $\Q(X_j \mid j \in J)$ be its fraction field. 
For each $k \in J_e$, we have an algebra involution $\mu_k^*$ of $\Q(X_j \mid j \in J)$ defined by the same formula as \eqref{eq:exrel}.

\begin{Def}
Let $\tB$ be an exchange matrix.
An element of $\Q(X_j \mid j \in J)$ is called  a \emph{cluster variable} (resp.~\emph{cluster monomial}) if it is written as
\[ \mu_{k_1}^*\mu_{k_2}^*\cdots \mu_{k_n}^*(X_j) \quad (\text{resp. } \mu_{k_1}^*\mu_{k_2}^*\cdots \mu_{k_n}^*(X^\bfa)), \]
for some finite sequence $(k_1, k_2, \ldots, k_n)$ in $J_e$ and $j \in J$ (resp.~$\bfa \in \N_0^{\oplus J}$).
The \emph{cluster algebra} $\cA(\tB)$
is defined to be the $\Z$-subalgebra of $\Q(X_j \mid j \in J)$ generated by all the cluster variables.
\end{Def}

Now, let $(\Lambda, \tB)$ be a compatible pair. 
By specializing $t^{1/2}$ to $1$, we obtain the surjective algebra homomorphism $\evt \colon \cT(\Lambda) \to \Z[X_j^{\pm 1}\mid j \in J]$.
By definition, it induces the surjection
\[ \evt \colon \cA_t(\Lambda, \tB) \to \cA(\tB).\]
under which a quantum cluster monomial goes to a cluster monomial.
See \cite[Lemma 3.3]{GLS20}. 

\begin{Lem}[cf.~{\cite{BZ05}}]
\label{Lem:evt}
The homomorphism $\evt$ gives a bijection between the set of quantum cluster monomials in $\cA_t(\Lambda, \tB)$ and the set of cluster monomials in $\cA(\tB)$.
\end{Lem}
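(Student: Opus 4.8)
The statement to prove is Lemma~\ref{Lem:evt}: the classical limit map $\evt$ restricts to a bijection between quantum cluster monomials in $\cA_t(\Lambda,\tB)$ and cluster monomials in $\cA(\tB)$. Surjectivity is essentially immediate: we already observed that $\evt$ sends a quantum cluster monomial $\mu_{k_1}^*\cdots\mu_{k_n}^*(X^\bfa)$ to the cluster monomial obtained by applying the same mutation sequence in the commutative setting, because the formulas \eqref{eq:exrel} defining $\mu_k^*$ specialize term by term under $t^{1/2}\mapsto 1$; conversely every cluster monomial arises this way, so $\evt$ is onto. The content of the lemma is therefore \emph{injectivity}.

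First I would reduce injectivity to a statement about a single seed. A quantum cluster monomial is a monomial $X(\mathbf{c})^{\mathbf m}$ in the variables of \emph{some} quantum seed $\mu_{k_1}^*\cdots\mu_{k_n}^*(\Lambda,\tB)$ (with $\mathbf m\in\N_0^{\oplus J}$), and two quantum cluster monomials that become equal after $\evt$ must, by the classical result, be monomials in the variables of a \emph{common} classical seed --- here I would invoke the fact that in the classical theory distinct cluster monomials are linearly independent, and that cluster monomials lying in a common seed are exactly the products of the cluster variables of that seed with nonnegative exponents (this is part of what makes $\cA(\tB)$ well-behaved; it is implicit in the discussion preceding the lemma and is the classical input I would cite from \cite{BZ05} / \cite{GLS20}). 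So it suffices to show: within a fixed quantum seed with cluster variables $\{X_j(\mathbf c)\}_{j\in J}$, the map $\mathbf m\mapsto \evt\big(X(\mathbf c)^{\mathbf m}\big)$ is injective on $\N_0^{\oplus J}$, and moreover a quantum cluster monomial belonging to seed $\mathbf c$ cannot equal (after $\evt$) one belonging to a genuinely different seed $\mathbf c'$ unless the corresponding classical monomials already force $\mathbf c$ and $\mathbf c'$ to share the relevant variables.

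The key step is the \emph{within-a-seed} injectivity, and here the mechanism is the $t$-commutation twist. In a quantum seed, $X(\mathbf c)^{\mathbf m}=t^{?}\prod^{\to}_j X_j(\mathbf c)^{m_j}$, and $\evt(X(\mathbf c)^{\mathbf m})=\prod_j \evt(X_j(\mathbf c))^{m_j}$ in the commutative Laurent ring. Because $\{X_j(\mathbf c)\}_{j}$ is an algebraically independent family generating a Laurent polynomial ring (quantum Laurent phenomenon plus the fact that a seed's cluster variables form a free generating set --- cf.\ Theorem in \S\ref{ssec:qca}), the exponent vector $\mathbf m$ is recovered from $\evt(X(\mathbf c)^{\mathbf m})$ by reading off degrees in the $\evt(X_j(\mathbf c))$; hence $\mathbf m$ is determined. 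Thus $\evt$ restricted to the quantum cluster monomials of a single seed is injective, and is a bijection onto the classical cluster monomials of that seed (a product $\prod_j \bar X_j(\mathbf c)^{m_j}$). The cross-seed part then follows by transport along mutations: if $\evt(M)=\evt(M')$ with $M$ in seed $\mathbf c$ and $M'$ in seed $\mathbf c'$, apply the classical theory to conclude the common classical monomial lies in seeds $\mathbf c$ and $\mathbf c'$ simultaneously, which by the structure of seeds means it lies in a common seed $\mathbf c''$ containing the needed variables; lift back to the quantum level in seed $\mathbf c''$ using the within-seed bijection to get $M=M'$ (up to the bar-invariant normalization, both $M$ and $M'$ are bar-invariant, so there is no residual power-of-$t$ ambiguity).

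The main obstacle I anticipate is the cross-seed bookkeeping: making precise that two quantum cluster monomials whose classical images coincide can be realized inside one common quantum seed, and that lifting the classical equality to the quantum level respects the $t^{1/2}$-normalization. This is where one must lean on the classical structural facts about seeds (that cluster monomials in a common seed are precisely the nonnegative-exponent products of that seed's variables, and that the "seed containing a given cluster monomial" is well-defined enough for the argument) together with the uniqueness of bar-invariant lifts. Once that is set up, the within-seed injectivity is a short degree-counting argument and the surjectivity is formal. In fact this is likely why the paper phrases it as "cf.~\cite{BZ05}" --- the real work is already in the classical/quantum Laurent phenomena and the seed combinatorics recalled above, so I would keep the proof brief, citing \cite[Lemma 3.3]{GLS20} for the compatibility of $\evt$ with mutation and \cite{BZ05} for the quantum Laurent phenomenon, and spelling out only the degree-reading argument for injectivity within a seed.
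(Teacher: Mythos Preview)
Your proposal is largely on target and matches the spirit of the paper's very brief proof, which says only that surjectivity is obvious and that injectivity ``follows from an argument similar to the proof of \cite[Theorem 6.1]{BZ05}''. The within-seed injectivity (reading off exponents via algebraic independence of the specialized cluster variables) is fine.

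The cross-seed step, however, has a subtle circularity. You place the common classical monomial $\evt(M)=\evt(M')$ in a seed $\mathbf{c}''$ and then ``lift back to the quantum level in seed $\mathbf{c}''$'', but this lifting presupposes that the quantum cluster variables of $\mathbf{c}$ appearing in $M$ coincide, as elements of $\F(\cT(\Lambda))$, with the corresponding quantum cluster variables of $\mathbf{c}''$ --- which is exactly the injectivity of $\evt$ on cluster \emph{variables}, the special case of what you are proving. The argument in \cite[Theorem 6.1]{BZ05} avoids this by working directly at the level of \emph{seeds}: the quantum mutation (both the update $\mu_k\Lambda=E^T\Lambda E$ and the exchange relation~\eqref{eq:exrel}) is governed entirely by the exchange matrix $\tB$, so two mutation paths from the initial quantum seed that reach the same classical seed necessarily reach the same quantum seed. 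Once that seed-level bijection is in hand, your within-seed degree-counting finishes the statement for cluster monomials. (Your route also leans on linear independence of classical cluster monomials, which is a considerably deeper input than what \cite{BZ05} actually needs.)
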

\begin{proof}
The surjectivity is obvious from the definition.
The injectivity follows from an argument similar to the proof of \cite[Theorem 6.1]{BZ05}. 
\end{proof} 

\subsection{Degrees} 
Let $(\Lambda, \tB)$ be a compatible pair. We say that an element $x$ of the quantum torus $\cT(\Lambda)$ is \emph{pointed} if it is written in the form
\[x = X^{\bfg} + \sum_{\bfn \in \N_0^{\oplus J_e} \setminus\{0\}} c_{\bfn} X^{\bfg + \tB \bfn}\]
for some $\bfg \in \Z^{\oplus J}$ and $c_\bfn \in \Z[t^{\pm 1/2}]$.
In this case, we write $\deg x = \bfg$ and call it \emph{the degree of $x$}. 
Note that this notion of degree does depend on the exchange matrix $\tB$, not only on the quantum torus $\cT(\Lambda)$.

It is known that every quantum cluster monomial $x$ in $\cA_t(\Lambda, \tB)$ is pointed~\cite[Theorem 5.3]{Tran}. 
Its degree is often called the $g$-vector of $x$. 

\begin{Thm}[{\cite[(7.18)]{FZ07}, \cite{DWZ10}, \cite{GHKK}}] 
\label{Thm:trop}
Let $(\Lambda, \tB)$ be a compatible pair and $k \in J_e$. 
Let $x \in \cA_t(\Lambda, \tB)$ and $x' \in \cA_t(\mu_k(\Lambda, \tB))$ be quantum cluster monomials whose $g$-vectors are $\bfg = (g_j)_{j \in J}$ and $\bfg' = (g'_j)_{j \in J}$ respectively. 
If $\mu_k^* (x') = x$, we have
\[ 
g_j = \begin{cases}
- g'_k & \text{if $j=k$}, \\
g'_j + \max(b'_{jk},0)g'_k & \text{if $j \neq k$ and $g'_k \ge 0$}, \\
g'_j - \min(b'_{jk},0)g'_k & \text{if $j \neq k$ and $g'_k \le 0$}.  
\end{cases}
\]
Here $\mu_k\tB = (b'_{ij})_{i \in J, j \in J_e}$.
\end{Thm}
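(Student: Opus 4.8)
The plan is to deduce the quantum statement from its classical counterpart — the $g$-vector mutation rule of Fomin--Zelevinsky — by specialising $t^{1/2}\mapsto 1$, and then to recall why that classical rule holds.

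\textbf{Reduction to the classical case.} The evaluation map $\evt\colon \cT(\Lambda)\to \Z[X_j^{\pm 1}\mid j\in J]$ sends each commutative monomial $X^{\bfa}$ to the Laurent monomial $\prod_j X_j^{a_j}$, and by its very definition \eqref{eq:exrel} it intertwines the quantum cluster transformation $\mu_k^*$ on $\F(\cT(\mu_k\Lambda))$ with the classical one on $\Q(X_j\mid j\in J)$. Hence, if $x\in\cT(\Lambda)$ is pointed with $x=X^{\bfg}+\sum_{\bfn\in\N_0^{\oplus J_e}\setminus\{0\}}c_{\bfn}X^{\bfg+\tB\bfn}$, then $\evt(x)=X^{\bfg}+\sum_{\bfn}\evt(c_{\bfn})X^{\bfg+\tB\bfn}$ is pointed in $\Z[X_j^{\pm 1}]$ with the same degree $\bfg$, since the coefficient $1$ of $X^{\bfg}$ survives the specialisation. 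Applying this to the quantum cluster monomials $x$ and $x'$ of the statement, and using that $\evt$ restricts to a bijection between quantum cluster monomials of $\cA_t(\Lambda,\tB)$ and cluster monomials of $\cA(\tB)$ (Lemma~\ref{Lem:evt}) which is compatible with $\mu_k^*$, one is reduced to proving the displayed formula for the $g$-vectors of the classical cluster monomials $\evt(x')\in\cA(\mu_k\tB)$ and $\evt(x)=\mu_k^*(\evt(x'))\in\cA(\tB)$.

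\textbf{The classical $g$-vector mutation rule.} In the classical setting this transformation is exactly \cite[(7.18)]{FZ07}; the structural input needed there — sign-coherence of the $c$-vectors, equivalently the fact that every cluster monomial is pointed, which is what makes $g$-vectors well defined — was supplied in the skew-symmetric case via decorated representations of quivers with potential \cite{DWZ10} and in the general skew-symmetrizable case via scattering diagrams \cite{GHKK}. Granting sign-coherence, the formula follows by a direct comparison of leading terms: writing a cluster monomial of $\cA(\mu_k\tB)$ as $X^{\bfg'}\bigl(1+\sum_{\bfn\ne 0}\bar c_{\bfn}X^{(\mu_k\tB)\bfn}\bigr)$ and substituting $X_k\mapsto X^{\bfa'}+X^{\bfa''}$ as in \eqref{eq:exrel} (note $\bfa'-\bfa''$ is the $k$-th column of $\tB$, hence a $\tB$-direction), one expands $(X^{\bfa'}+X^{\bfa''})^{g'_k}$ by the binomial theorem, factors out the dominant power of $X_k$ according to the sign of $g'_k$, and checks that the resulting leading exponent is the asserted $\bfg$ while the remaining terms lie in $\bfg+\tB\,\N_0^{\oplus J_e}$; sign-coherence is precisely what guarantees that this leading term is not cancelled by contributions coming from the lower-order part of the expansion or from the factors $X^{(\mu_k\tB)\bfn}$.

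\textbf{Main obstacle.} The genuinely deep point is the structural input just invoked (sign-coherence, i.e.\ pointedness of cluster monomials); once it is available — whether one cites \cite{DWZ10,GHKK} in the classical case and specialises as above, or invokes directly the quantum $g$-vector theory of Tran \cite[Theorem~5.3]{Tran}, which establishes pointedness of quantum cluster monomials together with the mutation rule in one stroke — everything else is the bookkeeping with the binomial expansion and the dominance preorder sketched above.
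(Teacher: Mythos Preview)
The paper does not give its own proof of this statement; it is recorded as a known result with citations to \cite{FZ07,DWZ10,GHKK}, so there is nothing to compare against beyond checking that your argument is sound. Your reduction to the classical case via $\evt$ is correct and is exactly the implicit content of those citations combined with Tran's result --- indeed the paper makes precisely this reduction explicit in the proof of the very next theorem (Theorem~\ref{Thm:gvsx}), citing \cite[Theorem~5.3(2)]{Tran} for the identity $\deg x=\deg\evt(x)$ --- so your proposal is correct and aligned with the intended approach.
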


\begin{Rem} \label{Rem:trop}
In particular, under the assumption of Theorem~\ref{Thm:trop}, we have $\bfg = E \bfg'$ if $g'_k \ge 0$.
Here $E = (e_{ij})_{i,j \in J}$ is the matrix given by \eqref{eq:EF} with $\tB = (b_{ij})_{i \in J, j \in J_e}$.
\end{Rem}

\begin{Rem} \label{Rem:permg}
Let $\pi \in \SG_J$ be a permutation such that $\pi (J_f) \subset J_f$ as in Section~\ref{Ssec:perm}.
From the definition, it is clear that we have $\deg (\pi^*x) = (g_{\pi(j)})_{j \in J}$ if $\deg x = (g_j)_{j \in J}$.
\end{Rem}

\begin{Thm}[{\cite[Conjecture 7.10(1)]{FZ07}, \cite{DWZ10}, \cite{GHKK}}] \label{Thm:gvsx}
Let $(\Lambda, \tB)$ be a compatible pair and $x, x' \in \cA_t(\Lambda, \tB)$ be two cluster monomials.
If $\deg x = \deg x'$, we have $x=x'$.
\end{Thm}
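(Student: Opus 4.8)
The plan is to reduce the statement to the analogous fact about the classical cluster algebra $\cA(\tB)$ and then to invoke the known solutions of the Fomin--Zelevinsky $g$-vector conjecture. The first step is to observe that the classical limit homomorphism $\evt \colon \cA_t(\Lambda, \tB) \to \cA(\tB)$ preserves $g$-vectors. Indeed, if $x \in \cA_t(\Lambda, \tB)$ is pointed, say $x = X^{\bfg} + \sum_{\bfn \in \N_0^{\oplus J_e} \setminus\{0\}} c_{\bfn} X^{\bfg + \tB \bfn}$, then, since $\evt(X^{\bfa}) = \prod_{j \in J} X_j^{a_j}$ for any $\bfa = (a_j)_j \in \Z^{\oplus J}$, applying $\evt$ term by term shows that $\evt(x)$ is pointed in the classical sense with the same degree $\bfg$. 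Together with Lemma~\ref{Lem:evt}, which asserts that $\evt$ restricts to a bijection between quantum cluster monomials and classical cluster monomials, this reduces the theorem to the statement that two cluster monomials of $\cA(\tB)$ with the same $g$-vector coincide.

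For the classical statement I would first reduce to the finite-rank, principal-coefficient setting. Any cluster monomial of $\cA(\tB)$ is produced from the initial cluster by a finite sequence of mutations, and, by the mutation rule for $g$-vectors recalled in Theorem~\ref{Thm:trop}, the $g$-vectors involved depend only on the sub-block of $\tB$ supported on the finitely many indices touched by this sequence. Hence one may restrict to a finite subset of $J$ containing all the relevant exchangeable directions, regarding every other exchangeable index as frozen; this changes neither the cluster monomials under consideration nor their $g$-vectors. Passing further to principal coefficients via the separation-of-additions formalism of \cite[\S3]{FZ07}, the $g$-vector of a cluster monomial is unchanged, so it suffices to prove the injectivity of the $g$-vector map on cluster monomials of a cluster algebra with principal coefficients. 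This is precisely \cite[Conjecture 7.10(1)]{FZ07}, established for skew-symmetric exchange matrices in \cite{DWZ10} through the representation theory of quivers with potentials and $F$-polynomials, and in full skew-symmetrizable generality in \cite{GHKK} through scattering diagrams: the $g$-vector is the leading exponent of the theta function attached to the cluster monomial, and distinct theta functions have distinct leading exponents.

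The only genuinely non-formal ingredients are these cited theorems; the rest is bookkeeping. I expect the point requiring the most care to be the passage from the possibly infinite index set $J$ --- with possibly infinitely many frozen directions --- to the finite-rank setting where the cited results are stated, and in particular the verification that freezing the irrelevant exchangeable indices alters neither the cluster monomials nor their $g$-vectors, which is immediate from Theorem~\ref{Thm:trop}. I would not attempt to reprove the self-contained heart of the classical statement, namely that the $g$-vector fan is a genuine (indeed simplicial) fan whose cones' lattice points biject with the cluster monomials of the corresponding cluster; this is exactly what \cite{DWZ10} and \cite{GHKK} provide.
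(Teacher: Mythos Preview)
Your proposal is correct and follows essentially the same route as the paper: reduce to the classical cluster algebra via $\evt$ using Lemma~\ref{Lem:evt} and the preservation of degrees (the paper cites \cite[Theorem 5.3(2)]{Tran} for the latter, while you argue it directly from the definition of pointed elements), and then invoke \cite{DWZ10} and \cite{GHKK} for the classical statement. The only extra content you supply is the explicit reduction from the possibly infinite index set $J$ to the finite-rank setting, which the paper leaves implicit.
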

\begin{proof}
Note that we have the similar notion of degrees for classical cluster algebra $\cA(\tB)$ \cite{FZ07} and we have $\deg x = \deg \evt(x)$ for any quantum cluster monomial $x$ thanks to~ \cite[Theorem 5.3(2)]{Tran}.
For the classical cluster algebra $\cA(\tB)$, the assertion is proved by \cite{DWZ10} when $B$ is skew-symmetric and by \cite{GHKK} in general.
Therefore, the assertion for the quantum cluster algebra $\cA_t(\Lambda, \tB)$ also follows in view of Lemma~\ref{Lem:evt}. 
\end{proof}

\begin{Rem}
In the main body of the present paper, we consider the degrees only for skew-symmetric quantum cluster algebras. 
Therefore, we do not really need the results of \cite{GHKK} for our purpose. 
\end{Rem}

\section{Cluster structure on $K(\Cc_\Z)$}
\label{sec:QCB}

We give examples for Theorem \ref{Thm:KKOP_CZ}.

\begin{Ex}[Type $\mathrm{A}$] \label{Ex: An quiver periodicity}
Let  $\cQ=(\Delta_{\mathrm{A}_n}, {\rm id},\xi)$ be a Q-datum of type $\mathrm{A}_n$ such that
$$  \xi_\im = \begin{cases} 
 0  & \text{ if } \im \equiv 1  \pmod 2, \\
 -1  & \text{ if } \im \equiv 0  \pmod 2,
\end{cases}
 $$
and take a
$\cQ$-adapted sequence
$$
\bfi = (\im_u)_{u \in \N}\seq \begin{cases}
(1,3,\ldots,n,2,4,\ldots,n-1, 1,3,\ldots,n,2,4,\ldots,n-1,\ldots) & \text{ if } n \equiv 1 \pmod 2, \\
(1,3,\ldots,n-1,2,4,\ldots,n, 1,3,\ldots,n-1,2,4,\ldots,n,\ldots) & \text{ if } n \equiv 0 \pmod 2.
\end{cases}
$$

The map $\fe$ defined below is a $\cQ$-adapted enumeration of $ \hD_{[\xi]}$: For $u=kn+s$ with $k \in \Z_{\ge0}$ and $0< s \le n$, set
$$
\fe(u) \seq 
\begin{cases}
 (\im_s,\xi_{\im_s}+k) & {\rm (i)} \    k\equiv  0   \pmod 2 \text{ and } 0< s \le \lceil n/2 \rceil, \\
 (\im_s,\xi_{\im_s}-k)      & {\rm (ii)} \    k\equiv  0    \pmod 2 \text{ and }  \lceil n/2 \rceil< s \le n, \\
 (\im_s,\xi_{\im_s}-  (k+1) ) & {\rm (iii)} \     k\equiv  1   \pmod 2 \text{ and } 0< s \le \lceil n/2 \rceil, \\
 (\im_s,\xi_{\im_s}+k+1)      & {\rm (iv)} \    k\equiv 1   \pmod 2  \text{ and }  \lceil n/2 \rceil< s \le n.
\end{cases}
$$
Then each $m_u^\fe$  in~\eqref{eq: m_u} becomes 
$$
m_u^\fe \seq
\begin{cases}
m^{(\im_s)}[\xi_{\im_s}- k,  \xi_{\im_s}+k]  &  {\rm (i)}, \\
m^{(\im_s)}[\xi_{\im_s}- k,  \xi_{\im_s}+k]  &  {\rm (ii)},  \\
m^{(\im_s)}[\xi_{\im_s}-  (k+1) ,  \xi_{\im_s}+k-1]  &  {\rm (iii)}, \\
m^{(\im_s)}[\xi_{\im_s}- k+1,  \xi_{\im_s}+k+1]  &  {\rm (iv)},
\end{cases}
$$
and 
the exchange matrix $\tB^\fe$ is given as follows (see \cite[Theorem 6.14]{KKOP22}):
$$
(\tB^\fe)_{u,v} 
\seq 
\begin{cases}
  1 & \text{ if }  {\rm (a)} \ u-v=\pm n  \text{ and $u$ satisfies {\rm (i)} or  {\rm (iv)}},\\
& \quad \text{ or }  {\rm (b)} \ |u-v| <n, \ \im_u \sim \im_v \text{ and    $u$ satisfies {\rm (iii)} or  {\rm (ii)}}, \\
  -1 & \text{ if }  {\rm (c)} \ u-v=\pm n  \text{ and $u$ satisfies {\rm (iii)} or  {\rm (ii)}}, \\
&\quad \text{ or }  {\rm (d)} \ |u-v| <n, \ \im_u \sim \im_v \text{ and   $u$ satisfies {\rm (i)} or  {\rm (iv)}}, \\
0 & \text{ otherwise.}
\end{cases}
$$
Using $\{ m_u^\fe \}_{u \in \N}$ as the set of vertices for the corresponding quiver $\Gamma_{\tB^\fe}$ of $\tB^\fe$, $\Gamma_{\tB^\fe}$ is isomorphic to the 
the product of sink-source Dynkin quivers of type $\mathrm{A}_\infty$ and $\mathrm{A}_n$,
usually denoted by $\overset{\to}{\mathrm{A}}_{\infty} \otimes \overset{\to}{\mathrm{A}_n}$ (see \cite[\S 3.3]{Ke13}). For instance,  $\Gamma_{\tB^\fe}$ for $n=3$ can be drawn as follows:
\begin{equation*}
\scalebox{0.8}{
\xymatrix@!C=14mm@R=6mm{
\cdots \ar[rr]&&   m^{(1)}_7[-2,2]   \ar[dl]    &&  m^{(1)}_4[-2,0]  \ar[rr]\ar[ll]    &&  m^{(1)}_1[0,0]   \ar[dl]    \\
\cdots &  m^{(2)}_9[-3,1] \ar[rr]  \ar[l]   && m^{(2)}_6[-1,1] \ar[ur]  \ar[dr] &&  m^{(2)}_3[-1,-1]   \ar[ll]  \\
\cdots \ar[rr] &&  m^{(3)}_8[-2,2] \ar[ul]    && m^{(3)}_5[-2,0]    \ar[rr]\ar[ll]    &&  m_2^{(3)}[0,0]      \ar[ul]
}}
\end{equation*}
Here $m_u^{(\im)}[p,s]$ means $m^\fe_u = m^{(\im)}[p,s]$.
%Note that the quiver above is known as the product of sink-source Dynkin quivers of type %$A_\infty$ and $A_3$, usually denoted by $\overset{\to}{A}_{\infty} \otimes \overset{\to}{A}_3$ 
%(see \cite[\S 3.3]{Ke13}). 
\end{Ex}

\begin{Ex}[Type $\mathrm{B}$] \label{Ex: Bn quiver periodicity}
Let $\cQ=(\Delta_{\mathrm{A}_{2n-1}}, \vee, \xi )$ be a Q-datum of type $\mathrm{B}_n$ such that
\begin{align*}
\xi_{n-s}   = \begin{cases}
\xi_{n} + 1 & \text{ if } n- s \equiv 0 \pmod 2, \\
\xi_{n} + 3 & \text{ if } n-s \equiv 1 \pmod 2,
\end{cases} \qquad  \xi_{2n-s}   = \xi_s-2 \ \text{ for } 1 \le s \le n-1,
\end{align*}
and take $\cQ$-adapted sequence
$$
\bfi \seq   (\im_1,\im_2,\ldots,\im_n,\im_{n+1},\im_{n+2},\im_{2n-1},\im_{2n},\im_{2n+1},\ldots)
$$
such that  
(1) $\im_k = \im_{k-2n}$ for any $k > 2n$, 
(2)  $\{ \im_1, \ldots,\im_n \} = \{ 1,\ldots, n\}$, (3) $\im_n=n$, (4) $\im_{n+k}= \vee(\im_k)$ for $1 \le k \le n-1$ and (5) $ \xi_{\im_1} \ge \xi_{\im_2} \ge \cdots \ge \xi_{\im_n}$ (recall Subsection~\ref{Ssec:Cox}).
Then the map $\fe$ defined below is a $\cQ$-adapted enumeration of $ \hD_{[\xi]}$:  For $u=2kn+s \in \N$ with $k \in \Z_{\ge0}$ and $0< s < n$, set
$$
\fe(u) \seq 
\begin{cases}
 (\im_s,\xi_{\im_s}-2k) & {\rm (i)} \    k\equiv  0   \pmod 2     \text{ and } \xi_n-\xi_{\im_s} \equiv 3 \pmod 4,   \\
 (\im_s,\xi_{\im_s}+2k) & {\rm (ii)} \    k\equiv  0   \pmod 2  \text{ and } \xi_n-\xi_{\im_s} \equiv 1 \pmod 4, \\

 (\im_s,\xi_{\im_s}+2(k+1)) & {\rm (iii)} \    k\equiv  1   \pmod 2 \text{ and } \xi_n - \xi_{\im_s} \equiv 3 \pmod 4, \\
 (\im_s,\xi_{\im_s}-2(k+1)) & {\rm (iv)} \    k\equiv  1   \pmod 2 \text{ and } \xi_n - \xi_{\im_s} \equiv 1 \pmod 4,
\end{cases}
$$
and, for $u=tn \in \N$ with $t \in \N$, set
$$
\fe(u) \seq 
\begin{cases}
(n, \xi_n + t)  & {\rm (v)} \  t \equiv 0 \pmod 2, \\ 
(n, \xi_n - (t-1))  & {\rm (vi)} \  t \equiv 1 \pmod 2.
\end{cases}
$$
Then each $m_u^\fe$  in~\eqref{eq: m_u} becomes as follows:
$$
m_u^\fe \seq
\begin{cases}
m^{(\im_s)}[\xi_{\im_s}- 2k,  \xi_{\im_s}+2k]  &  {\rm (i)} \text{ or } {\rm (ii)}, \\
%m^{(\im_s)}[\xi_{\im_s}- 2k,  \xi_{\im_s}+2k]  &  ,  \\
m^{(\im_s)}[\xi_{\im_s}-  2(k-1) ,  \xi_{\im_s}+2(k+1)]  &  {\rm (iii)}, \\
m^{(\im_s)}[\xi_{\im_s}- 2(k+1),  \xi_{\im_s}+2(k-1)]  &  {\rm (iv)}, \\
m^{(n)}[\xi_{\im_n}- (t-2),  \xi_{n}+t]  &  {\rm (v)},  \\
m^{(n)}[\xi_{\im_n}- (t-1) ,  \xi_{n}+t]  &  {\rm (vi)}.  
\end{cases}
$$
In this case, one can prove that the following skew-symmetric matrix is $\tB^\fe$ in Theorem~\ref{Thm:KKOP_CZ}: For $u,v \in \N$ such that $\{ \im_u,\im_v \} \ne \{ n, n \pm 1\}$,  
$$
(\tB^\fe)_{u,v} 
\seq 
\begin{cases}
  1 & \text{ if }  {\rm (a)} \ u-v=\pm  2\sfd_{\bar{\im}_u}n  \text{ and $u$ satisfies {\rm (ii)},  {\rm (iii)} or {\rm (v)}},\\
& \quad \text{ or }  {\rm (b)} \ |u-v| <  2\sfd_{\bar{\im}_u}n, \  \im_u \sim \im_v \text{ and    $u$ satisfies {\rm (i)}, {\rm (iv)} or  {\rm (vi)}}, \\
  -1 & \text{ if }  {\rm (c)} \ u-v=\pm 2\sfd_{\bar{\im}_u}n  \text{ and $u$ satisfies {\rm (i)}, {\rm (iv)} or {\rm (vi)}}, \\
&\quad \text{ or }  {\rm (d)} \ |u-v| < 2\sfd_{\bar{\im}_u}n, \ \im_u \sim \im_v \text{ and   $u$ satisfies {\rm (ii)}, {\rm (iii)} or  {\rm (v)}}, \\
0 & \text{ otherwise.}
\end{cases}
$$
and the remained entries $-(\tB^\fe)_{v,u} =(\tB^\fe)_{u,v} $ for $\im_u = n \pm 1$  and $\im_v = n$ 
are given as follows:  
$$
(\tB^\fe)_{u,v} 
\seq 
\begin{cases}
 1 & \text{ if } {\rm (g)} \ |u-v| < n, \    \text{and $v$ satisfies {\rm (v)}},   \\
  -1 & \text{ if } {\rm (e)} \ |u-v| < 3n, \  \im_u=n-1,  \text{ $u$ satisfies {\rm (ii)}  and $v$ satsifies {\rm (vi)}},   \\
  & \ \text{ or } {\rm (f)} \ |u-v| < 3n, \  \im_u=n+1,  \text{ $u$ satisfies {\rm (iv)}  and $v$ satsifies {\rm (vi)}},   \\
0 & \text{ otherwise.}
\end{cases}
$$

Interestingly enough, the quiver $\Gamma_{\tB^\fe}$ corresponding to $\tB^\fe$ in the above is isomorphic to 
the quiver $Q_{\infty}(\mathrm{B}_{n})$ in \cite[Figure 1]{IIKKKB} which is related to the periodicity
of cluster algebras.

\smallskip

Let us see the particular example for $n=3$. In this case, we can take $\xi_3=-3$ and 
$$ \bfi = (1,2,3,5,4,3,1,2,3,5,4,3,\ldots).$$ 
Using $\{ m_u^\fe \}_{u \in \N}$ as the set of vertices for the corresponding quiver $\Gamma_{\tB^\fe}$ of $\tB^\fe$, $\Gamma_{\tB^\fe}$ can be drawn as follows:
\begin{align}\label{eq: tB fe}
 \raisebox{11ex}{ \scalebox{0.58}{ \xymatrix@!C=7mm@R=3mm{
\cdots \ar[r]&m^{(1)}_{19}[-8,4]    &&&&  m^{(1)}_{13}[-8,0]     \ar[rrrr]  \ar[llll]   &&&&  m^{(1)}_7[-4,0]       \ar[ddll] &&&&   m^{(1)}_1[-4,-4]   \ar[llll]    \\ \\
\cdots \ar[rrr]& &&m^{(2)}_{14}[-6,2]    \ar[uurr]  \ar[dr] \ar[dlll]  &&&&   m^{(2)}_8[-6,-2]      \ar[llll]\ar[rrrr]    &&&&   m^{(2)}_2[-2,-2]      \ar[dlll]  \ar[uurr] \ar[dr] \\
\cdots   \ar[rr]&&m^{(3)}_{18}[-7,3]   \ar[ur]\ar[dl]&&   m^{(3)}_{15}[-7,1]   \ar[ll] \ar[rr] &&  m^{(3)}_{12}[-5,1]      \ar[dl] \ar[ur]  &&    m^{(3)}_9[-5,-1]   \ar[ll]  \ar[rr]  &&  m^{(3)}_6[-3,-1]    \ar[dl]   \ar[ur]  &&     m^{(3)}_3[-3,-3]         \ar[ll] \\
\cdots&  m^{(4)}_{17}[-8,0]   \ar[l] \ar[rrrr] &&&&  m^{(4)}_{11}[-4,0]    \ar[ul]\ar[ddrr] \ar[urrr]    \ &&&&   m^{(4)}_5[-4,-4]       \ar[ddrr]    \ar[llll]    \\ \\
&\cdots  \ar[rr] &&  m^{(5)}_{16}[-6,2]   \ar[uull]&&&& m^{(5)}_{10}[-6,-2]       \ar[rrrr]\ar[llll]  &&&&   m^{(5)}_4[-2,-2]       }}}
\end{align}
%Here $m_u^{(\im)}[p,s]$ means $m^\fe_u = m^{(\im)}[p,s]$. Note that the quiver $\Gamma_{\tB^\fe}$ in~\eqref{eq: tB fe} is isomorphic to the quiver $Q_{\infty}(B_{3})$ in \cite[Figure 1]{IIKKKB}.
\end{Ex}

\begin{Rem} 
As Example~\ref{Ex: An quiver periodicity}, there exist admissible chains of $i$-boxes and 
Q-data with $\sigma={\rm id}$, whose corresponding quivers are isomorphic to the product quivers $\overset{\to}{\mathrm{A}}_{\infty} \otimes \overset{\to}{\mathrm{D}}_n$ $(n \ge 4)$ and $\overset{\to}{\mathrm{A}}_{\infty} \otimes \overset{\to}{\mathrm{E}}_n$ $(n=6,7,8)$ in \cite{Ke13}, for simply-laced types $\fg$. Similar to Example~\ref{Ex: Bn quiver periodicity}, for non simply-laced type $\fg$, there exist admissible chains of i-boxes and 
Q-data with $\sigma \ne {\rm id}$, whose corresponding quivers are isomorphic to $Q_\infty(\fg)$
in \cite{IIKKKB,IIKKKB2}. 
\end{Rem}

\section{Example of substitution formulas}\label{sec:QCC}
In this appendix, we show some examples of our substitution formulas. We mainly consider the situation when $t=1$ for simplicity, that is, the morphism $\tPsi_{t=1}$. Its quantum analogue $\tPsi$ can be calculated in a parallel manner. 
\begin{Ex} \label{EX: A2 to A2} 
Let us calculate explicitly the substitution formula from type $\mathrm{A}_{2}$ to type $\mathrm{A}_2$ itself arising from a braid move. Consider the height functions $\xi', \xi$ given as follows:
\[
(\xi'_1, \xi'_2)=(0,-1), \qquad
(\xi_1,\xi_2)=(0,1).
\]
%\begin{align*}
%\xi' &: \{ 1,2 \} \to \Z, \quad 1 \mapsto 0, \quad 2 \mapsto -1  \quad \text{ and } \quad 
%\xi : \{ 1,2 \} \to \Z, \quad 1 \mapsto 0, \quad 2 \mapsto 1.
%\end{align*}
Then the followings are the infinite sequences $\bfi' = (\im'_u)_{u \in \N}, \bfi = (\im_u)_{u \in \N}\in \{1,2\}^{\N}$ satisfying the condition~\eqref{eq:cond2} in Example~\ref{Ex:periodic}:
\begin{align*}
    &\bfi' =( \hspace{-3ex}\underset{\text{\scalebox{0.8}{reduced word for $w_{\circ}$}}}{\underbrace{  1, 2, 1},} \hspace{-3ex} \underbrace{2,1,2}, \underbrace{1,2,1},\dots), \qquad     \bfi=(\hspace{-3ex} \underset{\text{\scalebox{0.8}{reduced word for $w_{\circ}$}}}{\underbrace{2,1, 2},} \hspace{-3ex} \underbrace{1,2,1}, \underbrace{2,1,2},\dots).
\end{align*}
The quiver $\Gamma_\bfi$ corresponding to $\bfi$ can be depicted as
\[
\raisebox{3mm}{
\scalebox{0.9}{\xymatrix@!C=1mm@R=3mm{
(\im\setminus p) &\cdots&-12&-11&-10&-9& -8 & -7 & -6 &-5&-4 &-3& -2 &-1& 0 & 1\\
1&\cdots& \bullet \ar@{<-}[dr]&&\bullet \ar@{<-}[ll]\ar@{<-}[dr]&&\bullet \ar@{<-}[ll]\ar@{<-}[dr]&& \bullet \ar@{<-}[ll]\ar@{<-}[dr] &&\bullet \ar@{<-}[ll]\ar@{<-}[dr]
&&\bullet \ar@{<-}[ll]\ar@{<-}[dr] && \bullet\ar@{<-}[ll]\ar@{<-}[dr] &\\
2&\cdots&& \bullet \ar@{<-}[l]\ar@{<-}[ur]&&\bullet \ar@{<-}[ll]\ar@{<-}[ur]&& \bullet \ar@{<-}[ll]\ar@{<-}[ur]&& \bullet  \ar@{<-}[ll]\ar@{<-}[ur] &&\bullet \ar@{<-}[ll]\ar@{<-}[ur]
&& \ar@{<-}[ll] \bullet \ar@{<-}[ur]&& \bullet \ar@{<-}[ll]
}}}
\]
The infinite sequence $\bfi$ can be obtained from $\bfi'$, and vise versa, by applying the braid moves. Hence applying the mutations at the following vertices
\begin{equation}
(2, 1),  (1,-2), (2,-5), (1, -8),(2,-11), \ldots \label{eq:A2toA2}
\end{equation}
%from left to right to $\Gamma_{\bfi}$, 
we obtain the quiver $\Gamma_{\bfi'}$.
\[
\raisebox{3mm}{
\scalebox{0.9}{\xymatrix@!C=1mm@R=3mm{
(\im\setminus p) &\cdots&-12&-11&-10&-9& -8 & -7 & -6 &-5&-4 &-3& -2 &-1& 0  \\
1&\cdots&\bullet \ar@{<-}[dr]&&\bullet\ar@{<-}[ll]\ar@{<-}[dr]&&\bullet \ar@{<-}[ll]\ar@{<-}[dr]&& \bullet \ar@{<-}[ll]\ar@{<-}[dr] &&\bullet\ar@{<-}[ll]\ar@{<-}[dr]
&&\bullet\ar@{<-}[ll]\ar@{<-}[dr] &&\bullet \ar@{<-}[ll]  &\\
2&\cdots&&\bullet \ar@{<-}[l]\ar@{<-}[ur]&&\bullet \ar@{<-}[ll]\ar@{<-}[ur]&&\bullet \ar@{<-}[ll]\ar@{<-}[ur]&& \bullet \ar@{<-}[ll]\ar@{<-}[ur] &&\bullet\ar@{<-}[ll]\ar@{<-}[ur]
&& \ar@{<-}[ll] \bullet \ar@{<-}[ur]  
}}}
\]
Here the correspondence of the labelling of vertices is given by
\begin{equation} \label{eq:vertexcorrep3}
\begin{aligned}
(2,1-6m) &\mapsto (1,-6m), \ \qquad\qquad (1,-6m) \mapsto (1,-2-6m),\allowdisplaybreaks\\
 (2,-1-6m) &\mapsto (2,-1-6m),   \quad (1,-2-6m) \mapsto (2,-3-6m), \allowdisplaybreaks\\
 (2,-3-6m) &\mapsto (2,-5-6m),\quad (1,-4-6m) \mapsto (1,-4-6m).
\end{aligned}
\end{equation}
By the exchange relation of cluster algebra and \eqref{eq:vertexcorrep3}, we have 
\begin{align*}
\hat{\bftau}^*(X'_{1,-6m})&=\dfrac{X_{2,3-6m}X_{1,-6m}+X_{1,2-6m}X_{2,-1-6m}}{X_{2,1-6m}},  \allowdisplaybreaks\\
\hat{\bftau}^*(X'_{1,-2-6m})&=X_{1,-6m}, \qquad \qquad \\
\hat{\bftau}^*(X'_{1,-4-6m})&=X_{1,-4-6m}, \allowdisplaybreaks\\
\hat{\bftau}^*(X'_{2,-1-6m})&=X_{2,-1-6m}, \qquad \quad \\
\hat{\bftau}^*(X'_{2,-3-6m})&=\dfrac{ X_{1,-6m}X_{2,-3-6m} + X_{2,-1-6m}X_{1,-4-6m}}{ X_{1,-2-6m}},\\
\hat{\bftau}^*(X'_{2,-5-6m})&=X_{2,-3-6m}. \allowdisplaybreaks
\end{align*}
Here $X_{2,3}=X_{1,2}\coloneqq 1$. 
Hence,
\begin{align*}
&\tPsi_{\le \xi, \le \xi'}(m^{(\im)}[p, \xi'_{\im}]) \\ &  = \begin{cases}
(Y_{2,1-6m}^{-1}Y_{1,-6m} + Y_{2,-1-6m})m^{(1)}[2-6m,0]  & \text{ if } (\im,p) = (1, -6m), \\
m^{(1)}[-6m,0] & \text{ if } (\im,p) = (1, -2-6m), \\
m^{(1)}[-4-6m,0] & \text{ if } (\im,p) = (1, -4-6m), \\
m^{(2)}[-1-6m,1]   & \text{ if } (\im,p) = (2, -1-6m), \\
(Y_{1,-2-6m}^{-1}Y_{2,-3-6m}+  Y_{1,-4-6m})m^{(2)}[-1-6m,1] & \text{ if } (\im,p) = (2, -3-6m), \\
m^{(2)}[-3-6m,1]  & \text{ if } (\im,p) = (2, -5-6m).
\end{cases}
\end{align*}
Here $m^{(1)}[2,0]\coloneqq 1$. 
This calculation implies that, for $(\im, p)\in \hI'_{\le \xi'}$,
\begin{align*}
\tPsi_{\le \xi, \le \xi'}(Y_{\im, p})
&=\tPsi_{\le \xi, \le \xi'}(m^{(\im)}[p, \xi'_{\im}])\tPsi_{\le \xi, \le \xi'}(m^{(\im)}[p+2, \xi'_{\im}])^{-1} \\
& = \begin{cases}
Y_{2,1-6m}^{-1}Y_{1,-6m} + Y_{2,-1-6m}  & \text{ if } (\im,p) = (1, -6m), \\
\dfrac{1}{Y_{2,1-6m}^{-1} +Y_{2,-1-6m}Y_{1,-6m}^{-1} }  & \text{ if } (\im,p) = (1, -2-6m), \\
Y_{1,-4-6m}Y_{1,-2-6m} & \text{ if } (\im,p) = (1, -4-6m), \\
 Y_{2,-1-6m}Y_{2,1-6m}     & \text{ if } (\im,p) = (2, -1-6m), \\
Y_{1,-2-6m}^{-1}Y_{2,-3-6m}+  Y_{1,-4-6m} & \text{ if } (\im,p) = (2, -3-6m), \\
\dfrac{1}{Y_{1,-2-6m}^{-1}+  Y_{1,-4-6m}Y_{2,-3-6m}^{-1}} & \text{ if } (\im,p) = (2, -5-6m).
\end{cases}
\end{align*}
Therefore, for $(\im, p)\in \hI'$,
\begin{align*}
\tPsi_{t=1}(Y_{\im, p}) = \begin{cases}
Y_{2,1-6m}^{-1}Y_{1,-6m} + Y_{2,-1-6m}  & \text{ if } (\im,p) = (1, -6m), \\
\dfrac{1}{Y_{2,1-6m}^{-1} +Y_{2,-1-6m}Y_{1,-6m}^{-1} }  & \text{ if } (\im,p) = (1, -2-6m), \\
Y_{1,-4-6m}Y_{1,-2-6m} & \text{ if } (\im,p) = (1, -4-6m), \\
 Y_{2,-1-6m}Y_{2,1-6m}     & \text{ if } (\im,p) = (2, -1-6m), \\
Y_{1,-2-6m}^{-1}Y_{2,-3-6m}+  Y_{1,-4-6m} & \text{ if } (\im,p) = (2, -3-6m), \\
\dfrac{1}{Y_{1,-2-6m}^{-1}+  Y_{1,-4-6m}Y_{2,-3-6m}^{-1}} & \text{ if } (\im,p) = (2, -5-6m).
\end{cases}
\end{align*}
This is the substitution formula from $A_2$ to itself.

For an instance, we have
$$
\chi_q(L(Y_{2,-7})) = Y_{2,-7} + Y_{1,-6} Y_{2,-5}^{-1} +  Y_{1,-4}^{-1}.
$$
By applying the above formula, we have
\begin{align*}
& Y_{2,-7}Y_{2,-5} +     Y_{2,-5}^{-1}Y_{1,-6}Y_{1,-2}^{-1} + Y_{2,-7}Y_{1,-2}^{-1} \\
& \hspace{5ex} + Y_{2,-5}^{-1}Y_{1,-6} Y_{1,-4}Y_{2,-3}^{-1}+ Y_{2,-7}Y_{1,-4}Y_{2,-3}^{-1}  +  Y_{1,-4}^{-1}Y_{1,-2}^{-1}
= \chi_q(L(Y_{2,-7}Y_{2,-5})).
\end{align*}
We remark here that the isomorphism $\Psi_{t=1}$ is categorified by the autofunctor 
$\mathscr{S}_2$ on $\Cc_{\Z,\mathfrak{sl}_3}$ in \cite{KKOP21} (see also Proposition~\ref{Prop:braid}).
\end{Ex}

\begin{Ex} %\sejin{}
We illustrate an example of our substitution formula from type $\mathrm{B}_2$ to type $\mathrm{A}_3$.
Consider the height functions $\xi'$ of type $\mathrm{B}_2$ and $\xi$ of type $\mathrm{A}_2$ given as follows:
\[
(\xi'_1,\xi'_2,\xi'_3) = (-3,0,-1), \qquad 
(\xi_1, \xi_2, \xi_3) = (-1, 0, -1).
\]
Then the followings are the infinite sequences $\bfi' = (\im'_u)_{u \in \N}, \bfi = (\im_u)_{u \in \N}\in \{1,2,3\}^{\N}$ satisfying the condition~\eqref{eq:cond2} in Example~\ref{Ex:periodic}:
\begin{align*}
    &\bfi' =(\underset{\text{\scalebox{0.8}{reduced word for $w_{\circ}$}}}{\underbrace{2, 3, {\color{red}2, 1, 2}, 3}}, \underbrace{2,1,{\color{red}2,3,2},1}, \underbrace{2,3,{\color{red}2,1,2},3},\dots),\\
    &\bfi =(\underset{\text{\scalebox{0.8}{reduced word for $w_{\circ}$}}}{\underbrace{2, 3, {\color{red}1, 2, 1}, 3}}, \underbrace{2,1,{\color{red}3,2,3},1}, \underbrace{2,3,{\color{red}1,2,1},3},\dots).
\end{align*}
The quiver $\Gamma_\bfi$ corresponding to $\bfi$ can be depicted as
\[
\raisebox{3mm}{
\scalebox{0.9}{\xymatrix@!C=0.5mm@R=2mm{
(\im\setminus p) &\cdots&-13&-12&-11&-10&-9& -8 & -7 & -6 &-5&-4 &-3& -2 &-1& 0 \\
1&\cdots&\bullet \ar@{<-}[dr]&&\bullet \ar@{<-}[ll]\ar@{<-}[dr]&&\bullet \ar@{<-}[ll]\ar@{<-}[dr]&& \bullet \ar@{<-}[ll]\ar@{<-}[dr] &&\bullet \ar@{<-}[ll]\ar@{<-}[dr]
&&\bullet \ar@{<-}[ll]\ar@{<-}[dr] && \bullet \ar@{<-}[ll]\ar@{<-}[dr] &\\
2&\cdots&&\bullet \ar@{<-}[l]\ar@{<-}[dr]\ar@{<-}[ur]&&\bullet \ar@{<-}[ll]\ar@{<-}[dr]\ar@{<-}[ur]&&\bullet \ar@{<-}[ll]\ar@{<-}[dr]\ar@{<-}[ur]&& \bullet \ar@{<-}[ll]\ar@{<-}[dr]\ar@{<-}[ur] &&\bullet \ar@{<-}[ll]\ar@{<-}[dr]\ar@{<-}[ur]
&& \ar@{<-}[ll]\bullet \ar@{<-}[dr]\ar@{<-}[ur]&& \bullet\ar@{<-}[ll]\\ 
3&\cdots&\bullet \ar@{<-}[ur]&&\bullet \ar@{<-}[ll]\ar@{<-}[ur]&&\bullet \ar@{<-}[ll]\ar@{<-}[ur]&& \bullet \ar@{<-}[ll]\ar@{<-}[ur]  &&\bullet\ar@{<-}[ll]\ar@{<-}[ur] &&\bullet \ar@{<-}[ll]\ar@{<-}[ur] && \bullet \ar@{<-}[ll]\ar@{<-}[ur] &}}}
\]
The infinite sequence $\bfi$ can be obtained from $\bfi'$, and vise versa, by applying the braid moves in the red parts of $\bfi$ and $\bfi'$ above. Hence, by applying the mutations at the following vertices 
\begin{equation}
(1, -1), (3,-5), (1, -9), (3,-13), \dots, (1,-1-8m), (3,-5-8m),\dots \label{eq:B2toA3}  
\end{equation}
we obtain the quiver $\Gamma_{\bfi'}$.  
\[
\raisebox{3mm}{
\scalebox{0.9}{\xymatrix@!C=0.5mm@R=2mm{
(\im\setminus p) &\cdots&-13&-12&-11&-10&-9& -8 & -7 & -6 &-5&-4 &-3& -2 &-1& 0 \\
1&\cdots&&&\bullet \ar@{<-}[ll]\ar@{<-}[dr]&&&& \bullet \ar@{<-}[llll]\ar@{<-}[dr] &&&&\bullet \ar@{<-}[llll]\ar@{<-}[dr] &&&\\
2&\cdots\ar@{<-}[urrr]&&\bullet \ar@{<-}[l]\ar@{<-}[drrr]&&\bullet \ar@{<-}[ll]\ar@{<-}[urrr]&&\bullet \ar@{<-}[ll]\ar@{<-}[drrr]&& \bullet \ar@{<-}[ll]\ar@{<-}[urrr] &&\bullet \ar@{<-}[ll]\ar@{<-}[drrr]
&& \ar@{<-}[ll]\bullet && \bullet\ar@{<-}[ll]\\ 
3&\cdots&\bullet \ar@{<-}[ur]&&&&\bullet \ar@{<-}[llll]\ar@{<-}[ur]&&&&\bullet\ar@{<-}[llll]\ar@{<-}[ur] &&&& \bullet \ar@{<-}[llll]\ar@{<-}[ur] &}}}
\]
Here the correspondence of the labelling of vertices is given by 
\begin{comment}
\begin{equation}
\begin{cases}
    (1, -1-8m)\mapsto (2, -2-12m)\\
    (1, -3-8m)\mapsto (1, -3-12m)\\
    (1, -5-8m)\mapsto (1, -7-12m)\\
    (1, -7-8m)\mapsto (1, -11-12m)\\
    (2, -4m)\mapsto (2, -6m)\\
    (2, -2-4m)\mapsto (2, -4-6m)\\
    (3, -1-8m)\mapsto (3, -1-12m)\\
    (3, -3-8m)\mapsto (3, -5-12m)\\
    (3, -5-8m)\mapsto (2, -8-12m)\\
    (3, -7-8m)\mapsto (3, -9-12m)
\end{cases}\label{eq:vertexcorrep}
\end{equation}
\end{comment}
\begin{equation} \label{eq:vertexcorrep}
\begin{aligned}
&    (1, -1-8m)\mapsto (2, -2-12m),\qquad     (1, -3-8m)\mapsto (1, -3-12m), \allowdisplaybreaks\\
&    (1, -5-8m)\mapsto (1, -7-12m),\qquad     (1, -7-8m)\mapsto (1, -11-12m), \allowdisplaybreaks\\
&   \hspace{4ex}  (2, -4m)\mapsto (2, -6m),    \hspace{9.8ex}   (2, -2-4m)\mapsto (2, -4-6m), \allowdisplaybreaks\\
&    (3, -1-8m)\mapsto (3, -1-12m),\qquad     (3, -3-8m)\mapsto (3, -5-12m), \allowdisplaybreaks\\
&    (3, -5-8m)\mapsto (2, -8-12m),\qquad    (3, -7-8m)\mapsto (3, -9-12m),
\end{aligned}
\end{equation}
for $m\in\N_0$. Note that $8$ and $12$ are the numbers $2rh^{\vee}$ for types $\mathrm{A}_3$ and $\mathrm{B}_2$ respectively. By the exchange relation of cluster algebras and \eqref{eq:vertexcorrep}, we have 
\begin{comment}
\begin{align*}
&\hat{\bftau}^*(X'_{(\im, p)})=\\
&\begin{cases}
  X_{1, -3-8m}&\text{if }(\im, p)=(1, -3-12m), \\
  X_{1, -5-8m}&\text{if }(\im, p)=(1, -7-12m), \\
  X_{1, -7-8m}&\text{if }(\im, p)=(1, -11-12m), \\
  X_{2, -4m}&\text{if }(\im, p)=(2, -6m), \\
  (X_{2, -2-8m}X_{1, 1-8m}+X_{1, -3-8m}X_{2, -8m})/ X_{1, -1-8m}&\text{if }(\im, p)=(2, -2-12m), \\ & \qquad \qquad \ (X_{1, 1}\coloneqq 1)\\
  X_{2, -2-4m}&\text{if }(\im, p)=(2, -4-6m), \\
   (X_{3, -7-8m}X_{2, -4-8m}+X_{2, -6-8m}X_{3, -3-8m})/ X_{3, -5-8m}&\text{if }(\im, p)=(2, -8-12m),\\
  X_{3, -1-8m}&\text{if }(\im, p)=(3, -1-12m), \\
  X_{3, -3-8m}&\text{if }(\im, p)=(3, -5-12m), \\
  X_{3, -7-8m}&\text{if }(\im, p)=(3, -9-12m).
\end{cases}    
\end{align*}
\end{comment}
\begin{align*}
\hat{\bftau}^*(X'_{1, -3-12m})&= X_{1, -3-8m},  \qquad
\hat{\bftau}^*(X'_{1, -7-12m})= X_{1, -5-8m}, \allowdisplaybreaks\\
 \hat{\bftau}^*(X'_{1, -11-12m})&= X_{1, -7-8m}, \hspace{7.9ex}
\hat{\bftau}^*(X'_{2, -6m})= X_{2, -4m} \allowdisplaybreaks\\
 \hat{\bftau}^*(X'_{2, -2-12m}) &= \dfrac{X_{2, -2-8m}X_{1, 1-8m}+X_{1, -3-8m}X_{2, -8m}}{X_{1, -1-8m}} \allowdisplaybreaks\\
  \hat{\bftau}^*(X'_{2, -4-6m}) &=   X_{2, -2-4m},  \qquad
 \hat{\bftau}^*(X'_{3, -1-12m}) =   X_{3, -1-8m}, \allowdisplaybreaks\\
  \hat{\bftau}^*(X'_{3, -5-12m}) &=   X_{3, -3-8m}, \qquad
 \hat{\bftau}^*(X'_{3, -9-12m}) =   X_{3, -7-8m}, \allowdisplaybreaks\\
  \hat{\bftau}^*(X'_{2, -8-12m})& = \dfrac{ X_{3, -7-8m}X_{2, -4-8m}+X_{2, -6-8m}X_{3, -3-8m} }{ X_{3, -5-8m}}.
\end{align*}
Here $X_{1,1}\seq 1$.   
\begin{comment}
\begin{align*}
&\tPsi_{\le \xi, \le \xi'}(m^{(\im)}[p, \xi'_{\im}])=\\
&\begin{cases}
  m^{(1)}[-3-8m, -1] &\text{if }(\im, p)=(1, -3-12m), \\
  m^{(1)}[-5-8m, -1] &\text{if }(\im, p)=(1, -7-12m), \\
  m^{(1)}[-7-8m, -1] &\text{if }(\im, p)=(1, -11-12m), \\
  m^{(2)}[-4m, 0] &\text{if }(\im, p)=(2, -6m), \\
  (Y_{2, -2-8m}Y_{1, -1-8m}^{-1}+Y_{1, -3-8m})m^{(2)}[-8m, 0]
  &\text{if }(\im, p)=(2, -2-12m),\\
  m^{(2)}[-2-4m, 0] &\text{if }(\im, p)=(2, -4-6m), \\
   (Y_{3, -7-8m}+Y_{2, -6-8m}Y_{3, -5-8m}^{-1})m^{(2)}[-4-8m, 0]
   &\text{if }(\im, p)=(2, -8-12m),\\
  m^{(3)}[-1-8m, -1] &\text{if }(\im, p)=(3, -1-12m), \\
  m^{(3)}[-3-8m, -1] &\text{if }(\im, p)=(3, -5-12m), \\
  m^{(3)}[-7-8m, -1] &\text{if }(\im, p)=(3, -9-12m). 
\end{cases}    
\end{align*}
\end{comment}
Hence,  
\begin{align*} %\tPsi_{\le \xi, \le \xi'}(m^{(\im)}[p, \xi'_{\im}])=
\tPsi_{\le \xi, \le \xi'}(m^{(1)}[-3-12m, -3])& =   m^{(1)}[-3-8m, -1], \allowdisplaybreaks \\
\tPsi_{\le \xi, \le \xi'}(m^{(1)}[-7-12m, -3]) & =   m^{(1)}[-5-8m, -1],  \allowdisplaybreaks\\
\tPsi_{\le \xi, \le \xi'}(m^{(1)}[-11-12m, -3])& =   m^{(1)}[-7-8m, -1],  \allowdisplaybreaks\\
\tPsi_{\le \xi, \le \xi'}(m^{(2)}[-6m,0])&=   m^{(2)}[-4m, 0],  \allowdisplaybreaks\\
\tPsi_{\le \xi, \le \xi'}(m^{(2)}[-2-12m,0])& =     (Y_{2, -2-8m}Y_{1, -1-8m}^{-1}+Y_{1, -3-8m})m^{(2)}[-8m, 0], \allowdisplaybreaks\\
\tPsi_{\le \xi, \le \xi'}(m^{(2)}[-4-6m, 0])& =   m^{(2)}[-2-4m, 0] , \allowdisplaybreaks\\
\tPsi_{\le \xi, \le \xi'}(m^{(3)}[-1-12m,-1])&=   m^{(3)}[-1-8m, -1], \allowdisplaybreaks\\
\tPsi_{\le \xi, \le \xi'}(m^{(3)}[-5-12m, -1])& =   m^{(3)}[-3-8m, -1],  \allowdisplaybreaks\\
\tPsi_{\le \xi, \le \xi'}(m^{(3)}[-9-12m,-1])&=   m^{(1)}[-7-8m, -1], \allowdisplaybreaks\\
\tPsi_{\le \xi, \le \xi'}(m^{(2)}[-8-12m,0])& =  (Y_{3, -7-8m}+Y_{2, -6-8m}Y_{3, -5-8m}^{-1})m^{(2)}[-4-8m, 0].
\end{align*}
%
\begin{comment}
This calculation implies that, for $(i, p)\in \hI'_{\le \fD^{\prime -2}\xi'}$,
\begin{align*}
&\trPsi_{\le \xi', \le \xi}(Y_{i, p})=\tPsi_{\le \xi, \le \xi'}(m^{(\im)}[p, \xi'_{\im}])\tPsi_{\le \xi, \le \xi'}(m^{(\im)}[p+2d_i, \xi'_{\im}])^{-1}\quad (\bar{\im}=i)\\
&=\begin{cases}
  Y_{1, -3-8m}Y_{1, -1-8m} &\text{if }(i, p)=(1, -3-12m), \\
  Y_{1, -5-8m} &\text{if }(i, p)=(1, -7-12m), \\
  Y_{1, -7-8m} &\text{if }(i, p)=(1, -11-12m), \\
  Y_{2, -8m} &\text{if }(i, p)=(2, -12m), \\
 Y_{2, -2-8m}Y_{1, -1-8m}^{-1}+Y_{1, -3-8m}
  &\text{if }(i, p)=(2, -2-12m),\\
   \displaystyle \frac{1}{Y_{1, -1-8m}^{-1}+Y_{2, -2-8m}^{-1}Y_{1, -3-8m}} &\text{if }(i, p)=(2, -4-12m), \\
   Y_{2, -4-8m} &\text{if }(i, p)=(2, -6-12m), \\
   Y_{3, -7-8m}+Y_{2, -6-8m}Y_{3, -5-8m}^{-1}
   &\text{if }(i, p)=(2, -8-12m),\\
   \displaystyle \frac{1}{Y_{2, -6-8m}^{-1}Y_{3, -7-8m}+Y_{3, -5-8m}^{-1}} 
   &\text{if }(i, p)=(2, -10-12m), \\
  Y_{3, -1-8m} &\text{if }(i, p)=(1, -1-12m), \\
  Y_{3, -3-8m} &\text{if }(i, p)=(1, -5-12m), \\
  Y_{3, -7-8m}Y_{3, -5-8m} &\text{if }(i, p)=(1, -9-12m),
\end{cases}
\qquad\text{for some }m\in \Z_{>0}.
\end{align*}
\end{comment}
Therefore, for $(i, p)\in \hI'$,
\begin{align*}
&\tPsi_{t=1}(Y_{i, p})=\begin{cases}
  Y_{1, -3-8m}Y_{1, -1-8m} &\text{if }(i, p)=(1, -3-12m), \\
  Y_{1, -5-8m} &\text{if }(i, p)=(1, -7-12m), \\
  Y_{1, -7-8m} &\text{if }(i, p)=(1, -11-12m), \\
  Y_{2, -8m} &\text{if }(i, p)=(2, -12m), \\
 Y_{2, -2-8m}Y_{1, -1-8m}^{-1}+Y_{1, -3-8m}
  &\text{if }(i, p)=(2, -2-12m),\\
   \displaystyle \frac{1}{Y_{1, -1-8m}^{-1}+Y_{2, -2-8m}^{-1}Y_{1, -3-8m}} &\text{if }(i, p)=(2, -4-12m), \\
   Y_{2, -4-8m} &\text{if }(i, p)=(2, -6-12m), \\
   Y_{3, -7-8m}+Y_{2, -6-8m}Y_{3, -5-8m}^{-1}
   &\text{if }(i, p)=(2, -8-12m),\\
   \displaystyle \frac{1}{Y_{2, -6-8m}^{-1}Y_{3, -7-8m}+Y_{3, -5-8m}^{-1}} 
   &\text{if }(i, p)=(2, -10-12m), \\
  Y_{3, -1-8m} &\text{if }(i, p)=(1, -1-12m), \\
  Y_{3, -3-8m} &\text{if }(i, p)=(1, -5-12m), \\
  Y_{3, -7-8m}Y_{3, -5-8m} &\text{if }(i, p)=(1, -9-12m),
\end{cases}
\end{align*}
for some $m\in \Z$, as Example~\ref{EX: A2 to A2}.
This is the substitution formula from type $\mathrm{B}_2$ to type $\mathrm{A}_3$.

For example, we have 
\[
\chi_q(L^{\mathrm{B}_2}(Y_{1, -7}))=Y_{1, -7}+Y_{2, -6}Y_{2, -4}Y_{1, -3}^{-1}
+Y_{2, -6}Y_{2, -2}^{-1}+Y_{1, -5}Y_{2, -4}^{-1}Y_{2, -2}^{-1}+Y_{1, -1}^{-1}.
\]
By applying the above formulas, we have 
\begin{align*}
    &Y_{1, -5}+
    \frac{Y_{2, -4}}{(Y_{1, -1}^{-1}+Y_{2, -2}^{-1}Y_{1, -3})Y_{1, -3}Y_{1, -1}}
    + \frac{Y_{2, -4}}{Y_{2, -2}Y_{1, -1}^{-1}+Y_{1, -3}} \allowdisplaybreaks\\
 & \hspace{30ex} + \frac{Y_{3, -3}(Y_{1, -1}^{-1}+Y_{2, -2}^{-1}Y_{1, -3})}{Y_{2, -2}Y_{1, -1}^{-1}+Y_{1, -3}}
+ Y_{3, -1}^{-1}\allowdisplaybreaks\\
&=Y_{1, -5}+
    Y_{2, -4}\frac{Y_{1, -3}^{-1}Y_{1, -1}^{-1}+Y_{2, -2}^{-1}}{Y_{1, -1}^{-1}+Y_{2, -2}^{-1}Y_{1, -3}}
+Y_{3, -3}Y_{2, -2}^{-1}
+ Y_{3, -1}^{-1}\allowdisplaybreaks\\
&=Y_{1, -5}+
    Y_{2, -4}Y_{1, -3}^{-1}
+Y_{3, -3}Y_{2, -2}^{-1}
+ Y_{3, -1}^{-1}=\chi_q(L^{\mathrm{A}_3}(Y_{1, -5})).
\end{align*}
%In this example, the representation in type $B_2^{(1)}$ has dimension $5$ and the corresponding representation in type $A_3^{(1)}$ has dimension $4$. This illustrates the fact that the transformation does not preserve dimensions.
\end{Ex}

\bibliographystyle{alpha}
\bibliography{ref}

\newcommand{\etalchar}[1]{$^{#1}$}
\begin{thebibliography}{KKOP21b}

\bibitem[Bit21]{Bit}
L\'{e}a Bittmann.
\newblock A quantum cluster algebra approach to representations of simply laced
  quantum affine algebras.
\newblock {\em Math. Z.}, 298(3-4):1449--1485, 2021.

\bibitem[BZ05]{BZ05}
Arkady Berenstein and Andrei Zelevinsky.
\newblock Quantum cluster algebras.
\newblock {\em Adv. Math.}, 195(2):405--455, 2005.

\bibitem[CP94]{CP}
Vyjayanthi Chari and Andrew Pressley.
\newblock {\em A guide to quantum groups}.
\newblock Cambridge University Press, Cambridge, 1994.

\bibitem[CP95]{CP95}
Vyjayanthi Chari and Andrew Pressley.
\newblock Quantum affine algebras and their representations.
\newblock In {\em Representations of groups ({B}anff, {AB}, 1994)}, volume~16
  of {\em CMS Conf. Proc.}, pages 59--78. Amer. Math. Soc., Providence, RI,
  1995.

\bibitem[DWZ10]{DWZ10}
Harm Derksen, Jerzy Weyman, and Andrei Zelevinsky.
\newblock Quivers with potentials and their representations {II}: applications
  to cluster algebras.
\newblock {\em J. Amer. Math. Soc.}, 23(3):749--790, 2010.

\bibitem[FHOO22]{FHOO}
Ryo Fujita, David Hernandez, Se-jin Oh, and Hironori Oya.
\newblock Isomorphisms among quantum {G}rothendieck rings and propagation of
  positivity.
\newblock {\em J. Reine Angew. Math.}, 785:117--185, 2022.

\bibitem[FM01]{FM01}
Edward Frenkel and Evgeny Mukhin.
\newblock Combinatorics of {$q$}-characters of finite-dimensional
  representations of quantum affine algebras.
\newblock {\em Comm. Math. Phys.}, 216(1):23--57, 2001.

\bibitem[FO21]{FO21}
Ryo Fujita and Se-jin Oh.
\newblock Q-data and representation theory of untwisted quantum affine
  algebras.
\newblock {\em Comm. Math. Phys.}, 384(2):1351--1407, 2021.

\bibitem[FR99]{FR99}
Edward Frenkel and Nicolai Reshetikhin.
\newblock The {$q$}-characters of representations of quantum affine algebras
  and deformations of {$\mathscr{W}$}-algebras.
\newblock In {\em Recent developments in quantum affine algebras and related
  topics ({R}aleigh, {NC}, 1998)}, volume 248 of {\em Contemp. Math.}, pages
  163--205. Amer. Math. Soc., Providence, RI, 1999.

\bibitem[FZ07]{FZ07}
Sergey Fomin and Andrei Zelevinsky.
\newblock Cluster algebras. {IV}. {C}oefficients.
\newblock {\em Compos. Math.}, 143(1):112--164, 2007.

\bibitem[GHKK18]{GHKK}
Mark Gross, Paul Hacking, Sean Keel, and Maxim Kontsevich.
\newblock Canonical bases for cluster algebras.
\newblock {\em J. Amer. Math. Soc.}, 31(2):497--608, 2018.

\bibitem[GLS13]{GLS13}
C.~Gei\ss, B.~Leclerc, and J.~Schr\"{o}er.
\newblock Cluster structures on quantum coordinate rings.
\newblock {\em Selecta Math. (N.S.)}, 19(2):337--397, 2013.

\bibitem[GLS20]{GLS20}
Christof Geiss, Bernard Leclerc, and Jan Schr\"{o}er.
\newblock Quantum cluster algebras and their specializations.
\newblock {\em J. Algebra}, 558:411--422, 2020.

\bibitem[GY17]{GY17}
K.~R. Goodearl and M.~T. Yakimov.
\newblock Quantum cluster algebra structures on quantum nilpotent algebras.
\newblock {\em Mem. Amer. Math. Soc.}, 247(1169):vii+119, 2017.

\bibitem[Her04]{Her04}
David Hernandez.
\newblock Algebraic approach to {$q,t$}-characters.
\newblock {\em Adv. Math.}, 187(1):1--52, 2004.

\bibitem[Her05]{Her05}
David Hernandez.
\newblock Monomials of {$q$} and {$q, t$}-characters for non simply-laced
  quantum affinizations.
\newblock {\em Math. Z.}, 250(2):443--473, 2005.

\bibitem[Her06]{Her06}
David Hernandez.
\newblock The {K}irillov-{R}eshetikhin conjecture and solutions of {T}-systems.
\newblock {\em J. Reine Angew. Math.}, 596:63--87, 2006.

\bibitem[HL10]{HL10}
David Hernandez and Bernard Leclerc.
\newblock Cluster algebras and quantum affine algebras.
\newblock {\em Duke Math. J.}, 154(2):265--341, 2010.

\bibitem[HL15]{HL15}
David Hernandez and Bernard Leclerc.
\newblock Quantum {G}rothendieck rings and derived {H}all algebras.
\newblock {\em J. Reine Angew. Math.}, 701:77--126, 2015.

\bibitem[HL16]{HL16}
David Hernandez and Bernard Leclerc.
\newblock A cluster algebra approach to {$q$}-characters of
  {K}irillov-{R}eshetikhin modules.
\newblock {\em J. Eur. Math. Soc. (JEMS)}, 18(5):1113--1159, 2016.

\bibitem[HO19]{HO19}
David Hernandez and Hironori Oya.
\newblock Quantum {G}rothendieck ring isomorphisms, cluster algebras and
  {K}azhdan-{L}usztig algorithm.
\newblock {\em Adv. Math.}, 347:192--272, 2019.

\bibitem[IIK{\etalchar{+}}13a]{IIKKKB}
Rei Inoue, Osamu Iyama, Bernhard Keller, Atsuo Kuniba, and Tomoki Nakanishi.
\newblock Periodicities of t-systems and y-systems, dilogarithm identities, and
  cluster algebras {I}: type ${B}_r$.
\newblock {\em Publications of the Research Institute for Mathematical
  Sciences}, 49(1):1--42, 2013.

\bibitem[IIK{\etalchar{+}}13b]{IIKKKB2}
Rei Inoue, Osamu Iyama, Bernhard Keller, Atsuo Kuniba, and Tomoki Nakanishi.
\newblock Periodicities of t-systems and y-systems, dilogarithm identities, and
  cluster algebras {II}: types $ {C}_r$, $ {F}_4$ and $ {G}_2$.
\newblock {\em Publications of the Research Institute for Mathematical
  Sciences}, 49(1):43--85, 2013.

\bibitem[Kel13]{Ke13}
Bernhard Keller.
\newblock The periodicity conjecture for pairs of dynkin diagrams.
\newblock {\em Annals of Mathematics}, pages 111--170, 2013.

\bibitem[KK19]{KK19}
Masaki Kashiwara and Myungho Kim.
\newblock Laurent phenomenon and simple modules of quiver {H}ecke algebras.
\newblock {\em Compos. Math.}, 155(12):2263--2295, 2019.

\bibitem[KKKO18]{KKKO18}
Seok-Jin Kang, Masaki Kashiwara, Myungho Kim, and Se-jin Oh.
\newblock Monoidal categorification of cluster algebras.
\newblock {\em J. Amer. Math. Soc.}, 31(2):349--426, 2018.

\bibitem[KKOP21a]{KKOP21}
Masaki Kashiwara, Myungho Kim, Se-jin Oh, and Euiyong Park.
\newblock Braid group action on the module category of quantum affine algebras.
\newblock {\em Proceedings of the Japan Academy, Ser. A, Mathematical
  Sciences}, 97(3):13--18, 2021.

\bibitem[KKOP21b]{KKOP2}
Masaki Kashiwara, Myungho Kim, Se-jin Oh, and Euiyong Park.
\newblock Monoidal categorification and quantum affine algebras {II}.
\newblock Preprint, \arxiv{2103.10067}v3, 2021.

\bibitem[KKOP22]{KKOP22}
Masaki Kashiwara, Myungho Kim, Se-jin Oh, and Euiyong Park.
\newblock Cluster algebra structures on module categories over quantum affine
  algebras.
\newblock {\em Proceedings of the London Mathematical Society},
  124(3):301--372, 2022.

\bibitem[KO23]{KO22}
Masaki Kashiwara and Se-jin Oh.
\newblock The {$(q,t)$}-{C}artan matrix specialized at {$q=1$} and its
  applications.
\newblock {\em Math. Z.}, 303(2):Paper No. 42, 49, 2023.

\bibitem[Lus93]{LusztigBook}
George Lusztig.
\newblock {\em Introduction to quantum groups}, volume 110 of {\em Progress in
  Mathematics}.
\newblock Birkh\"{a}user Boston, Inc., Boston, MA, 1993.

\bibitem[Nak]{NakajimaQCB}
Hiraku Nakajima.
\newblock Modules of quantized {C}oulomb branches.
\newblock in preparation.

\bibitem[Nak01]{Nak01}
Hiraku Nakajima.
\newblock Quiver varieties and finite-dimensional representations of quantum
  affine algebras.
\newblock {\em J. Amer. Math. Soc.}, 14(1):145--238, 2001.

\bibitem[Nak03]{Nakajima03II}
Hiraku Nakajima.
\newblock {$t$}-analogs of {$q$}-characters of {K}irillov-{R}eshetikhin modules
  of quantum affine algebras.
\newblock {\em Represent. Theory}, 7:259--274 (electronic), 2003.

\bibitem[Nak04]{Nak04}
Hiraku Nakajima.
\newblock Quiver varieties and {$t$}-analogs of {$q$}-characters of quantum
  affine algebras.
\newblock {\em Ann. of Math. (2)}, 160(3):1057--1097, 2004.

\bibitem[NW23]{NW19}
Hiraku Nakajima and Alex Weekes.
\newblock Coulomb branches of quiver gauge theories with symmetrizers.
\newblock {\em J. Eur. Math. Soc. (JEMS)}, 25(1):203--230, 2023.

\bibitem[Qin17]{Qin17}
Fan Qin.
\newblock Triangular bases in quantum cluster algebras and monoidal
  categorification conjectures.
\newblock {\em Duke Math. J.}, 166(12):2337--2442, 2017.

\bibitem[Tra11]{Tran}
Thao Tran.
\newblock {$F$}-polynomials in quantum cluster algebras.
\newblock {\em Algebr. Represent. Theory}, 14(6):1025--1061, 2011.

\bibitem[VV03]{VV03}
M.~Varagnolo and E.~Vasserot.
\newblock Perverse sheaves and quantum {G}rothendieck rings.
\newblock In {\em Studies in memory of {I}ssai {S}chur ({C}hevaleret/{R}ehovot,
  2000)}, volume 210 of {\em Progr. Math.}, pages 345--365. Birkh\"{a}user
  Boston, Boston, MA, 2003.

\end{thebibliography}
\end{document}